\documentclass[a4paper,11pt]{amsart}

\pdfoutput=1

\usepackage[text={400pt,660pt},centering]{geometry}

\usepackage[utf8]{inputenc}
\usepackage[T1]{fontenc}
\usepackage{amsthm, amssymb, amsmath, amsfonts, mathrsfs}
\usepackage{mathtools}
\usepackage{scalerel} 
\usepackage[colorlinks=true, pdfstartview=FitV, linkcolor=blue, citecolor=blue, urlcolor=blue,pagebackref=false]{hyperref}

\usepackage{esint} 
\usepackage{MnSymbol} 
\usepackage{bbm}

\usepackage{appendix}

\usepackage{color}

\usepackage{microtype}

\usepackage[notcite,notref,color]{showkeys}
\definecolor{labelkey}{gray}{.8}
\definecolor{refkey}{gray}{.8}

\definecolor{darkgreen}{rgb}{0,0.5,0}
\definecolor{darkblue}{rgb}{0,0,0.7}
\definecolor{darkred}{rgb}{0.9,0.1,0.1}

\newtheorem{proposition}{Proposition}
\newtheorem{theorem}[proposition]{Theorem}
\newtheorem{lemma}[proposition]{Lemma}
\newtheorem{corollary}[proposition]{Corollary}

\theoremstyle{remark}
\newtheorem{remark}[proposition]{Remark}

\theoremstyle{definition}

\newtheorem{hypothesis}[proposition]{Hypothesis}

\numberwithin{equation}{section}
\numberwithin{proposition}{section}

\renewcommand{\leq}{\leqslant}
\renewcommand{\geq}{\geqslant}

\renewcommand{\subset}{\subseteq}
\newcommand{\mcl}{\mathcal}

\newcommand{\A}{{\mathsf{A}}}
\newcommand{\F}{\mathcal{F}}

\newcommand{\K}{\mathcal{K}}

\newcommand{\DD}{\mathbf{D}} 
\newcommand{\cm}{\mathbf{\Sigma}} 
\newcommand{\D}{{\mathsf{D}}} 

\newcommand{\ccc}{\mathbf{c}}

\newcommand{\E}{\mathbb{E}}
\newcommand{\Er}{\mathbb{E}_{\rho}}
\renewcommand{\Pr}{\mathbb{P}_{\rho}}

\newcommand{\expec}[1]{\left\langle #1 \right\rangle}
\renewcommand{\L}{\mathcal{L}}
\newcommand{\Lb}{{\overbracket[1pt][-1pt]{\L}}}
\newcommand{\Lbssep}{{\overbracket[1pt][-1pt]{\L}}^{\mathrm{ssep}}}
\newcommand{\Pb}{\bar P}
\newcommand{\Pbssep}{{\bar P}^{\, \mathrm{ssep}}}

\newcommand{\La}{\Lambda}

\newcommand{\N}{\mathbb{N}}

\newcommand{\Ll}{\left}
\newcommand{\Rr}{\right}

\newcommand{\rhs}{right-hand side}
\newcommand{\1}{\mathbf{1}}
\newcommand{\R}{\mathbb{R}}

\newcommand{\Z}{\mathcal{Z}}
\newcommand{\Zo}{\mathbb{Z}}
\newcommand{\Zt}{\mathbb{Z}^2}
\newcommand{\Zd}{{\mathbb{Z}^d}}

\renewcommand{\P}{\mathbb{P}}
\newcommand{\ov}{\overline}
\renewcommand{\bar}{\overline}

\renewcommand{\tilde}{\widetilde}

\renewcommand{\d}{{\mathrm{d}}}
\newcommand{\var}{\mathbb{V}\!\mathrm{ar}}
\newcommand{\cov}{\mathbb{C}\mathrm{ov}}

\renewcommand{\epsilon}{\varepsilon}

\newcommand{\X}{\mathcal{X}} 

\newcommand{\g}{\mathbf{g}} 

\newcommand{\HH}{\mathcal{H}}

\newcommand{\deff}{d_{\operatorname{eff}}}

\newcommand{\cu}{{\scaleobj{1.2}{\square}}}

\renewcommand{\r}{\mathbf{r}}

\newcommand{\fil}{\mathscr{F}}

\DeclareMathOperator{\supp}{supp}

\newcommand{\norm}[1]{\left\Vert{#1}\right\Vert}
\newcommand{\bracket}[1]{\left\langle{#1}\right\rangle}

\usepackage{hyperref}

\title[Additive functional of non-gradient exclusion processes]{Scaling limit of additive functionals for reversible non-gradient exclusion process: critical cases}

\author[C. Gu]{Chenlin Gu} 
\address[Chenlin Gu]{Yau Mathematical Sciences Center, Tsinghua University, Beijing, China}
\email{gclmath@tsinghua.edu.cn}

\author[L. Yang]{Linzhi Yang} 
\address[Linzhi Yang]{Qiuzhen College, Tsinghua University, Beijing, China}
\email{ylz24@mails.tsinghua.edu.cn}

\author[L. Zhao]{Linjie Zhao} 
\address[Linjie Zhao]{ School of Mathematics and Statistics $\&$ Hubei Key Laboratory of Engineering Modeling and Scientific Computing, Huazhong University of Science and Technology, Wuhan, China.}
\email{linjie\_zhao@hust.edu.cn}

\begin{document}
	\begin{abstract}
		For the reversible speed-change exclusion process $(\eta_t)_{t \geq 0}$ in $\Zd$, we study the scaling limit of additive functionals ${\Gamma_t(f) = \int_0^t f(\eta_s) \, \d s}$. Concerning the local centered function $f$, the previous work [Commun. Math. Phys. 104, 1–19, 1986] by Kipnis and Varadhan and [Comm. Pure Appl. Math., 66: 649-677, 2013] by Gon{\c{c}}alves and Jara respectively covered the cases $d \geq 3$ and $d=1$. The present paper completes the missing part $d=2$, and also develops the theory for functions with higher degree. The novelty is a quantitative homogenization of the resolvent, which allows to overcome the obstacle of correlation function in non-gradient models. 
		
		\bigskip
		
		\noindent \textsc{MSC 2010:} 82C22, 35B27, 60K35.
		
		\medskip
		
		\noindent \textsc{Keywords:} interacting particle system, non-gradient models, quantitative homogenization, two-scale expansion, resolvent, martingale, scaling limit.
		
	\end{abstract}
	\maketitle
	
	\setcounter{tocdepth}{1}
	\tableofcontents
	
	\newpage
	
	%
	%
	%
	%
	%
	%
	%
	%
	\section{Introduction}
	
	A classical problem in interacting particle system is to study the long-time behavior of the additive functional
	\begin{align*}
		\Gamma_t(f) := \int_0^{t} f(\eta_s) \, \d s.
	\end{align*}
	In the seminal work \cite{KV86}, Kipnis and Varadhan proved that, for reversible interacting particle systems $(\eta_t)_{t\geq 0}$ and  a centered function $f$ of finite $H^{-1}$ norm, then $\Ll(\frac{1}{\sqrt{N}}\Gamma_{Nt}(f)\Rr)_{t \geq 0}$ has a weak limit as a Brownian motion as $N \to \infty$. 
	
	However, not all functions have a finite $H^{-1}$ norm, thus different scaling appears to obtain a meaningful limit. Taking the simple symmetric exclusion process (SSEP) on $\Zd$ for example, Kipnis proposed in \cite{kipnis87} the normalization constants for centered local function $f$ in different \emph{dimensions}
	\begin{align}\label{eq.defa}
		a(N,d) := \Ll\{\begin{array}{cc}
			N^{\frac{3}{4}}& d = 1, \\
			\sqrt{N \log N}& d = 2, \\
			\sqrt{N} & d \geq 3,
		\end{array}\Rr.
	\end{align}
    and studied the normalized additive functional 
	\begin{align}\label{eq.defGamma}
		\Gamma^N_t(f) := \frac{1}{a(N,d)} \int_0^{Nt} f(\eta_s) \, \d s.
	\end{align}
    The scaling limit of marginal distribution was also obtained in \cite{kipnis87}. Then the limit of the whole process was proved by Sethuraman and Xu in \cite{sethuraman1996central} for $d\geq 3$, and by Sethuraman in \cite{sethuraman2000central} for $d\leq 2$: the limit is a fractional Brownian motion of Hurst index $\frac{3}{4}$ when $d = 1$, and a Brownian motion for $d = 2$ or $d \geq 3$. Similar results have also been established in \cite{CG84} for independent random walk by Cox and Griffeath, and in \cite{QJS02} for the zero-range model by Quastel, Jankowski, and Sheriff, and in \cite{sheriff2011central} for a Ginzburg--Landau model by Sheriff.

    It is possible that the scaling limit degenerates to $0$ under the normalization \eqref{eq.defa}. This is related to the \emph{degree} of the function $f$, which was observed firstly in \cite{sethuraman1996central}. In \cite{sheriff2011central}, the normalization constant $\sqrt{N \log N}$ was clarified for the case $(\text{dim},\text{deg})=(1,2)$, which is like $(\text{dim},\text{deg})=(2,1)$. As a summary from the literature, one should define \emph{the effective dimension} 
    \begin{align}\label{eq.effdimIntro}
        \deff := \text{dim} \times \text{deg},
    \end{align}
    then $\Ll(\frac{1}{a(N,\deff)} \int_0^{Nt} f(\eta_s)\,\d s\Rr)_{t \geq 0}$ has a non-trivial weak limit as $N \to \infty$ for a general local centered function $f$ on SSEP.  
	
    The scaling limit for reversible speed-change exclusion is also expected, while the rigorous results are less complete due to its nature of non-gradient process. Except a direct application of \cite{KV86, sethuraman1996central} to $d \geq 3$, the justified case is $(\text{dim},\text{deg})=(1,1)$ by Gon{\c{c}}alves and Jara \cite{GJ13}. The main result in this paper can be stated informally as follows.
    \begin{theorem}\label{thm.informal}
        For the speed-change reversible exclusion process $(\eta_t)_{t \geq 0}$ and every local centered function $f$,  the additive functional $\Ll(\frac{1}{a(N,\deff)} \int_0^{Nt} f(\eta_s)\,\d s\Rr)_{t \geq 0}$ admits a non-trivial weak limit as $N \to \infty$.
    \end{theorem}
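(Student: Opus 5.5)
The plan is to reduce the informal statement to the critical cases $\deff = 2$, i.e. $(\text{dim},\text{deg}) = (2,1)$ and $(1,2)$. The supercritical regime $\deff \geq 3$ follows from Kipnis--Varadhan \cite{KV86} once we check that $f$ has finite $H^{-1}$ norm; this uses spectral-gap/comparison with SSEP together with the degree decomposition of $f$. The case $(1,1)$ is \cite{GJ13}. Every local centered $f$ splits into its degree-one part $\tilde f$, plus a degree-two part, plus higher-degree terms. The higher-degree terms are negligible under the normalization $a(N,\deff)$ of the lowest degree, so only the lowest-degree component matters.

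For the critical case, I would use the resolvent method. Set $\lambda = N^{-1}$ and let $u_\lambda$ solve $(\lambda - \L) u_\lambda = f$. Then Dynkin's formula gives
\begin{align*}
\int_0^{Nt} f(\eta_s)\,\d s = u_\lambda(\eta_0) - u_\lambda(\eta_{Nt}) + \lambda\int_0^{Nt} u_\lambda(\eta_s)\,\d s + M^\lambda_{Nt}.
\end{align*}
Dividing by $\sqrt{N\log N}$, the boundary terms and the $\lambda$-integral are controlled by $\lambda\langle u_\lambda, u_\lambda\rangle_\rho$. This quantity is $O(1)$, whereas $\langle u_\lambda,(-\L)u_\lambda\rangle_\rho \sim c\log N$. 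So the martingale carries the limit, provided its quadratic variation, normalized by $N\log N$, converges in $L^1$ to $\sigma^2 t$.

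The core step is to identify the asymptotics $\langle u_\lambda, (-\L)u_\lambda\rangle_\rho = \sigma^2 \log N + o(\log N)$. I would do this by quantitative homogenization of the resolvent through a two-scale expansion. For SSEP, the degree-one $u$ is explicit: $u^{\mathrm{ssep}}_\lambda = \sum_x G_\lambda(x)(\eta_x-\rho)$, where $G_\lambda$ is the massive Green function, and $\|\nabla G_\lambda\|^2 \sim \log(1/\lambda)$ in $d=2$. For the speed-change model, I would take as approximation
\begin{align*}
\tilde u_\lambda = \sum_x \bar G_\lambda(x)(\eta_x - \rho) + \sum_{x,i} \partial_i\bar G_\lambda(x)\,\tau_x\phi_i,
\end{align*}
where $\bar G_\lambda$ is the Green function of the homogenized operator with diffusion matrix $\bar{\mathbf D}(\rho)$, and the $\phi_i$ are correctors, built from finite-volume cube approximations. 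The cubes should have mesoscopic size, since correctors need not be local.

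The main obstacle is to bound $\|u_\lambda - \tilde u_\lambda\|_{H^1}^2 = o(\log N)$. The error decomposes over dyadic scales. Each scale contributes a quantity like the corrector's convergence rate times $|\nabla^2 \bar G_\lambda|$. Summing over scales gives a convergent series only if the corrector approximation error at scale $r$ decays by some power $r^{-\alpha}$, uniformly in $\rho$. This requires a quantitative homogenization estimate for the non-gradient exclusion process, obtained by a subadditive-quantity/renormalization argument on cubes. I also expect trouble with the correlation terms from the non-product interaction, which is exactly what makes the non-gradient case hard. These would be handled by Gaussian-type bounds and spectral gap on cubes.

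Given the energy asymptotics, convergence of the quadratic variation would follow from an $L^2$ ergodic-type argument: a local replacement of $\L$-carré-du-champ averages using the same expansion. Tightness comes from martingale moment bounds, and the FCLT for martingales then gives a Brownian limit. The case $(1,2)$ is analogous, with $u_\lambda$ of degree two and the two-particle Green function, which behaves like a $2$-dimensional one.
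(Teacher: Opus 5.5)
Your architecture matches the paper's: Kipnis--Varadhan with SSEP comparison above criticality, and at criticality a Dynkin martingale at $\lambda=1/N$ with a two-scale expansion using mesoscopic cube correctors. A single scale $m$ with $1\ll 3^{Cm}\ll\log N$ suffices; no sum over dyadic scales is needed. The genuine gap is the step you treat as routine, the concentration of the quadratic variation, above all in the case $(1,2)$.

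The energy asymptotics only give the mean of $\frac{1}{N\log N}\int_0^{Nt}\sum_b c_b(\pi_b\tilde G_\lambda)^2(\eta_s)\,\d s$. By stationarity its variance is at most $t^2(\log N)^{-2}\var_\rho[\sum_b c_b(\pi_b\tilde G_\lambda)^2]$, so one needs this fixed-time variance to be $o((\log N)^2)$.

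For $(2,1)$ this follows from locality. With deterministic slopes and local correctors, the carr\'e du champ is a sum of finite-range dependent local terms with deterministic weights. Its variance is then at most $C3^{18m}\sum_{x,i}(\D_{e_i}\bar g_\lambda(x))^4$ (Proposition~\ref{prop.H1TwoScale}).

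For $(1,2)$ the analogy with a two-dimensional Green function breaks down, for two reasons.
\begin{enumerate}
\item The slope in front of the corrector is now the random field $2\sum_y\D_1\bar g_\lambda(x,y)\bar\eta_y$. It must be averaged over the cube and restricted to $y\notin N_\r(z+\cu_m^+)$ to be independent of $\phi^z_m$. This creates new errors in the flux estimate, notably the term $\mathbf{F.3}$ of Proposition~\ref{prop.L2FluxDeg2}, where $\pi_b$ hits the random slope.
\item More seriously, already for the leading part,
$\sum_x c_{x,x+1}(\pi_{x,x+1}\bar G_\lambda)^2=4\sum_x c_{x,x+1}(\bar\eta_x-\bar\eta_{x+1})^2\big(\sum_y\D_1\bar g_\lambda(x,y)\bar\eta_y\big)^2$
is a non-local quadratic form in the density field. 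Its variance contains
\[\sum_{x,x'\in\Zo}w(x)\,w(x')\Big(\sum_{y\in\Zo}\D_1\bar g_\lambda(x,y)\,\D_1\bar g_\lambda(x',y)\Big)^2\le\|w\|_{\ell^\infty}^2\,\big\|T[\D_1\bar g_\lambda]\big\|_{S_4}^4,\]
where $T[\D_1\bar g_\lambda]$ is the operator on $\ell^2(\Zo)$ with kernel $\D_1\bar g_\lambda$. Cauchy--Schwarz only gives $\|T[\D_1\bar g_\lambda]\|_{\mathrm{HS}}^4\sim(\log N)^2$, the square of the mean, so concentration does not follow.
\end{enumerate}
The missing ingredient is the Schatten bound $\|T[\D_1\bar g_\lambda]\|_{S_4}^4=O(\log(\lambda^{-1}))$. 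The paper proves it in Proposition~\ref{prop.GreenQuartic}, using the Fourier representation of the two-particle exclusion Green function and Macaev's theorem on triangular truncation. It then bounds the three- and four-block quartic sums to get a variance of order $3^{38m}(\log N)^{3/2}$.

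Two further points. First, running the martingale on the true resolvent $u_\lambda$ instead of $\tilde G_\lambda$ is a legitimate variant, but it is not free.
\begin{itemize}
\item It requires the $H^1$ approximation $\langle u_\lambda-\tilde u_\lambda,-\L(u_\lambda-\tilde u_\lambda)\rangle=o(\log N)$ (Proposition~\ref{prop.HomoResolvent}).
\item Your claim $\lambda\|u_\lambda\|_{L^2}^2=O(1)$ does not follow from Dirichlet-form comparison with SSEP, which only gives $O(\log N)$ and does not kill the boundary terms.
\item You therefore need either a sharp decay $\var_\rho[P_t f]\le Ct^{-1}$ for the non-gradient dynamics, or the homogenization estimate above.
\item The paper sidesteps this by working with $\tilde G_\lambda$, whose $L^2$ norm is controlled through the explicit $\bar G_\lambda$.
\end{itemize}
Second, martingale moment bounds do not give tightness of $\Gamma^N$, because $\sup_{t\le T}|u_\lambda(\eta_{Nt})|/\sqrt{N\log N}$ is not controlled by fixed-time $L^2$ bounds. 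The paper proves tightness separately. In $d=2$ it uses the flow lemma plus Feynman--Kac. In $d=1$ the flow cost is of order $\ell$, so it uses a forward--backward martingale decomposition instead. Only then does it identify the remainder's finite-dimensional limits as zero.
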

    Especially, the technique of homogenization is developed to treat the critical case $\deff = 2$, which contains two situations $(\text{dim},\text{deg}) \in \{(1,2),(2,1)\}$.
	
	\subsection{Main result}
	Let $\Zd$ be the Euclidean lattice and $\X := \{0,1\}^{\Zd}$ stand for the configuration space of the exclusion process. We denote by $\eta = \{\eta(x): x \in \Zd \}$ the canonical element in $\X$. Here $\eta(x) = 0$ means the site $x$ is vacant and $\eta(x) = 1$ means the site is occupied. We write $y \sim x$ if $x$ and $y$ are neighbors, i.e. $\vert x - y\vert = 1$. Then $\{x,y\}$ is called an (undirected) bond. For every $\Lambda \subset \Zd$, we denote by $\Lambda^*$ the set of bonds in $\Lambda$ that 
	\begin{align}\label{eq.defBond}
		\Lambda^* := \{ \{x,y\}: x,y \in \Lambda, x \sim y\}.
	\end{align} 
	
	For $x,y \in \Zd$, the exchange operator $\eta^{x,y}$ is defined as 
	\begin{align}\label{eq.exchange}
		(\eta^{x,y})(z) := \Ll\{\begin{array}{ll}
			\eta(z), & \qquad z \neq x,y; \\
			\eta(y), & \qquad z = x; \\
			\eta(x), & \qquad z = y.
		\end{array}\Rr.
	\end{align}
	In particular, when $b = \{x,y\}$ is a bond, we also write $\eta^b$ instead of $\eta^{x,y}$, and define the Kawasaki operator $\pi_b \equiv \pi_{x,y}$ 
	\begin{align}\label{eq.Kawasaki}
		\pi_b F(\eta) := F(\eta^b) - F(\eta).
	\end{align} 
	For every $x \in \Zd$, the translation operator $\tau_x$ is defined as 
	\begin{align}\label{eq.translation1}
		(\tau_x \eta)(y) := \eta({x+y}),
	\end{align}
	and given a function $F$ on $\X$, we also define $\tau_x F$ as 
	\begin{align}\label{eq.translation2}
		(\tau_x F)(\eta) := F(\tau_x \eta).
	\end{align}
	
	The \emph{non-gradient exclusion process} on $\Zd$ is defined by the generator below
	\begin{align}\label{eq.Generator}
		\L := \sum_{b \in (\Zd)^*} c_b(\eta) \pi_b = \frac{1}{2}\sum_{x,y \in \Zd: \vert x - y\vert = 1} c_{x,y}(\eta) \pi_{x,y},
	\end{align}
	where the family of functions
	\begin{align}
		\{c_b(\eta) \equiv c_{x,y}(\eta) = c_{y,x}(\eta); b=\{x,y\} \in (\Zd)^*\},
	\end{align}
	determine the jump rate of particles on the nearest bonds. This model is also called \emph{the speed-change Kawasaki dynamics} or \emph{the lattice gas} in the literature. 
	
	We suppose the following conditions for the jump rate throughout the paper without specific explanation.
	\begin{hypothesis}\label{hyp} The following conditions are assumed for  $\{c_b\}_{b \in (\Zd)^*}$.
		\begin{enumerate}
			\item Non-degenerate and local: $c_{x,y}(\eta)$ depends only on $\{\eta(z): \vert z - x\vert \leq \r\}$ for some integer $\r > 0$, and is bounded on two sides $0 < c_{-} \leq c_{x,y}(\eta) \leq c_+$.
			\item Spatially homogeneous: for all $\{x,y\} \in (\Zd)^*$, $c_{x,y} = \tau_x c_{0,y-x}$.
			\item Detailed balance under Bernoulli measures: $c_{x,y}(\eta)$ is independent of $\{\eta(x), \eta(y)\}$.
		\end{enumerate}
	\end{hypothesis}
	This model is known to be of non-gradient type, i.e. one cannot find functions $\{h_{i,j}\}_{1\leq i,j\leq d}$ such that $c_{0,e_i} (\eta)(\eta({e_i}) - \eta(0) ) = \sum_{j=1}^d \Ll((\tau_{e_j} h_{i,j})(\eta) - h_{i,j}(\eta)\Rr)$ for general $\{c_{b}\}_{b \in (\Zd)^*}$, with $\{e_i\}_{1 \leq i \leq d}$ the canonical basis of $\Zd$. 
	
	For every $\rho \in (0,1)$, the Bernoulli product measure $\Pr = \mathrm{Bernoulli}(\rho)^{\otimes \Zd}$ is an invariant measure on $\X$. We denote it by $\Pr$ with the associated expectation $\Er$. In the following statement, we denote by
	\begin{align}\label{eq.def.frho}
		\bar{f}(\rho) := \Er[f], \qquad \bar{f}'(\rho) := \frac{\d}{\d \rho} \Er[f], \qquad  \bar{f}''(\rho) := \frac{\d^2}{\d \rho^2} \Er[f].
	\end{align}
	The quantities $\DD \in \R^{d \times d}_{sym}$ and $\chi \in \R_+$ are respectively \emph{the diffusion matrix} and \emph{the compressibility}. The $H^{-1}$-norm is defined as $\norm{f}^2_{-1} := \Er[f(-\L)^{-1}f]$; see respectively \eqref{eq.Einstein}, \eqref{eq.defCompress}, and \eqref{eq.def.InnerMinus1} for their rigorous definitions.
	
	The first main theorem of the paper studies the scaling limit of $(\Gamma^N_t(f))_{t \geq 0}$ defined in \eqref{eq.defGamma}.
	\begin{theorem}\label{thm.main_basic}
		Under Hypothesis~\ref{hyp} and under the stationary measure $\Pr$, for every centered local function $f$, the additive functional $(\Gamma^N_t(f))_{t \geq 0}$ converges weakly in $C(\R_+, \R)$ to a limit $(\Gamma^\infty_t(f))_{t \geq 0}$, which is characterized  as follows:
		\begin{itemize}
			\item $d = 1$, $(\Gamma^\infty_t(f))_{t \geq 0}$ is a $\frac{3}{4}$-fractional Brownian motion with covariance matrix
			\begin{align}\label{eq.fBM}
				\cov[\Gamma^\infty_t(f), \Gamma^\infty_s(f)] = \frac{2\chi(\rho)}{3\sqrt{\pi \DD (\rho)}}\bar{f}'(\rho)^2 \Ll(t^{\frac{3}{2}} + s^{\frac{3}{2}} - \vert t - s\vert^{\frac{3}{2}} \Rr).
			\end{align}
			\item $d = 2$, $(\Gamma^\infty_t(f))_{t \geq 0}$ is a Brownian motion with covariance
			\begin{align}\label{eq.BM_d2}
				\cov[\Gamma^\infty_t(f), \Gamma^\infty_s(f)] = \frac{\chi(\rho)\bar{f}'(\rho)^2}{2\pi\sqrt{\det[\DD(\rho)]}} (t \wedge s).
			\end{align}
			\item $d \geq 3$, $(\Gamma^\infty_t(f))_{t \geq 0}$ is a Brownian motion with covariance
			\begin{align}\label{eq.BM_d3}
				\cov[\Gamma^\infty_t(f), \Gamma^\infty_s(f)] =  2 \norm{f}^2_{-1} (t \wedge s).
			\end{align}
		\end{itemize}
	\end{theorem}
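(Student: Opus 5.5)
The plan splits the proof by dimension and concentrates on the critical case $d=2$, where the quantitative homogenization input is needed.

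\emph{Case $d\ge 3$.} Here I would show that every centered local $f$ has $\norm{f}_{-1}<\infty$. The key step is the decomposition
\begin{align*}
f=\bar f'(\rho)\,(\eta(0)-\rho)+g .
\end{align*}
The remainder $g$ has degree at least $2$ in the orthogonal polynomial (Walsh) expansion. By the spectral gap and comparison with SSEP (the rates are bounded by $c_\pm$), the $H^{-1}$ norm of $\eta(0)-\rho$ is comparable to that of the SSEP. That norm is $\int |\xi|^{-2}\,d\xi<\infty$ for $d\ge3$. For $g$ one has effective dimension $\ge 2d\ge 6$, so its norm is also finite. Kipnis--Varadhan \cite{KV86}, together with the tightness argument of \cite{sethuraman1996central}, then gives convergence to a Brownian motion with variance $2\norm{f}^2_{-1}$.

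\emph{Case $d=2$.} I would use the same decomposition. The degree-$\geq 2$ part has effective dimension $\ge 4$, hence finite $H^{-1}$ norm. Its contribution is therefore $O(\sqrt N)=o(\sqrt{N\log N})$ in $L^2$, uniformly on compact time intervals by the Kipnis--Varadhan maximal inequality. This reduces the problem to the linear observable $\eta(0)-\rho$.

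For this observable I would use the resolvent/martingale method. Let $u_\lambda=(\lambda-\L)^{-1}(\eta(0)-\rho)$ with $\lambda=1/N$. Dynkin's formula writes $\Gamma_{Nt}$ as a martingale plus $\lambda\int_0^{Nt}u_\lambda(\eta_s)\,\d s$ plus boundary terms. After normalizing by $\sqrt{N\log N}$, the boundary terms vanish provided $\lambda\Er[u_\lambda^2]\ll \log N/N$. The drift term vanishes provided $\lambda\,\Er[u_\lambda(-\L)u_\lambda]$ does not dominate; more precisely one needs the sharp asymptotic
\begin{align*}
\Er[(\eta(0)-\rho)u_\lambda]\sim \frac{\chi}{4\pi\sqrt{\det\DD}}\log\lambda^{-1}.
\end{align*}
The martingale part has quadratic variation governed by $\Er[u_\lambda(-\L)u_\lambda]$. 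A martingale CLT then yields Brownian motion with variance $\frac{\chi\bar f'^2}{2\pi\sqrt{\det \DD}}$, once ergodic averaging of the quadratic variation is established.

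\emph{Main obstacle.} The hardest step is computing the resolvent asymptotics to precision $o(\log\lambda^{-1})$ in the non-gradient setting. The correlation function $\Er[(\eta_t(0)-\rho)(\eta_0(0)-\rho)]$ has no closed form here. I would attack it with a two-scale expansion. Approximate $u_\lambda$ by
\begin{align*}
\sum_x G_\lambda^{\DD}(x)\,\bigl(\eta(x)-\rho+\text{corrector terms}\bigr),
\end{align*}
where $G^{\DD}_\lambda$ is the massive Green function of $\lambda-\nabla\cdot\DD\nabla$ and the correctors come from the Varadhan non-gradient decomposition defining $\DD$. The error would then be controlled in $H^1$ using quantitative decay of the corrector fluxes over scale $\lambda^{-1/2}$. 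A mild (logarithmic or polynomial) rate suffices, since the leading term already grows like $\log\lambda^{-1}$.

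\emph{Case $d=1$.} For $\deff=1$ I would reduce to the density fluctuation field, following Gon{\c{c}}alves--Jara \cite{GJ13}. By the Boltzmann--Gibbs principle, the degree-$\ge2$ part is negligible at scale $N^{3/4}$, and $\int_0^{Nt}(\eta_s(0)-\rho)\,\d s$ is replaced by a spatial average. The equilibrium fluctuations then give an Ornstein--Uhlenbeck limit, which yields the fractional Brownian motion covariance \eqref{eq.fBM}.
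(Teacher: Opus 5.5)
Your treatment of $d\ge3$ (Kipnis--Varadhan) and $d=1$ (Gon\c{c}alves--Jara) agrees with the paper, and so does your reduction in $d=2$ to $\bar\eta(0)$. The gap is in the critical case $d=2$, at the step you defer with ``once ergodic averaging of the quadratic variation is established.'' With $\lambda=1/N$, the integrand of $\bracket{M^N}_t=\frac{1}{N\log N}\int_0^{Nt}\sum_b c_b(\pi_b u_{1/N})^2(\eta_s)\,\d s$ changes with $N$ and is a non-local function spread over scale $N^{1/2}$, so no ergodic theorem applies. What is actually required is a static concentration bound $\var_\rho\bigl[\sum_b c_b(\pi_b u_\lambda)^2\bigr]=o\bigl((\log\lambda^{-1})^2\bigr)$. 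Stationarity and Cauchy--Schwarz then give $\var_\rho[\bracket{M^N}_t]\le t^2(\log N)^{-2}\var_\rho\bigl[\sum_b c_b(\pi_b u_{1/N})^2\bigr]\to0$. For the implicit resolvent of a non-gradient generator you give no way to obtain such a bound, and there is no evident one; the paper names exactly this as the reason the critical case was open. The jump condition of the martingale CLT has the same problem: you need $\sup_b\norm{\pi_b u_{1/N}}_{L^\infty}=o(\sqrt{N\log N})$, whereas the only obvious bound is $2\lambda^{-1}=2N$.

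The missing idea is to build the martingale from the explicit two-scale expansion $\tilde G_\lambda=\bar G_\lambda+\sum_z\sum_i(\D_{e_i}\bar g_\lambda)_{z+\cu_m}\phi^z_{m,e_i}$ rather than from $u_\lambda$, using $\bar\eta(0)=-\L\tilde G_\lambda+\lambda\bar G_\lambda+(\L\tilde G_\lambda-\Lb\bar G_\lambda)$.
\begin{itemize}
\item Since $\bar G_\lambda=\sum_x\bar g_\lambda(x)\bar\eta(x)$ and the correctors are local, the carr\'e du champ of $\tilde G_\lambda$ splits into block variables $X^z_m$. Each is measurable on $N_\r(z+\cu_m^+)$, and non-adjacent blocks are independent.
\item This gives variance at most $C3^{18m}\sum_{x,i}(\D_{e_i}\bar g_\lambda(x))^4\le C3^{18m}\log\lambda^{-1}$ (Proposition~\ref{prop.H1TwoScale}). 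It also gives the deterministic jump bound $\norm{\pi_b\tilde G_\lambda}_{L^\infty}\le Cm3^{4m}$ (Corollary~\ref{Cor.Jump}).
\item The defect $\L\tilde G_\lambda-\Lb\bar G_\lambda$ goes into the remainder. It is controlled by the flux replacement bound, Proposition~\ref{prop.L2FluxErr}(2), with $1\ll3^{18m}\ll\log N$.
\end{itemize}
You could instead keep $u_\lambda$ and transfer both controls from $\tilde G_\lambda$ through Proposition~\ref{prop.HomoResolvent}, but the block computation for $\tilde G_\lambda$ is still the heart of the proof.

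Conversely, the step you call the main obstacle, the mean, needs no homogenization. By Proposition~\ref{prop.relaxation}, $\bracket{u_\lambda,-\L u_\lambda}=\int_0^\infty e^{-\lambda t}\var_\rho[P_{t/2}\bar\eta(0)]\,\d t+O(1)=\frac{\chi(\rho)}{4\pi\sqrt{\det\DD(\rho)}}\log\lambda^{-1}+O(1)$.

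Smaller points:
\begin{itemize}
\item The boundary terms need $\Er[u_\lambda^2]=o(N\log N)$, not $\lambda\Er[u_\lambda^2]\ll\log N/N$. The latter is false, since $\Er[u_\lambda^2]\asymp N$.
\item The drift term is controlled by that same $L^2$ bound, not by the sharp asymptotic.
\item $f-\bar f'(\rho)\bar\eta(0)$ generally keeps a nonzero first-chaos part of zero mass, so it is not of Walsh degree $\ge2$. Your $H^{-1}$ conclusion still holds.
\item You never prove tightness of $\Gamma^N$ in $C(\R_+,\R)$. It is needed to pass from fixed-time $L^2$ bounds on the boundary terms to process-level convergence; the paper uses the flow lemma and Feynman--Kac (Section~\ref{subsec.tightness}).
\end{itemize}
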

	We remark that, the case $d \geq 3$ is essentially an application of the Kipnis--Varadhan theorem, and the case $d=1$ was treated by Gon{\c{c}}alves and Jara in \cite[Theorem~2.5]{GJ13} using the local Boltzmann--Gibbs principle. Thus, our main contribution is to fill the missing part $d = 2$. 
	
	Meanwhile, Theorem~\ref{thm.main_basic} is still not the complete story. A very natural question is to ask what happens if the leading order constant $\bar{f}'(\rho)$ vanishes in Theorem~\ref{thm.main_basic}. One may expect different normalization constant, as discussed in \cite[Theorem~1.2, (ii)]{QJS02}. The normalization is determined by the flatness of the local function $f$, which can be calibrated by the \emph{degree} (under the invariant measure $\Pr$) defined as 
	\begin{align}\label{eq.degree}
		\deg(f) := \min\Ll\{n \in \N: \bar{f}^{(n)}(\rho) \neq 0\Rr\}.
	\end{align}
	Here $\bar{f}^{(n)}(\rho)$ is the $n$-th derivative of  $\bar{f}(\rho)$ defined in \eqref{eq.def.frho} with respect to $\rho$, which is well-defined when $f$ is a local function. We make the convention  $\inf\emptyset=\infty$ for functions $f$ with vanishing derivative of any finite order. In the early work \cite{sethuraman1996central}, the threshold to apply the Kipnis--Varadhan theorem was proposed for the zero-range process and simple symmetric exclusion, and it can actually be summarized in the following way. Define the \emph{effective dimension} of a function $f$ to be 
    \begin{align}\label{eq.effdim}
        \deff(f):= \text{dimension} \times \deg(f). 
    \end{align}
    Then the threshold is
	\begin{align}\label{eq.threshold}
		 \deff(f) > 2.
	\end{align}  
	In other words, both the degree and the dimension contribute to the regularity. When the effective dimension is high enough, the Kipnis--Varadhan theorem applies. The cases $\deff(f) = 2$ are thus critical. A classification for the long-range exclusion can be found in \cite{bernardin2016occupation}. The following theorem, as the counterpart of  \cite[Theorem~1.2, (ii)]{QJS02} in the non-gradient reversible setting, completes the results. An illustration of the phases can be found in Figure~\ref{fig.phase}.
	\begin{figure}[h!]
		\centering
		\includegraphics[width=0.7\textwidth]{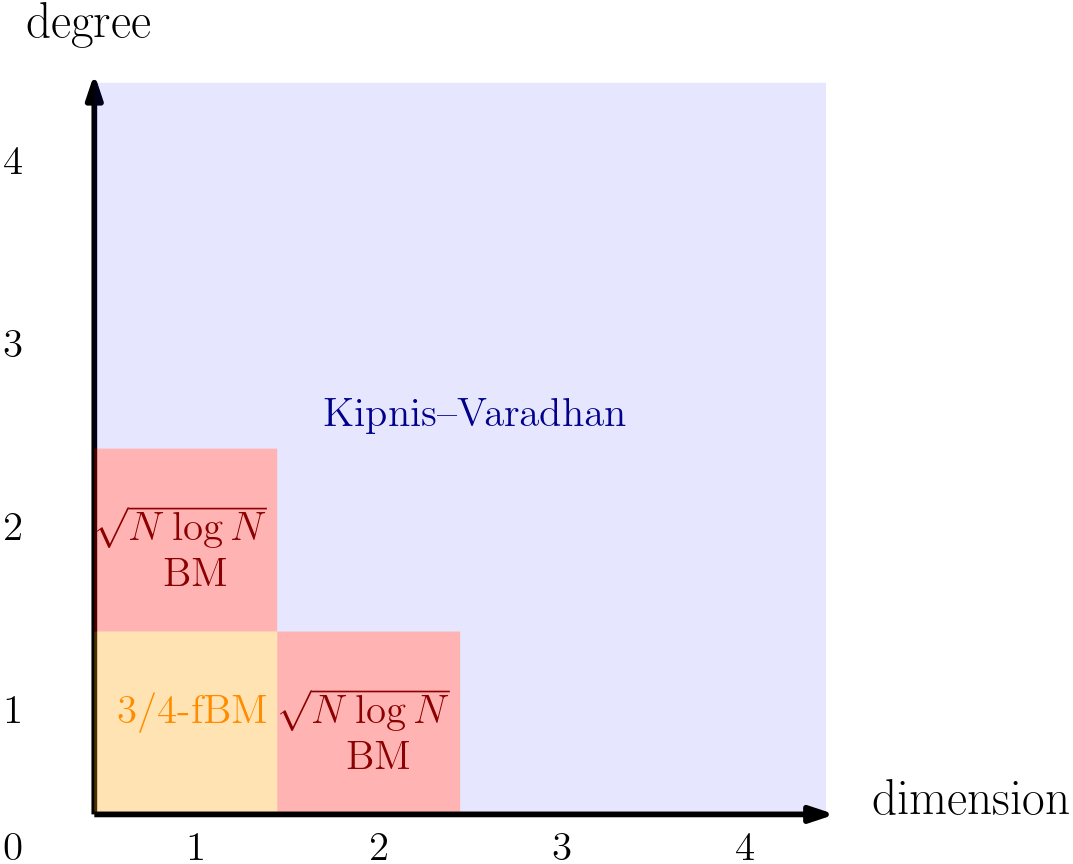}
		\caption{The diagram of the additive functional.}\label{fig.phase}
	\end{figure}
	
	\begin{theorem}\label{thm.main_next}
		Under Hypothesis~\ref{hyp} and under the stationary measure $\Pr$, for every centered local function $f$, the additive functional has a scaling limit in $C(\R_+, \R)$ for the following cases. 
		\begin{itemize}
			\item $d = 1$, $\deg(f) = 2$, then 
			\begin{align}\label{eq.def12}
				\Ll(\frac{1}{\sqrt{N \log N}} \int_0^{Nt} f(\eta_s) \, \d s\Rr)_{t \geq 0} \Longrightarrow (B_t)_{t \geq 0}, 
			\end{align}
			has the scaling limit as a Brownian motion of covariance
			\begin{align}\label{eq.BM12}
				\cov[B_t, B_s] = \frac{\chi(\rho)^2}{4\pi\DD(\rho)}\vert \bar{f}''(\rho)  \vert^2 (t \wedge s).
			\end{align}
			\item If $\deff(f) > 2$, i.e. $\deg(f)  > 2/d$, then 
			\begin{align}\label{eq.def13}
				\Ll(\frac{1}{\sqrt{N}} \int_0^{Nt} f(\eta_s) \, \d s\Rr)_{t \geq 0} \Longrightarrow (B_t)_{t \geq 0}, 
			\end{align}
			has the scaling limit as a Brownian motion of covariance
			\begin{align}\label{eq.BM13}
				\cov[B_t, B_s] = 2 \norm{f}^2_{-1} (t \wedge s).
			\end{align}
		\end{itemize}
	\end{theorem}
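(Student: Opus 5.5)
We split according to the two cases of the statement. The second case, $\deff(f)>2$, we plan to reduce to the Kipnis--Varadhan theorem \cite{KV86}. It suffices to show $\norm{f}_{-1}<\infty$; then the martingale decomposition from the resolvent equation gives the Brownian limit with covariance \eqref{eq.BM13}. To bound $\norm{f}_{-1}$ we use the variational formula $\norm{f}_{-1}^2=\sup_g\{2\Er[fg]-\Er[g(-\L)g]\}$. Since $c_-\leq c_b\leq c_+$, the Dirichlet form of $\L$ is comparable to that of SSEP, so $\norm{f}_{-1}$ is comparable to its SSEP counterpart. Finiteness for SSEP is then a computation in the orthogonal basis $\{\prod_{x\in A}(\eta(x)-\rho)\}_{A}$ of duality polynomials. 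Expanding $f=\sum_A \hat f(A)\,\omega_A$, the condition $\deg(f)=n$ says that the sums $\sum_{|A|=k}\hat f(A)$ vanish for $k<n$. The SSEP generator acts on degree-$k$ polynomials as $k$ exclusion walkers in $\Zd$, so the $k$-th component behaves like a function on $\mathbb{Z}^{dk}$ with zero mass in every degree $k<n$. The Green function in $\mathbb{Z}^{dk}$ is summable against such mean-zero local functions when $d\,k>2$, or when the mass vanishes (which gains two extra dimensions). This yields $\norm{f}_{-1}<\infty$ exactly when $dn>2$.

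The first case, $d=1$ and $\deg(f)=2$, follows the strategy for Theorem~\ref{thm.main_basic} in $d=2$, which we take as the template.

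\textbf{Step 1 (local replacement).} Write $f=\tfrac12\bar f''(\rho)\,\omega_0\omega_1+\tilde f$, where $\omega_x=\eta(x)-\rho$. The remainder $\tilde f$ has $\deg\geq 3$, so by the previous paragraph $\norm{\tilde f}_{-1}<\infty$ and its contribution is $O(\sqrt N)=o(\sqrt{N\log N})$. The degree-one and degree-zero parts vanish by the degree assumption. Any mismatch between degree-two functions with equal $\bar f''$ is a sum of differences of translates, hence also has finite $H^{-1}$ norm. The problem therefore reduces to the quadratic local field $\omega_0\omega_1$.

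\textbf{Step 2 (resolvent approximation).} With $\lambda=N^{-1}$, consider $u_\lambda=(\lambda-\L)^{-1}f$ and the decomposition
\begin{align*}
\int_0^{Nt}f(\eta_s)\,\d s=u_\lambda(\eta_0)-u_\lambda(\eta_{Nt})+\lambda\int_0^{Nt}u_\lambda(\eta_s)\,\d s+M^\lambda_{Nt}.
\end{align*}
Dividing by $\sqrt{N\log N}$, we show that the boundary and $\lambda$-terms vanish. Their sizes are controlled by $\lambda\Er[u_\lambda^2]$ and $\Er[u_\lambda f]$, which should be of order $\log N$ at most, relative to the normalization. The martingale $M^\lambda$ is then treated by the martingale CLT with quadratic variation $2\Er[u_\lambda(-\L)u_\lambda]\,Nt/(N\log N)$. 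Ergodic averaging of the quadratic variation requires a variance estimate, which we plan to borrow from the $d=2$ argument.

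\textbf{Step 3 (main obstacle: the constant).} We must prove
\begin{align*}
\Er[f\,(\lambda-\L)^{-1}f]\sim\frac{\chi^2\,|\bar f''|^2}{8\pi\DD}\log\lambda^{-1}.
\end{align*}
For SSEP this is an explicit two-walker Green function on $\mathbb{Z}^2$. For the non-gradient dynamics the degree-two chaos is not preserved by $\L$, and this is where the quantitative homogenization of the resolvent developed in the paper enters. We approximate $u_\lambda$ by a two-scale expansion: the homogenized two-point resolvent $(\lambda-\DD\Delta_{\R^2})^{-1}$ acting on $\omega_x\omega_y$, corrected by local correctors (the same ones defining $\DD$ in \eqref{eq.Einstein}). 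We then show the error is $o(\log\lambda^{-1})$ in the $H^1$ sense. The critical logarithm comes from $\int_{|\xi|>\sqrt\lambda}(\DD|\xi|^2)^{-1}\,\d\xi$ in $\R^2$, giving $\frac{1}{2\pi\DD}\log\lambda^{-1/2}$ with prefactor $\chi^2(\bar f''/2)^2\cdot 2$.

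Among these steps, controlling the corrector error at the critical logarithmic scale is where we expect the difficulty to lie. To handle it, we would first try the multiscale (renormalization) bounds for the corrector flux established for $d=2$, transplanted to the pair-process in effective dimension $2$.
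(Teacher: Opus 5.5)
Your treatment of $\deff(f)>2$ is essentially the paper's Proposition~\ref{prop.KVnonGradient}: SSEP comparison, chaos decomposition, fast decay for the high chaoses, and Lemma~\ref{lem.H-1Es} for the massless low ones. Your reduction of the case $d=1$, $\deg(f)=2$ to $\bar\eta(0)\bar\eta(1)$ also matches the paper, and so does your constant.

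\textbf{Main gap: concentration of the quadratic variation (Step~2).} The martingale CLT needs $\var_\rho[\sum_b c_b(\pi_b u)^2]=o((\log N)^2)$, where $u$ defines the martingale. You propose $u=u_\lambda$ and plan to borrow the bound from $d=2$; neither works.
\begin{itemize}
\item For the true non-gradient resolvent, nothing about $\pi_b u_\lambda$ is explicit, and no useful bound on its jumps is at hand either. This is why the paper builds the martingale from $\tilde G_\lambda$ in \eqref{eq.TwoscaleExDeg2}. It links that martingale to $u_\lambda$ only through Proposition~\ref{prop.HomoResolvent}.
\item The $d=2$ variance bound in Proposition~\ref{prop.H1TwoScale} uses that $\pi_b\tilde G_\lambda$ is local, so the carr\'e du champ splits into blocks that are independent beyond neighbours. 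In the second chaos this is false: $\pi_{x,x+1}\bar G_\lambda=2\sum_y\D_1\bar g_\lambda(x,y)(\bar\eta_x-\bar\eta_{x+1})\bar\eta_y$ couples every block to all distant sites. The leading covariance is then
\begin{equation*}
\sum_{x,x',y,y'}w(x)w(x')\,\D_1\bar g_\lambda(x,y)\D_1\bar g_\lambda(x,y')\D_1\bar g_\lambda(x',y)\D_1\bar g_\lambda(x',y')
\end{equation*}
with bounded weights $w$.
\item Cauchy--Schwarz only bounds this sum by $C\big(\sum_{x,y}(\D_1\bar g_\lambda)^2\big)^2\sim(\log\lambda^{-1})^2$. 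That is the same order as the squared mean, so no concentration follows.
\end{itemize}
The missing ingredient is Proposition~\ref{prop.GreenQuartic}. There the sum is written as $\norm{T[\D_1\bar g_\lambda]}_{S_4}^4$, and Macaev's triangular-truncation theorem passes to the full-plane kernel. A Fourier computation then gives $O(\log\lambda^{-1})$, while the remaining block configurations cost $O((\log\lambda^{-1})^{3/2})$.

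\textbf{Step~3 is also underspecified.} The paper's expansion multiplies the one-dimensional single-particle correctors by a \emph{random} slope $[\D\bar G_\lambda]^z_m\in\HH_1$. This slope is built only from sites outside $N_\r(z+\cu_m^+)$, so that it is independent of $\phi^z_m$. Its randomness creates the extra flux error $\mathbf{F.3}$, which is controlled by the $H^2$ bound on $\bar G_\lambda$. No renormalization of a pair process is involved, and none is available, since $\L$ does not preserve $\HH_2$.

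\textbf{Boundary and $\lambda$-terms.} To make these vanish you need $\lambda\norm{u_\lambda}_{L^2}^2=o(\log N)$.
\begin{itemize}
\item The bound $\lambda\norm{u_\lambda}_{L^2}^2\le\Er[u_\lambda f]\sim\log N$ only gives $O(1)$ after normalization.
\item Proposition~\ref{prop.relaxation} in $d=1$ gives only $\var_\rho[P_t(\bar\eta(0)\bar\eta(1))]=o(t^{-(1+\delta)/2})$ for a small $\delta>0$, which is too weak.
\end{itemize}
The paper gets the needed bound from Proposition~\ref{prop.HomoResolvent} together with $\lambda\norm{\bar G_\lambda}_{L^2}^2=O(1)$.

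\textbf{Tightness.} You never address tightness in $C(\R_+,\R)$. Fixed-time bounds do not control $\sup_{t\le T}\vert u_\lambda(\eta_{Nt})\vert$. The flow-lemma argument of Proposition~\ref{prop.tightGamma} fails in $d=1$, because the flow cost is $\ell$ rather than $\log\ell$. The paper instead uses the forward--backward martingale decomposition of Proposition~\ref{prop.tightGammaDeg2}, which removes the boundary term altogether.
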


	\subsection{Related literature} It has a long history to investigate the additive functionals of  interacting particle systems. Besides the results discussed in the beginning, there are many other developments in this direction. Sethuraman studied the mean zero case for exclusion processes in \cite{sethuraman2000central}.  In the asymmetric case, the behavior of the additive functionals of the exclusion process depends on whether
	the density $\rho=1/2$ or not. For $\rho \neq 1/2$, one expects the occupation time is diffusive. This was proved  by Seppäläinen and Sethuraman \cite{seppalainen2003transience}   in dimension one, and was extended to dimension two by Bernardin \cite{bernardin2004fluctuations}. When $\rho = 1/2$, the variance of the occupation time grows super-linearly. In dimension one, Bernardin \cite{bernardin2004fluctuations} showed that the variance of the occupation time grows at least $N^{5/4}$ in some sense.  Li and Mao \cite{li2008upper} proved an upper bound of order $N^{3/2}$. Sethuraman \cite{sethuraman2006superdiffusivity} obtained a lower bound of order $N \log \log N$ in dimension two.  Gonçalves and
	Jara \cite{GJ13} proved invariance principles of additive functionals for a large class of particle systems in dimension one by proving the so-called  local Boltzmann--Gibbs principle, which allows to relate the  additive functionals to empirical measures of the process. Bernardin, Gonçalves and Sethuraman  
	\cite{bernardin2016occupation} studied additive functionals of the
	exclusion process with long jumps. We refer the reader to  \cite{komorowski2012fluctuations} for an excellent review on this topic. 
    
    We underline that the above literature concerns the stationary case. Only a few results concern non-equilibrium fluctuations of the additive functionals; see \cite{erhard2024nonequilibrium,xu2025nonequilibrium,fontes2021additive} for recent results.
	
	One important input of this paper is the homogenization theory; see the classical reference \cite{bensoussan1979boundary, jikov1994}. The link between the homogenization and the particle systems was revealed in the work \cite{varadhanII} by Varadhan, which applies to the non-gradient models. We also refer to \cite{bannai2024topological, bannai2021varadhan, BS25} for the further generalization of Varadhan's argument. The quantitative homogenization theory has been developed in the last decade; see the monograph \cite{AKMbook, gloria2011optimal, armstrong2022elliptic} and the proceeding \cite{armstrong2025coarse}. The quantitative homogenization is also extended to particle systems in \cite{bulk, funaki2024quantitative, gu2024quantitative, gu2025relaxation}. 
    
    Very recently, the homogenization of \emph{superdiffusion} has attracted considerable attention. Its roots, however, are distinct: it arises from long‑range jumps in \cite{bou2026quantitative, chen2026quantitative} and from acceleration in random divergence‑free fields in \cite{chatzigeorgiou2025gaussian, armstrong2026superdiffusion}. The latter is also related to \cite{yau, LQSY04} in particle systems. The $\sqrt{N \log N}$ normalization in the present paper is due to correlations of the particle density.

	\subsection{Outline of the proof}
	In this part, we give a panorama of the proof. Sections~\ref{subsec.SemiOcc} and ~\ref{subsec.MartingaleCLT} recall the main ingredients of the martingale CLT in the work \cite{KV86}. Section~\ref{subsec.HomOfReso} gives the idea how the homogenization technique works at the critical phase $\deff = 2$, and this subsection takes the case $(\text{dim},\text{deg})=(2,1)$ for example. Then Section~\ref{subsec.HigherOrder} mentions the necessary modification in the case $(\text{dim},\text{deg})=(1,2)$.
	
	\subsubsection{Semigroup and occupation time}\label{subsec.SemiOcc}
	Let us mention at first the links between the additive functionals, the occupation time, and the semigroup. The normalization constant $a(N,d)$ is closely related to the asymptotic behavior of the semigroup $P_t = e^{t\L}$. To see this, let us calculate the second moment of $\Gamma^N_t(f)$
	\begin{equation}\label{eq.double_integral}
		\begin{split}
			\Er\Ll[\Gamma^N_t(f)^2\Rr] &= \frac{2}{a(N,d)^2} \int_0^{Nt} \int_0^{s_1} \Er\Ll[f P_{s_1 - s_2} f\Rr] \, \d {s_2} \, \d {s_1}, \\
			&= \frac{2}{a(N,d)^2} \int_0^{Nt} \int_0^{s_1} \Er\Ll[\Ll(P_{(s_1-s_2)/2} f\Rr)^2\Rr]  \, \d {s_2} \, \d {s_1}.
		\end{split}
	\end{equation}
	Then $a(N,d)$ is the correct order if one assumes the following decay of the semigroup
	\begin{align}\label{eq.heat_kernel_decay}
		\var_\rho[P_t f] = C\Ll(\bar{f}'(\rho)\Rr)^2 t^{-\frac{d}{2}} + o(t^{-\frac{d}{2}}).
	\end{align}
	Moreover, this estimate allows us to reduce the study of additive functionals to the occupation time when $d \leq 2$, i.e.
	\begin{align*}
		f(\eta) =  \bar{f}'(\rho) \bar{\eta}(0) + (f(\eta) - \bar{f}'(\rho) \bar{\eta}(0)).
	\end{align*}
	Here $\bar{\eta}(0)$ is the centered random variable defined in \eqref{eq.defeta_center}.
	Since the second term has mass zero, \eqref{eq.heat_kernel_decay} will yield 
	\begin{align*}
		d \leq 2, \qquad \Er[\Gamma^N_t(f(\eta) - \bar{f}'(\rho) \bar{\eta}(0))^2] \xrightarrow{N \to \infty} 0.
	\end{align*}
	Therefore, for $d \leq 2$, it suffices to study 
	\begin{align}\label{eq.Occupation}
		\Gamma^N_t := \frac{1}{a(N,d)} \int_0^{Nt}  \bar{\eta}_s(0) \, \d s
	\end{align}
	The estimate \eqref{eq.heat_kernel_decay} is explicit for independent random walks \cite{CG84}, and requires justification in other models. It is also an important input in the previous work including \cite{QJS02, sheriff2011central}. Concerning the speed-change exclusion process, it was obtained very recently in \cite{gu2025relaxation} by the first two authors of this paper.
	
	\subsubsection{Martingale CLT}\label{subsec.MartingaleCLT}
	The classical approach for deriving the scaling limit of the occupation time \eqref{eq.Occupation} relies on the martingale argument. Consider the resolvent 
	\begin{align}\label{eq.resolvent}
		(\lambda - \L) G_\lambda = f.
	\end{align}
	It defines a martingale
	\begin{align}\label{eq.defMt}
		M_{\lambda, t} := G_\lambda(\eta_{t}) - G_\lambda(\eta_0) - \int_0^{t} \L  G_\lambda(\eta_s) \, \d s. 
	\end{align}
	Then the occupation time in \eqref{eq.Occupation} can be reformulated as 
	\begin{multline}\label{eq.Gamma_decom}
		\Gamma^N_t = \frac{1}{a(N,d)} M_{\lambda, Nt} \\
		- \frac{1}{a(N,d)} \Ll(G_\lambda(\eta_{Nt}) - G_\lambda(\eta_0)\Rr) + \frac{1}{a(N,d)} \int_0^{Nt}  \lambda G_\lambda(\eta_s) \, \d s.
	\end{multline}
	The second line is considered as the remainder. 

	We need a martingale convergence to conclude the proof. For the reasonable choice of $\lambda$, we denote by 
	\begin{align}\label{eq.defMt_new}
		M^{N}_t := \frac{1}{a(N,d)} M_{\lambda, Nt}.
	\end{align}
	The scaling limit is naturally related to the quadratic variation of $M^{N}_t$
	\begin{align}\label{eq.quadra}
		\bracket{M^{N}}_t = \frac{1}{a(N,d)^2} \int_0^{Nt} \sum_{b \in (\Zd)^*} \Ll(c_b (\pi_b G_\lambda)^2\Rr)(\eta_s) \, \d s.
	\end{align}
	Its limit requires different techniques. 
	\begin{itemize}
		\item When $d \geq 3$, then $(-\L)^{-1}$ is well-defined for all the centered local functions $f$ thanks to \eqref{eq.heat_kernel_decay}.  We can choose $\lambda = 0$, then the ergodic theorem applies to \eqref{eq.quadra}.
		\item When $d = 1$, the result was proved by relating the occupation time to the fluctuation field by proving the local Boltzmann--Gibbs principle; see \cite{GJ13}.
		\item When $d = 2$, the non-trivial choice is to let $\lambda = \frac{1}{N}$. Since $G_{1/N}$ also depends on $N$, the ergodic theorem does not apply directly to \eqref{eq.quadra}. Meanwhile, if the particle system is SEP, $G_{1/N}$ has an explicit expression that allows us to derive the convergence of quadratic variation.
	\end{itemize}
	In the speed-change model and $d = 2$, we also need to choose $\lambda = \frac{1}{N}$, but then $G_{1/N}$ does not have any explicit expression. This is the main challenge and explains why related result of the critical case was missing in the literature. One major contribution of this paper is to propose a substitution $\tilde{G}_\lambda$ for $G_\lambda$, which helps close the martingale approach.

	\subsubsection{Homogenization of resolvent}\label{subsec.HomOfReso} Thanks to the reduction by occupation time in \eqref{eq.Occupation},  we take $f = \bar{\eta}(0)$ in \eqref{eq.resolvent} for $G_\lambda$ when $d=2$, and the strategy to study it is inspired from the homogenization theory.  We propose its \emph{two-scale expansion ansatz} 
	\begin{align}\label{eq.TwoScaleExpan}
		\tilde{G}_\lambda:=\bar{G}_\lambda+\sum_{z\in\Z_m}\sum_{i=1}^{d}(\D_{e_i}\bar{g}_\lambda)_{z+\cu_m}\phi_{m,e_i}^z.
	\end{align}
	Here $\bar{G}_\lambda$ is the solution for the resolvent equation of a SEP with the same diffusion matrix:
	\begin{equation}\label{eq.ResolventSEP}
		(\lambda-\Lb)\bar{G}_\lambda=\bar{\eta}(0).
	\end{equation}
	The SEP with generator $\Lb$ is considered as a \emph{homogenized process} for the speed-change model, and the details for the construction of this SEP is discussed in  Subsection~\ref{subsec.Generator}.
	By the self-duality of the SEP, $\bar{G}_\lambda$ has an explicit expression as:
	\begin{equation*}
		\bar{G}_\lambda(\eta)=\sum_{x\in \mathbb{Z}^d}\bar{g}_\lambda(x)\bar{\eta}(x).
	\end{equation*}
	The second term in the two-scale expansion \eqref{eq.TwoScaleExpan}, $m \in \N_+$ is chosen to be a mesoscopic scale compared to $\lambda^{-1}$, and $(\D_{e_i}\bar{g}_\lambda)_{z+\cu_m}$ is the local average of the function in a cube $z+\cu_m$. The function $\phi_{m,e_i}^z$ is \emph{the local corrector}. Their rigorous definitions can be found in the beginning of Section~\ref{subsec.HomH1}.
	
	The technique of two-scale expansion comes from the homogenization theory, and is related to \emph{the replacement lemma} in the literature of particle systems. The main observation is 
	\begin{align*}
		\tilde{G}_\lambda \simeq  G_\lambda,
	\end{align*}
	while the former has a more explicit characterization. Relying on recent progress in quantitative homogenization of non-gradient particle systems \cite{bulk, gu2024quantitative, funaki2024quantitative, gu2025relaxation}, we can justify the heuristic above; see the discussion in Appendix~\ref{appendix}.
	
	Once understanding the ansatz above, a more direct idea is to consider a martingale associated with $\tilde{G}_\lambda$
	\begin{align}\label{eq.defTildeMt}
		\tilde{M}_{\lambda, t} := \tilde{G}_\lambda(\eta_{t}) - \tilde{G}_\lambda(\eta_0) - \int_0^{t} \L  \tilde{G}_\lambda(\eta_s) \, \d s, 
	\end{align}
	without touching $G_\lambda$ and $M^{N}_t$. This is also the approach taken in this paper. 
	By \eqref{eq.ResolventSEP} we can then decompose $\bar{\eta}(0)$ as:
	\begin{equation*}
		\bar{\eta}(0)=(\lambda-\Lb)\bar{G}_\lambda=-\L\tilde{G}_\lambda+\lambda\bar{G}_\lambda+(\L\tilde{G}_\lambda-\Lb\bar{G}_\lambda).
	\end{equation*}
	The occupation time in \eqref{eq.Occupation} can be reformulated as 
	\begin{equation}\label{eq.Gamma_decomTilde}
		\Gamma^N_t = \tilde{M}^{N}_t + R^N_t,
	\end{equation}
	with the martingale part defined as
	\begin{align}\label{eq.defMt_newTilde}
		\tilde{M}^{N}_t := \frac{1}{a(N,d)} \tilde{M}_{\lambda, Nt},
	\end{align}
	and a remainder  
	\begin{multline}\label{eq.remainder}
		R^N_t := \frac{1}{a(N,d)} \Ll( \tilde{G}_\lambda(\eta_0) - \tilde{G}_\lambda(\eta_{Nt}) + \int_0^{Nt}  \lambda \bar{G}_\lambda(\eta_s) + (\L\tilde{G}_\lambda-\Lb\bar{G}_\lambda)(\eta_s) \, \d s \Rr).
	\end{multline}


	Notice that the martingale $\tilde{M}^{N}_t$ also depends on $\lambda$ and $m$. With a careful choice of these parameters, one can derive the martingale CLT when $d=2$.
	
	\begin{proposition}\label{prop.CLT_M}
		For $d = 2$, let $\lambda = \frac{1}{N}$ and $m$ satisfy 
		\begin{equation}\label{eq.scaleM}
			1 \ll 3^{18m} \ll \log N,
		\end{equation}
		(e.g. $m=\lfloor\tfrac{1}{36} \log \log N\rfloor$), then the martingale $\tilde{M}^{N}_t$ defined in \eqref{eq.defMt_newTilde} converges weakly to a Brownian motion of variance $\sigma^2=\tfrac{\chi(\rho)}{2\pi\sqrt{\det[\DD(\rho)]}}$.
	\end{proposition}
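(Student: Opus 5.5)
We will prove Proposition~\ref{prop.CLT_M} with the standard martingale functional CLT for continuous-time martingales with bounded jumps. The key requirement is that the quadratic variation
\begin{align*}
\bracket{\tilde M^N}_t = \frac{1}{N\log N}\int_0^{Nt}\sum_{b\in(\Zd)^*}\Ll(c_b(\pi_b\tilde G_\lambda)^2\Rr)(\eta_s)\,\d s
\end{align*}
converge in $L^1(\Pr)$ to $\sigma^2 t$ for each fixed $t$. The jumps must also be negligible: the jumps of $\tilde M^N$ are $a(N,2)^{-1}\pi_b\tilde G_\lambda(\eta_{s^-})$, which are deterministically bounded by $C(\log N)^{-1/2}\sup_x|\D\bar g_\lambda(x)|(1+\|\phi\|_\infty)$. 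Since $|\nabla \bar g_\lambda|\lesssim 1$ in $d=2$ and the local correctors on $\cu_m$ are bounded by a power of $3^m$, this bound is $o(1)$ under \eqref{eq.scaleM}. Tightness in $C(\R_+,\R)$ then follows once the quadratic variation converges.

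The convergence of $\bracket{\tilde M^N}_t$ is reduced in three steps.

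\textbf{Mean.} By stationarity, $\Er\bracket{\tilde M^N}_t = \frac{t}{\log N}\sum_b\Er[c_b(\pi_b\tilde G_\lambda)^2]$. Inserting the two-scale expansion \eqref{eq.TwoScaleExpan}, the sum over bonds in a cube $z+\cu_m$ is approximately
\begin{align*}
\sum_{i,j}(\D_{e_i}\bar g_\lambda)_{z+\cu_m}(\D_{e_j}\bar g_\lambda)_{z+\cu_m}\,\Er\Big[\sum_{b\in(z+\cu_m)^*}c_b\,\pi_b(\ell_{e_i}+\phi^z_{m,e_i})\,\pi_b(\ell_{e_j}+\phi^z_{m,e_j})\Big],
\end{align*}
where $\ell_e=\sum_x (e\cdot x)\bar\eta(x)$. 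The bracketed expectation is $|\cu_m|\,(2\chi\DD_m)_{ij}$, with $\DD_m$ the finite-volume approximation of $\DD$. By quantitative homogenization, $|\DD_m-\DD|\lesssim 3^{-\alpha m}$. Replacing local averages by pointwise values costs a factor $3^m\|\nabla^2\bar g_\lambda\|$. We thus get
\begin{align*}
\Er[\ldots]\approx 2\chi\sum_x \nabla\bar g_\lambda\cdot\DD\nabla\bar g_\lambda .
\end{align*}
Here $\bar g_\lambda$ solves $(\lambda-\nabla\cdot\DD\nabla)\bar g_\lambda=\delta_0$, up to the normalization of $\Lb$. The lattice Green function asymptotics give $\sum_x\nabla\bar g\cdot\DD\nabla\bar g=\bar g_\lambda(0)-\lambda\sum\bar g_\lambda^2\sim\frac{\log(1/\lambda)}{4\pi\sqrt{\det\DD}}$. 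With $\lambda=1/N$ this produces $\sigma^2=\frac{\chi}{2\pi\sqrt{\det\DD}}$, up to constant bookkeeping. The boundary contributions from bonds straddling two cubes, and from gradients of the coefficients $(\D\bar g)_{z+\cu_m}$, are of relative order $3^{-m}$ or smaller. The dominant part of the logarithm comes from $1\ll|x|\ll N^{1/2}$, where $\nabla\bar g\sim|x|^{-1}$; this is what makes the mesoscopic cutoff harmless.

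\textbf{Variance.} We show $\var_\rho(\bracket{\tilde M^N}_t)\to0$. Write the integrand as $\sum_z\sum_{i,j}(\D_{e_i}\bar g)(\D_{e_j}\bar g)\,\Psi^z_{ij}$, where $\Psi^z_{ij}$ is a local function on $z+\cu_m$ (enlarged by the range $\r$), plus cross-cube terms. We bound the time integral by the $H^{-1}$ variational estimate
\begin{align*}
\Er\Big[\Big(\int_0^{Nt}V(\eta_s)\,\d s\Big)^2\Big]\lesssim Nt\,\norm{V}^2_{-1,1/N}.
\end{align*}
We apply it to the centered functions $\Psi^z_{ij}-\Er\Psi^z_{ij}$. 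These functions have mean zero but generally nonzero degree. Using the decay \eqref{eq.heat_kernel_decay} from \cite{gu2025relaxation}, or alternatively the spectral gap on cubes of size $3^m$, the degree-one part contributes $\lesssim 3^{cm}\cdot N\sum_z|\nabla\bar g(z)|^4\cdot\log N$. This is $o((\log N)^2)$ because $\sum|\nabla\bar g|^4<\infty$. The remaining part is controlled similarly with a better rate. This is where the factor $3^{18m}\ll\log N$ enters: all polynomial losses in $3^m$ (corrector sup norms, the spectral gap $3^{2m}$, the number of sites) must be beaten by a power of $\log N$.

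\textbf{Conclusion.} Combining the previous two steps gives $\bracket{\tilde M^N}_t\to\sigma^2t$ in $L^1$, and the martingale CLT finishes the proof. We expect the \textbf{main obstacle} to be the variance step. It requires a uniform control of the fluctuations of $\sum_b c_b(\pi_b\tilde G_\lambda)^2$ that are weighted by the slowly decaying $\nabla\bar g_\lambda$, because the naive ergodic theorem fails when the test function depends on $N$. Our first attempt will be to localize on the cubes $z+\cu_m$ and use the independence of Bernoulli marginals together with the $H^{-1}$ bound. If a direct bound fails, we will fall back on a second application of the quantitative homogenization estimates of Appendix~\ref{appendix} to replace $\Psi^z_{ij}$ by its expectation plus an $H^{-1}$-small remainder.
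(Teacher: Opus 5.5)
Your architecture matches the paper's. You use the martingale CLT with vanishing jumps; your jump bound is even too pessimistic, since the prefactor is $(N\log N)^{-1/2}$ and Corollary~\ref{Cor.Jump} gives jumps $\le Cm3^{4m}(N\log N)^{-1/2}$. You then prove $L^2$ convergence of $\bracket{\tilde M^N}_t$ through its mean and variance.

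For the mean you take a different but valid route. Working cube by cube, you replace $\D_{e_i}\bar g_\lambda(x)$ by its cube average. This costs $O(3^m\sqrt{\log N})$, since $\sum_x|\D^2\bar g_\lambda(x)|^2\lesssim\norm{\Lb\bar G_\lambda}^2_{L^2}=O(1)$. You then use that the finite-volume corrector energy converges to $2\chi\DD$ at rate $3^{-\alpha m}$, which is what Lemma~\ref{lem.Homogenization}\eqref{lem.FluxReplacement} yields when tested with $v=\ell_{e_j}+\phi^z_{m,e_j}$. The paper instead writes $\Er[\sum_b c_b(\pi_b\tilde G_\lambda)^2]=2\bracket{\tilde G_\lambda,-\L\tilde G_\lambda}$. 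It compares this with $2\bracket{\bar G_\lambda,-\Lb\bar G_\lambda}$ through the global flux and $L^2$ estimates of Proposition~\ref{prop.L2FluxErr}, which avoids the boundary and averaging bookkeeping. Both routes give the error $O(3^{-\alpha m}\log N+3^m\sqrt{\log N})$.

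The variance step is where the proposal has a real gap, and you have misplaced the difficulty.

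\emph{What goes wrong as written.} Your degree-one bound is obtained by adding up the $H^{-1}_{1/N}$ norms of the pieces $\Psi^z_{ij}-\Er\Psi^z_{ij}$ cube by cube. These pieces are independent, hence orthogonal in $L^2$, but they are not orthogonal in $H^{-1}$. Their degree-one parts are coupled by the dynamics: $\bracket{\bar\eta(y),(\lambda-\L)^{-1}\bar\eta(y')}$ behaves like the two-dimensional Green function, which decays only logarithmically. So the off-diagonal terms dominate the diagonal ones. The bookkeeping is also inconsistent: $3^{cm}N\sum|\nabla\bar g|^4\log N$ is not $o((\log N)^2)$, and what is actually needed is $\var_\rho[\int_0^{Nt}F(\eta_s)\,\d s]=o(N^2(\log N)^2)$. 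The higher-chaos part is only asserted. The conclusion would survive a correct version, but as written this is not a proof.

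\emph{The missing observation.} No dynamical input is needed. Set $F:=\sum_b c_b(\pi_b\tilde G_\lambda)^2$. Stationarity and Cauchy--Schwarz in time give $\var_\rho[\bracket{\tilde M^N}_t]\le t^2(\log N)^{-2}\var_\rho[F]$; this is the same as plugging the crude bound $\norm{V}^2_{-1,\lambda}\le\lambda^{-1}\norm{V}^2_{L^2}$ into your inequality. So it suffices that the \emph{static} variance of $F$ is $o((\log N)^2)$, while its mean is of order $\log N$. This is a spatial law of large numbers:
\begin{itemize}
\item $F=\sum_{z\in\Z_m}X^z_m$ with $X^z_m\in\F_0(N_\r(z+\cu_m^+))$, so $X^{z_1}_m$ and $X^{z_2}_m$ are independent for non-adjacent cubes, and $\var_\rho[F]\le C\sum_z\E_\rho[(X^z_m)^2]$.
\item Cauchy--Schwarz and the $L^4$ bound on the corrector give $\E_\rho[(X^z_m)^2]\le C3^{18m}\sum_{x\in z+\cu_m}\sum_i(\D_{e_i}\bar g_\lambda(x))^4$.
\item Since $\D\bar g_\lambda$ is bounded, $\sum_x(\D_{e_i}\bar g_\lambda(x))^4\le C\sum_x(\D_{e_i}\bar g_\lambda(x))^2\le C\log(\lambda^{-1})$.
\end{itemize}
Hence $\var_\rho[\bracket{\tilde M^N}_t]\le C3^{18m}t^2/\log N\to0$; see \eqref{eq.H1TwoScaleVar}. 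This static estimate is exactly where the condition $3^{18m}\ll\log N$ enters, not through spectral-gap or relaxation estimates.
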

	
	\subsubsection{Higher degree cases}\label{subsec.HigherOrder}
	Since the mass on the first chaos vanishes, for the special case $d=1,\ \deg(f)=2$, it suffices to study the occupation time on two different sites:
	\begin{align}\label{eq.OccupationDeg2}
		\Gamma^N_t := \frac{1}{\sqrt{N\log N}} \int_0^{Nt}  \bar{\eta}_s(0) \bar{\eta}_s(1)\, \d s.
	\end{align}
	
	Identifying the limit of \eqref{eq.OccupationDeg2} is non-trivial, even for the simple symmetric exclusion process, which is the simplest one among the exclusion models. Up to the authors' knowledge, although the limit of \eqref{eq.OccupationDeg2} for SSEP was discussed in \cite[Theorem 1.2 (ii)]{QJS02}, some details of proof were omitted there. We present a complete calculation in this paper, because it requires efforts to handle some technical difficulties hidden in the Green function of two particles with exclusion.  
	
	Similarly as before, instead of directly using the martingale defined by the resolvent
	\begin{align}\label{eq.ResolventDeg2}
		(\lambda-\mathcal{L}) G_\lambda = \bar{\eta} (0) \bar{\eta} (1),
	\end{align} 
	we utilize the two-scale expansion $\tilde{G}_\lambda$ to define the martingale \eqref{eq.defTildeMt}, where $\tilde{G}_\lambda$ is again connected to the homogenized resolvent $\bar{G}_\lambda$:
	\begin{equation}\label{eq.ResolventSSEP}
		(\lambda-\Lb)\bar{G}_\lambda=\bar{\eta}(0)\bar{\eta}(1).
	\end{equation}
	The detailed expression of $\tilde{G}_\lambda$ in terms of $\bar{G}_\lambda$ is given in \eqref{eq.TwoscaleExDeg2}. Because the averaged slope is no longer a constant, we need to re-establish some homogenization estimates in \cite[Section 4]{gu2025relaxation} (i.e. Proposition~\ref{prop.L2FluxDeg2}). Combining some precise estimates of the resolvent, we obtain the following result in parallel with Proposition~\ref{prop.CLT_M}:
	\begin{proposition}\label{prop.CLT_MDeg2}
		For $d = 1$, let $\lambda = \frac{1}{N}$ and $m$ satisfy 
		\begin{equation}\label{eq.scaleMDeg2}
			1 \ll 3^{76m} \ll \log N,
		\end{equation}
		(e.g. $m=\lfloor\tfrac{1}{100} \log \log N\rfloor$), then the martingale $\tilde{M}^{N}_t$ defined in \eqref{eq.defMt_newTilde} with respect to $\tilde{G}_\lambda$ defined in \eqref{eq.TwoscaleExDeg2} converges weakly to a Brownian motion of variance $\sigma^2=\tfrac{\chi(\rho)^2}{\pi\DD(\rho)}$.
	\end{proposition}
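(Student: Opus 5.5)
The plan is the martingale functional CLT (e.g. Helland's theorem, or \cite[Theorem~2.1]{komorowski2012fluctuations}) applied to $\tilde M^N_t$. Its hypotheses are: (i) the quadratic variation $\bracket{\tilde M^N}_t$ converges in $L^1(\Pr)$ to $\sigma^2 t$ for each $t$; (ii) the jumps are asymptotically negligible. For (ii), the jumps of $\tilde M^N$ are $a(N,1)^{-1}\pi_b\tilde G_\lambda$, so it suffices to bound $\sup_b\|\pi_b\tilde G_\lambda\|_\infty$. The term $\pi_b\bar G_\lambda$ is a gradient of the two-particle Green function $\bar g_\lambda(x,y)$ of the homogenized SSEP, which is $O(1)$, and even $o(\sqrt{\log N})$ suffices. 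The corrector part is bounded by $3^{Cm}$ times local averages of $\nabla \bar g_\lambda$, which is $o(\sqrt{\log N})$ under \eqref{eq.scaleMDeg2}. Tightness in $C(\R_+,\R)$ then follows from (i) and (ii) together with stationarity.

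The core is (i). Write
\[
\bracket{\tilde M^N}_t=\frac{1}{N\log N}\int_0^{Nt}\sum_b \bigl(c_b(\pi_b\tilde G_\lambda)^2\bigr)(\eta_s)\,\d s.
\]
I would proceed in three steps. First, compute its expectation, $t\,\Er[\sum_b c_b(\pi_b\tilde G_\lambda)^2]/\log N$. Using the homogenization estimate (Proposition~\ref{prop.L2FluxDeg2}, the degree-two analogue of \cite[Section~4]{gu2025relaxation}) together with the corrector structure in \eqref{eq.TwoscaleExDeg2}, this Dirichlet form equals $\sum_b\Er[\bar c_b(\pi_b\bar G_\lambda)^2]$ of the homogenized SSEP, up to an error $3^{Cm}\lambda^{\alpha}$ plus boundary-layer terms which are $o(\log N)$. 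Second, evaluate the SSEP Dirichlet form explicitly via duality, $\bar G_\lambda=\sum_{x<y}\bar g_\lambda(x,y)\bar\eta(x)\bar\eta(y)$. Here $\pi_{z,z+1}\bar G_\lambda$ is linear in $\bar\eta(\cdot)$, with coefficients equal to discrete gradients of $\bar g_\lambda$ across the diagonal, and the sum reduces to $\chi(\rho)^2\lambda^{-1}$ times the two-particle energy $\sum\DD|\nabla\bar g_\lambda|^2$. The two-particle walk with exclusion is a 2d walk in the half plane $\{x<y\}$ with a reflecting/sticky diagonal. Comparing it with the free 2d Green function at source $(0,1)$ gives energy $\sim \frac{1}{4\pi\DD}\log\lambda^{-1}$ (times normalizations), hence the value $\sigma^2=\chi^2/(\pi\DD)$. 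The diagonal defect contributes only $O(1)$, and I expect it to be bounded using the explicit SSEP structure.

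Third, show that the variance of the time average vanishes. Let $h_N=\sum_b c_b(\pi_b\tilde G_\lambda)^2$, centred. Bound
\[
\Er\Bigl[\Bigl(\int_0^{Nt}\bar h_N\,\d s\Bigr)^2\Bigr]\le CNt\,\|\bar h_N\|_{-1,\lambda}^2 .
\]
Decompose $\bar h_N$ into chaoses. The quartic and quadratic parts, built from products of $\bar\eta$ with gradient-of-Green coefficients summed over space, have $H^{-1}$ norm controlled by the decay of $\nabla\bar g_\lambda$ and by local mixing; their degree $\ge 2$ in $d=1$ makes the spatial sum effectively smoothing, giving $o(N\log^2N)$. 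The cross terms with correctors are handled by the variance decay \eqref{eq.heat_kernel_decay} of \cite{gu2025relaxation} applied to local functions weighted by $\nabla\bar g_\lambda$.

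I expect the main obstacle to be step one. There I must control $\Er\sum_b c_b(\pi_b(\tilde G_\lambda-\text{flux-corrected}))^2$ with a non-constant slope $\nabla\bar g_\lambda$ that is singular near the diagonal and near the source. I would first cut out a neighbourhood of radius $3^{m}$ around the diagonal, bounding it crudely by $3^{Cm}$, and apply the coarse-grained flux estimate only away from it. The exponent $76$ in \eqref{eq.scaleMDeg2} is precisely what should absorb the $3^{Cm}$ losses against $\log N$.
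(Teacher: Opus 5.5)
Your skeleton is the paper's: martingale CLT, the mean of the carr\'e du champ $h_N:=\sum_b c_b(\pi_b\tilde G_\lambda)^2$ via Proposition~\ref{prop.L2FluxDeg2} and the two-particle Green function, vanishing variance of $\bracket{\tilde M^N}_t$, and negligible jumps. But the step that actually carries the proof, your third one, is only asserted.

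\textbf{The missing variance estimate.} Since $\Er[h_N]\sim\frac{\chi(\rho)^2}{\pi\DD(\rho)}\log N$, you need the fluctuations of the time average of $h_N$ to be $o(\log N)$. The leading fluctuating part of $h_N$ is
$4\sum_x X_x\sum_{y\neq y'}\D_1\bar g_\lambda(x,y)\D_1\bar g_\lambda(x,y')\bar\eta_y\bar\eta_{y'}$, where $X_x=c_{x,x+1}\bigl(\pi_{x,x+1}(\ell_e+\phi^z_m)\bigr)^2$ for $x\in z+\cu_m$. So the correctors sit inside the leading term; they are not a lower-order cross term. Its variance contains $\sum_{y,y'}\bigl(\sum_x\Er[X_x]\D_1\bar g_\lambda(x,y)\D_1\bar g_\lambda(x,y')\bigr)^2$. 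Cauchy--Schwarz only bounds this by $(\sum_{x,y}(\D_1\bar g_\lambda)^2)^2\sim(\log N)^2$, which is the size of the squared mean and therefore useless.

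The paper beats this as follows. It identifies the four-block sum with $\norm{T[\D_1\bar g_\lambda]}_{S_4}^4$, removes the exclusion constraint with Macaev's triangular-truncation theorem, and computes the result in Fourier as $O(\log\lambda^{-1})$. Together with the three- and two-block bounds, this is Proposition~\ref{prop.GreenQuartic}. It yields \eqref{eq.H1TwoScaleVarDeg2}, namely $\var_\rho[h_N]\le C3^{38m}((\log N)^{3/2}+1)$. The crude stationary bound $\var[\bracket{\tilde M^N}_t]\le t^2\var_\rho[h_N]/(\log N)^2\le Ct^2\,3^{38m}(\log N)^{-1/2}$ then concludes.

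This, not your step one, is where $3^{76m}\ll\log N$ comes from. The mean error in \eqref{eq.H1TwoScaleExpectDeg2}, $C(\sqrt{\log N}+3^{14m})(3^{-\alpha m}\sqrt{\log N}+3^{7m})$, needs a much weaker condition. It also decays like $3^{-\alpha m}$, not like a power of $\lambda$.

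\textbf{Why your proposed tools do not close the gap.} Your $H^{-1}$ inequality is fine; the paper's bound is its crudest instance, $\norm{\bar h_N}_{-1,\lambda}^2\le\lambda^{-1}\var_\rho[h_N]$. The reasons you give for controlling the right-hand side, however, do not hold.
\begin{itemize}
\item Degree two in $d=1$ is not ``effectively smoothing''. It is exactly the critical case $\deff=2$ of the statement, so any gain must come from a quantitative estimate on the $N$-dependent kernel, and you do not provide one.
\item You omit the first-chaos part of $h_N$, which is the least regular chaos in $d=1$. It comes from $\bar\eta_y^2=\chi(\rho)+(1-2\rho)\bar\eta_y$, from $(\bar\eta_x-\bar\eta_{x+1})^2$, and from $c_b$.
\item For the corrector terms, Proposition~\ref{prop.relaxation} is unusable. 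Its remainder depends on the local function $f$ in an unquantified way, while your functions depend on $N$ and $m$, are non-local, and have size $3^{Cm}$.
\end{itemize}

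\textbf{Minor points.}
\begin{itemize}
\item The jumps are divided by $\sqrt{N\log N}$, so you only need $\sup_b\norm{\pi_b\tilde G_\lambda}_{L^\infty}=o(\sqrt{N\log N})$. Corollary~\ref{Cor.JumpDeg2}, which gives $Cm3^{3m}\lambda^{-1/2}$, suffices.
\item The SSEP energy is $2\DD(\rho)\chi(\rho)^2\sum(\D_1\bar g_\lambda)^2=\chi(\rho)^2\bar g_\lambda(\{0,1\})+O(1)$, with no factor $\lambda^{-1}$.
\item The diagonal is not an $O(1)$ perturbation of the free walk. In Lemma~\ref{lem.TransitionGreen} the reflected term has $\A\to1$ by \eqref{eq.ANear0}, so it doubles the free value $\frac{1}{4\pi\DD(\rho)}\log\lambda^{-1}$. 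This doubling is what produces $\sigma^2=\chi(\rho)^2/(\pi\DD(\rho))$.
\end{itemize}
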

	
	\subsection{Organization of paper}
	We will review some results from the previous paper in Section~\ref{sec.pre}. Section~\ref{sec.KV} is devoted to the situations where the classical Kipnis--Varadhan theorem applies. Afterward, the remaining two critical cases $(\text{dim},\text{deg})=(2,1)$ and $(\text{dim},\text{deg})=(1,2)$ will be treated in Section~\ref{sec.critical_d_2} and Section~\ref{sec.critical_d_1} separately. As mentioned, the basic ideas in these two sections are similar, but the proof of the case $(\text{dim},\text{deg})=(1,2)$ is heavier.
	
	\section{Preliminaries}\label{sec.pre}
	\subsection{Notation}
	For every $\Lambda \subset \Zd$, we denote by $\fil_{\Lambda}$ the $\sigma$-algebra generated by $(\eta(x))_{x \in \Lambda}$ and we write $\fil$ for $\fil_{\Zd}$.
	We denote by $\F_0$ the local function space on $\X$ and $\F_0(\La)$ if the function is $\fil_{\La}$-measurable. The parameter $\rho \in (0,1)$ is fixed throughout the paper. For $p \geq 1$,  $L^p$ is a shorthand notation for $L^p(\X, \fil, \Pr)$, and $\ell^p$ refers to the $\ell^p$-norm on the relevant countable set.

	\subsubsection{Diffusion matrix}
	
	Let us give the detailed definition of the physical quantities in the theorem. We refer to \cite[Part II, Proposition~2.2]{spohn2012large} and \cite[(1.5)]{fuy} for the background. The diffusion matrix $\DD : (0,1) \to \R^{d \times d}$ is defined by
	\begin{align}\label{eq.Einstein}
		\DD(\rho) := \frac{\ccc(\rho)}{2 \chi(\rho)},
	\end{align}
	where $\chi(\rho)$ is \emph{the compressibility}
	\begin{align}\label{eq.defCompress}
		\chi(\rho) := \rho (1-\rho),
	\end{align}  
	and  $\ccc(\rho)$ is \emph{the effective conductivity} defined as follows. We construct a quadratic form with respect to the function $F \in \F_0^d$ 
	\begin{align}\label{eq.defQuadra}
		\xi \cdot \ccc(\rho; F) \xi = \frac{1}{2} \sum_{\vert x\vert = 1} \Er \Ll[c_{0,x}\Ll(\xi \cdot \Ll\{ x(\eta(x) - \eta(0)) - \pi_{0,x}(\sum_{y \in \Zd} \tau_y F)\Rr\}\Rr)^2\Rr],
	\end{align}
	where $\F_0^d := (\F_0)^d$. Then $\ccc(\rho)$ is the minimization of $\ccc(\rho; F)$
	\begin{align}\label{eq.defC}
		\xi \cdot \ccc(\rho) \xi := \inf_{F \in \F_0^d}\xi \cdot \ccc(\rho; F) \xi.
	\end{align}
	
	\subsubsection{Generator and semigroup}\label{subsec.Generator}
	Several generators will be used in this paper. The generator of the speed-change exclusion is defined in \eqref{eq.Generator}. We denote by $\Lbssep$ the generator of the standard SSEP
	\begin{align}\label{eq.Lbssep}
		\Lbssep := \sum_{b \in (\Zd)^*}  \pi_b,
	\end{align}
	We also denote by $\Lb$ the generator of a constant jump rate $Q$ with finite range
	\begin{align}\label{eq.Lb}
		\Lb := \frac{1}{2}\sum_{x \in \Zd} \sum_{y \in \Zd} Q_{y-x} \pi_{x,y},
	\end{align}
	such that its associated SEP has the same diffusion matrix $\DD(\rho)$ as $\L$. The existence of such $\Lb$ is an elementary exercise in linear algebra; see \cite[Corollary~2.3]{gu2025relaxation} for the justification. The notations for the semigroups are defined as follows 
	\begin{align}\label{eq.semigroup}
		P_t := e^{t\L}, \qquad \Pb_t := e^{t\bar{\L}}, \qquad \Pbssep_t := e^{t \bar{\L}^{\,\mathrm{ssep}}}.
	\end{align}
	
	\subsubsection{Fock space}
	The degree of function is related to the chaos expansion in the Fock space. The following paragraph collects some facts, and one can find their proof in \cite{Privault2008}; see also \cite[Section~3.1]{gu2025relaxation}.
	
	We define the centered field for every $x \in \Zd$	
	\begin{align}\label{eq.defeta_center}
		\bar{\eta}(x):=\eta(x)-\rho,
	\end{align}
	and for every finite subset $Y \subset \Zd$
	\begin{equation*}
		\bar{\eta}(Y):=\prod_{x\in Y}\bar{\eta}(x).
	\end{equation*}
	The collection of finite subsets of cardinal $n$ is denoted by 
	\begin{align}\label{eq.defKn}
		\K_n := \{Y \subset \Zd: \vert Y\vert = n\}.
	\end{align}
	The multi-dimensional stochastic integral for Bernoulli random variables is defined as
	\begin{equation}\label{eq.DefIn}
		\forall n \geq 1, \qquad I_n(f_n):=\sum_{Y\in\K_n}f_n(Y)\bar{\eta}(Y).
	\end{equation}
	It is well-defined for $f_n \in \ell^1(K_n)$, but then can be extended to $f_n \in \ell^2(K_n)$ thanks to the orthogonal sum. Afterward, we can define the Fock space of order $n$ 
	\begin{equation*}
		\forall n \in \N_+, \qquad \HH_n:=\{I_n(f_n):f_n\in\ell^2(\K_n)\},
	\end{equation*}
	and we keep the convention $\HH_0:=\R$.
	
	The Fock space gives the orthogonal decomposition of $L^2(\X, \fil, \Pr)$
	\begin{align}\label{eq.Fock2}
		L^2(\X, \fil, \Pr) = \bigoplus_{n=0}^{\infty}\HH_n.
	\end{align}
	More precisely, for every $f\in L^{2}(\X,\fil,\P_\rho)$, it can be decomposed as
	\begin{equation}\label{eq.chaos} 
		f =\sum_{n=0}^{\infty}I_n(T_n f),
	\end{equation}
	where the equality makes sense in $L^{2}(\X,\fil,\P_{\rho})$, and the coefficient $T_n f$ is defined by the inner product
	\begin{align}\label{eq.Tnf}
		T_n f(Y):=  \chi(\rho)^{-|Y|} \Er[\bar{\eta}(Y)f(\eta)].
	\end{align}
	The coefficient $T_n f$ is related to the degree in the following identity. For every local function $f  $, we have 
	\begin{equation}\label{eq.PertFormu}
		\frac{\d^n}{\d\rho^n}\Er[f]=n!\sum_{Y\in\K_n}T_n f(Y).
	\end{equation}
	Thus, if $\deg(f) = n$, its mass of sum vanishes in $\HH_{0 \leq k \leq n-1}$.
	
	\subsubsection{Geometry}
	The domain with various boundary conditions $\Lambda^-, \overline{\Lambda^*}, \Lambda^+$ are defined as follows. We define $\partial \Lambda$ as the boundary set of $\Lambda$, and denote by $\Lambda^-$ its interior that 
	\begin{align}\label{eq.defBoundary}
		\partial \Lambda := \{ x \in \Lambda : \exists y \notin \Lambda, y \sim x\}, \qquad \Lambda^- := \Lambda \setminus \partial \Lambda.
	\end{align}
	Recall that the bonds set of $\Lambda$ is defined as $\Lambda^*$ in \eqref{eq.defBond}. We  define its enlarged version 
	\begin{align}\label{eq.defBondLarge}
		\overline{\Lambda^*} := \{\{x,y\}: x \in \Lambda, y = x+e_i, i=1,2, \cdots, d\},
	\end{align}
	where $e_i \in \Zd$ is the $i$-th directed unit vector. We also denote by $\Lambda^+$ the vertices concerned in \eqref{eq.defBondLarge}
	\begin{align} \label{eq.defVertexLarge}
		\Lambda^+:=\Lambda\cup\bigcup_{i=1}^d(\Lambda+e_i).
	\end{align}

	For all $m \in \N$, let $\cu_m$ stand for the lattice cube of side length $3^m$ 
	\begin{align}\label{eq.defcu}
		\cu_m := \Ll(- \frac{3^m}{2}, \frac{3^m}{2}\Rr)^d \bigcap \Zd.
	\end{align}
	We also use the notation $\Z_m := 3^m \Zd$ as the centers of triadic cubes.
	
	We denote all lattices at most $\r$ away from the enlarged cube $\La^+$ by $N_\r(\La^+)$:
	\begin{equation}\label{eq.defNrLa}
		N_\r(\La^+):=\{x\in\Zd:\operatorname{dist}(x,\La^+)\leq \r\}.
	\end{equation}
	
	\subsection{Relaxation to equilibrium}
	
	The sharp asymptotic for the relaxation to equilibrium of non-gradient exclusion was recently derived in \cite{gu2025relaxation} by the first and the second author of this paper.
	\begin{proposition}[Theorem~1.2 of \cite{gu2025relaxation}]\label{prop.relaxation}
		There exists a positive exponent $\delta(d,\r, c_{-},c_+)$, such that for every local function $f$, we have 
		\begin{align}\label{eq.relaxation}
			\var_{\rho}[P_t f] = \frac{\bar{f}' (\rho)^2 \chi(\rho)}{\sqrt{(8\pi t)^{d} \det [\DD(\rho)]}} + o(t^{-\frac{d+\delta}{2}}).
		\end{align}
		Here the remainder depends on $f$ and satisfies ${\lim_{t \to \infty}t^{\frac{d+\delta}{2}}o(t^{-\frac{d+\delta}{2}}) = 0}$.
	\end{proposition}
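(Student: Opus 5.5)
This result is imported from \cite{gu2025relaxation}, and here is how I would reprove it. I would separate the first chaos from the rest and treat each part differently. By \eqref{eq.chaos} and \eqref{eq.PertFormu}, a centered local $f$ can be written as
\begin{align*}
f - \bar f(\rho) = \bar f'(\rho)\,\bar\eta(0) + h,
\end{align*}
where $h$ is a centered local function whose first-chaos coefficients have zero total mass, $\sum_{x} T_1 h(x) = 0$. The leading term in \eqref{eq.relaxation} should come entirely from $P_t \bar\eta(0)$. The contributions of $h$ and of the cross term should be $o(t^{-(d+\delta)/2})$.

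\textbf{Step 1: the homogenized model.} For the SEP $\Lb$ of \eqref{eq.Lb}, self-duality gives
\begin{align*}
\Pb_t \bar\eta(0) = \sum_x \bar p_t(x)\,\bar\eta(x),
\end{align*}
where $\bar p_t$ is the heat kernel of the random walk with rates $Q$. Hence
\begin{align*}
\var_\rho[\Pb_t\bar\eta(0)] = \chi(\rho)\sum_x \bar p_t(x)^2 = \chi(\rho)\,\bar p_{2t}(0).
\end{align*}
The local CLT, with covariance $2\DD(\rho)t$, gives
\begin{align*}
\bar p_{2t}(0) = \bigl((8\pi t)^d\det\DD(\rho)\bigr)^{-1/2} + O(t^{-(d+1)/2}).
\end{align*}
This is exactly the constant in \eqref{eq.relaxation}.

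\textbf{Step 2: comparing $P_t\bar\eta(0)$ with $\Pb_t\bar\eta(0)$.} This is the main obstacle. I would use the two-scale expansion
\begin{align*}
\tilde u_t := \Pb_t\bar\eta(0) + \sum_{z\in\Z_m}\sum_i (\D_{e_i}\bar p_t)_{z+\cu_m}\,\phi^z_{m,e_i},
\end{align*}
with local correctors $\phi^z_{m,e_i}$ on triadic cubes of a mesoscopic size $3^m\sim t^{\epsilon}$. Then I would estimate $\partial_t\tilde u_t - \L\tilde u_t$ in $H^{-1}$.

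The ingredients are as follows.
\begin{itemize}
\item A quantitative rate for the finite-volume approximations $\ccc(\rho;\cu_m)$ of the conductivity \eqref{eq.defC}, namely $|\ccc(\rho;\cu_m)-\ccc(\rho)|\lesssim 3^{-\alpha m}$. I would obtain this by a subadditive renormalization argument on cubes, as in \cite{bulk,funaki2024quantitative,gu2024quantitative}.
\item A flux estimate: the local flux of $x(\eta(x)-\eta(0))-\pi(\cdot)$ composed with the corrector is close to its homogenized value $\ccc(\rho)$ in weak norm.
\item Smoothness of $\bar p_t$ at scale $\sqrt t$, so that replacing $\D\bar p_t$ by its cube averages costs $3^m/\sqrt t$.
\end{itemize}
A Duhamel/energy estimate then gives
\begin{align*}
\Er\bigl[(P_t\bar\eta(0)-\tilde u_t)^2\bigr]\lesssim t^{-(d+\delta)/2}.
\end{align*}
The corrector term itself has size $t^{-(d+1)/2}3^{2m}$ in $L^2$. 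Together these yield the asymptotic for $\var_\rho[P_t\bar\eta(0)]$.

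\textbf{Step 3: faster decay of the mass-zero part $h$.} I would show $\var_\rho[P_t h]\lesssim t^{-(d+\delta')/2}$. Writing the first chaos of $h$ as a discrete divergence, the leading heat-kernel contribution gains a derivative, giving a factor $t^{-1/2}$. For chaoses of order $n\ge2$, the "degree" heuristic suggests decay like $t^{-nd/2}$, which is at least $t^{-d}$.

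To make this rigorous I would bound the higher chaoses using the spectral gap on cubes of size $\sqrt t$ together with a Nash-type/moment argument: $\frac{\d}{\d t}\var_\rho[P_t h] = -2\,\Er[P_t h\,(-\L)P_t h]$. I would combine this with a localization of $P_t h$ to a box of radius $\sim t^{1/2+\epsilon}$, using the finite speed of propagation of the local function. Since the generator mixes chaoses, I would treat $h$ as a perturbation: apply Step 2's $H^{-1}$ control to the first-chaos component and iterate the decay estimate. The cross term is then handled by Cauchy--Schwarz, $t^{-d/4}\,t^{-(d+\delta')/4} = o(t^{-d/2})$, with the exponent $\delta$ renamed to absorb the loss.
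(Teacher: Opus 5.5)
The paper does not prove this statement; it imports it from \cite{gu2025relaxation}, so I assess your sketch on its own terms. Step~1 is correct, including the constant, since $\chi(\rho)\sum_x\bar p_t(x)^2=\chi(\rho)\bar p_{2t}(0)$. The first-chaos part of $h$ is also harmless once Step~2 holds, because $P_tI_1(T_1h)=\sum_xT_1h(x)\,\tau_xP_t\bar\eta(0)$ is a finite combination of translates with zero total weight.

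The genuine gap is Step~3 for chaoses of order $n\geq2$. The rate $t^{-nd/2}$ is a property of $\Pb_t$ (Proposition~\ref{prop.fasterDecay}) and rests on $\Lb\HH_n\subset\HH_n$. The speed-change generator does not preserve chaoses: if $b=\{x,x+e\}$ and $y\notin b$ lies in the range of $c_b$, then $c_b\pi_b(\bar\eta(x)\bar\eta(y))$ has $\HH_1$-component $\chi(\rho)\,T_1c_b(y)\,(\bar\eta(x+e)-\bar\eta(x))$. So $P_t$ acting on the higher chaoses of $h$ keeps producing mass-zero first-chaos sources at every time $s\in[0,t]$. Their amplitudes are governed by the very decay you are trying to prove, and ``apply Step~2 and iterate'' does not say how this loop closes.

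The other tools you list do not close it either. Bounded-rate finite-range dynamics localize $P_th$ only at the ballistic scale $Ct$, not at $t^{1/2+\epsilon}$. The Dirichlet-form comparison $\bracket{g,-\Lb g}\le C\bracket{g,-\L g}$ transfers, by operator monotonicity, to $\bracket{h,(-\L)^{-s}h}\le C^s\bracket{h,(-\Lb)^{-s}h}$ only for $s\in(0,1]$. Together with the monotonicity of $t\mapsto\var_\rho[P_th]$, this yields $\var_\rho[P_th]\lesssim t^{-s}$ with $s\le 1$. That suffices in $d=1$, but not in $d\geq2$, where you need $t^{-(d+\delta)/2}$.

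Step~2 depends on the same missing input. For $w_t=P_t\bar\eta(0)-\tilde u_t$, the energy identity only gives $\Er[w_t^2]\le\Er[w_0^2]+C\int_0^te(s)^2\,\d s$, where $e(s)$ is the analogue of the flux error in Proposition~\ref{prop.L2FluxErr}~(2) with $\bar g_\lambda$ replaced by $\bar p_s$. Already $\int_0^1e(s)^2\,\d s$ is of order $3^{-2\alpha m}+3^{2m}$, so the bound does not decay in $t$; at best it gives $t^{-2\alpha\epsilon}$ when $3^m\sim t^{\epsilon}$. Moreover, $w_0$ (minus the corrector term at time $0$) is a local function whose decay under $P_t$ is again a Step-3-type statement.

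Pairing with $\bar\eta(0)$ instead does not help without further input. The Cauchy--Schwarz bound on the contribution of times $s=O(1)$ is of order $3^m\norm{P_{2t}\bar\eta(0)}_1\approx 3^mt^{-(d+2)/4}$. This is not $o(t^{-d/2})$ for $d\geq2$, and it already presupposes a decay bound for $P_t$.

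What is missing in both places is an a priori decay estimate for the speed-change semigroup itself. Concretely, you need $\var_\rho[P_tg]\lesssim t^{-d/2}$ with explicit dependence on the local function $g$, improved to $o(t^{-d/2})$ with a polynomial rate when $\frac{\d}{\d\tilde\rho}\E_{\tilde\rho}[g]$ vanishes at $\tilde\rho=\rho$. Your mention of spectral gaps on boxes of size $\sqrt t$ and a Nash-type argument points toward the kind of multiscale argument required. As written, however, the proposal assumes rather than proves the one estimate that makes the non-gradient case hard.
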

	
	For the case of SEP, a faster decay was obtained much earlier by Bertini and Zegarlinski in \cite{berzeg} with the generalized Nash inequality.
	\begin{proposition}[Theorem~17 of \cite{berzeg}]\label{prop.fasterDecay}
		For every $n\in\N_+$, there exists a constant $C$ only depending on $n,d$ and $\rho$ such that the following estimate holds for all local function $f\in \HH_n$ with a finite triple norm $|||\cdot|||_n$:  
		\begin{align}\label{eq.fasterDecay}
			\var_{\rho}[\Pb_t f] \leq C t^{-\frac{nd}{2}} |||f|||^2_n.
		\end{align}
	\end{proposition}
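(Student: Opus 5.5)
The plan is to use the self-duality of the SEP and reduce \eqref{eq.fasterDecay} to an on-diagonal heat kernel bound for a system of $n$ exclusion particles, which I would obtain from a Nash inequality of dimension $nd$.

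\emph{Step 1 (reduction to $\K_n$).} Write $f=I_n(f_n)$ with $f_n\in\ell^1(\K_n)$; I take $|||f|||_n$ to control $\norm{f_n}_{\ell^1(\K_n)}$. Since the jump rates $Q$ in \eqref{eq.Lb} are constant, a direct computation gives $\Lb\, I_n(f_n)=I_n(A_n f_n)$. Here $A_n$ is the generator of $n$ particles on $\Zd$ jumping with kernel $Q$ under the exclusion rule:
\begin{align*}
A_n u(Y)=\frac12\sum_{x,y\in\Zd}Q_{y-x}\bigl(u(Y^{x,y})-u(Y)\bigr),
\end{align*}
where $Y^{x,y}$ exchanges $x$ and $y$ in $Y$. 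In particular $\Pb_t$ preserves $\HH_n$ and $\Pb_t I_n(f_n)=I_n(e^{tA_n}f_n)$. By orthogonality of the chaos decomposition \eqref{eq.Fock2},
\begin{align*}
\var_\rho[\Pb_t f]=\chi(\rho)^n\norm{e^{tA_n}f_n}^2_{\ell^2(\K_n)}.
\end{align*}
The operator $A_n$ is symmetric on $\ell^2(\K_n)$ with counting measure and is Markov. It therefore suffices to prove the ultracontractive bound
\begin{align*}
\norm{e^{tA_n}u}_{\ell^2}^2\leq Ct^{-nd/2}\norm{u}_{\ell^1}^2.
\end{align*}

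\emph{Step 2 (Nash inequality).} Let $\mathcal{E}_n(u)=\langle u,-A_nu\rangle$. I would prove the Nash inequality
\begin{align*}
\norm{u}_{\ell^2}^{2+4/(nd)}\leq C\,\mathcal{E}_n(u)\,\norm{u}_{\ell^1}^{4/(nd)}.
\end{align*}
Given this, the standard argument applies. Since $\norm{e^{tA_n}u}_{\ell^1}\leq\norm{u}_{\ell^1}$, the function $\psi(t)=\norm{e^{tA_n}u}_2^2$ satisfies $\psi'\leq -c\,\psi^{1+2/(nd)}\norm{u}_1^{-4/(nd)}$. Integrating this differential inequality yields $\psi(t)\leq Ct^{-nd/2}\norm{u}_1^2$, which is the required bound.

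\emph{Step 3 (main obstacle: the Nash inequality on $\K_n$).} This is where the exclusion constraint enters. I would argue geometrically. Consider the graph on $\K_n$ whose edges are the single-particle moves allowed by $Q$. The goal is to show that this graph is roughly isometric to a graph whose Nash dimension is known to be $nd$, and then invoke the stability of Nash/Faber--Krahn inequalities under rough isometries with bounded geometry (Coulhon--Saloff-Coste).
\begin{itemize}
\item For $d\geq2$, map $\K_n$ to $(\Zd)^n$ modulo permutations. The diagonals have codimension $d\geq2$ and are easily bypassed: a blocked step can be replaced by a detour of bounded length. Hence $\K_n$ is roughly isometric to the unordered product of $n$ copies of $\Zd$, which has volume growth $r^{nd}$ and satisfies the Nash inequality of dimension $nd$.
\item For $d=1$, $\K_n$ is identified with the discrete cone $\{x_1<\dots<x_n\}\subset\Zo^n$. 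This is a uniform (John) domain, so it inherits the $n$-dimensional Nash inequality from $\Zo^n$, for instance via an extension operator or a direct Faber--Krahn estimate on cones.
\end{itemize}
In both cases the finite range of $Q$ makes $\mathcal{E}_n$ comparable to the nearest-neighbour Dirichlet form, up to constants depending on $n$, $d$ and $Q$.

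If the rough-isometry route turns out to be delicate, I would fall back on a direct comparison $\mathcal{E}_n(u)\geq c\,\mathcal{E}^{\mathrm{ind}}(\tilde u)$. Here $\tilde u$ is the symmetric extension of $u$ to $(\Zd)^n$, defined off the diagonals, and bounded paths are used to bound the diagonal contributions. One then applies the Nash inequality for independent walks on $(\Zd)^n$.
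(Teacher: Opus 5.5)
Your argument is correct in outline, but it does not follow the paper, which gives no proof of this statement. The paper imports it from Theorem~17 of \cite{berzeg}, where (as the paper says) the decay comes from a generalized Nash inequality on the configuration space. That inequality bounds a power of $\var_\rho[f]$ by the Dirichlet form times a power of the triple norm. It is then run through the same differential inequality as your Step~2, together with control of the triple norm along $\Pb_t$.

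Your route keeps this architecture but lets self-duality do the work first:
\begin{itemize}
\item $\var_\rho[\Pb_t f]=\chi(\rho)^n\norm{e^{tA_n}f_n}^2_{\ell^2(\K_n)}$ is an exact identity.
\item The $\ell^1$-contraction of the doubly stochastic semigroup $e^{tA_n}$ replaces any monotonicity of the triple norm.
\item The Nash inequality becomes a statement about a single bounded-degree graph, the $n$-particle exclusion graph on $\K_n$. There the dimension $nd$ is visible by comparison with $\Zo^{nd}$.
\end{itemize}
This gives a short, self-contained proof. The price is that it is tied to constant-rate SEP, which is exactly the generality in which the paper uses the statement ($\Lbssep$ in Section~\ref{sec.KV}, the SSEP of \eqref{eq.defSSEPG} in Section~\ref{sec.critical_d_1}).

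Three points to tighten.

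First, in Step~1 say explicitly that for $f\in\HH_n$ the triple norm of \cite{berzeg} dominates $\norm{f_n}_{\ell^1(\K_n)}$, since this is all you use. For the natural definitions through $n$-fold spin-flip derivatives this holds with a constant depending only on $n$.

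Second, in Step~3 the comparison you need is $\mathcal{E}^{\mathrm{nn}}_n\leq C\,\mathcal{E}_n$, where $\mathcal{E}^{\mathrm{nn}}_n$ is the nearest-neighbour exclusion form on $\K_n$. This inequality comes from irreducibility of $Q$: each nearest-neighbour move is a bounded chain of $Q$-moves avoiding occupied sites. Finite range only gives the reverse inequality. If $\supp Q$ generated a proper sublattice, you would have to work coset by coset.

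Third, your fallback is the cleaner argument, and it works verbatim in every dimension, $d=1$ included. Extend $u$ symmetrically to $(\Zd)^n$. On the coincidence set, give $\tilde u$ the value of $u$ at a configuration of $\K_n$ at bounded distance. Then
\begin{itemize}
\item $\norm{\tilde u}^2_{\ell^2}\geq n!\norm{u}^2_{\ell^2}$,
\item $\norm{\tilde u}_{\ell^1}\leq C\norm{u}_{\ell^1}$,
\item $\mathcal{E}^{\mathrm{ind}}(\tilde u)\leq C\,\mathcal{E}^{\mathrm{nn}}_n(u)$, with $\mathcal{E}^{\mathrm{ind}}$ the independent-walk form on $(\Zd)^n$.
\end{itemize}
The last bound holds because two configurations of $\K_n$ at bounded distance as multisets are joined by a bounded path in $\K_n$. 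The operative fact is that the particles are indistinguishable, not that the diagonals have codimension $d$. Neither rough isometries nor John domains are needed. For nearest-neighbour jumps in $d=1$, the map $x_i\mapsto x_i-i$ is even an exact graph isomorphism from $\{x_1<\dots<x_n\}$ onto $\{y_1\leq\dots\leq y_n\}$.
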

	
	The precise definition for the triple norm $|||\cdot|||_n$ can be found in $|||\cdot|||_{n,1}$ in \cite[(5.5)]{berzeg}; see also \cite[(3.14)]{gu2025relaxation}.
	
	\subsection{$H^{-1}$-norm and Kipnis--Varadhan theorem}
	
	The parameter $\rho \in (0,1)$ is fixed throughout the paper. We then define the $L^2$-inner product 
	\begin{align}\label{eq.def.InnerL2}
		\bracket{f,g} := \Er[fg],
	\end{align}
	where the parameter $\rho$ is omitted. Afterward, we define the $H^1$-inner product associated to $\L$
	\begin{align}\label{eq.def.InnerH1}
		\bracket{f,f}_1 := \Er[f(-\L f)] = \bracket{f, -\L f}, 
	\end{align} 
	and $H^{-1}$-inner product 
	\begin{align}\label{eq.def.InnerMinus1}
		\bracket{f,f}_{-1} := \sup_{g \in \F_0} \Ll\{2 \bracket{f,g} - \bracket{g,g}_1 \Rr\}.
	\end{align}
	For convenience, we sometimes also refer to it as $\bracket{f, (-\L)^{-1}f}$. We also denote by $\norm{\, \cdot \,}_{1}, \norm{\, \cdot \,}_{-1}$ the associated norm, and let $H^{-1}$ refer to the function space with finite $\norm{\, \cdot \,}_{-1}$ norm.
	
	The $H^{-1}$ condition is related to the seminal Kipnis--Varadhan theorem. We refer to \cite[Corollary~1.5]{KV86} and \cite[Theorem~2.7, Theorem~2.33]{komorowski2012fluctuations}.
	
	\begin{proposition}[Kipnis--Varadhan]\label{prop.KV}
		For every function $f \in L^2 \cap H^{-1}$, as ${N \to \infty}$, the additive functional $\Ll(\frac{1}{\sqrt{N}} \int_0^{Nt} f(\eta_s) \, \d s\Rr)_{t \geq 0}$ converges in distribution to a Brownian motion with diffusive constant $\norm{f}^2_{-1}$.
	\end{proposition}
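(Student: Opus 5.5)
The plan is the classical resolvent--martingale argument of \cite{KV86}, using only reversibility of $\L$ with respect to $\Pr$ and ergodicity of $\Pr$ under the speed-change dynamics.

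\textbf{Resolvent estimates.} For $\lambda>0$ let $G_\lambda\in L^2$ solve $(\lambda-\L)G_\lambda=f$. Testing the equation against $G_\lambda$ gives
\begin{align*}
\lambda\norm{G_\lambda}^2_{L^2}+\norm{G_\lambda}_1^2=\bracket{f,G_\lambda}\leq \norm{f}_{-1}\norm{G_\lambda}_1 .
\end{align*}
Hence $\lambda\norm{G_\lambda}_{L^2}^2$ and $\norm{G_\lambda}_1$ are bounded uniformly in $\lambda$. Next subtract the equations for $\lambda$ and $\mu$ and test against $G_\lambda-G_\mu$. I aim to show two things as $\lambda,\mu\to0$:
\begin{align*}
\lambda\norm{G_\lambda}_{L^2}^2\to 0,\qquad \norm{G_\lambda-G_\mu}_1\to 0 .
\end{align*}
I would prove both via the spectral theorem for the self-adjoint operator $-\L$. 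With $\nu_f$ the spectral measure of $f$, we have $\lambda\norm{G_\lambda}^2=\int \lambda(\lambda+s)^{-2}\,\nu_f(\d s)$. Also $\int s^{-1}\,\nu_f(\d s)=\norm{f}_{-1}^2<\infty$. Dominated convergence then gives both limits.

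\textbf{Decomposition.} Take $\lambda=1/N$ and write, exactly as in \eqref{eq.Gamma_decom},
\begin{align*}
\frac{1}{\sqrt N}\int_0^{Nt} f(\eta_s)\,\d s=\frac{1}{\sqrt N}M_{\lambda,Nt}-\frac{1}{\sqrt N}\bigl(G_\lambda(\eta_{Nt})-G_\lambda(\eta_0)\bigr)+\frac{1}{\sqrt N}\int_0^{Nt}\lambda G_\lambda(\eta_s)\,\d s .
\end{align*}

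\textbf{Remainder terms.} By stationarity, the boundary term has second moment at most $4N^{-1}\norm{G_\lambda}^2=4\lambda\norm{G_\lambda}^2\to0$. For the integral term I use the Kipnis--Varadhan maximal inequality
\begin{align*}
\Er\Bigl[\sup_{s\le T}\Bigl(\int_0^s g(\eta_r)\,\d r\Bigr)^2\Bigr]\le C\,T\,\norm{g}_{-1}^2 .
\end{align*}
Applied with $g=\lambda G_\lambda$, the bound is $C\,t\,N\norm{\lambda G_\lambda}_{-1}^2$. Spectrally, $\norm{\lambda G_\lambda}_{-1}^2\le\lambda^2\norm{G_\lambda}^2$. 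Since $N\lambda^2=\lambda$, the bound becomes $C\,t\,\lambda\norm{G_\lambda}^2\to0$. Both terms therefore vanish uniformly on compacts in probability.

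\textbf{Martingale term.} By the $H^1$-Cauchy property, $M_{\lambda,\cdot}$ converges in $L^2$, uniformly on compact time intervals, to a martingale $M_\cdot$. This uses $\Er[(M_{\lambda,t}-M_{\mu,t})^2]=2t\norm{G_\lambda-G_\mu}_1^2$. The limit $M$ has stationary increments, and its quadratic variation is an additive functional of the form $\int_0^t \Psi(\eta_s)\,\d s$ with $\Psi$ in $L^1$. Two further points are needed:
\begin{itemize}
\item the error from replacing $M_{\lambda,N\cdot}$ by $M_{N\cdot}$ must vanish after dividing by $\sqrt N$ in the regime $\lambda=1/N$; it is controlled by $\norm{G_{1/N}-G_\lambda}_1$ together with Doob's inequality;
\item the ergodic theorem gives $\bracket{M}_{Nt}/N\to t\,\Er[\Psi]$.
\end{itemize}
Here $\Er[\Psi]=2\lim_{\lambda\to0}\norm{G_\lambda}_1^2=2\norm{f}_{-1}^2$. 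The martingale functional CLT then yields a Brownian motion with variance $2\norm{f}_{-1}^2\,t$, which is the normalization of \eqref{eq.BM13}. Tightness in $C(\R_+,\R)$ comes from the martingale CLT together with the maximal inequality for the remainder.

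\textbf{Main obstacle.} I expect the hardest step to be the uniform-in-$N$ control of $M_{1/N,N\cdot}/\sqrt N-M_{N\cdot}/\sqrt N$. This is where the $H^1$-Cauchy property of $G_\lambda$ is genuinely needed, not mere boundedness.
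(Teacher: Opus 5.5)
Your route (resolvent $G_{1/N}$, the $H^1$-Cauchy property from the spectral measure $\nu_f$, a limiting martingale $M$, the ergodic theorem for $\bracket{M}$, variance $2\norm{f}_{-1}^2t$) is the classical Kipnis--Varadhan argument. The paper does not reprove this proposition; it cites Kipnis--Varadhan and Komorowski--Landim--Olla. At fixed times your estimates give convergence of finite-dimensional distributions, modulo a slip noted at the end.

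\textbf{The gap.} The problem is the passage to convergence as processes in $C(\R_+,\R)$. For the boundary term $N^{-1/2}\bigl(G_{1/N}(\eta_{Nt})-G_{1/N}(\eta_0)\bigr)$, you only show that its second moment is at most $4\lambda\norm{G_\lambda}_{L^2}^2$ at a \emph{fixed} $t$. You then assert that it vanishes uniformly on compacts. That does not follow: a stationary process with small one-time variance can still have a large supremum over the long window $[0,NT]$. The maximal inequality you quote does not help either, because the boundary term is not an additive functional. Writing it as $M_{\lambda,t}+\int_0^t\L G_\lambda(\eta_s)\,\d s$ produces two pieces each of size $\sqrt N$, since $\norm{\L G_\lambda}_{-1}=\norm{G_\lambda}_1\to\norm{f}_{-1}$. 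Your tightness claim rests on this step, so as written you only get convergence of finite-dimensional distributions.

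\textbf{The fix.} The missing ingredient is time reversal, which is where reversibility is used beyond the spectral calculus. Under $\Pr$ the reversed process $(\eta_{NT-t})_{0\le t\le NT}$ is again Markov with generator $\L$. Hence $\mathfrak m_{\lambda,t}:=G_\lambda(\eta_{NT-t})-G_\lambda(\eta_{NT})-\int_0^t\L G_\lambda(\eta_{NT-s})\,\d s$ is a backward martingale with the same law as $M_{\lambda,\cdot}$. Adding the two cancels the boundary terms:
\begin{align*}
\int_0^{t} f(\eta_s)\,\d s=\tfrac12\bigl(M_{\lambda,t}+\mathfrak m_{\lambda,NT}-\mathfrak m_{\lambda,NT-t}\bigr)+\lambda\int_0^{t}G_\lambda(\eta_s)\,\d s .
\end{align*}
This is the analogue of the decomposition \eqref{decom tightness} that the paper itself uses in Proposition~\ref{prop.tightGammaDeg2}.

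After rescaling by $N^{-1/2}$, both martingales are tight with continuous limit points, by your Doob comparison and martingale FCLT. The last term is negligible. So $N^{-1/2}\int_0^{N\cdot}f$ is tight in $C(\R_+,\R)$. Consequently $R^N:=N^{-1/2}\int_0^{N\cdot}f-N^{-1/2}M_{N\cdot}$ is tight with continuous limits and has finite-dimensional distributions tending to $0$, so $\sup_{t\le T}\vert R^N_t\vert\to0$ in probability.

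This is the real obstacle, not the comparison of $M_{1/N}$ with $M$ that you flagged. That comparison is immediate:
\begin{align*}
N^{-1}\Er\Bigl[\sup_{t\le NT}\vert M_{1/N,t}-M_t\vert^2\Bigr]\le 8T\lim_{\mu\to0}\norm{G_{1/N}-G_\mu}_1^2\to0 .
\end{align*}

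\textbf{Two smaller slips.} First, the inequality $\norm{\lambda G_\lambda}_{-1}^2\le\lambda^2\norm{G_\lambda}_{L^2}^2$ holds only if $\nu_f$ lives on $[1,\infty)$, i.e.\ it would need a spectral gap, which the exclusion process on $\Zd$ does not have. Second, after normalization the maximal-inequality bound is $Ct\norm{\lambda G_\lambda}_{-1}^2$, not $CtN\norm{\lambda G_\lambda}_{-1}^2$. The term still vanishes. One way is dominated convergence in $\int\lambda^2s^{-1}(\lambda+s)^{-2}\,\nu_f(\d s)$. More simply, Cauchy--Schwarz bounds the mean squared supremum over $[0,T]$ by $T^2\lambda\norm{G_\lambda}_{L^2}^2$.
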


	\section{Regime of Kipnis--Varadhan theorem}\label{sec.KV}
	In this section, we discuss at first the cases where the Kipnis--Varadhan theorem applies in non-gradient exclusion. 
	\begin{proposition}\label{prop.KVnonGradient}
		Every local function $f$ satisfying $\deg(f)  > 2/d$ belongs to $H^{-1}$. Then the Kipnis--Varadhan theorem applies to the additive functional $\Ll(\frac{1}{\sqrt{N}} \int_0^{Nt} f(\eta_s) \, \d s\Rr)_{t \geq 0}$.
	\end{proposition}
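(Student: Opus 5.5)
The plan is to compare Dirichlet forms so as to reduce to the standard SSEP, where the chaos decomposition is preserved by the dynamics and each chaos can be handled separately. By Hypothesis~\ref{hyp}, $c_b \geq c_-$, so for every $g \in \F_0$
\begin{align*}
\bracket{g,g}_1 = \frac{1}{2}\sum_{b} \Er\Ll[c_b (\pi_b g)^2\Rr] \geq c_- \, \bracket{g, -\Lbssep g}.
\end{align*}
Plugging this into the variational formula \eqref{eq.def.InnerMinus1} gives
\begin{align*}
\bracket{f,f}_{-1} \leq c_-^{-1}\, \bracket{f,(-\Lbssep)^{-1} f}.
\end{align*}
It therefore suffices to show that $f$ has finite $H^{-1}$-norm with respect to SSEP. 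Once $f \in L^2\cap H^{-1}$ (for the original $\L$), Proposition~\ref{prop.KV} gives the second claim directly.

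For SSEP, $\Lbssep$ maps $\HH_n$ into itself by self-duality, and the chaos components are orthogonal. Since $f$ is local, its expansion \eqref{eq.chaos} is a finite sum $f=\sum_{n\leq n_0} I_n(T_n f)$ with each $T_n f$ finitely supported. Hence the SSEP $H^{-1}$-norm splits as a sum over $n$, and I treat each chaos separately using
\begin{align*}
\bracket{h,(-\Lbssep)^{-1}h} = \int_0^\infty \var_\rho\Ll[\Pbssep_{t/2} h\Rr]\,\d t .
\end{align*}
The integrand is bounded by $\var_\rho[h]$, so only $t\to\infty$ matters. If $nd>2$, Proposition~\ref{prop.fasterDecay}, applied to SSEP (the Bertini--Zegarlinski estimate holds for any finite-range SEP, in particular nearest-neighbour), gives decay $t^{-nd/2}$, which is integrable; the triple norm is finite because $T_nf$ is finitely supported. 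The chaoses $n=0$ vanish since $f$ is centered.

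The remaining chaoses have $1\leq n \leq 2/d$, which happens only for $(d,n)\in\{(1,1),(1,2),(2,1)\}$. In each of these cases $n<\deg(f)$, so \eqref{eq.PertFormu} gives $\sum_{Y\in\K_n} T_nf(Y)=0$. For such mass-zero, finitely supported $u=T_nf$, I would write $u$ as a finite sum of discrete divergences $u = \sum_{\text{moves}} (h(Y)-h(Y'))$. Here $h$ is finitely supported on $\K_n$, and $Y\to Y'$ ranges over single-particle nearest-neighbour moves respecting exclusion. Concretely, transport the mass of $u$ along paths in the graph $\K_n$, which is connected for every $n$ and $d$. For $d=1$ the particles keep their order, but the ordered configurations still form a connected graph, so the transport is possible.

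Then for any $g\in\F_0$ I would compute $\bracket{I_n(u),g}$ by summation by parts. This turns it into a sum of terms $\Er[\bar\eta(\,\cdot\,)\,\pi_b g]$ weighted by $h$. Cauchy--Schwarz then gives
\begin{align*}
\vert\bracket{I_n(u),g}\vert \leq C\norm{h}_{\ell^2}\bracket{g,-\Lbssep g}^{1/2},
\end{align*}
using that $\Er[(\pi_b g)\, \bar\eta(Y\setminus\{x\})(\eta(y)-\eta(x))]$ is controlled by the Dirichlet form on bond $b=\{x,y\}$. This yields finiteness of the $H^{-1}$-norm.

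As an alternative, one can check that the divergence structure produces an extra factor $t^{-1}$ in the variance decay, namely $t^{-nd/2-1}$, which is integrable. The main obstacle I expect is this summation-by-parts step: one has to check that the Kawasaki operator $\pi_{x,y}$ acting on the chaos $I_n$ corresponds exactly to the discrete gradient on $\K_n$, with the correct handling of sets $Y$ containing both $x$ and $y$, which are invariant under the exchange. This is where the exclusion geometry enters.
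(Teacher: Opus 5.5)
Your proof is correct, but it is organized differently from the paper's.

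\textbf{The paper's route.} It splits by dimension. For $d\geq 3$, and for $d=2$ with $\deg(f)\geq 2$, it integrates the sharp relaxation estimate \eqref{eq.relaxation} for the non-gradient dynamics itself. In $d=2$, $\bar f'(\rho)=0$ kills the leading term, and the $\delta$-improved remainder is integrable. Only for $d=1$, $\deg(f)\geq 3$ does it compare with SSEP. There it uses \eqref{eq.fasterDecay} for the chaoses $n\geq 3$. For $I_1(T_1f)$ and $I_2(T_2f)$ it checks $\E_{\tilde\rho}[I_n(T_nf)]=(\tilde\rho-\rho)^n\sum_{Y}T_nf(Y)=0$ and then quotes Lemma~\ref{lem.H-1Es} (\cite[Proposition~3.5]{GJ13}).

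\textbf{Your route.} You compare with SSEP in every dimension and treat every chaos with $nd>2$ by Bertini--Zegarlinski. For the three remaining cases $(d,n)\in\{(1,1),(1,2),(2,1)\}$ you replace Lemma~\ref{lem.H-1Es} with an explicit flow on $\K_n$ plus summation by parts. This is uniform in $d$ and self-contained. It never uses the deep non-gradient estimate \eqref{eq.relaxation}, so Proposition~\ref{prop.KVnonGradient} no longer depends on \cite{gu2025relaxation}. It also gives an explicit bound on $\norm{f}_{-1}$ in terms of the flow.

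\textbf{What each buys.} The paper's version is shorter given tools it needs anyway elsewhere. Lemma~\ref{lem.H-1Es} is also more general: it covers any local function whose mean vanishes at all densities, not just a single chaos component.

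\textbf{Three small points.}

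First, your $h$ must be read as an antisymmetric, finitely supported flow on the \emph{edges} of $\K_n$ whose divergence is $u$. That is what transporting mass along paths produces. It cannot be a vertex function with $u$ its graph Laplacian. For instance, in $d=1$, $n=1$, the function $u=\delta_0-\delta_1$ has nonzero first moment, so it is not $-\Delta h$ for any finitely supported $h$.

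Second, the obstacle you anticipate does not arise. An edge of $\K_n$ has the form $A\cup\{x\}\to A\cup\{y\}$ with $x\sim y$ and $x,y\notin A$; exclusion forbids moves along a bond both of whose sites lie in $Y$. Hence $\bar\eta(A\cup\{x\})-\bar\eta(A\cup\{y\})=\bar\eta(A)(\eta(x)-\eta(y))$ is odd under $\eta\mapsto\eta^{x,y}$. This gives $\Er[\bar\eta(A)(\eta(x)-\eta(y))g]=-\tfrac12\Er[\bar\eta(A)(\eta(x)-\eta(y))\pi_{x,y}g]$. Cauchy--Schwarz over the finitely many edges, each bond appearing with bounded multiplicity, then finishes the argument.

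Third, the alternative claim of $t^{-nd/2-1}$ decay is unnecessary. It would require gradient bounds for the $n$-particle exclusion kernel, so it is better dropped.
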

	A similar discussion has already appeared in \cite[Theorems~1.1, 1.2]{sethuraman1996central} respectively for the SSEP and zero-range process; \cite[Theorem~1.3]{sethuraman1996central} covers the non-gradient process for $d=1$. The proof below relies on the chaos expansion, the relaxation estimates \eqref{eq.relaxation}, \eqref{eq.fasterDecay}, and the observation below.
	
	\begin{lemma}\cite[Proposition~3.5]{GJ13}\label{lem.H-1Es}
		For every local function $f:\X\rightarrow\R$ satisfying 
		\begin{equation}\label{eq.Equiv0}
			\forall \tilde{\rho} \in (0,1), \qquad	\E_{\tilde{\rho}}[f] \equiv 0,
		\end{equation}
		then $f \in H^{-1}$.
	\end{lemma}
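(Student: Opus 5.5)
We need to prove Lemma \ref{lem.H-1Es}: a local $f$ with $\E_{\tilde\rho}[f]\equiv 0$ for all $\tilde\rho$ lies in $H^{-1}$. The plan is to write $f$ as a finite sum of discrete gradients $\pi_b$ of local functions, and then bound the $H^{-1}$ norm by duality using the Dirichlet form.

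\textbf{Step 1: gradient decomposition.} Let $f \in \F_0(\La)$ with $\La$ a finite box, say $\La = \cu_k$. Condition \eqref{eq.Equiv0} says that $f$ has zero mean under every Bernoulli product measure. Since $\La$ is finite, the canonical measures on $\{0,1\}^\La$ with fixed particle number $j$ are mixtures of these, by equivalence of ensembles in the finite form. Concretely, $\E_{\tilde\rho}[f] = \sum_{j=0}^{|\La|} \tilde\rho^j(1-\tilde\rho)^{|\La|-j}\, S_j(f)$, where $S_j(f)$ is the sum of $f$ over configurations with $j$ particles. This is a polynomial in $\tilde\rho$ vanishing identically on $(0,1)$, and the Bernstein-type monomials are linearly independent, so every $S_j(f)=0$. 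Hence $f$ has zero mean on each hyperplane $\{\sum_{x\in\La}\eta(x)=j\}$.

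The SSEP restricted to $\La^*$ is irreducible on each such hyperplane. Therefore the finite-dimensional Poisson equation $-\sum_{b\in\La^*}\pi_b u = f$ has a solution $u\in\F_0(\La)$. This gives
\[
f = \sum_{b \in \La^*} \pi_b h_b, \qquad h_b := -u \in \F_0(\La).
\]

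\textbf{Step 2: duality bound.} For any $g\in\F_0$, reversibility of $\Pr$ under $\eta\mapsto\eta^b$ gives $\bracket{\pi_b h, g} = -\frac12\Er[\pi_b h\,\pi_b g]$. Then Cauchy--Schwarz and $c_b\ge c_-$ yield
\[
2\bracket{f,g} \le \sum_{b\in\La^*}\Er[(\pi_b h_b)^2]^{1/2}\,\Er[(\pi_b g)^2]^{1/2} \le C(\La,h)\Big(\sum_{b}\Er[c_b(\pi_b g)^2]\Big)^{1/2}.
\]
The last factor is at most $\sqrt{2\bracket{g,g}_1}$, since $\bracket{g,g}_1=\frac12\sum_b\Er[c_b(\pi_bg)^2]$. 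Taking the supremum in \eqref{eq.def.InnerMinus1}, of the form $\sup_g\{2C\sqrt{2\bracket{g,g}_1}-\bracket{g,g}_1\}$, gives $\norm{f}_{-1}^2 \le 2C^2<\infty$. Finally, $f\in L^2$ trivially since it is local and bounded.

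\textbf{Main obstacle.} The only nontrivial point is Step 1: turning the vanishing of means under all product measures into zero mean on each canonical slice, and solving the local Poisson equation. I would handle it by the polynomial-identity argument and finite-state irreducibility as above. No relaxation estimate is needed.
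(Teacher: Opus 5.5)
Your proof is correct. The paper gives no argument of its own for this lemma (it only cites \cite[Proposition~3.5]{GJ13}), and what you wrote is a self-contained version of the standard finite-volume argument behind that reference: vanishing of all Bernoulli means forces zero mean on every particle-number slice of the support box, irreducibility on that box (the qualitative spectral gap) solves the local Poisson equation, and the variational formula with $c_b \geq c_-$ bounds $\norm{f}_{-1}$.

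Two cosmetic slips. First, it is the Bernoulli measures that are mixtures of the canonical ones, not the reverse; your Bernstein-polynomial argument is what is actually used, and it is right. Second, your displayed inequality gives $\sup_{t \geq 0}\{\sqrt{2}\,C t - t^2\} = C^2/2$, so the stated bound $2C^2$ is valid but loose.
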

	\begin{proof}[Proof of Proposition~\ref{prop.KVnonGradient}]
		The cases $d = 2$ and $d\geq 3$ can be analyzed directly using \eqref{eq.relaxation}
		\begin{align*}
			\norm{f}^2_{-1} = \int_0^\infty \bracket{f,P_t f} \, \d t = \int_0^\infty \bracket{P_{t/2} f, P_{t/2} f} \, \d t.
		\end{align*}
		We handle the integral when $\deg(f) > 2/d$:
		\begin{itemize}
			\item For $d \geq 3$, we have $\var_{\rho}[P_t f]  =  C_f t^{-\frac{d}{2}} + o(t^{-\frac{d}{2}})$, so the integral is finite.
			\item For $d = 2$ and $\deg(f) > 1$, i.e. $\deg(f) \geq 2$, then $\bar{f}'(\rho) = 0$ and the leading term in \eqref{eq.relaxation} vanishes, which yields $\var_{\rho}[P_t f] \leq C t^{-(1+\delta/2)}$. The integral thus is also finite.
		\end{itemize}
		
		We then treat the case $d = 1$ and $\deg(f) \geq 3$, which requires a more precise decay estimate. The comparison between $\L$ and $\Lbssep$ yields
		\begin{equation}\label{eq.H1comparison}
			\begin{split}
				\norm{f}^2_{-1} 	&:=\sup_{g \in \F_0}\{2\expec{f,g}-\expec{g,g}_1\}\\
				&\leq \sup_{g \in \F_0}\Ll\{2\expec{f,g}- C\expec{g,(-\Lbssep)g}\Rr\}\\
				&=C^{-1} \sup_{g \in \F_0}\left\{2\expec{f,C g}-\expec{Cg, (-\Lbssep)Cg}\right\}\\
				&=C^{-1}\expec{f, (-\Lbssep)^{-1}f}\\
				&=C^{-1}\int_{0}^{\infty}\expec{f,\Pbssep_t f} \d t.
			\end{split}
		\end{equation} 	
		Thus we only need to estimate the $H^{-1}$-norm associated to SSEP. Since $f$ is local and centered, its chaos expansion \eqref{eq.chaos} is a sum of finite terms for some $N \in \N$
		\begin{align*}
			f = I_1(T_1 f) + I_2(T_2 f) + \sum_{n=3}^{N}I_n(T_n f).
		\end{align*}
		All the three parts belong to $H^{-1}$ for the following reason:
		\begin{itemize}
			\item For the term $\sum_{n=3}^{N}I_n(T_n f)$, we apply \eqref{eq.H1comparison} and the faster decay \eqref{eq.fasterDecay}. The decay rate is at least $t^{-\frac{3}{2}}$, so the integral in \eqref{eq.H1comparison} is finite.
			\item For the terms $I_1(T_1 f), I_2(T_2 f)$, the condition $\deg(f) \geq 3$ and \eqref{eq.PertFormu} imply that 
			\begin{align*}
				\sum_{x \in \Zd}T_1 f(x) = 0, \qquad \sum_{Y\in\K_2}T_2 f(Y) = 0.
			\end{align*}
			Then for every $\tilde{\rho} \in (0,1)$, we verify the condition \eqref{eq.Equiv0}
			\begin{align*}
				\E_{\tilde{\rho}}\Ll[I_1(T_1 f)\Rr] &= (\tilde{\rho} - \rho) 	\sum_{x \in \Zd}T_1 f(x)= 0,\\
				\E_{\tilde{\rho}}\Ll[I_2(T_2 f)\Rr] &= (\tilde{\rho} - \rho)^2 	\sum_{Y\in\K_2}T_2 f(Y)= 0.
			\end{align*}
			Therefore, Lemma~\ref{lem.H-1Es} applies and $I_1(T_1 f), I_2(T_2 f) \in H^{-1}$.
		\end{itemize}
	\end{proof}

	\section{Critical case \texorpdfstring{$d=2$}{} and \texorpdfstring{$\deg(f)=1$}{}}\label{sec.critical_d_2}
	
	In this section, we focus on the critical case $d=2$ and $\deg(f)=1$. As discussed in Section~\ref{subsec.SemiOcc}, the key is to study the occupation time \eqref{eq.Occupation} in :
	\begin{align*}
		\Gamma^N_t := \frac{1}{a(N,d)} \int_0^{Nt}  \bar{\eta}_s(0) \, \d s.
	\end{align*}  
	We will follow the martingale argument to study the scaling limit of the occupation time. As discussed in Section~\ref{subsec.MartingaleCLT} and Section~\ref{subsec.HomOfReso} we will consider the martingale defined by $\tilde{G}_\lambda$ instead of $G_\lambda$.
	
	In Section~\ref{subsec.SEPH1}, we first collect several estimates about $\bar{G}_\lambda$, which is  the resolvent equation of a SEP. Its analysis relies on the explicit calculation. In Section~\ref{subsec.HomH1}, we summary the estimates about the two-scale expansion $\tilde{G}_\lambda$ from \cite{funaki2024quantitative,gu2025relaxation}, and then further develop  the results about its \emph{carr\'e du champ}. In Section~\ref{subsec.tightness} and  Section~\ref{subsec.Convergence}, we prove the tightness of $\{(\Gamma_t^N)_{t\geq 0}\}_{N\in \N}$ and justify its convergence to a Brownian motion.
	
	\subsection{Resolvent of SEP}\label{subsec.SEPH1}
	We first consider the resolvent equation \eqref{eq.ResolventSEP} of a SEP with constant jump rate whose generator is defined in \eqref{eq.Lb}:
	\begin{equation*}
		(\lambda-\Lb)\bar{G}_\lambda=\bar{\eta}(0).
	\end{equation*}
	Since $\Lb$ is closed in the Fock space, the solution of the above equation lives in the Fock space $\HH_1$ and can be expressed as
	\begin{equation}\label{eq.Gg}
		\bar{G}_\lambda(\eta)=\sum_{x\in \mathbb{Z}^2}\bar{g}_\lambda(x)\bar{\eta}(x),
	\end{equation}
	where the function $\bar{g}_\lambda : \Zt \to \R$ solves the discrete heat equation:
	\begin{equation}\label{eq.DiscreteHeat}
		\left(\lambda-\tfrac{1}{2}\Delta_Q\right)\bar{g}_\lambda=\delta_0.
	\end{equation}
	The discrete Laplacian $\frac{1}{2}\Delta_Q$ can be written explicitly in terms of the jump rate $\{Q_y\}_{y\in\mathbb{Z}^2}$ for the SEP: 
	\begin{align}\label{eq.defDeltaQ}
		\Ll(\tfrac{1}{2}\Delta_Q f\Rr)(x):= \sum_{y\in \mathbb{Z}^2}Q_{y-x}(f(y) - f(x)).
	\end{align}
	
	To study the Green function $\bar{g}_\lambda$, we consider a continuous-time symmetric random walk $(S_t)_{t \geq 0}$ on $\mathbb{Z}^2$ with the transition matrix $Q$. 
	Following the convention in the literature, the covariance matrix $\cm$ of $(S_t)_{t\geq 0}$ has the following relation with the diffusion matrix $\DD(\rho)$:
	\begin{equation*}
		\tfrac{1}{2} \cm = \DD(\rho).
	\end{equation*}
	With the transition probability $\bar{p}_t(x,y)$, we can rewrite the Green function $\bar{g}_\lambda$ as
	\begin{equation}\label{eq.GreenEx}
		\bar{g}_\lambda(x) = \int_{0}^{\infty}e^{-\lambda t}\bar{p}_t(0,x)\, \d t.
	\end{equation}
	
	Several results about $\bar{g}_\lambda$ are summarized in Proposition~\ref{prop.GreenEstimate}. They are the consequences of \eqref{eq.DiscreteHeat} and \eqref{eq.GreenEx}. Besides, one important analytic tool is the local CLT \cite[Theorem 2.1.3]{lawler2010random}: there exists a constant $c$ such that for any $t\geq 1$,
	\begin{equation}\label{eq.LocalCLT}
		\left|\bar{p}_t(0,x) - \tfrac{1}{\sqrt{(2\pi t)^2 \det[\cm]}}e^{-\frac{x\cdot\cm^{-1} x}{2t}}\right| \leq \frac{c}{t^2},\qquad \forall x\in\mathbb{Z}^{2}.
	\end{equation}
	In the statement, we denote by $\D_{h}$ the finite difference operator on $\mathbb{Z}^2$: 
	\begin{align}\label{eq.defD}
		\forall h \in \mathbb{Z}^2, x \in \mathbb{Z}^2, \qquad (\D_h f)(x) := f(x+h) - f(x).
	\end{align}
	
	\begin{proposition}\label{prop.GreenEstimate}
		The following estimates hold for the Green function $\bar{g}_\lambda$.
		\begin{enumerate}
			\item \label{eq.GreenL2}
			The $\ell^2$ norm is of order $\lambda^{-1}$:
			\begin{equation*}
				\norm{\bar{g}_\lambda}^2_{\ell^2(\mathbb{Z}^2)} \sim \lambda^{-1}.
			\end{equation*}
			\item \label{eq.GreenH1}
			The Dirichlet energy is of order $\log(\lambda^{-1})$:
			\begin{equation*}
				\expec{\bar{g}_\lambda, -\tfrac{1}{2}\Delta_Q\bar{g}_\lambda}_{\ell^2(\mathbb{Z}^2)} =  \tfrac{1}{4\pi\sqrt{\det[\DD(\rho)]}}\log(\lambda^{-1}) + O(1).
			\end{equation*}
			\item \label{eq.GreenDBound}
			The finite difference is uniformly bounded:
			\begin{equation*}
				\sup_{\lambda\in(0,1)}\sup_{i=1,2}\norm{\D_{e_i}\bar{g}_\lambda}_{\ell^\infty(\Zt)}\leq C.
			\end{equation*}
		\end{enumerate}
	\end{proposition}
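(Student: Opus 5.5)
All three items follow from the Fourier representation of $\bar g_\lambda$ together with the time-integral formula \eqref{eq.GreenEx}. Write $\hat q(\theta) := \sum_{y} Q_y (1-\cos(y\cdot\theta))$ for $\theta \in [-\pi,\pi]^2$. Then \eqref{eq.DiscreteHeat} gives
\begin{equation*}
\bar g_\lambda(x) = \frac{1}{(2\pi)^2}\int_{[-\pi,\pi]^2} \frac{e^{i x\cdot\theta}}{\lambda + \hat q(\theta)}\,\d\theta .
\end{equation*}
Since $Q$ is symmetric, of finite range and irreducible, $\hat q(\theta) = \tfrac12 \theta\cdot\cm\theta + O(|\theta|^4)$ near $0$ and $\hat q \geq c|\theta|^2$ on the torus. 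Here $\tfrac12\cm = \DD(\rho)$, so $\hat q(\theta)\approx \theta\cdot \DD\theta$.

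\textbf{Item (1).} By Plancherel,
\begin{equation*}
\norm{\bar g_\lambda}_{\ell^2}^2 = (2\pi)^{-2}\int (\lambda+\hat q)^{-2}\,\d\theta .
\end{equation*}
Using $c|\theta|^2 \leq \hat q \leq C|\theta|^2$ and polar coordinates, this integral is comparable to
\begin{equation*}
\int_0^\pi \frac{r\,\d r}{(\lambda+r^2)^2} \sim \lambda^{-1}.
\end{equation*}

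\textbf{Item (2).} The Dirichlet energy equals
\begin{equation*}
(2\pi)^{-2}\int \frac{\hat q}{(\lambda+\hat q)^2}\,\d\theta .
\end{equation*}
Equivalently, from \eqref{eq.DiscreteHeat}, $\expec{\bar g_\lambda,-\tfrac12\Delta_Q\bar g_\lambda} = \bar g_\lambda(0) - \lambda\norm{\bar g_\lambda}^2_{\ell^2}$.

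This route is probably cleanest. By item (1), the Fourier computation shows that $\lambda\norm{\bar g_\lambda}^2_{\ell^2}$ is bounded. Write
\begin{equation*}
\bar g_\lambda(0) = \int_0^\infty e^{-\lambda t}\,\bar p_t(0,0)\,\d t .
\end{equation*}
Split the integral at $t=1$; the part over $[0,1]$ is $O(1)$. For $t\geq 1$, the local CLT \eqref{eq.LocalCLT} gives
\begin{equation*}
\bar p_t(0,0) = \frac{1}{2\pi t\sqrt{\det\cm}} + O(t^{-2}),
\end{equation*}
and the error term integrates to $O(1)$. Moreover
\begin{equation*}
\int_1^\infty \frac{e^{-\lambda t}}{t}\,\d t = \log(\lambda^{-1}) + O(1).
\end{equation*}
Since $\det\cm = 4\det\DD$, the constant is $\frac{1}{2\pi\cdot 2\sqrt{\det\DD}} = \frac{1}{4\pi\sqrt{\det\DD}}$, as claimed.

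The main care needed is bookkeeping of the constant and confirming that $\lambda\norm{\bar g_\lambda}^2$ is genuinely $O(1)$ rather than contributing a logarithm. The computation $\lambda\int(\lambda+r^2)^{-2}r\,\d r = O(1)$ settles this.

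\textbf{Item (3).} This is the main obstacle: uniformity in $\lambda$ requires exploiting cancellation, since $\bar g_\lambda$ itself diverges. Write
\begin{equation*}
\D_{e_i}\bar g_\lambda(x) = \int_0^\infty e^{-\lambda t}\bigl(\bar p_t(0,x+e_i)-\bar p_t(0,x)\bigr)\,\d t .
\end{equation*}
I would bound $|\bar p_t(0,x+e_i)-\bar p_t(0,x)| \leq C t^{-3/2}$ uniformly in $x$ for $t\geq1$. This is the gradient local CLT, e.g.\ from the Fourier representation
\begin{equation*}
\bar p_t(0,x) = (2\pi)^{-2}\int e^{ix\cdot\theta}e^{-t\hat q(\theta)}\,\d\theta ,
\end{equation*}
which gives
\begin{equation*}
|\bar p_t(0,x+e_i)-\bar p_t(0,x)| \leq (2\pi)^{-2}\int |e^{i\theta_i}-1|\,e^{-ct|\theta|^2}\,\d\theta \leq C t^{-3/2}.
\end{equation*}
The time integral is then bounded by
\begin{equation*}
2 + \int_1^\infty C t^{-3/2}\,\d t < \infty ,
\end{equation*}
independently of $\lambda$ and $x$.
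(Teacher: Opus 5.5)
Your proof is correct. Item (2) follows the paper's argument exactly: you use the identity $\bar g_\lambda(0)-\lambda\norm{\bar g_\lambda}^2_{\ell^2}$ together with the local CLT for $\bar p_t(0,0)$.

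For items (1) and (3) you work on the Fourier side, whereas the paper stays with the time representation \eqref{eq.GreenEx}:
\begin{itemize}
\item For (1), the paper uses the semigroup identity $\norm{\bar g_\lambda}^2_{\ell^2}=\int_0^\infty t e^{-\lambda t}\bar p_t(0,0)\,\d t$ and then the local CLT. Your Plancherel computation gives the two-sided bound directly from $c|\theta|^2\leq \hat q\leq C|\theta|^2$, with no local CLT needed.
\item For (3), the paper applies the mean value theorem to the Gaussian in \eqref{eq.LocalCLT}. You bound the Fourier integral with the factor $|e^{i\theta_i}-1|$ directly.
\end{itemize}

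The paper's route has the economy of using one tool, the local CLT, for all three items.

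Your route in (3) also gives the correct decay rate $Ct^{-3/2}$. The paper's displayed bound \eqref{eq.SemigroupDBound} claims $C/t^2$, which the mean value theorem does not deliver: the gradient of the Gaussian is of order $t^{-3/2}$ when $|x|$ is of order $\sqrt t$. This does not affect the paper's conclusion, since $t^{-3/2}$ is still integrable on $[1,\infty)$.

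Both arguments rest on the same nondegeneracy of $Q$, namely that its support generates $\mathbb{Z}^2$. You need it to get $\hat q\geq c|\theta|^2$ on the whole torus; the paper needs it for the local CLT.
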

	\begin{proof}
		By \eqref{eq.GreenEx} and the symmetry of the random walk, we can compute
		\begin{equation}\label{eq.gL2}
			\begin{split}
				\norm{\bar{g}_\lambda}^2_{\ell^{2}(\mathbb{Z}^2)}
				& = \sum_{x\in\mathbb{Z}^2}\int_{0}^{\infty}\int_{0}^{\infty}e^{-\lambda(s+t)}\bar{p}_s(0,x)\bar{p}_t(0,x)\, \d s\, \d t\\
				&=\int_{0}^{\infty}\int_{0}^{\infty}e^{-\lambda(s+t)}\bar{p}_{s+t}(0,0)\, \d s\, \d t\\
				&=\int_{0}^{\infty}t e^{-\lambda t}\bar{p}_{t}(0,0)\, \d t.
			\end{split}
		\end{equation}
		We can derive \eqref{eq.GreenL2} combining \eqref{eq.LocalCLT}.
		
		Since $\int_{1}^{\infty}\frac{1}{t}e^{-\lambda t} \d t=\log(\lambda^{-1}) + O(1)$, combining \eqref{eq.GreenEx} and \eqref{eq.LocalCLT} we can obtain
		\begin{equation*}
			\bar{g}_\lambda(0) = \tfrac{1}{4\pi\sqrt{\det[\DD(\rho)]}}\log(\lambda^{-1}) + O(1).
		\end{equation*} 
		Testing \eqref{eq.DiscreteHeat} with $\bar{g}_\lambda$ yields:
		\begin{equation*}
			\expec{\bar{g}_\lambda, -\tfrac{1}{2}\Delta_Q\bar{g}_\lambda}_{\ell^2(\mathbb{Z}^2)} = \bar{g}_\lambda(0) - \lambda \norm{\bar{g}_\lambda}^{2}_{\ell^2(\mathbb{Z}^2)}.
		\end{equation*}
		Combining this with estimate \eqref{eq.GreenL2}, we  obtain \eqref{eq.GreenH1}.
		
		Using mean value theorem and \eqref{eq.LocalCLT} we have for any $t\geq 1$:
		\begin{equation}\label{eq.SemigroupDBound}
			\left|\bar{p}_t(0,x)-\bar{p}_t(0,x+e_i)\right|\leq \tfrac{C}{t^2},\qquad\forall x\in\mathbb{Z}^2,\ i=1,2.
		\end{equation}  
		Since $\int_{1}^{\infty}\frac{1}{t^2}e^{-\lambda t} \d t=1 + o(1)$, combining \eqref{eq.GreenEx} and \eqref{eq.SemigroupDBound} we can obtain \eqref{eq.GreenDBound}.
	\end{proof}
	
	\begin{corollary}\label{Cor.TildeGNorm}
		We have the following estimates for $\bar{G}_\lambda$:
		\begin{itemize}
			\item $\norm{\bar{G}_\lambda}^2_{L^2}=O(\lambda^{-1})$.
			\item $\expec{\bar{G}_\lambda,-\Lb\bar{G}_\lambda}=\frac{\chi(\rho)}{4\pi\sqrt{\det[\DD(\rho)]}}\log(\lambda^{-1})+O(1)$.
			\item $\expec{-\Lb\bar{G}_\lambda,-\Lb\bar{G}_\lambda}=\norm{\Lb \bar{G}_\lambda}^2_{L^2}=O(1)$.
		\end{itemize}
	\end{corollary}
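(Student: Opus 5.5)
The plan is to transfer each estimate of Proposition~\ref{prop.GreenEstimate} from $\ell^2(\Zt)$ to $L^2$ through the first-chaos isometry. Since $\bar{G}_\lambda = I_1(\bar{g}_\lambda)$ by \eqref{eq.Gg}, and the variables $\bar{\eta}(x)$ are independent, centered, and of variance $\chi(\rho)$ under $\Pr$, we have for any $h \in \ell^2(\Zt)$
\begin{equation*}
\norm{I_1(h)}^2_{L^2} = \chi(\rho)\norm{h}^2_{\ell^2(\Zt)}.
\end{equation*}
The first bullet then follows immediately from item~\eqref{eq.GreenL2}: $\norm{\bar{G}_\lambda}^2_{L^2} = \chi(\rho)\norm{\bar{g}_\lambda}^2_{\ell^2} = O(\lambda^{-1})$.

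For the other two bullets I need the action of $\Lb$ on $\HH_1$. Compute $\pi_{x,y}\bar{\eta}(z)$: it vanishes unless $z \in \{x,y\}$, and $\pi_{x,y}\bar{\eta}(x) = \eta(y)-\eta(x) = \bar{\eta}(y)-\bar{\eta}(x)$. Summing over pairs in \eqref{eq.Lb} with the symmetric rates $Q_{y-x}=Q_{x-y}$ gives
\begin{equation*}
\Lb I_1(h) = I_1\Ll(\tfrac{1}{2}\Delta_Q h\Rr),
\end{equation*}
with $\tfrac{1}{2}\Delta_Q$ as in \eqref{eq.defDeltaQ}. This is the same computation that justifies \eqref{eq.DiscreteHeat}, so I will only check the factor $\frac12$ and the symmetry once.

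With this identity the second bullet reads $\expec{\bar{G}_\lambda,-\Lb\bar{G}_\lambda} = \chi(\rho)\expec{\bar{g}_\lambda,-\tfrac{1}{2}\Delta_Q\bar{g}_\lambda}_{\ell^2}$, and item~\eqref{eq.GreenH1} gives the constant $\frac{\chi(\rho)}{4\pi\sqrt{\det[\DD(\rho)]}}\log(\lambda^{-1})$ with an $O(1)$ error, since $\chi(\rho)\le 1/4$ is fixed. For the third bullet I use \eqref{eq.DiscreteHeat} to write $\tfrac{1}{2}\Delta_Q\bar{g}_\lambda = \lambda\bar{g}_\lambda - \delta_0$, so that
\begin{equation*}
\norm{\Lb\bar{G}_\lambda}^2_{L^2} = \chi(\rho)\norm{\lambda\bar{g}_\lambda-\delta_0}^2_{\ell^2} \leq 2\chi(\rho)\Ll(\lambda^2\norm{\bar{g}_\lambda}^2_{\ell^2}+1\Rr) = O(\lambda)+O(1) = O(1),
\end{equation*}
again by item~\eqref{eq.GreenL2}.

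All steps are routine; the only point needing care is the generator identity on the first chaos, namely the bookkeeping of the $\frac12$ factor and of $Q$'s symmetry. An alternative is to express the energy through the Dirichlet form: $\sum_{x,y} Q_{y-x}\Er[(\pi_{x,y}\bar G_\lambda)^2]/4$ equals $\chi(\rho)\cdot\frac14\sum_{x,y}Q_{y-x}(\bar g_\lambda(y)-\bar g_\lambda(x))^2$, which gives the same result and serves as a cross-check of the constant.
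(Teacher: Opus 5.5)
Your proposal is correct and is essentially the paper's proof: the paper's ``follow directly'' for the first two bullets is exactly your transfer of items (1) and (2) of Proposition~\ref{prop.GreenEstimate} via $\norm{I_1(h)}^2_{L^2}=\chi(\rho)\norm{h}^2_{\ell^2(\Zt)}$ and $\Lb I_1(h)=I_1(\tfrac12\Delta_Q h)$, and your third bullet uses the resolvent equation in $\ell^2$ where the paper works in $L^2$, which makes no real difference. One slip, in your optional cross-check only: since $\Er[(\pi_{x,y}\bar G_\lambda)^2]=2\chi(\rho)(\bar g_\lambda(y)-\bar g_\lambda(x))^2$, the prefactor there should be $\frac{\chi(\rho)}{2}$ rather than $\frac{\chi(\rho)}{4}$; with $\frac{\chi(\rho)}{2}$ it does agree with $\chi(\rho)\expec{\bar g_\lambda,-\tfrac12\Delta_Q\bar g_\lambda}_{\ell^2(\Zt)}$.
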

	\begin{proof}
		The first two estimates follow from \eqref{eq.GreenL2} and \eqref{eq.GreenH1} in Proposition~\ref{prop.GreenEstimate} directly. The third one can be derived from \eqref{eq.ResolventSEP}:
		\begin{equation*}
			\norm{\Lb\bar{G}_\lambda}_{L^2}=\norm{\lambda\bar{G}_\lambda-\bar{\eta}(0)}_{L^2}\leq \norm{\lambda\bar{G}_\lambda}_{L^2}+\norm{\bar{\eta}(0)}_{L^2}.
		\end{equation*}
		The calculation of $\norm{\lambda\bar{G}_\lambda}_{L^2}$ follows \eqref{eq.Gg} and \eqref{eq.gL2}
		\begin{align*}
			\norm{\lambda\bar{G}_\lambda}_{L^2}^2 = \chi(\rho)\norm{\lambda \bar{g}_\lambda}^2_{\ell^{2}(\mathbb{Z}^2)} = \chi(\rho)\lambda^2 \int_{0}^{\infty} t e^{-\lambda t}\bar{p}_{t}(0,0)\, \d  t \leq \int_{0}^{\infty} s e^{-s }\, \d s \leq 1.
		\end{align*}
	\end{proof}
	
	\subsection{Two-scale expansion}\label{subsec.HomH1}
    Recall the two-scale expansion \eqref{eq.TwoScaleExpan} yields
	\begin{align*}
		\tilde{G}_\lambda:=\bar{G}_\lambda+\sum_{z\in\Z_m}\sum_{i=1}^{d}(\D_{e_i}\bar{g}_\lambda)_{z+\cu_m}\phi_{m,e_i}^z.
	\end{align*}
	Here the difference operator $\D_{e_i}$ is defined in \eqref{eq.defD}, and $(\D_{e_i} \bar{g}_\lambda)_{z+\cu_m}$ is the local average of $\D_{e_i} \bar{g}_\lambda$ in $z+\cu_m$
	\begin{align}\label{eq.DgLocalAvg}
		(\D_{e_i} \bar{g}_\lambda)_{z+\cu_m} := \frac{1}{\vert \cu_m \vert} \sum_{x \in z + \cu_m } \D_{e_i} \bar{g}_\lambda(x),
	\end{align}
	and $\phi_{m,e_i}^z$ is the local corrector defined in \cite[Section 4.1]{gu2025relaxation}. Roughly, it is an approximation of the minimization in \eqref{eq.defQuadra}. Another related quantity is called  \emph{the centered flux}
	\begin{align}\label{eq.defFlux}
		\g_{m,e_i,b}^z := c_b  \pi_b (\ell_{e_i} + \phi^z_{m, e_i} ) - \pi_b \ell_{\DD(\rho) e_i},
	\end{align}
	where $\ell_{e_i}$ is the affine function defined as:
	\begin{equation}\label{eq.defaffinefuc}
		\ell_{e_i}:=\sum_{x \in \Zd} (e_i \cdot x) \eta(x) .
	\end{equation}
	$\g_{m,e_i, b}^z$ can create spatial cancellation, which plays a similar role as Varadhan's gradient replacement (see \cite[Chapter~7.1]{kipnis1998scaling}). We list the important properties about $\phi_{m,e_i}^z, \g_{m,e_i,b}^z$ below.
	
	\begin{lemma}\label{lem.Homogenization}
		The following estimates hold for the corrector $\phi_{m,e_i}^z$ and the flux $\g_{m,e_i, b}^z$:
		\begin{enumerate}
			\item\label{lem.Correctorlocal} \cite[Proposition~4.3~(1)]{gu2025relaxation}
			The local corrector $\phi^z_{m, e_i}$ is a local function in $\F_0({z+\cu_m^-})$.
			\item\label{lem.CorrectorMean} \cite[Proposition 4.3 (2)]{gu2025relaxation} The local corrector $\phi^z_{m,e_i}$ is centered  
			\begin{align*}
				\E_\rho \Ll[\phi^z_{m,e_i}\Rr] = 0.
			\end{align*}
			\item\label{lem.CorrectorH1L2} \cite[Proposition 4.3 (3)]{gu2025relaxation} The local corrector $\phi^z_{m,e_i}$ satisfies the following $H^1$ and $L^2$ estimates: for all ${\rho \in (0,1)}, m \in \N$ and $z \in \Z_m$, we have 
			\begin{align*}
				\sum_{b\in\ov{(z+\cu_m)^*}}\E_{\rho}\left[c_b(\eta)\left(\pi_b\phi^z_{m,e_i}\right)^2\right] &\leq 16\, (\tfrac{c_+}{c_-})\chi(\rho)3^{dm},\\
				\E_\rho\Ll[\Ll(\phi^z_{m,e_i}\Rr)^2 \Rr] &\leq 16\, (\tfrac{c_{+}}{c_{-}})\, \chi(\rho) 3^{(d+2)m}.
			\end{align*}
			\item\label{lem.FluxReplacement}\cite[Proposition 4.3 (4)]{gu2025relaxation}
			There exists an exponent $\alpha(d,c_{-},c_{+},\r) \in \Ll(0,\frac{1}{2}\Rr)$ and a positive constant $C(d,c_{-},c_{+},\r) < \infty$, such that for every $v : \X \to \R$, we have 
			\begin{align*}
				\Ll\vert \frac{1}{\vert \cu_m \vert} \sum_{b \in \ov{(z+\cu_m)^*} }\bracket{\pi_b v, \g_{m,e_i,b}^z} \Rr\vert \leq C 3^{-\alpha m}\Ll(\frac{1}{\vert \cu_m \vert} \sum_{b \in \ov{(z+\cu_m)^*} }\E_\rho\Ll[(\pi_b v)^2\Rr]\Rr)^{\frac{1}{2}}.
			\end{align*}
			\item\label{lem.CorrectorLp} \cite[Lemma 4.4]{funaki2024quantitative}
			There exists a constant $C(d,c_{-},c_{+})$ such that the $L^\infty$ and $L^{p}$ norm for the corrector $\phi_{m,e_i}^z$ satisfy the following estimates:
			\begin{align*}
				&\norm{\phi_{m,e_i}^z}_{L^\infty}\leq Cm3^{(d+2)m},\qquad\forall m\in\N,\  z\in \Z_m,\\
				&\norm{\phi_{m,e_i}^z}_{L^p}\leq C(p)3^{(d+2)m},\qquad \forall p\in[1,\infty),\quad \forall m\in\N,\  z\in \Z_m.
			\end{align*}
		\end{enumerate}
	\end{lemma}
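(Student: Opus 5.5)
The lemma collects properties of the local corrector proved in \cite{gu2025relaxation,funaki2024quantitative}. The plan is to reconstruct them from one variational definition of $\phi^z_{m,e_i}$, in order of increasing difficulty. I take $\phi^z_{m,e_i}$ to be the minimizer, over $v \in \F_0(z+\cu_m^-)$, of the local energy
\begin{align*}
	\frac{1}{\vert \cu_m\vert}\sum_{b \in \ov{(z+\cu_m)^*}} \Er\Ll[c_b\Ll(\pi_b(\ell_{e_i} + v)\Rr)^2\Rr].
\end{align*}
This is a finite-dimensional, strictly convex problem modulo functions annihilated by all $\pi_b$, namely functions of the particle number in the cube. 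I fix that ambiguity by requiring $\phi$ to have zero conditional mean given the number of particles in $z+\cu_m^-$.

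Items (1) and (2) then hold by construction. Locality is built into the admissible class. Subtracting the mean does not change any $\pi_b \phi$, and the conditional-centering normalization already forces $\Er[\phi^z_{m,e_i}]=0$.

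For item (3), I compare the minimal energy with the competitor $v=0$. Since $c_b \leq c_+$ and $\Er[(\pi_b \ell_{e_i})^2] \leq 2\chi(\rho)$, there are $O(3^{dm})$ bonds and $c_b \geq c_-$, the minimality together with the triangle inequality bound the Dirichlet energy of $\phi$ by $C(c_+/c_-)\chi(\rho)3^{dm}$. The $L^2$ bound then follows from the spectral gap of the exclusion process on $z+\cu_m^-$ restricted to a fixed particle number, which is of order $3^{-2m}$ uniformly in the number of particles (a classical estimate). Applying this canonical-ensemble Poincar\'e inequality to the conditionally centered $\phi$ and averaging over particle number gives $\Er[\phi^2]\leq C3^{2m}\cdot 3^{dm}$.

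For item (5), I would upgrade the $L^2$ bound to $L^p$ and $L^\infty$. My first attempt would be a Moser/De Giorgi-type iteration on the Euler--Lagrange equation of the minimization. The alternative is a concentration argument on each canonical ensemble, for instance a logarithmic Sobolev inequality for exclusion in the cube with constant of order $3^{2m}$ (Yau), whose Herbst argument gives $L^p$ moments at the same scale. The extra factor $m$ in the $L^\infty$ bound presumably comes from a union or logarithmic loss over the configurations of the cube, which suggests that the log-Sobolev route is the one that produces it.

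The main obstacle is item (4), the quantitative flux replacement with rate $3^{-\alpha m}$. The plan is the subadditive scheme of quantitative homogenization adapted to the particle system, following \cite{bulk,funaki2024quantitative}. First, introduce the dual pair of subadditive quantities $\nu(\cu_m,p)$ (the energy above) and $\nu^*(\cu_m,q)$, and show that both converge to the quadratic forms of $\ccc(\rho)$ and $\ccc(\rho)^{-1}$ respectively. Second, obtain an algebraic rate through the iteration $\tau_m \leq C(\tau_{m+1}-\tau_m)$, driven by a multiscale Poincar\'e inequality and the spectral gap above. Third, use the first variation of $\nu$ to see that the flux $c_b \pi_b(\ell_{e_i}+\phi)$ is orthogonal to local test functions inside the cube. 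The remaining boundary and averaging defect is controlled by $\nu - \nu^* \lesssim 3^{-\alpha m}$, and by identifying the limit with $\pi_b \ell_{\DD(\rho)e_i}$ through the Einstein relation \eqref{eq.Einstein}. The final step is Cauchy--Schwarz against $\pi_b v$.
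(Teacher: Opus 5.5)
The paper does not prove this lemma. Items (1)--(4) are imported from \cite[Proposition~4.3]{gu2025relaxation} and item (5) from \cite[Lemma~4.4]{funaki2024quantitative}, so your reconstruction has to stand on its own. Two steps in it do not.

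\textbf{The kernel of the energy, and with it the normalization and the $L^2$ bound in (3).} The bond set $\ov{(z+\cu_m)^*}$ contains the bonds $\{x,x+e_j\}$ with $x\in z+\cu_m^-$ and $x+e_j\in\partial(z+\cu_m)$. For $w\in\F_0(z+\cu_m^-)$, such a bond acts as a resampling of $\eta(x)$ from an independent density-$\rho$ reservoir. Hence a nonconstant function of $N:=\sum_{x\in z+\cu_m^-}\eta(x)$ has strictly positive energy, and by connectedness of $z+\cu_m^-$ the only zero-energy functions are the constants. The minimizer is therefore unique up to an additive constant, and (2) is just the normalization $\Er[\phi^z_{m,e_i}]=0$; you cannot additionally require $\Er[\phi\mid N]=0$. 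If you impose that as a constraint, you minimize over a smaller space. The Euler--Lagrange equation then in general fails for test functions $f(N)$, and you are no longer working with the paper's corrector. In particular you lose the orthogonality of the flux to $\pi_b w$ for all $w\in\F_0(z+\cu_m^-)$, which you invoke for (4). For the true minimizer, $\Er[\phi\mid N]$ is in general nonzero, so the canonical-ensemble gap does not yield the $L^2$ bound. What you need is a Poincar\'e inequality that uses the boundary bonds,
\begin{align*}
\var_\rho[w]\leq C3^{2m}\sum_{b\in\ov{(z+\cu_m)^*}}\Er\Ll[(\pi_b w)^2\Rr],\qquad w\in\F_0(z+\cu_m^-),
\end{align*}
that is, the order-$3^{-2m}$ gap of exclusion in $z+\cu_m^-$ with density-$\rho$ reservoirs. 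One way to get it is to apply the canonical gap on a cube of twice the side containing $z+\cu_m$. Conditioning an $\F_0(z+\cu_m^-)$-function on the particle number of the larger cube contracts its variance by a factor bounded away from $1$.

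\textbf{Item (5).} Neither route you propose works as stated. De Giorgi--Moser needs a Sobolev inequality on $\{0,1\}^{z+\cu_m^-}$ whose constants do not degenerate with the dimension $3^{dm}$, and none is available. Herbst needs a pointwise bound on $\sum_b c_b(\pi_b\phi)^2$, whereas (3) controls it only in $L^1(\Pr)$. Note also that the exponent $(d+2)m$ in (5) is twice the one in $\norm{\phi}_{L^2}\lesssim 3^{(d+2)m/2}$, so (5) is not a concentration upgrade of (3).

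The actual mechanism starts from the Euler--Lagrange equation, $-A\phi^z_{m,e_i}=g$. Here $Aw:=\Er[\sum_b c_b\pi_b w\mid\fil_{z+\cu_m^-}]$ is a reversible generator on $\F_0(z+\cu_m^-)$ (exclusion plus reservoir flips), and $g:=\Er[\sum_b c_b\pi_b\ell_{e_i}\mid\fil_{z+\cu_m^-}]$ satisfies $\norm{g}_{L^\infty}\leq C3^{dm}$. Write $\phi=\int_0^\infty e^{tA}g\,\d t$. Use $L^\infty$-contractivity for short times, then the $3^{-2m}$ gap together with hypercontractivity from a log-Sobolev inequality with constant of order $3^{2m}$. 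This gives $C(p)3^{2m}\cdot 3^{dm}$ in $L^p$. In $L^\infty$ the factor $m$ comes from the time needed to reach $L^\infty$: it is of order $3^{2m}$ times $\log\log$ of the inverse smallest cylinder probability on $z+\cu_m^-$, which is of order $m3^{2m}$.

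\textbf{Item (4).} Your outline skips the step that handles an arbitrary $v$. First replace $v$ by $\Er[v\mid\fil_{N_\r(z+\cu_m^+)}]$. This leaves the left-hand side unchanged, since $\g^z_{m,e_i,b}$ is measurable for that $\sigma$-algebra and $\pi_b$ commutes with the conditioning, and it does not increase the right-hand side. Then split the result into a part in $\F_0(z+\cu_m^-)$, which the first variation kills, and an $\L$-harmonic part in the cube. Bound the harmonic part through the smallness of the dual quantity $J=\nu+\nu^*-p\cdot q$ at the homogenized flux. ``$\nu-\nu^*$ small'' together with the first variation of $\nu$ alone does not control the pairing with a general $v$.
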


	Based on Lemma~\ref{lem.Homogenization} above, the properties about the two-scale expansion $\tilde{G}_\lambda$ are established. Especially, the heuristic $\tilde{G}_\lambda \simeq \bar{G}_\lambda \simeq G_\lambda$ is valid. We cite the useful results developed in \cite[Section 4.2]{gu2025relaxation}. We highlight that, both (4) of Lemma~\ref{lem.Homogenization} and (2) of Proposition~\ref{prop.L2FluxErr} aim to realize the replacement argument.
	
	\begin{proposition}\label{prop.L2FluxErr}
		We have the following estimates for the two-scale expansion $\tilde{G}_\lambda$:
		\begin{enumerate}
			\item\label{prop.L2Err} \cite[Lemma 4.5]{gu2025relaxation} There exists a finite positive constant $C(d, c_{-},c_{+})$ such that 
			\begin{equation*}
				\norm{\tilde{G}_\lambda-\bar{G}_\lambda}^2_{L^2}\leq C3^{2m}\expec{\bar{G}_\lambda,-\Lb \bar{G}_\lambda}.
			\end{equation*}
			\item\label{prop.FluxErr} \cite[Lemma 4.7]{gu2025relaxation} There exists an exponent $\alpha(d,c_{-},c_{+}, \r) > 0$ and a finite positive constant $C(d,c_{-},c_{+}, \r,\rho)$ such that the following estimate holds for any $V\in L^2$:
			\begin{equation*}
				\left|\expec{V,-\L\tilde{G}_\lambda+\Lb \bar{G}_\lambda}\right|\leq C\expec{V,-\Lb V}^{\frac{1}{2}}\left(3^{-\alpha m}\expec{\bar{G}_\lambda,-\Lb \bar{G}_\lambda}^{\frac{1}{2}}+3^m \expec{-\Lb \bar{G}_\lambda,-\Lb \bar{G}_\lambda}^{\frac{1}{2}}\right).
			\end{equation*}
		\end{enumerate}
	\end{proposition}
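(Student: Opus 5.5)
The plan is to treat the two estimates separately. Both reduce, through the chaos structure $\bar G_\lambda=I_1(\bar g_\lambda)$, to bounds on $\D_{e_i}\bar g_\lambda$ and on its second differences. Throughout I write $a_{z,i}:=(\D_{e_i}\bar g_\lambda)_{z+\cu_m}$. I also use the identities $\expec{\bar G_\lambda,-\Lb\bar G_\lambda}=\chi(\rho)\expec{\bar g_\lambda,-\tfrac12\Delta_Q\bar g_\lambda}_{\ell^2}$ and $\norm{\Lb\bar G_\lambda}_{L^2}^2=\chi(\rho)\norm{\tfrac12\Delta_Q\bar g_\lambda}^2_{\ell^2}$. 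Finally, I use the comparison $\sum_x\sum_i|\D_{e_i}\bar g_\lambda(x)|^2\le C\expec{\bar g_\lambda,-\tfrac12\Delta_Q\bar g_\lambda}_{\ell^2}$; this is valid because $Q$ is a nondegenerate finite-range symmetric kernel, and its analogue on $\X$ compares the Dirichlet forms of $\Lb$, $\Lbssep$ and $\L$.

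\emph{Step 1 (estimate (1)).} By Lemma~\ref{lem.Homogenization}(1), $\phi^z_{m,e_i}$ is $\fil_{z+\cu_m^-}$-measurable, and these cubes are disjoint for distinct $z\in\Z_m$. By Lemma~\ref{lem.Homogenization}(2) the correctors are centered. Since $\Pr$ is a product measure, the blocks $\sum_i a_{z,i}\phi^z_{m,e_i}$ for different $z$ are independent and centered, hence orthogonal in $L^2$. This gives
\begin{equation*}
\norm{\tilde G_\lambda-\bar G_\lambda}^2_{L^2}\le d\sum_{z,i}a_{z,i}^2\,\E_\rho\big[(\phi^z_{m,e_i})^2\big]\le C3^{(d+2)m}\sum_{z,i}a_{z,i}^2 ,
\end{equation*}
where the second inequality is Lemma~\ref{lem.Homogenization}(3). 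Jensen's inequality on each cube yields $\sum_z a_{z,i}^2\le 3^{-dm}\sum_x|\D_{e_i}\bar g_\lambda(x)|^2$. The comparison above then closes the bound as $C3^{2m}\expec{\bar G_\lambda,-\Lb\bar G_\lambda}$.

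\emph{Step 2 (estimate (2): decomposition).} I write $\expec{V,-\L\tilde G_\lambda}=\sum_b\E_\rho[c_b\,\pi_bV\,\pi_b\tilde G_\lambda]$ over nearest-neighbour bonds and group the bonds by cubes $z+\cu_m$ using $\ov{(z+\cu_m)^*}$. For $b$ in cube $z$, the function $\phi^{z'}$ with $z'\ne z$ is invariant under $\pi_b$ by locality, so $\pi_b\tilde G_\lambda=\pi_b\bar G_\lambda+\sum_i a_{z,i}\pi_b\phi^z_{m,e_i}$. Using the definition \eqref{eq.defFlux} of the flux, I split
\begin{equation*}
c_b\pi_b\tilde G_\lambda=\sum_i a_{z,i}\,\g^z_{m,e_i,b}+\sum_i a_{z,i}\,\pi_b\ell_{\DD(\rho)e_i}+c_b\Big(\pi_b\bar G_\lambda-\sum_i a_{z,i}\pi_b\ell_{e_i}\Big).
\end{equation*}
I then estimate the three contributions in turn.
\begin{itemize}
\item \emph{Flux term.} Lemma~\ref{lem.Homogenization}(4) on each cube, followed by Cauchy--Schwarz over $z$, bounds it by $C3^{-\alpha m}\big(|\cu_m|\sum_{z,i}a_{z,i}^2\big)^{1/2}\big(\sum_b\E_\rho[(\pi_bV)^2]\big)^{1/2}$. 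By Step 1's Jensen bound and the Dirichlet comparison, this is at most $C3^{-\alpha m}\expec{\bar G_\lambda,-\Lb\bar G_\lambda}^{1/2}\expec{V,-\Lb V}^{1/2}$.
\item \emph{Local-slope term.} On $b=\{x,x+e_j\}$ we have $\pi_b\bar G_\lambda=\D_{e_j}\bar g_\lambda(x)(\eta(x)-\eta(x+e_j))$ and $\pi_b\ell_{e_i}=\delta_{ij}(\eta(x)-\eta(x+e_j))$. The term therefore involves only $|\D_{e_j}\bar g_\lambda(x)-a_{z,j}|$. A Poincar\'e inequality on the cube bounds this in $\ell^2$ by $C3^m$ times the second differences of $\bar g_\lambda$. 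Those are controlled by $\norm{\Delta_Q\bar g_\lambda}_{\ell^2}$, using discrete elliptic regularity on $\Zt$ via Fourier. This produces the term $3^m\expec{-\Lb\bar G_\lambda,-\Lb\bar G_\lambda}^{1/2}$.
\item \emph{Homogenized term.} Here $\sum_b\E_\rho[\pi_bV\sum_i a_{z,i}\pi_b\ell_{\DD(\rho)e_i}]$ has to be compared with $\expec{V,-\Lb\bar G_\lambda}=\tfrac12\sum_{x,y}Q_{y-x}\E_\rho[\pi_{x,y}V\,\pi_{x,y}\bar G_\lambda]$.
\end{itemize}

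\emph{Main obstacle.} I expect the homogenized term to be the delicate step. $\Lb$ has finite range $Q$, while the flux lives on nearest-neighbour bonds, so the two Dirichlet forms must be matched. My plan is to telescope each long jump $\pi_{x,y}$ along a lattice path of nearest-neighbour exchanges, and to replace the slopes by the local constants $a_{z,i}$ up to second-difference errors, which are again absorbed into the $3^m\norm{\Lb\bar G_\lambda}$ term. What remains after replacement is a purely linear-algebraic identity at constant slope: with constant slope $\xi$, the nearest-neighbour form with matrix $\DD(\rho)$ must coincide with the $Q$-form. This is exactly the defining property of $Q$ from \cite[Corollary~2.3]{gu2025relaxation}. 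Summing the three contributions then gives the claimed bound with $V$ measured in $\expec{V,-\Lb V}^{1/2}$.
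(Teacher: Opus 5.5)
Your proposal is correct and follows essentially the route behind the cited Lemmas 4.5 and 4.7 of \cite{gu2025relaxation}, whose structure the paper reproduces in the proof of Proposition~\ref{prop.L2FluxDeg2}. For (1) this is orthogonality of the cube blocks, Jensen, and the corrector $L^2$ bound. For (2) it is a flux-replacement term handled by item (4) of Lemma~\ref{lem.Homogenization} plus a slope-fixing error handled by Poincar\'e and second differences; the randomness term $\mathbf{F.3}$ is absent here because the slopes $a_{z,i}$ are deterministic. Your extra step matching the nearest-neighbour $\DD(\rho)$-form with the finite-range $Q$-form is sound. It is cleanest to run it on the first-chaos projection $v(y)=\E_\rho[V\bar\eta(y)]$: both pairings see $V$ only through $v$, so you telescope $v(x+h)-v(x)$ rather than $\pi_{x,y}V$, and the identity $\DD(\rho)=\tfrac12\sum_h Q_h hh^{T}$ closes the argument.
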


	In the next step, we further develop the following proposition about $\sum_{b\in(\Zt)^*}\left(c_b(\pi_b\tilde{G}_\lambda)^2\right)$, which is the carr\'e du champ of $\tilde{G}_\lambda$ in dimension $d=2$. It will allow us to understand the quadratic variation of the martingale in \eqref{eq.defTildeMt}.
	
	\begin{proposition}[Homogenization of carr\'e du champ]\label{prop.H1TwoScale}
		There exists a finite positive constant $C(c_{-},c_{+},\r,\rho)$ such that for every $\lambda\in(0,\tfrac{1}{2})$, the following estimates about the expectation and variance of the carr\'e du champ of $\tilde{G}_\lambda$ hold:
		\begin{multline}\label{eq.H1TwoScaleExpect}
			\Ll|\E_\rho\Ll[\sum_{b\in(\Zt)^*}\left(c_b(\pi_b\tilde{G}_\lambda)^2\right)\left(\eta\right)\Rr]-2\expec{\bar{G}_\lambda,-\Lb\bar{G}_\lambda}\Rr|\\
			\leq C\sqrt{\log (\lambda^{-1})}(3^{-\alpha m}\sqrt{\log(\lambda^{-1})}+3^m),
		\end{multline}
		\begin{equation}\label{eq.H1TwoScaleVar}
			\var_\rho\left[\sum_{b\in(\Zt)^*}\left(c_b(\pi_b\tilde{G}_\lambda)^2\right)(\eta)\right]\leq C 3^{18m}\log(\lambda^{-1}).
		\end{equation} 
	\end{proposition}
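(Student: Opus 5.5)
For the expectation \eqref{eq.H1TwoScaleExpect}, I would rewrite the left-hand side with reversibility as $2\expec{\tilde{G}_\lambda,-\L\tilde{G}_\lambda}$, since $\E_\rho[\sum_b c_b(\pi_b F)^2] = 2\expec{F,-\L F}$. I then split
\begin{equation*}
\expec{\tilde{G}_\lambda,-\L\tilde{G}_\lambda}-\expec{\bar{G}_\lambda,-\Lb\bar{G}_\lambda} = \expec{\tilde{G}_\lambda,-\L\tilde{G}_\lambda+\Lb\bar{G}_\lambda} + \expec{\tilde{G}_\lambda-\bar{G}_\lambda,-\Lb\bar{G}_\lambda}.
\end{equation*}
The first term is handled by Proposition~\ref{prop.L2FluxErr}~(2) with $V=\tilde{G}_\lambda$. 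This needs $\expec{\tilde{G}_\lambda,-\Lb\tilde{G}_\lambda}\lesssim \log(\lambda^{-1})$, which I would obtain as follows. The Dirichlet form of $\bar{G}_\lambda$ is controlled by Corollary~\ref{Cor.TildeGNorm}. The corrector part has energy bounded by $\sum_z |(\D_{e_i}\bar g_\lambda)_{z+\cu_m}|^2 3^{dm}$ by Lemma~\ref{lem.Homogenization}~(3), and this is of order $\sum_x |\D\bar g_\lambda(x)|^2\sim\log\lambda^{-1}$. Here I use that $\Lb$ and $\L$ have comparable Dirichlet forms, both comparable to that of SSEP, and that the corrector supports are disjoint. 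Together with Corollary~\ref{Cor.TildeGNorm}, the first term is then bounded by $C\sqrt{\log\lambda^{-1}}(3^{-\alpha m}\sqrt{\log\lambda^{-1}}+3^m)$. For the second term, Cauchy--Schwarz, Proposition~\ref{prop.L2FluxErr}~(1), and $\norm{\Lb\bar G_\lambda}_{L^2}=O(1)$ give a bound $C3^m\sqrt{\log\lambda^{-1}}$. This yields \eqref{eq.H1TwoScaleExpect}.

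For the variance \eqref{eq.H1TwoScaleVar}, I would write $\Phi:=\sum_b c_b(\pi_b\tilde G_\lambda)^2=\sum_b \Psi_b$. I use that $\pi_b\tilde G_\lambda$ is local: $\pi_b\bar G_\lambda=\D\bar g_\lambda(b)\,(\eta(y)-\eta(x))$ up to sign, and $\pi_b\phi^z_{m,e_i}$ is nonzero only for $b$ touching $z+\cu_m$. Hence $\Psi_b$ is measurable with respect to $\fil$ of an $O(3^m+\r)$-neighbourhood of $b$, with coefficients that are weighted by the averages $(\D\bar g_\lambda)_{z+\cu_m}$ of at most a bounded number of neighbouring cubes. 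Under the product measure, $\cov(\Psi_b,\Psi_{b'})=0$ whenever the dependence sets are disjoint. Therefore
\begin{equation*}
\var[\Phi]\le \sum_{b}\sum_{b':\,|b-b'|\lesssim 3^m}|\cov(\Psi_b,\Psi_{b'})| \le C3^{dm}\sum_b \norm{\Psi_b}_{L^2}^2 .
\end{equation*}
Next, $\norm{\Psi_b}_{L^2}\lesssim \big(\sum_{z\text{ near }b}\sum_i|(\D_{e_i}\bar g_\lambda)_{z+\cu_m}|+|\D\bar g_\lambda(b)|\big)^2 \cdot \sup\norm{\pi_b\phi}_{L^4}^2$. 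I would bound $\norm{\pi_b\phi}_{L^4}\le 2\norm{\phi}_{L^4}\le C3^{(d+2)m}$ by Lemma~\ref{lem.Homogenization}~(5). Combined with the uniform bound on $\D\bar g_\lambda$ from Proposition~\ref{prop.GreenEstimate}~(3), this gives $\norm{\Psi_b}_{L^2}^2\lesssim 3^{4(d+2)m}\,(\text{local average of }|\D\bar g_\lambda|^2)$. Summing over $b$ and using Jensen for local averages, the total is bounded by $C3^{dm}3^{4(d+2)m}\sum_x|\D\bar g_\lambda(x)|^2\lesssim 3^{(5d+8)m}\log\lambda^{-1}$. With $d=2$ the exponent is $18m$, which is exactly the claimed power. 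This confirms the plan: use the fourth moment bound $L^4$ in $\Psi_b=c_b(\pi_b\tilde G)^2$, and $3^{dm}$ from the number of correlated bonds.

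The main obstacle is the bookkeeping of the variance: making the finite-range decorrelation rigorous given that $\Psi_b$ involves the averaged slopes of adjacent cubes. Bonds crossing cube boundaries see two correctors. One also has to use $\sum_x|\D\bar g_\lambda|^2\lesssim\log\lambda^{-1}$ rather than the crude $\ell^\infty$ bound, which alone would give a divergent sum. I would first try to control $\norm{\Psi_b}_{L^2}^2$ by Hölder with the $L^p$ corrector bounds, being careful that the power of $3^m$ stays at most $18m$.
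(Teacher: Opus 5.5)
Your proposal is correct and follows essentially the same route as the paper. For the expectation you use the same splitting, handled by Proposition~\ref{prop.L2FluxErr} together with the corrector-energy bound $\expec{\tilde{G}_\lambda,-\Lb\tilde{G}_\lambda}\leq C\expec{\bar{G}_\lambda,-\Lb\bar{G}_\lambda}$. For the variance you use the same finite-range decorrelation, $L^4$ corrector bound, Jensen on the local averages and $\norm{\D_{e_i}\bar{g}_\lambda}_{\ell^\infty(\Zt)}\leq C$, giving $3^{2m}\cdot 3^{16m}\log(\lambda^{-1})$. The only cosmetic difference is that the paper groups bonds cube by cube into blocks $X^z_m$ (independent for non-adjacent cubes) rather than working bond by bond. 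Also, since $\phi^z_{m,e_i}\in\F_0(z+\cu_m^-)$, each bond sees the corrector of at most one cube, so the boundary bookkeeping you flagged is trivial.
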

	\begin{proof}
		\textit{Step~1: the expectation.}
		We first prove estimate \eqref{eq.H1TwoScaleExpect}. Using
		\begin{equation*}
			\E_\rho\Ll[\sum_{b\in(\Zt)^*}\left(c_b(\pi_b\tilde{G}_\lambda)^2\right)\left(\eta\right)\Rr]=2\expec{\tilde{G}_\lambda,-\L\tilde{G}_\lambda},
		\end{equation*} 
		we can decompose the left-hand side of \eqref{eq.H1TwoScaleExpect} into two terms:
		\begin{multline}\label{eq.DecomFluxL2}
			\Ll|\E_\rho\Ll[\sum_{b\in(\Zt)^*}\left(c_b(\pi_b\tilde{G}_\lambda)^2\right)\left(\eta\right)\Rr]-2\expec{\bar{G}_\lambda, -\Lb\bar{G}_\lambda}\Rr|\\
			\leq 2\Ll|\expec{\tilde{G}_\lambda, -\L\tilde{G}_\lambda+\Lb\bar{G}_\lambda}\Rr| + 2\Ll|\expec{\tilde{G}_\lambda-\bar{G}_\lambda, -\Lb\bar{G}_\lambda}\Rr|.
		\end{multline}
		We then treat the two terms on the {\rhs} respectively.
	
		For the first term, we can use \eqref{prop.FluxErr} in Proposition~\ref{prop.L2FluxErr} to obtain:
		\begin{multline}\label{eq.ExpectFlux0}
			\Ll|\expec{\tilde{G}_\lambda, -\L\tilde{G}_\lambda+\Lb\bar{G}_\lambda}\Rr|\\ 
			\leq C \expec{\tilde{G}_\lambda,-\Lb\tilde{G}_\lambda}^{\frac{1}{2}}(3^{-\alpha m}\expec{\bar{G}_\lambda,-\Lb\bar{G}_\lambda}^{\frac{1}{2}}+3^m\expec{-\Lb\bar{G}_\lambda,-\Lb\bar{G}_\lambda}^{\frac{1}{2}}).
		\end{multline}
		We can control $\expec{\tilde{G}_\lambda,-\Lb\tilde{G}_\lambda}$ by $\expec{\bar{G}_\lambda,-\Lb\bar{G}_\lambda}$. A detailed computation yields 
		\begin{equation*}
			\begin{aligned}
				\expec{\tilde{G}_\lambda-\bar{G}_\lambda,-\Lb\left(\tilde{G}_\lambda-\bar{G}_\lambda\right)}
				&\leq C\expec{\tilde{G}_\lambda-\bar{G}_\lambda, -\L\left(\tilde{G}_\lambda-\bar{G}_\lambda\right)}\\
				&=\frac{C}{2}\sum_{z\in\Z_m}\sum_{b\in\ov{(z+\cu_m)^*}}\sum_{i=1}^{2}\left(\D_{e_i}\bar{g}_\lambda\right)^2_{z+\cu_m}\E_{\rho}\left[c_b(\eta)\left(\pi_b\phi^z_{m,e_i}\right)^2\right]\\
				&\leq C\sum_{x\in\Zt}\sum_{i=1}^{2}\frac{1}{|\cu_m|}\left(\D_{e_i}\bar{g}_\lambda(x)\right)^2\cdot 3^{2m}\\
				&\leq C\expec{\bar{G}_\lambda,-\Lb\bar{G}_\lambda}.
			\end{aligned}
		\end{equation*}
		Here in the first line, we use the uniform ellipticity in (1) of Hypothesis~\ref{hyp}. In the second line, we use the expression \eqref{eq.TwoScaleExpan} and the locality of the corrector; see \eqref{lem.Correctorlocal} in Lemma~\ref{lem.Homogenization}. In the third line, we use Cauchy--Schwarz inequality and \eqref{lem.CorrectorH1L2} in Lemma~\ref{lem.Homogenization}. The last line follows the explicit calculation in \eqref{eq.Gg}. 	Therefore, by the triangular inequality we have
		\begin{equation}\label{eq.TildeGH1}
			\expec{\tilde{G}_\lambda,-\Lb\tilde{G}_\lambda}^{\frac{1}{2}}\leq \expec{\tilde{G}_\lambda-\bar{G}_\lambda,-\Lb\left(\tilde{G}_\lambda-\bar{G}_\lambda\right)}^{\frac{1}{2}}+\expec{\bar{G}_\lambda,-\Lb\bar{G}_\lambda}^{\frac{1}{2}}\leq C\expec{\bar{G}_\lambda,-\Lb\bar{G}_\lambda}^{\frac{1}{2}}.
		\end{equation}
		Combining \eqref{eq.ExpectFlux0}, \eqref{eq.TildeGH1}, and Corollary~\ref{Cor.TildeGNorm}, we obtain an upper bound for the first term in \eqref{eq.DecomFluxL2}:
		\begin{equation}\label{eq.ExpectFlux}
			\Ll|\expec{\tilde{G}_\lambda, -\L\tilde{G}_\lambda+\Lb\bar{G}_\lambda}\Rr| \leq C\sqrt{\log(\lambda^{-1})}(3^{-\alpha m}\sqrt{\log(\lambda^{-1})}+3^m).
		\end{equation}
		
		For the second term in \eqref{eq.DecomFluxL2}, combining \eqref{prop.L2Err} in Proposition~\ref{prop.L2FluxErr} and the estimate for $\expec{-\Lb\bar{G}_\lambda,-\Lb\bar{G}_\lambda}$ in Corollary~\ref{Cor.TildeGNorm}, we have
		\begin{equation}\label{eq.ExpectL2}
			\Ll|\expec{\tilde{G}_\lambda-\bar{G}_\lambda, -\Lb\bar{G}_\lambda}\Rr| \leq \norm{\tilde{G}_\lambda-\bar{G}_\lambda}_{L^2}\norm{\Lb\bar{G}_\lambda}_{L^2} \leq C 3^{m}\sqrt{\log(\lambda^{-1})}.
		\end{equation}
		We can obtain the desired estimate \eqref{eq.H1TwoScaleExpect} combining \eqref{eq.DecomFluxL2}, \eqref{eq.ExpectFlux}, and \eqref{eq.ExpectL2}.
		
		\smallskip
		\textit{Step~2: the variance.} We next prove the variance estimate \eqref{eq.H1TwoScaleVar}. We can expand $\sum_{b\in(\Zt)^*}\left(c_b(\pi_b\tilde{G}_\lambda)^2\right)\left(\eta\right)$ as:
		\begin{multline}\label{eq.H1TwoScaleCom}
			\sum_{b\in(\Zt)^*}\left(c_b(\pi_b\tilde{G}_\lambda)^2\right)\left(\eta\right) \\
			= \sum_{b\in(\Zt)^*}c_b(\eta)\Ll(\pi_b\bar{G}_\lambda(\eta)+\pi_b\left(\sum_{z\in\Z_m}\sum_{i=1}^{2}(\D_{e_i}\bar{g}_\lambda)_{z+\cu_m}\phi^z_{e_i,m}\right)\Rr)^2.
		\end{multline}
		For a bond $b=\{x,y\}$, we can compute the first term explicitly:
		\begin{equation*}
			\pi_b \bar{G}_\lambda(\eta) = (\bar{g}_\lambda(x)-\bar{g}_\lambda(y))(\bar{\eta}(y)- \bar{\eta}(x)),
		\end{equation*}
		and since the corrector is a local function (see \eqref{lem.Correctorlocal} in Lemma~\ref{lem.Homogenization}), we can simplify the second term:
		\begin{equation*}
			\pi_b\left(\sum_{z\in\Z_m}\sum_{i=1}^{2}(\D_{e_i}\bar{g}_\lambda)_{z+\cu_m}\phi^z_{e_i,m}\right)=\sum_{z\in\Z_m}\1_{\Ll\{b\in \ov{(z+\cu_m)^*}\Rr\}}\sum_{i=1}^2 (\D_{e_i}\bar{g}_\lambda)_{z+\cu_m}\pi_b\phi^z_{e_i,m}.
		\end{equation*}
		Combining the above two observations, we can rewrite \eqref{eq.H1TwoScaleCom} as:
		\begin{equation}\label{eq.CubeDecom}
			\sum_{b\in(\Zt)^*}\left(c_b(\pi_b\tilde{G}_\lambda)^2\right)\left(\eta\right)
			= \sum_{z\in\Z_m}X^z_{m}(\eta),
		\end{equation}
		where the random variable $X_{m}^z(\eta)$ equals
		\begin{multline*}
			\sum_{j=1}^2\sum_{x\in z+\cu_m}c_{x,x+e_j}(\eta)\Ll(\D_{e_j}\bar{g}_\lambda(x)(\bar{\eta}(x)-\bar{\eta}(x+e_j))+\sum_{i=1}^{2}(\D_{e_i}\bar{g}_\lambda)_{z+\cu_m}\pi_{x,x+e_j}\phi^z_{e_i,m}\Rr)^2.
		\end{multline*}
		Combining the local assumption for the jump rate $\{c_b\}_{b\in(\Zt)^*}$ in Hypothesis~\ref{hyp} and the locality of the corrector in \eqref{lem.Correctorlocal} of Lemma~\ref{lem.Homogenization}, we can show the random variable $X^z_m$ is also local:
		\begin{equation*}
			X^z_m\in\F_0(N_\r(z+\cu_m^+)).
		\end{equation*}
		This means that the random variables $X^{z_1}_m$ and $X^{z_2}_m$ are independent, when $m$ satisfies $3^m>2\r+1$ and the cubes $z_1+\cu_m, z_2+\cu_m$ are not adjacent. Therefore, we obtain that
		\begin{equation}\label{eq.CubeDecomVar}
			\var_\rho\Ll[\sum_{b\in(\Zt)^*}\left(c_b(\pi_b\tilde{G}_\lambda)^2\right)\left(\eta\right)\Rr]\leq 5\sum_{z\in\Z_m}\var_\rho[X^z_m].
		\end{equation} 
		Here the AM--GM inequality was utilized to handle the adjacent cubes in \eqref{eq.CubeDecom}.
		
		Let us just calculate one term $\var_\rho[X^z_m]$ for $z\in\Z_m$. Using the ellipticity of the jump rate $c_b(\eta)$ and Cauchy--Schwarz inequality we have
		\begin{multline*}
			\var_\rho[X^z_m]
			\leq \E_\rho\Ll[\Ll(X^z_m\Rr)^2\Rr]\\
			\leq 16c_{+}^2|\cu_m|\sum_{j=1}^2\sum_{x\in z+\cu_m}\Bigg(\E_\rho\Ll[\Ll(\D_{e_j}\bar{g}_\lambda(x)(\bar{\eta}(x)-\bar{\eta}(x+e_j))\Rr)^4\Rr]\\
			\hspace{7.5cm}+\E_\rho\Ll[\Ll(\sum_{i=1}^{2}(\D_{e_i}\bar{g}_\lambda)_{z+\cu_m}\pi_{x,x+e_j}\phi^z_{e_i,m}\Rr)^4\Rr]\Bigg).
		\end{multline*}
		The first term is easy to compute:
		\begin{equation*}
			\E_\rho\Ll[\Ll(\D_{e_j}\bar{g}_\lambda(x)(\bar{\eta}(x)-\bar{\eta}(x+e_j))\Rr)^4\Rr]
			=2\chi(\rho)\Ll(\D_{e_j}\bar{g}_\lambda(x)\Rr)^4
		\end{equation*}
		For the second term, we use Cauchy--Schwarz inequality to simplify it:
		\begin{align*}
			\E_\rho\Ll[\Ll(\sum_{i=1}^{2}(\D_{e_i}\bar{g}_\lambda)_{z+\cu_m}\pi_{x,x+e_j}\phi^z_{e_i,m}\Rr)^4\Rr]
			&\leq 8\sum_{i=1}^{2}\Ll(\D_{e_i}\bar{g}_\lambda\Rr)^4_{z+\cu_m}\E_{\rho}\Ll[(\pi_{x,x+e_j}\phi^z_{e_i,m})^4\Rr]\\
			&\leq C3^{14m}\sum_{i=1}^2\sum_{x\in z+\cu_m}(\D_{e_i}\bar{g}_\lambda(x))^4,
		\end{align*}
		where in the second line we use the $L^4$ estimate for the corrector; see \eqref{lem.CorrectorLp} in Lemma~\ref{lem.Homogenization}.	Combining the above two estimates, we conclude that
		\begin{equation}\label{eq.OneBlockL4}
			\var_\rho[X^z_m]\leq C3^{18 m}\sum_{i=1}^{2}\sum_{x\in z+\cu_m}(\D_{e_i}\bar{g}_\lambda(x))^4.
		\end{equation}
	
		We combine \eqref{eq.OneBlockL4} and \eqref{eq.CubeDecomVar} to obtain the desired result \eqref{eq.H1TwoScaleVar}.
		\begin{align*}
			\var_\rho\Ll[\sum_{b\in(\Zt)^*}\left(c_b(\pi_b\tilde{G}_\lambda)^2\right)\left(\eta\right)\Rr] &\leq C3^{18 m}\sum_{x\in\Zt}\sum_{i=1}^2(\D_{e_i}\bar{g}_\lambda(x))^4 \\
			& \leq C3^{18 m}\sum_{x\in\Zt}\sum_{i=1}^{2}(\D_{e_i}\bar{g}_\lambda(x))^2 \\
			& \leq C 3^{18 m}\log(\lambda^{-1}).
		\end{align*}
		The passage from the first line to the second line follows \eqref{eq.GreenDBound} in Proposition~\ref{prop.GreenEstimate}.  The last line follows Corollary~\ref{Cor.TildeGNorm} and \eqref{eq.Gg}.
	\end{proof}
	
	We finish this subsection with a uniform bound for $\pi_b\tilde{G}_\lambda$.
	\begin{corollary}\label{Cor.Jump}
		There exists a constant $C(c_{-},c_{+})$ such that, for any bond $b\in(\Zt)^*$ and $\lambda\in(0,1)$,  we have the following uniform bound for $\pi_b\tilde{G}_\lambda$:
		\begin{equation*}
			\norm{\pi_b\tilde{G}_\lambda}_{L^\infty}\leq C m 3^{4m},\qquad\forall b\in(\Zt)^*,\ \lambda\in(0,1),\quad\forall m\in\N.
		\end{equation*}
		\begin{proof}
			Similar to the proof for \eqref{eq.H1TwoScaleVar} in Proposition~\ref{prop.H1TwoScale}, we can compute $\pi_{b}\tilde{G}_\lambda(\eta)$ exactly using the expression for $\tilde{G}_\lambda$ and the locality of the corrector Lemma~\ref{lem.Homogenization} \eqref{lem.Correctorlocal}. Suppose that the bond $b= \{x,x+e_i\}$ lies in $\ov{(z+\cu_m)^*}$ for $z\in\Z_m$, then we have
			\begin{equation*}
				\pi_b\tilde{G}_\lambda(\eta) = \D_{e_i}\bar{g}_\lambda(x)(\bar{\eta}(x)-\bar{\eta}(x+e_i))+\sum_{j=1}^{2}(\D_{e_j}\bar{g}_\lambda)_{z+\cu_m}\pi_{x,x+e_i}\phi^z_{e_j,m}.
			\end{equation*}
			Combining \eqref{eq.GreenDBound} in Proposition~\ref{prop.GreenEstimate} and \eqref{lem.CorrectorLp} in Lemma~\ref{lem.Homogenization}, we can obtain the desired result.
		\end{proof}
	\end{corollary}

	\subsection{Martingale CLT of $\tilde{M}^{N}$}\label{subsec.MTildeCLT}
	The estimates in the last two subsections allow us to finish the martingale CLT of $\tilde{M}^{N}$.
	\begin{proof}[Proof of Proposition~\ref{prop.CLT_M}]
		Based on the central limit theorem for martingales (\cite[Lemma 4.1]{QJS02} and see also \cite{RolandoCentral} for the details), we need to verify the following two conditions: 
		\begin{itemize}
			\item [(i)] $\bracket{\tilde{M}^N}_t \xrightarrow{N \to \infty} \sigma^2 t$ in probability with $\sigma^2=\tfrac{\chi(\rho)}{2\pi\sqrt{\det[\DD(\rho)]}}$.
			\item [(ii)] For every $\varepsilon>0$, we have $\sigma_{\varepsilon}(\tilde{M}^N)(t) \xrightarrow{N \to \infty} 0$ in probability. Here the quantity $\sigma_{\varepsilon}(\tilde{M}^N)(t)$ counts the accumulated jumps larger than $\epsilon$
			\begin{equation*}
				\sigma_{\varepsilon}(\tilde{M}^N)(t) :=\sum_{0\leq \tau_i\leq t}\vert \Delta \tilde{M}^N(\tau_i)\vert^2 \1_{\{\vert \Delta \tilde{M}^N(\tau_i)\vert\geq \varepsilon\}},
			\end{equation*}
			where $(\tau_i)_{i \in \N_+}$ are the jump moments and $\Delta \tilde{M}^N(s) := \tilde{M}^N(s) - \tilde{M}^N(s-)$.
		\end{itemize}
		We also recall the setting 
		\begin{align}\label{eq.scaleM.RC}
			\lambda = \frac{1}{N}, \qquad 1 \ll 3^{18m} \ll \log N.
		\end{align}
		
		Let us check at first the condition (ii). By the graphical construction of the process, we know almost surely there is at most one jump at every jump moment. Using Corollary~\ref{Cor.Jump} and the chosen scale \eqref{eq.scaleM.RC}, we have
		\begin{equation*}
			\sup_{0\leq \tau_i\leq t}\left|\Delta\tilde{M}^N(\tau_i)\right|\leq \frac{1}{\sqrt{N\log N}} \sup_{b\in(\Zt)^*}\norm{\pi_b\tilde{G}_{1/N}}_{L^\infty}\leq \frac{C m 3^{4m}}{\sqrt{N\log N}}\xrightarrow{N \to \infty} 0.
		\end{equation*}
		Therefore, for every fixed $\varepsilon>0$, the jump will be smaller than $\epsilon$ and $\sigma_{\varepsilon}(\tilde{M}^N)(t) = 0$ for $N$ large enough. This justifies (ii).
		
		We then turn to the condition (i), and its proof relies on its first and second moments. The quadratic variation of $\tilde{M}^{N}$ is given by
		\begin{align*}
			\bracket{\tilde{M}^N}_t = \frac{1}{N\log N} \int_0^{Nt} \sum_{b \in (\Zt)^*} \Ll(c_b (\pi_b \tilde{G}_{1/N})^2\Rr)(\eta_s) \, \d s.
		\end{align*} 
		Combining \eqref{eq.H1TwoScaleExpect} in Proposition~\ref{prop.H1TwoScale}, Corollary~\ref{Cor.TildeGNorm}, and the chosen scale \eqref{eq.scaleM.RC}, we can obtain
		\begin{equation}\label{eq.TildeMEx}
			\begin{aligned}
				\E_{\rho}\left[\bracket{\tilde{M}^N}_t\right] &=\frac{1}{N\log N}\int_{0}^{Nt}\E_\rho\left[\sum_{b \in (\Zt)^*} \Ll(c_b (\pi_b \tilde{G}_{1/N})^2\Rr)(\eta)\right]\, \d s\\
				&= \frac{N t}{N\log N} \Ll(2\expec{\bar{G}_{1/N},-\Lb\bar{G}_{1/N}} + O(3^{-\alpha m} \log N +3^m \sqrt{\log N})\Rr)\\
				&=\frac{\chi(\rho)t}{2\pi\sqrt{\det[\DD(\rho)]}}+o(1).
			\end{aligned}
		\end{equation}
		Similarly, combining \eqref{eq.H1TwoScaleVar} in Proposition~\ref{prop.H1TwoScale}, Corollary~\ref{Cor.TildeGNorm}, and the chosen scale \eqref{eq.scaleM.RC}, by Cauchy--Schwarz inequality, we can obtain
		\begin{equation}\label{eq.TildeMVar}
			\begin{split}
				\var_\rho\left[\bracket{\tilde{M}^N}_t\right] &\leq \frac{N t}{(N\log N)^2}\int_{0}^{Nt}\var_\rho\left[\sum_{b \in (\Zt)^*} \Ll(c_b (\pi_b \tilde{G}_{1/N})^2\Rr)(\eta)\right]\, \d s\\
				&\leq \frac{t^2}{(\log N)^2} C3^{18m}\log N \\
				&\leq \frac{C3^{18m}t^2}{\log N}=o(1).
			\end{split}
		\end{equation}
		The equations \eqref{eq.TildeMEx} and \eqref{eq.TildeMVar} yield the condition (i) with $\sigma^2=\tfrac{\chi(\rho)}{2\pi\sqrt{\det[\DD(\rho)]}}$.
	\end{proof}
	
	\subsection{Tightness of $\Gamma^N$}\label{subsec.tightness}
	In this section, we prove the tightness of the occupation time. We use the shorthand notation $\Gamma^N \equiv (\Gamma^N_t)_{t \geq 0}$ for the process. 
	\begin{proposition}\label{prop.tightGamma}
		For $d = 2$, the occupation time $\{\Gamma^N\}_{N \in \N}$ defined in \eqref{eq.Occupation} is tight in $C(\R_+, \R)$.
	\end{proposition}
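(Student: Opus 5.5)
We will prove tightness in $C(\R_+,\R)$ through a Kolmogorov-type moment criterion on increments. Since $\Gamma^N_0=0$ and paths are continuous, it suffices to find, for each $T>0$, constants $C$ and $\gamma>1$ (or a modulus tending to zero) such that
\begin{equation*}
\Er\Ll[\Ll(\Gamma^N_t-\Gamma^N_s\Rr)^2\Rr]\leq C\,\omega(t-s),\qquad 0\leq s\leq t\leq T,
\end{equation*}
with $\omega(h)\to 0$. A second-moment bound of order $h$ alone gives only $\gamma=1$, which is borderline. We will therefore combine a second-moment estimate with the decomposition \eqref{eq.Gamma_decomTilde} and the already proved martingale behaviour.

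\textbf{Step 1.} First we control increments via the semigroup. By stationarity, $\Gamma^N_t-\Gamma^N_s$ has the law of $\Gamma^N_{t-s}$. By \eqref{eq.double_integral} and the relaxation estimate Proposition~\ref{prop.relaxation} (in $d=2$, $\var_\rho[P_u\bar\eta(0)]\leq C(1+u)^{-1}$), we get
\begin{equation*}
\Er[(\Gamma^N_h)^2]\leq \frac{C}{N\log N}\int_0^{Nh}\int_0^{s_1}\frac{\d s_2\,\d s_1}{1+s_1-s_2}\leq C h\,\frac{\log(2+Nh)}{\log N}.
\end{equation*}
This is uniformly $\leq Ch$ for $h\leq T$, and for $h\geq N^{-1/2}$ it is $\leq Ch$ with a constant independent of $N$.

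\textbf{Step 2.} Next we upgrade this to a usable modulus. We split $\Gamma^N=\tilde M^N+R^N$ as in \eqref{eq.Gamma_decomTilde}, with $\lambda=1/N$ and $m$ as in \eqref{eq.scaleM}. The martingale part is tight, since it converges weakly to a Brownian motion by Proposition~\ref{prop.CLT_M} (tightness in $D$ with vanishing jumps, by Corollary~\ref{Cor.Jump}, gives C-tightness). For $R^N$ we show $\Er[\sup_{t\leq T}|R^N_t|^2]\to 0$, which gives tightness of $\Gamma^N$ with continuous limit points.

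We bound the pieces of $R^N$ separately:
\begin{itemize}
\item[] the boundary terms satisfy $\Er[\sup_{t\leq T}\tilde G_\lambda(\eta_{Nt})^2]/(N\log N)$ small, via Proposition~\ref{prop.L2FluxErr}(1) and Corollary~\ref{Cor.TildeGNorm}, which give $\|\tilde G_\lambda\|_{L^2}^2\leq C N$;
\item[] the term $\lambda\bar G_\lambda$ is handled by the Kipnis--Varadhan maximal inequality $\Er[\sup_{t\leq T}|\int_0^{Nt}V|^2]\leq C NT\|V\|_{-1}^2$. Here $\|\lambda\bar G_\lambda\|_{-1}^2\leq \lambda^2\langle \bar G_\lambda,(-\Lb)^{-1}\bar G_\lambda\rangle$ after comparison of $\L$ with $\Lb$, and the explicit spectral computation from \eqref{eq.GreenEx} makes this $O(1)$, hence $o(\log N)$;
\item[] for the flux term $\L\tilde G_\lambda-\Lb\bar G_\lambda$, Proposition~\ref{prop.L2FluxErr}(2) together with the variational formula \eqref{eq.def.InnerMinus1} and the comparison $\langle V,-\Lb V\rangle\leq C\langle V,-\L V\rangle$ gives $\|\L\tilde G_\lambda-\Lb\bar G_\lambda\|_{-1}^2\leq C(3^{-2\alpha m}\log N+3^{2m})=o(\log N)$, so the maximal inequality again yields a vanishing contribution.
\end{itemize}

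\textbf{Main obstacle.} The delicate point is the supremum over time of the boundary terms $\tilde G_\lambda(\eta_{Nt})$. Only an $L^2$ bound is available pointwise in time. To handle it we would discretize time on a mesh of size $\epsilon$. On each mesh interval we control oscillations by rewriting $\tilde G_\lambda(\eta_{Nt})-\tilde G_\lambda(\eta_{Ns})$ via the martingale identity, which re-expresses the remaining terms as integrals of $\L\tilde G_\lambda$. On the mesh points we use a union bound with the $L^2$ bound (or a higher $L^p$ bound via Lemma~\ref{lem.Homogenization}(5)). If this proves awkward, the fallback is to avoid suprema altogether: prove finite-dimensional control of $R^N\to0$ and use the Aldous criterion for $\Gamma^N$ directly, with the increment bound of Step~1 at stopping times obtained through the same decomposition.
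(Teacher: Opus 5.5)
Your overall plan ($\Gamma^N=\tilde M^N+R^N$, C-tightness of $\tilde M^N$ from Proposition~\ref{prop.CLT_M}, uniform control of $R^N$) could work, but the step that carries all the difficulty is not proved. The boundary term $\tilde G_\lambda(\eta_{Nt})$ is not controlled uniformly in time. In the first bullet you deduce $\Er[\sup_{t\le T}\tilde G_\lambda(\eta_{Nt})^2]=o(N\log N)$ from $\norm{\tilde G_\lambda}_{L^2}^2\le CN$. A fixed-time $L^2$ bound says nothing about a supremum over a microscopic window of length $NT$. Even the forward--backward martingale bound only gives $CNT\expec{\tilde G_\lambda,-\L\tilde G_\lambda}\sim CNT\log N$, which is exactly the critical order, so $\sup_t|R^N_t|\to0$ does not follow.

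Your mesh repair is circular. Since $\L\tilde G_\lambda=\lambda\bar G_\lambda-\bar\eta(0)+(\L\tilde G_\lambda-\Lb\bar G_\lambda)$, rewriting $\tilde G_\lambda(\eta_{Nt})-\tilde G_\lambda(\eta_{Ns})$ through the martingale identity brings back $-\sqrt{N\log N}\,(\Gamma^N_t-\Gamma^N_s)$. You simply recover $\Gamma^N=\tilde M^N+R^N$ and need the modulus of $\Gamma^N$ you are trying to prove. The Aldous fallback fails for another reason: at a stopping time $\tau$ the law of $\eta_{N\tau}$ is no longer $\Pr$. So neither Step~1 nor the $L^2$ bound on $\tilde G_\lambda$ applies to $\Gamma^N_{\tau+\delta}-\Gamma^N_\tau$ or to $\tilde G_\lambda(\eta_{N\tau})$.

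The missing idea is the forward--backward (Lyons--Zheng) decomposition, which the paper itself uses in the case $d=1$. Let $\hat M$ be the Dynkin martingale of $\tilde G_\lambda$ for the reversed process $(\eta_{NT-s})_{s\le NT}$; by reversibility it has the same law as $\tilde M_\lambda$. Then
\[
-\int_0^{Nt}\L\tilde G_\lambda(\eta_s)\,\d s=\tfrac12\big(\tilde M_{\lambda,Nt}+\hat M_{NT}-\hat M_{N(T-t)}\big).
\]
This writes $\Gamma^N$ with no boundary term, as forward and backward martingales (both C-tight) plus the two integral terms. You only need tightness, not $\sup|R^N|\to0$.

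For the integral terms, your flux bullet is fine, but the $\lambda\bar G_\lambda$ bullet is wrong. Both $\norm{\lambda\bar G_\lambda}_{-1}$ and $\lambda^2\expec{\bar G_\lambda,(-\Lb)^{-1}\bar G_\lambda}$ are infinite. Indeed $\sum_x\lambda\bar g_\lambda(x)=1$, so $\lambda\bar G_\lambda$ is a degree-one function in $d=2$, precisely the critical non-$H^{-1}$ case. The term is nevertheless harmless by Cauchy--Schwarz in time: $\Er\big[\sup_{t\le T}\big(\tfrac{1}{\sqrt{N\log N}}\int_0^{Nt}\lambda\bar G_\lambda(\eta_s)\,\d s\big)^2\big]\le \tfrac{NT^2\lambda^2\norm{\bar G_\lambda}_{L^2}^2}{\log N}\le \tfrac{CT^2}{\log N}$.

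For comparison, the paper's $d=2$ proof uses no homogenization input. It splits off the local average $\bar\eta^\ell(0)$ with $\ell=\lfloor\sqrt{N/\log N}\rfloor$ and handles it by Kolmogorov--Chentsov, via $\Er[(\Gamma^{N,1}_t-\Gamma^{N,1}_s)^2]\le C(t-s)^2$. It writes $\bar\eta(0)-\bar\eta^\ell(0)$ as the divergence of the Jara--Menezes flow, whose cost is $\log\ell$. Feynman--Kac then gives $\log\Er[e^{\alpha(\Gamma^{N,2}_t-\Gamma^{N,2}_s)}]\le C\alpha^2(t-s)$, and Garsia--Rodemich--Rumsey concludes.
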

	
	The proof will make use of a lemma about the flow, so let us recall its definition. Given $p,p^\prime$ two probability measures on $\mathbb{Z}^d$, we call $\phi: \mathbb{Z}^d \times \mathbb{Z}^d \rightarrow \R$ a flow connecting $p$ to $p^\prime$ if it satisfies the following conditions:
	\begin{itemize}
		\item[(i)] $\phi$ is anti-symmetric, $\phi(x,y) = - \phi (y,x)$ for any $x,y \in \mathbb{Z}^d$;
		\item[(ii)] $\phi (x,y) = 0$ unless $y \sim x$;
		\item[(iii)] for any $x \in \mathbb{Z}^d$, $\sum_{y} \phi (x,y) =  p^\prime (x) - p(x)$.
	\end{itemize}
	For any $\ell > 0$, let $p_\ell$ be the uniform measure on the cube $\Lambda_\ell = (- \frac{\ell}{2}, \frac{\ell}{2}) \cap \Zd$ with $\ell \in \N_+$. The following lemma was proved by Jara and Menezes \cite{jarmen18}.
	
	\begin{lemma}[Flow lemma]
		There exists a flow $\phi_\ell$ connecting $\delta_0$ to $p_\ell$ satisfying the following conditions.
		\begin{itemize}\label{lem flow}
			\item $\phi_\ell (x,y) = 0$ unless $x,y \in \Lambda_\ell$;
			\item There exists a finite positive constant $C_0 (d)$ such that
			\[\sum_{x \sim y} \phi_\ell (x,y)^2 \leq C_0 g_d (\ell),\]
			where
			\[g_d (\ell) = \begin{cases}
				\ell, \quad&\text{if } \;d=1,\\
				\log \ell, \quad&\text{if } \;d=2,\\
				1, \quad&\text{if } \;d\geq 3.
			\end{cases}\]
		\end{itemize} 
	\end{lemma}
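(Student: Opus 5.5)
We construct $\phi_\ell$ explicitly by a multiscale (triadic or dyadic) telescoping, since the target $p_\ell$ is uniform on a cube and $\delta_0$ is a point mass. We take a nested sequence of cubes $\Lambda_{\ell_0}\subset\Lambda_{\ell_1}\subset\cdots\subset\Lambda_{\ell_K}=\Lambda_\ell$ with $\ell_0=1$ (so $p_{\ell_0}=\delta_0$) and $\ell_{k+1}\approx 2\ell_k$, hence $K\approx\log_2\ell$. Flows add: if $\psi_k$ connects $p_{\ell_k}$ to $p_{\ell_{k+1}}$, then $\phi_\ell:=\sum_k\psi_k$ connects $\delta_0$ to $p_\ell$. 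Antisymmetry and the nearest-neighbour support condition are preserved under sums, and each $\psi_k$ is supported in $\Lambda_{\ell_{k+1}}\subset\Lambda_\ell$, which gives the first bullet. (If $\ell$ is not of the form $2^K$ or parity is an issue, we use the last step with ratio between $1$ and $2$; this is harmless.)

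\textbf{One-scale flow.} The key estimate is that for cubes $\Lambda_L\subset\Lambda_{L'}$ with $L'\le 2L$ (up to rounding) there is a flow $\psi$ from $p_L$ to $p_{L'}$, supported in $\Lambda_{L'}$, with
\begin{equation*}
\sum_{x\sim y}\psi(x,y)^2\leq C\,L^{2-d}.
\end{equation*}
Heuristically, mass of order one is transported a distance $L$ through a cross-section of $L^{d-1}$ sites, so each edge carries flux about $L^{1-d}$ and the total energy is about $L^d\cdot L^{2-2d}=L^{2-d}$. To make this rigorous in the simplest way, we take $\psi$ to be the minimal-energy (electrical) flow in $\Lambda_{L'}$ with the Neumann problem $\Delta u=p_{L'}-p_L$, and set $\psi=\nabla u$. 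Its energy equals $\langle p_{L'}-p_L,(-\Delta_{\Lambda_{L'}})^{-1}(p_{L'}-p_L)\rangle$, and we bound this by the Poincaré inequality on the cube, which has spectral gap of order $L^{-2}$, together with $\|p_{L'}-p_L\|_{\ell^2}^2\lesssim L^{-d}$. This yields the bound $L^2\cdot L^{-d}$.

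An alternative that is fully explicit is to move mass one coordinate direction at a time: spread uniform mass on $[-L/2,L/2]^d$ to $[-L'/2,L'/2]\times[-L/2,L/2]^{d-1}$ along lines in direction $e_1$. This is a one-dimensional problem on each line, with flux at most $L^{1-d}$ on $O(L)$ edges per line and $L^{d-1}$ lines, and then we repeat in the other directions.

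\textbf{Summing scales.} By Cauchy--Schwarz in the triangle form $\|\sum_k\psi_k\|_{\ell^2}\le\sum_k\|\psi_k\|_{\ell^2}$, we get
\begin{equation*}
\sum_{x\sim y}\phi_\ell(x,y)^2\le\Big(\sum_{k}C\,\ell_k^{(2-d)/2}\Big)^2 .
\end{equation*}
For $d\ge3$ this geometric series is bounded, and for $d=1$ it is dominated by the last term $\ell^{1/2}$, which gives $g_d(\ell)$ in both cases. For $d=2$, however, the triangle inequality only gives $K^2\approx(\log\ell)^2$, which is too weak. This is the main obstacle.

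To recover $\log\ell$ in $d=2$, we exploit orthogonality across scales: $\|\sum\psi_k\|^2=\sum\|\psi_k\|^2+2\sum_{j<k}\langle\psi_j,\psi_k\rangle$. We choose the $\psi_k$ to be gradient flows $\nabla u_k$ (the electrical flows above). The cross terms are then $\langle p_{\ell_{j+1}}-p_{\ell_j},u_k\rangle$, and $u_k$ is nearly constant on $\Lambda_{\ell_{j+1}}$ when $j\ll k$, with oscillation about $(\ell_{j+1}/\ell_k)$ times $\|\nabla u_k\|_\infty\ell_k$. Since $p_{\ell_{j+1}}-p_{\ell_j}$ has mass zero, this gives $|\langle\psi_j,\psi_k\rangle|\lesssim 2^{-(k-j)}$, and the double sum is $O(K)$. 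Alternatively, we can bypass the decomposition entirely by taking $\phi_\ell=\nabla u$ directly for $\Delta u=p_\ell-\delta_0$ with Neumann conditions on $\Lambda_\ell$. Its energy is $\langle p_\ell-\delta_0,u\rangle\approx u(0)-\mathrm{avg}(u)$, which is a Neumann Green's function evaluated at the diagonal and is of order $\log\ell$ in $d=2$, of order $\ell$ in $d=1$, and $O(1)$ in $d\ge3$. Justifying these Green's function bounds on a discrete box is where the real work lies, and we would cite standard random walk estimates for it.
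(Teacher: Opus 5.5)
Your proposal is correct, but the argument that actually closes it is not the multiscale telescoping you lead with. The paper gives no proof of this lemma and simply quotes it from \cite{jarmen18}; the standard proof is an explicit scale-by-scale construction like the one you start with.

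Your final alternative is already a complete proof in every dimension. Take $\phi_\ell(x,y)=u(y)-u(x)$ for neighbours $x,y\in\Lambda_\ell$, where $\Delta^{\Lambda_\ell}u=p_\ell-\delta_0$ and $\Delta^{\Lambda_\ell}$ is the Neumann Laplacian of the box; this is solvable because the right side has mass zero. The flow is antisymmetric, nearest-neighbour and supported in $\Lambda_\ell$. Up to the factor $2$ from ordered pairs, its energy is $u(0)-\sum_x p_\ell(x)u(x)=\int_0^\infty\big(P^{\Lambda_\ell}_t(0,0)-|\Lambda_\ell|^{-1}\big)\,\mathrm{d}t$, where $P^{\Lambda_\ell}_t$ is the reflected walk on the box. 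The estimates you defer are elementary. The box Neumann Laplacian is a sum of one-dimensional ones with explicit cosine eigenfunctions. This gives $P^{\Lambda_\ell}_t(0,0)-|\Lambda_\ell|^{-1}\leq Ct^{-d/2}$ for $1\leq t\leq\ell^2$ and $\leq C\ell^{-d}e^{-ct/\ell^2}$ for $t\geq\ell^2$, and integrating yields $C g_d(\ell)$. This route gives the minimal-energy flow and shows that $g_d$ is the size of a Neumann Green's function on the diagonal.

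Two remarks on your multiscale part. First, the orthogonality variant is incomplete as written. Bounding $\langle\psi_j,\psi_k\rangle$ by $2^{-(k-j)}$ needs $\|\nabla u_k\|_\infty\leq C\ell_k^{-1}$ in $d=2$. That is a Lipschitz estimate for the discrete Neumann problem, and your spectral-gap argument, which is purely an $\ell^2$ statement, does not provide it.

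Second, and more useful, your explicit line-by-line flows already handle $d=2$ with no cross terms at all. Each such $\psi_k$ satisfies the pointwise bound $|\psi_k(x,y)|\leq C\ell_k^{1-d}$, because the flux through an edge of a line never exceeds the mass carried by that line. Each $\psi_k$ also vanishes outside $\Lambda_{\ell_{k+1}}$. So at an edge with $|x|_\infty=r$, only scales with $\ell_{k+1}>2r$ contribute, and for $d\geq2$ the geometric sum gives $|\phi_\ell(x,y)|\leq C(1+r)^{1-d}$. There are $O((1+r)^{d-1})$ edges at each distance $r\leq\ell$, so $\sum_{x\sim y}\phi_\ell(x,y)^2\leq C\sum_{r\leq\ell}(1+r)^{1-d}$. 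This is $C\log\ell$ for $d=2$ and $O(1)$ for $d\geq3$, and your triangle-inequality bound already covers $d=1$. Summing the scales pointwise rather than in $\ell^2$ is what removes the extra logarithm, and it needs no Green's function input.
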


	\begin{proof}[Proof of Proposition~\ref{prop.tightGamma}]
		For every integer $\ell \in \N_+$ and any configuration $\eta$, define
		\[\bar{\eta}^\ell (0) := \frac{1}{|\Lambda_\ell|}\sum_{x \in \Lambda_\ell} \bar{\eta} (x).\]
		We then decompose the occupation time into two parts 
		\begin{equation}\label{eq.occupation_decom}
			\begin{split}
				\Gamma^N_t &= \Gamma^{N,1}_t + \Gamma^{N,2}_t, \\
				\Gamma^{N,1}_t &:= \frac{1}{a(N,2)} \int_0^{Nt}  \bar{\eta}^\ell_s (0) \, \d s, \\
				\Gamma^{N,2}_t &:= \frac{1}{a(N,2)} \int_0^{Nt}  (\bar{\eta}_s(0) - \bar{\eta}^\ell_s (0)) \, \d s.
			\end{split}
		\end{equation}
		We aim to show the tightness for the two parts with the choice $\ell = \lfloor \sqrt{N / \log N}\rfloor$.
		
		Concerning $\Gamma^{N,1}$, the Cauchy--Schwarz inequality applies to every $0<s<t<\infty$,
		\[\E_\rho \Ll[ \Ll(\Gamma^{N,1}_t - \Gamma^{N,1}_s \Rr)^2\Rr] \leq \frac{(t-s)^2N}{\log N} \E_\rho\left[\left(\bar{\eta}^\ell (0)\right)^2\right] \leq \frac{C(t-s)^2N}{\ell^2\log N} \leq C(t-s)^2.\]
		Thus, by Kolmogorov--Chentsov criterion, the part $\{\Gamma^{N,1}\}_{N \in \N}$ is tight in $C(\R_+, \R)$.

		It remains to prove the tightness of $\{\Gamma^{N,2}\}_{N \in \N}$.  By the Feynman--Kac formula, for any $\alpha > 0$ and any $s < t$,
		\begin{equation}\label{eq.FK}
			\begin{split}
				&\log \E_\rho \Ll[ \exp \Ll(\alpha (\Gamma^{N,2}_t - \Gamma^{N,2}_s)\Rr) \Rr] \\
				&= \log \E_\rho \Ll[ \exp \Ll\{ \frac{\alpha}{a(N,2)}\int_{sN}^{t N} \left(\bar{\eta}_r (0) - \bar{\eta}_r^\ell (0)\right)\, \d r \Rr\} \Rr] \\
				&\leq (t-s)N \sup_{f \geq 0, \E_\rho[f] = 1} \Ll\{ \frac{\alpha}{a(N,2)}\expec{\bar{\eta} (0) - \bar{\eta}^\ell (0),f} - \expec{\sqrt{f},-\L\sqrt{f}}\Rr\}.
			\end{split}
		\end{equation} 
		We aim to estimate the last line of \eqref{eq.FK}. 
		
		Let $\phi_\ell (x,y)$ be the flow connecting $\delta_0$ to $p_\ell$ in Lemma \ref{lem flow}. We write
		\[    \bar{\eta}^\ell (0) - \bar{\eta} (0) = \sum_{x\sim y} \phi_\ell (x,y) \eta(x) = \frac{1}{2} \sum_{x \sim y} \phi_\ell (x,y) \left(\eta(x) - \eta (y)\right),\]
		where we used the antisymmetry of $\phi_\ell$. By marking the change of variables $\eta \mapsto \eta^{x,y}$ 
		\begin{align*}
			\frac{\alpha}{a(N,2)}\expec{\bar{\eta} (0) - \bar{\eta}^\ell (0),f} & = \Er\Ll[ \frac{\alpha}{4a(N,2)}\sum_{x \sim y} \phi_\ell (x,y) \left(\eta(x) - \eta (y)\right) \pi_{x,y} f (\eta) \Rr]\\
			& = \frac{\alpha}{4a(N,2)}\sum_{x \sim y} \phi_\ell (x,y)\Er\Ll[  \left(\eta(x) - \eta (y)\right) \pi_{x,y} f (\eta) \Rr].
		\end{align*}
		Here we use the fact that $\phi_\ell$ is a deterministic function. We notice the trick
		\begin{align*}
			f(\eta^{x,y}) - f(\eta) = \Ll(\sqrt{f(\eta^{x,y})} - \sqrt{f(\eta)}\Rr)\Ll(\sqrt{f(\eta^{x,y})} + \sqrt{f(\eta)}\Rr),
		\end{align*}
		and then apply the Cauchy--Schwarz inequality to the term $\pi_{x,y} f (\eta)$ 
		\begin{align*}
			&\Ll|\Er\Ll[  \left(\eta(x) - \eta (y)\right) \pi_{x,y} f (\eta) \Rr]\Rr| \\
			&\leq 2\Er\Ll[ c_{x,y}(\eta) \Ll(\sqrt{f(\eta^{x,y})} - \sqrt{f(\eta)}\Rr)^2 \Rr]^{\frac{1}{2}}\Er\Ll[ c^{-1}_{x,y}(\eta) \Ll(\sqrt{f(\eta^{x,y})} + \sqrt{f(\eta)}\Rr)^2\Rr]^{\frac{1}{2}}\\
			&\leq 8(c_{-})^{-\frac{1}{2}} \Er\Ll[ c_{x,y}(\eta) \Ll(\pi_{x,y}\sqrt{f}\Rr)^2(\eta) \Rr]^{\frac{1}{2}} \Er[f]^{\frac{1}{2}}.
		\end{align*}
		Recall the condition $\Er[f]=1$. Then we obtain the estimate for $\frac{\alpha}{a(N,2)}\expec{\bar{\eta} (0) - \bar{\eta}^\ell (0),f}$
		\begin{align*}
			\frac{\alpha}{a(N,2)}\expec{\bar{\eta} (0) - \bar{\eta}^\ell (0),f} &\leq \frac{C\alpha}{4a(N,2)}\sum_{x \sim y} \vert \phi_\ell (x,y) \vert \Er\Ll[ c_{x,y}(\eta) \Ll(\pi_{x,y}\sqrt{f}\Rr)^2(\eta) \Rr]^{\frac{1}{2}}\\
			&\leq \sum_{x \sim y} \Ll(\frac{C \alpha^2}{a (N,2)^2} \phi_\ell (x,y)^2 + \Er\Ll[ c_{x,y}(\eta) \Ll(\pi_{x,y}\sqrt{f}\Rr)^2(\eta) \Rr]\Rr)\\
			&\leq \frac{C \alpha^2}{a (N,2)^2} g_2 (\ell) + \expec{\sqrt{f},-\L\sqrt{f}}.
		\end{align*}
		We insert this estimate, the result in Lemma~\ref{lem flow}, and the choice $\ell = \lfloor \sqrt{N / \log N}\rfloor$ into the last line of \eqref{eq.FK}, then we obtain that
		\[\log \E_\rho \Ll[ \exp \Ll(\alpha (\Gamma^{N,2}_t - \Gamma^{N,2}_s)\Rr) \Rr]\leq C \alpha^2 (t-s).\]
		By using Garsia--Rodemich--Rumsey inequality and following  \cite[Proof of Lemma 4.2]{QJS02} line by line, we can show that for any $\delta > 0$,
		\[\E_\rho \Ll[ \sup_{|s -t|\leq T \atop |s-t| \leq \delta} \Ll| \Gamma^{N,2}_t - \Gamma^{N,2}_s \Rr| \Rr] \leq C (T) \sqrt{\delta} (1 + |\log \delta|).\]
		This concludes the proof of tightness.
	\end{proof}

	\subsection{Proof of Theorem~\ref{thm.main_basic}}\label{subsec.Convergence}
	
	Recall that we decompose $\Gamma_t^N$ in \eqref{eq.Gamma_decomTilde} as:
	\begin{equation*}
		\Gamma^N_t=\tilde{M}^{N}_t+R_t^N,
	\end{equation*}
	with the remainder part $R^N_t$ given by
	\begin{equation*}
		R^N_t = \frac{1}{\sqrt{N\log N}} \Ll( \tilde{G}_\lambda(\eta_0) - \tilde{G}_\lambda(\eta_{Nt}) + \int_0^{Nt}  \lambda \bar{G}_\lambda(\eta_s) \, \d s + \int_0^{Nt} (\L\tilde{G}_\lambda-\Lb\bar{G}_\lambda)(\eta_s) \, \d s \Rr).
	\end{equation*}
	We now give the description of the remainder.

	\begin{proposition}\label{prop.RemainderConverge}
		Under the same setting as Proposition~\ref{prop.CLT_M}, the remainder $\{R^N\}_{N \in \N}$ converges weakly to the zero process.
	\end{proposition}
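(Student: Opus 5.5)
We split $R^N$ into its three pieces and show that each tends to $0$ in the uniform topology on compacts. The pieces are
\begin{align*}
R^{N,1}_t &:= \frac{1}{\sqrt{N\log N}}\bigl(\tilde G_\lambda(\eta_0)-\tilde G_\lambda(\eta_{Nt})\bigr),\\
R^{N,2}_t &:= \frac{1}{\sqrt{N\log N}}\int_0^{Nt}\lambda \bar G_\lambda(\eta_s)\,\d s,\\
R^{N,3}_t &:= \frac{1}{\sqrt{N\log N}}\int_0^{Nt}(\L\tilde G_\lambda-\Lb\bar G_\lambda)(\eta_s)\,\d s,
\end{align*}
with $\lambda=1/N$ and $m$ as in \eqref{eq.scaleM}.

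\textbf{Tightness shortcut.} We have $R^N=\Gamma^N-\tilde M^N$. By Proposition~\ref{prop.tightGamma}, $\Gamma^N$ is tight in $C(\R_+,\R)$. By Proposition~\ref{prop.CLT_M}, $\tilde M^N$ is C-tight: its jumps vanish uniformly by Corollary~\ref{Cor.Jump}. Hence $\{R^N\}$ is tight, and it suffices to prove finite-dimensional convergence to $0$, i.e.\ $\E_\rho[(R^{N}_t)^2]\to0$ for each fixed $t$. This avoids any supremum estimates.

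\textbf{Boundary term and drift term.} For $R^{N,1}$ we use stationarity. By Proposition~\ref{prop.L2FluxErr}\,(1) and Corollary~\ref{Cor.TildeGNorm},
\[
\norm{\tilde G_\lambda}_{L^2}^2\le 2\norm{\bar G_\lambda}_{L^2}^2+C3^{2m}\log N\le CN+C3^{2m}\log N .
\]
This gives $\E_\rho[(R^{N,1}_t)^2]\le 4\norm{\tilde G_\lambda}^2_{L^2}/(N\log N)\to0$.

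For $R^{N,2}$, Cauchy--Schwarz in time alone loses: it gives $(Nt)^2\lambda^2\norm{\bar G_\lambda}^2/(N\log N)\sim t^2/\log N\to 0$. This actually suffices, since $\norm{\lambda \bar G_\lambda}^2_{L^2}=O(\lambda)$ by Proposition~\ref{prop.GreenEstimate}\,(1). Alternatively one can use the Kipnis--Varadhan type bound
\[
\E\Bigl[\bigl(\textstyle\int_0^T F\bigr)^2\Bigr]\le CT\norm{F}_{-1,\lambda'}^2
\]
with $\lambda'=1/T$.

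\textbf{Flux term (main obstacle).} The key step is $R^{N,3}$. Here we use the standard variational bound for additive functionals of reversible processes,
\[
\E_\rho\Bigl[\sup_{s\le T}\Bigl(\int_0^s F(\eta_r)\,\d r\Bigr)^2\Bigr]\le C\,T\sup_{V}\bigl\{2\bracket{F,V}-T^{-1}\bracket{V,V}-\bracket{V,-\L V}\bigr\},
\]
with $T=Nt$ and $F=\L\tilde G_\lambda-\Lb\bar G_\lambda$. By Proposition~\ref{prop.L2FluxErr}\,(2), together with the comparison $\bracket{V,-\Lb V}\le C\bracket{V,-\L V}$ (both generators have bounded rates; for $\Lb$ we use that $Q$ has finite range and a path/comparison argument), we get
\[
\bracket{F,V}\le C\bracket{V,-\L V}^{1/2}\,\epsilon_N,\qquad
\epsilon_N:=3^{-\alpha m}\sqrt{\log N}+3^m .
\]
Here we used Corollary~\ref{Cor.TildeGNorm} for $\bracket{\bar G_\lambda,-\Lb\bar G_\lambda}\sim\log N$ and $\norm{\Lb\bar G_\lambda}_{L^2}=O(1)$. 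Optimizing over $V$ yields a bound $C\,Nt\,\epsilon_N^2$, so
\[
\E_\rho\bigl[(R^{N,3}_t)^2\bigr]\le C t\,\frac{3^{-2\alpha m}\log N+3^{2m}}{\log N}\to 0,
\]
because $m\to\infty$ and $3^{2m}\ll 3^{18m}\ll\log N$. The delicate points are two. First, Proposition~\ref{prop.L2FluxErr}\,(2) is stated for $V\in L^2$ and must be applied inside the variational formula. Second, the comparison between the $\Lb$- and $\L$-Dirichlet forms must be justified. This is also the step where the choice of the mesoscale $m$ from \eqref{eq.scaleM} is genuinely used. The sup version of the bound even yields uniform convergence directly.
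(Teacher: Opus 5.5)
Your proof is correct. Its skeleton matches the paper's: tightness of $R^N=\Gamma^N-\tilde M^N$ comes from Propositions~\ref{prop.tightGamma} and~\ref{prop.CLT_M}, then second-moment bounds are proved for the three pieces. The boundary term and the $\lambda\bar G_\lambda$ term are handled identically.

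The genuine difference is the flux term $F=\L\tilde G_\lambda-\Lb\bar G_\lambda$. The paper pairs $\int_0^{Nt}F(\eta_s)\,\d s$ with a test function $V$ and uses reversibility to move the semigroup onto $V$. It then applies Proposition~\ref{prop.L2FluxErr}(2) to $P_sV$ and closes with Cauchy--Schwarz in time and $\int_0^\infty\bracket{P_sV,-\L P_sV}\,\d s\le\frac12\norm{V}_{L^2}^2$. You instead combine Proposition~\ref{prop.L2FluxErr}(2) with the Dirichlet-form comparison to get $\norm{F}_{-1}\le C\epsilon_N$, and then invoke the Kipnis--Varadhan inequality.

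Your route buys something concrete. In the paper's pairing, $V$ is a function of the initial configuration, so it literally controls only $\norm{\E_\rho[\int_0^{Nt}F(\eta_s)\,\d s\mid\eta_0]}_{L^2}$. The passage to the second moment is exactly the $H^{-1}$ variance bound you use: since $\bracket{F,P_uF}=\norm{P_{u/2}F}_{L^2}^2\ge0$,
\begin{equation*}
\E_\rho\Bigl[\Bigl(\int_0^TF(\eta_s)\,\d s\Bigr)^2\Bigr]\le 2T\int_0^\infty\bracket{F,P_uF}\,\d u=2T\norm{F}_{-1}^2 .
\end{equation*}
Your version is therefore the cleaner justification of the paper's final display for this term, and its supremum form even gives uniform-in-time control. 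Both arguments end with the same rate $t\,(3^{-2\alpha m}+3^{2m}/\log N)\to0$ under \eqref{eq.scaleM}, and both rely on the same comparison $\bracket{V,-\Lb V}\le C\bracket{V,-\L V}$ from \cite[Corollary~2.3]{gu2025relaxation}.
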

	\begin{proof}
		Combining Proposition~\ref{prop.CLT_M} and the tightness of $\{\Gamma^N\}_{N \in \N}$ proved in Section~\ref{subsec.tightness}, we obtain that the remainder $\{R^N\}_{N \in \N}$ is tight in $D([0,T],\R)$ and the trajectory of the limit process almost surely lies in $C([0,T],\R)$. It remains to characterize the limit.
		Using Corollary~\ref{Cor.TildeGNorm}, \eqref{prop.L2Err} in Proposition~\ref{prop.L2FluxErr} and the chosen scale \eqref{eq.scaleM}, we can obtain:
		\begin{equation*}
			\norm{\bar{G}_{1/N}}_{L^2}^2=O(N),\quad \norm{\tilde{G}_{1/N}}_{L^2}^2=O(N).
		\end{equation*}
		
		The first two terms in the remainder $R^N_t$ can be estimated:
		\begin{equation*}
			\E_{\rho}\left[\left(\frac{1}{\sqrt{N\log N}}\Ll(\tilde{G}_{1/N}(\eta_{Nt})-\tilde{G}_{1/N}(\eta_0)\Rr)\right)^2\right]\leq \frac{C}{\log N},
		\end{equation*}
		and
		\begin{equation*}
			\E_\rho\Ll[\Ll(\frac{1}{\sqrt{N\log N}}\int_{0}^{Nt}\frac{1}{N}\bar{G}_{1/N}(\eta_s)\, \d s\Rr)^2\Rr]\leq \frac{C N^2 t^2}{N^3\log N}\E_\rho\Ll[\Ll(\bar{G}_{1/N}\Rr)^2\Rr]\leq \frac{C t^2}{\log N}.
		\end{equation*}
	
		For the last term in the remainder $R_t^N$, we test it with $V\in L^2$:
		\begin{multline*}
			\expec{\frac{1}{\sqrt{N\log N}}\int_{0}^{Nt}\Ll(\L\tilde{G}_{1/N}-\Lb\bar{G}_{1/N}\Rr)(\eta_s)\, \d s,V}\\
			=\frac{1}{\sqrt{N\log N}}\int_{0}^{Nt}\expec{P_s\Ll(\L\tilde{G}_{1/N}-\Lb\bar{G}_{1/N}\Rr),V}\, \d s.
		\end{multline*}
		Using the reversibility of $\L$ and $\Lb$ under $\P_\rho$, combining \eqref{prop.FluxErr} in Proposition~\ref{prop.L2FluxErr}, Corollary~\ref{Cor.TildeGNorm} with the chosen scale \eqref{eq.scaleM}, we obtain that
		\begin{align*}
			\Ll|\int_0^{Nt}\expec{P_s\Ll(\L\tilde{G}_{1/N}-\Lb\bar{G}_{1/N}\Rr),V}\, \d s\Rr|
			&=\Ll|\int_0^{Nt}\expec{\L\tilde{G}_{1/N}-\Lb\bar{G}_{1/N},P_s V}\, \d s\Rr|\\
			&\leq C\int_{0}^{Nt}\expec{P_s V,-\Lb P_s V}^{\frac{1}{2}}\Ll(3^{-\alpha m}(\log N)^{\frac{1}{2}}+3^m\Rr)\, \d s\\
			&\leq o((\log N)^{\frac{1}{2}})\sqrt{Nt}\Ll(\int_{0}^{N t}\expec{P_s V,-\L P_s V}\, \d s\Rr)^{\frac{1}{2}}\\
			&\leq o(\sqrt{N\log N})\sqrt{t}\norm{V}_{L^2}.
		\end{align*}
		Here in the third line, we use \cite[Corollary 2.3]{gu2025relaxation} and Cauchy--Schwarz inequality.
		Therefore we have 
		\begin{equation*}
			\E_\rho\Ll[\Ll(\frac{1}{\sqrt{N\log N}}\int_{0}^{Nt}\Ll(\L\tilde{G}_{1/N}-\Lb\bar{G}_{1/N}\Rr)(\eta_s)\, \d s\Rr)^2\Rr]= o(1)t.
		\end{equation*}
		The above computations justify that any finite distribution of the limit process coincide with the zero process.  Combining the fact that the trajectory of the limit process almost surely lies in $C([0,T],\R)$, we prove the limit process is essentially the zero process.
	\end{proof}

	We collect enough tools for the main theorem.
	\begin{proof}[Proof of Theorem~\ref{thm.main_basic}]
		Firstly, concerning the occupation time for $d=2$, we have 
		\begin{align*}
			\Gamma^N_t=\tilde{M}^{N}_t+R_t^N.
		\end{align*}
		Using the chosen scale in \eqref{eq.scaleM}, the martingale CLT of $\{\tilde{M}^{N}\}_{N \in \N}$ is established in Proposition~\ref{prop.CLT_M} and the limit of the remainder $\{R^N\}_{N \in \N}$ is justified to be zero in Proposition~\ref{prop.RemainderConverge}. This concludes the scaling limit of $\{\Gamma^N\}_{N \in \N}$.
		
		Secondly, for a general local function $f$, the additive functional has a decomposition
		\begin{align*}
			\Gamma^N_t(f) = \bar{f}^\prime (\rho) \Gamma^N_t + \Ll(\Gamma^N_t (f) - \bar{f}^\prime (\rho) \Gamma^N_t\Rr). 
		\end{align*}
		The second term is exactly
		\begin{align*}
			\Gamma^N_t (f) - \bar{f}^\prime (\rho) \Gamma^N_t = \Gamma^N_t\Ll(f(\eta) - \bar{f}^\prime (\rho) \bar{\eta}(0)\Rr).
		\end{align*}
		Because the function $f(\eta) - \bar{f}^\prime (\rho) \bar{\eta}(0)$ has degree at least $2$, it enters the regime of Kipnis--Varadhan theorem with a normalization factor $\sqrt{N}$; see Proposition~\ref{prop.KVnonGradient}. Therefore, due to the larger normalization factor $\sqrt{N \log N}$, the part ${\big(\Gamma^N_t (f) - \bar{f}^\prime (\rho) \Gamma^N_t)\big)_{t \geq 0}}$ converges weakly to the zero process. In conclusion, the limit of $\{\Gamma^N(f)\}_{N \in \N}$ coincides with that of $\{\bar{f}^\prime (\rho)\Gamma^N\}_{N \in \N}$, which finishes the proof.
	\end{proof}
	
	\section{Critical case \texorpdfstring{$d=1$}{} and \texorpdfstring{$\deg(f)=2$}{}}\label{sec.critical_d_1}
	
	In this section, we discuss the critical case $d=1$ and $\deg(f)=2$. For this case, we will focus on the occupation time on two different sites \eqref{eq.OccupationDeg2} in Section~\ref{subsec.HigherOrder}:
	\begin{equation*}
		\Gamma^N_t := \frac{1}{\sqrt{N\log N}} \int_0^{Nt}  \bar{\eta}_s(0) \bar{\eta}_s(1)\, \d s.
	\end{equation*}
	
	The structure of the proof for deriving the limit of $\{\Gamma_t^N\}_{N \in \N}$ is exactly the same as the dimension $d=2$ case in Section~\ref{sec.critical_d_2}, but in each part, more effort is needed due to its second order chaos nature. 
	
	For our non-gradient exclusion process in one dimension, the diffusion matrix $\DD(\rho)$ is a scalar, and therefore we can construct a SSEP with the same diffusion matrix $\DD(\rho)$. We remark that one-dimensional SSEP, which is the simplest homogenized process in the exclusion family, significantly simplifies the computations and estimates in both the Green function analysis and the homogenization analysis.
	
	\subsection{Resolvent of SSEP}
	Suppose the generator of the constructed SSEP is $\Lb$:
	\begin{equation}\label{eq.defSSEPG}
		\Lb = \DD(\rho)\sum_{x\in\Zo}\pi_{x,x+1}.
	\end{equation}
	Since $\Lb$ is closed in the Fock space, the solution of \eqref{eq.ResolventSSEP} is in the Fock space $\HH_2$ and can be expressed as
	\begin{equation}\label{eq.defBarGH2}
		\bar{G}_\lambda(\eta) = \sum_{Y\in\K_2(\Zo)}\bar{g}_\lambda(Y)\bar{\eta}_Y = \sum_{x\neq y}\bar{g}_\lambda(x,y)\bar{\eta}(x)\bar{\eta}(y).
	\end{equation}
	For convenience, we assume that
	\begin{equation}\label{eq.SymmetricAssumption}
		\bar{g}_\lambda(x,y) = \bar{g}_\lambda(y,x) = \frac{1}{2}\bar{g}_\lambda(\{x,y\}),\quad\forall x\neq y.
	\end{equation}
	
	We denote $\D_1$ and $\D_2$ the finite difference operator defined similarly to \eqref{eq.defD} for a function $f:\Zt\rightarrow\R$:
	\begin{equation*}
		\D_1 f(x,y):= f(x+1,y)-f(x,y),\ \D_2 f(x,y):= f(x,y+1)-f(x,y),\qquad\forall x,y\in\Zo.
	\end{equation*}
	
	For simplicity, we make the following convention throughout Section~\ref{sec.critical_d_1}:
	\begin{align*}
		\D_1\bar{g}_\lambda(x,y)&=0,\quad\forall y\in\{x,x+1\},\\
		\D_1\D_1\bar{g}_\lambda(x,y)&=0,\quad\forall y\in\{x,x+1,x+2\},\\
		\quad \D_1\D_2\bar{g}_\lambda(x,y)&=0,\quad\forall y\in\{x-1,x,x+1\}.
	\end{align*}
	By symmetry we have
	\begin{equation}\label{eq.PibGbar}
		\pi_{x,x+1}\bar{G}_\lambda(\eta) = 2\sum_{y\in\Zo}\D_1\bar{g}_\lambda(x,y)(\bar{\eta}_x-\bar{\eta}_{x+1})\bar{\eta}_y,
	\end{equation}

	To study the Green function $\bar{g}_\lambda$, we consider a continuous-time simple random walk $(S_t)_{t \geq 0}$ on $\K_2(\Zo)$ with transition rate $\DD(\rho)$ to one of its nearest neighbors. 
	\begin{multline*}
		Y=\{x_1,y_1\} \sim Y'=\{x_2,y_2\} \\
		\Longleftrightarrow |x_2-x_1|+|y_2-y_1|=1,\quad \forall Y,Y'\in\K_2(\Zo),\, x_1<y_1,\, x_2<y_2.
	\end{multline*}
	
	For any $Y\in\K_2(\Zo)$, $\bar{g}_\lambda(Y)$ is the Laplace transform of the transition kernel $\bar{p}_t(\{0,1\},Y)$:
	\begin{equation*}
		\bar{g}_\lambda(Y) = \int_{0}^{\infty} e^{-\lambda t} \bar{p}_t(\{0,1\},Y)\, \d t.
	\end{equation*}
	
	\begin{lemma}\label{lem.TransitionGreen}
		The following estimates hold for the transition kernel $\bar{p}_t(\{0,1\},Y)$ and the Green function $\bar{g}_\lambda(Y)$:
		\begin{enumerate}
			\item \cite[Theorem 1.1]{LandimGaussian}\label{lem.TwoParticleTransition} There exist finite positive constants $C_1,C_2,C_3$ such that for every $t>C_1$ and every $x<y$, the following estimate holds:
			\begin{equation*}
				\bar{p}_t(\{0,1\},\{x,y\})\leq \frac{2C_1}{1+t}\Ll(\exp\Ll(-\frac{\norm{(x,y)}_{2}^2}{C_2t}\Rr)+\exp\Ll(-\frac{\norm{(x,y)}_2}{C_2\log t}\Rr)\1_{\norm{(x,y)}_2\geq \frac{C_3et}{\log t}}\Rr),
			\end{equation*}
			where the norm $\norm{(x,y)}_2$ is defined as $\norm{(x,y)}_2^2:=x^2+(y-1)^2.$
			\item \cite[Corollary 2.7, $\theta=-1,\alpha=2\DD(\rho)$]{CarinciExact}\label{lem.TwoParticleGreen}
			The Laplace transform of the transition kernel for every $x<y$ is given by:
			\begin{equation*}
				\bar{g}_\lambda(\{x,y\})\\
				=\frac{1}{8\pi^2\DD(\rho)}\int_{\pi}^{\pi}\int_{-\pi}^{\pi}\frac{e^{i(k_1 x+k_2(y-1))}+\A(\frac{k_1+k_2}{2},\lambda)e^{i(k_1 y+k_2(x-1))}}{2+\lambda-(\cos k_1+\cos k_2)}\, \d k_1\, \d k_2,
			\end{equation*}
			where the function $\A(k,\lambda)$ is defined as:
			\begin{equation*}
				\A(k,\lambda):=2\frac{\cos k\cdot(\xi_{\lambda,k}^{-2}-1)}{\cos k\cdot(\xi_{\lambda,k}^{-2}-1)-2(x_{\lambda,k}-\cos k)}-1,
			\end{equation*}
			with
			\begin{equation*}
				\xi_{\lambda,k}=x_{\lambda,k}-\sqrt{x_{\lambda,k}^2-1},\qquad x_{\lambda,k}=\frac{1}{\cos k}\left(1+\frac{\lambda}{2}\right).
			\end{equation*} 
		\end{enumerate}
	\end{lemma}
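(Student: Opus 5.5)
Both parts of Lemma~\ref{lem.TransitionGreen} concern only the two-particle SSEP on $\Zo$, that is, the nearest-neighbour walk $(S_t)_{t\geq0}$ on $\K_2(\Zo)\cong\{(x,y)\in\Zo^2: x<y\}$ with rate $\DD(\rho)$ per admissible jump. The plan is therefore not to reprove the cited results but to identify precisely our setting with theirs, with the correct parameters. I first record the generator of $(S_t)$ acting on $h(x,y)$, $x<y$:
\begin{equation*}
\DD(\rho)\sum_{(x',y')\sim(x,y),\, x'<y'}\bigl(h(x',y')-h(x,y)\bigr).
\end{equation*}
Away from the diagonal this is $\DD(\rho)$ times the free two-dimensional discrete Laplacian. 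On the diagonal layer $y=x+1$ the jumps onto $\{x'=y'\}$ are suppressed, which is the exclusion constraint.

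For part (1), this walk is exactly the two-particle symmetric exclusion considered in \cite{LandimGaussian}, up to the time change $t\mapsto \DD(\rho)t$. I will apply \cite[Theorem~1.1]{LandimGaussian} at time $\DD(\rho)t$ and absorb the factor $\DD(\rho)$ into the constants $C_1,C_2,C_3$. This is harmless since $\DD(\rho)$ is bounded above and below by Hypothesis~\ref{hyp}. The starting point $\{0,1\}$ explains the norm $x^2+(y-1)^2$, which is the squared distance to the initial configuration. The non-Gaussian tail term is kept verbatim from the citation.

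For part (2), I will either quote \cite[Corollary~2.7]{CarinciExact} with $\theta=-1$ (the exclusion case) and $\alpha=2\DD(\rho)$, or verify the formula directly by the Bethe ansatz. The direct verification goes in four steps.
\begin{enumerate}
\item Write $\bar g_\lambda$ as the solution of $(\lambda-\text{generator})\bar g_\lambda=\delta_{\{0,1\}}$, extended to all of $\Zo^2$. The exclusion constraint becomes the boundary condition $h(x,x)+h(x+1,x+1)=h(x,x+1)+h(x+1,x)$, which must hold along the diagonal.
\item The free part $e^{i(k_1x+k_2(y-1))}/(2+\lambda-\cos k_1-\cos k_2)$, integrated against the normalization $(8\pi^2\DD(\rho))^{-1}$, solves the free equation with the source at $(0,1)$. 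The factor $\DD(\rho)$ comes from dividing by the jump rate, with $\lambda$ rescaled accordingly.
\item Add the reflected wave with the momenta exchanged, multiplied by an amplitude $\A(\tfrac{k_1+k_2}{2},\lambda)$ that depends only on the centre-of-mass momentum. This amplitude is fixed by imposing the boundary condition of the first step.
\item Pass to centre-of-mass and relative coordinates and integrate out the relative momentum by residues. The resulting pole is $\xi_{\lambda,k}=x_{\lambda,k}-\sqrt{x_{\lambda,k}^2-1}$, the root inside the unit disc, which is what produces the stated expression for $\A$.
\end{enumerate}

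I expect the main obstacle to be this boundary-matching step. Specifically, I must check that the branch of the square root gives $|\xi_{\lambda,k}|<1$, so that the solution decays, and that the parameters $(\theta,\alpha)$ of \cite{CarinciExact} translate exactly to our rate $\DD(\rho)$. I will handle the latter by verifying that the resulting formula satisfies the resolvent equation at one interior point and at one point of the diagonal layer $y=x+1$.
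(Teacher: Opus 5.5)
Your main route, citing \cite[Theorem~1.1]{LandimGaussian} after the time change $t\mapsto\DD(\rho)t$ with the constants adjusted and citing \cite[Corollary~2.7]{CarinciExact}, is exactly the paper's, which gives no argument beyond the two citations. You are also right that the display matches rate $\DD(\rho)$ only if $\lambda$ is read as $\lambda/(2\DD(\rho))$; I use that reading below. The self-contained Bethe-ansatz verification, however, fails as written.

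The boundary condition in your step~1 is wrong. At $(x,x+1)$ the free generator exceeds the exclusion generator by $\DD(\rho)\bigl[h(x,x)+h(x+1,x+1)-2h(x,x+1)\bigr]$, so the correct condition is $h(x,x)+h(x+1,x+1)=2h(x,x+1)$. The value $h(x+1,x)$ lies outside $\{x\le y\}$ and plays no role. If you impose your condition on the ansatz of step~3, the relative-momentum integration gives $(1+\A)\times(\text{nonzero})=0$, i.e.\ $\A\equiv-1$. That is the antisymmetric function, which is the Green function of the walk killed at rate $2\DD(\rho)$ on the layer $y=x+1$, not of the exclusion walk.

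Even with the correct condition, step~4 does not reproduce the displayed amplitude. Set $k_1=k+q$, $k_2=k-q$. The $q$-integral picks up the pole $e^{iq}=\xi_{\lambda,k}$. This pole lies inside the unit disc only if $\sqrt{x_{\lambda,k}^2-1}$ carries the sign of $\cos k$; with the positive root, $|\xi_{\lambda,k}|>1$ when $\cos k<0$. The boundary condition then reduces to $\cos k\,(1+\A)\xi_{\lambda,k}-1-\A\xi_{\lambda,k}^2=0$, i.e.
\begin{equation*}
\A(k,\lambda)=\frac{\xi_{\lambda,k}^{-1}-\cos k}{\cos k-\xi_{\lambda,k}},
\end{equation*}
which is the SSEP scattering amplitude evaluated at the pole. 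This is not the displayed expression.

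Here is an independent check at $k=0$. Write $\xi=\xi_{\lambda,0}$. Summing $\bar g_\lambda(\{x,x+r\})$ over $x$ gives the resolvent of the gap process reflected at $1$, namely $\xi^{r-1}/\bigl(2\DD(\rho)(\xi^{-1}-1)\bigr)$. The displayed formula gives $(1+\A(0,\lambda)\xi^2)\,\xi^{r-1}/\bigl(2\DD(\rho)(\xi^{-1}-\xi)\bigr)$. Matching the two forces $\A(0,\lambda)=\xi^{-1}$, whereas the display gives $\frac{\xi^{-2}-1+\lambda}{\xi^{-2}-1-\lambda}$; at $\xi=\tfrac12$ this is $\tfrac75$ instead of $2$. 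The two expressions agree only to leading order, $1+O(\sqrt{\lambda+k^2})$.

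So your planned spot check on the layer $y=x+1$ would fail, and the translation of $(\theta,\alpha)$ from \cite{CarinciExact} has to be re-derived rather than assumed. Note also that the correct amplitude grows like $\cos^{-2}k$ as $\cos k\to0$. This is compensated only after the $q$-integration, by the factor $\xi_{\lambda,k}^{\,y-x+1}$, so the amplitude is not uniformly bounded.
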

	
	\begin{remark}
		\cite[Section 4.3]{erhard2024nonequilibrium} establishes a similar estimate to $(1)$ in Lemma~\ref{lem.TransitionGreen} with an extra factor $t^{-\frac{1}{4}}$ for the difference of the transition kernel
		\begin{equation*} |\bar{p}_t(\{0,1\},\{x+1,y\})-\bar{p}_t(\{0,1\},\{x,y\})|.
		\end{equation*} 
	\end{remark}
	
	Notice that if we fix $\delta>0$ and for $k\in(-\pi+\delta,-\delta)\cup (\delta,\pi-\delta)$, we have
	\begin{equation*}
		\left|\A(k,\lambda)\right|\leq 2\left|\frac{\left(x_{\lambda,k}+\sqrt{x_{\lambda,k}^2-1}\right)^2-1}{\left(x_{\lambda,k}+\sqrt{x_{\lambda,k}^2-1}\right)^2+1-\frac{2x_{\lambda,k}}{\cos k}}\right|+1=2\left|\frac{\sqrt{x_{\lambda,k}^2-1}\left(x_{\lambda,k}+\sqrt{x_{\lambda,k}^2-1}\right)}{\frac{\lambda}{2}\frac{1}{\cos k}x_{\lambda,k}+x_{\lambda,k}\sqrt{x_{\lambda,k}^2-1}}\right|+1.
	\end{equation*}
	Then for $k$ satisfying $\cos k\geq 0 $, we have $x_{\lambda,k}\geq 1$ and therefore
	\begin{equation*}
		\left|\A(k,\lambda)\right|\leq 2\frac{2x_{\lambda,k}\sqrt{x_{\lambda,k}^2-1}}{x_{\lambda,k}\sqrt{x_{\lambda,k}^2-1}}+1=5.
	\end{equation*}
	For $k$ satisfying $\cos k\leq 0$, we have $x_{\lambda,k}\leq -1$ and 
	\begin{equation*}
		\left|\A(k,\lambda)\right|\leq 2\left|\frac{x_{\lambda,k}+\sqrt{x_{\lambda,k}^2-1}}{\frac{\lambda}{2}\frac{1}{\cos k}+\sqrt{x_{\lambda,k}^2-1}}\right|+1\leq \frac{\left(2+\lambda\right)\left(1+\lambda\right)}{(\sin k)^2+\lambda}+1\leq C(\delta),\quad \forall \lambda\in(0,1).
	\end{equation*}
	Combining the previous two estimates, we obtain
	\begin{equation*}
		\left|\A(k,\lambda)\right|\leq C(\delta),\quad \forall k\in(-\pi+\delta,-\delta)\cup (\delta,\pi-\delta),\forall\lambda\in(0,1).
	\end{equation*}
	
	For $k$ near $0$ or $\pm\pi$, denote $\tilde{k}=\min\{|k|,|\pi-k|,|\pi+k|\}$, we have 
	\begin{equation*}
		|x_{\lambda,k}|=1+\frac{\lambda+\tilde{k}^2}{2}+O(\tilde{k}^4+\lambda\tilde{k}^2),\quad x_{\lambda,k}-\cos k=O(\lambda+\tilde{k}^2),\quad \xi_{\lambda,k}^{-2}-1=2\sqrt{\lambda+\tilde{k}^2}+O(\lambda+\tilde{k}^2),
	\end{equation*}
	and therefore
	\begin{equation}\label{eq.ANear0}
		\A(k,\lambda)=1+O\left(\sqrt{\lambda+\tilde{k}^2}\right),\qquad\text{as}\  \tilde{k},\lambda\rightarrow 0.
	\end{equation}
	Combining the above estimates, we know $\A(k,\lambda)$ is uniformly bounded for $k$ and $\lambda$:
	\begin{equation}\label{eq.ABound}
		|\A(k,\lambda)|\leq C,\quad\forall k\in[-\pi,\pi],\, \lambda\in(0,1).
	\end{equation}
	
	Similar to Corollary~\ref{Cor.TildeGNorm}, we can obtain the $L^2$, $H^1$, and $H^2$ estimates for $\bar{G}_\lambda$:
	\begin{proposition}\label{prop.GreenEstimateDeg2}
		We have the following estimates for the Green function $\bar{G}_\lambda$.
		\begin{enumerate}
			\item \label{eq.GreenL2Deg2}
			The $L^2$ norm of $\bar{G}_\lambda$ is of order $\lambda^{-1}$:
			\begin{equation*}
				\norm{\bar{G}_\lambda}_{L^2}^2=O(\lambda^{-1}).
			\end{equation*}
			\item \label{eq.GreenH1Deg2}
			The Dirichlet form of $\bar{G}_\lambda$ is of order $\log(\lambda^{-1})$:
			\begin{equation*}
				\expec{\bar{G}_\lambda,-\Lb\bar{G}_\lambda}=\tfrac{\chi(\rho)^2}{2\pi\DD(\rho)}\log(\lambda^{-1})+O(1).
			\end{equation*}
			\item \label{eq.GreenH2Deg2}
			The $H^{2}$ norm of $\bar{G}_\lambda$ is uniformly bounded with respect to $\lambda\in(0,1)$:
			\begin{equation*}
				\expec{-\Lb\bar{G}_\lambda,-\Lb\bar{G}_\lambda}=O(1).
			\end{equation*}
		\end{enumerate} 
	\end{proposition}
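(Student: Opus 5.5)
The plan mirrors the proof of Proposition~\ref{prop.GreenEstimate} and Corollary~\ref{Cor.TildeGNorm}. We reduce everything to the two-particle Green function $\bar{g}_\lambda$ on $\K_2(\Zo)$ via \eqref{eq.defBarGH2}. Orthogonality of the $\bar\eta(Y)$ gives $\norm{\bar{G}_\lambda}_{L^2}^2=\chi(\rho)^2\sum_{Y\in\K_2}\bar{g}_\lambda(Y)^2$. By self-duality, $\Lb$ acts on $\HH_2$ as the generator $\tfrac12\Delta^{(2)}$ of the exclusion random walk $(S_t)$ on $\K_2(\Zo)$, so that $(\lambda-\tfrac12\Delta^{(2)})\bar g_\lambda=\delta_{\{0,1\}}$.

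\textbf{Step 1 ($L^2$ bound).} Reversibility of $S_t$ with respect to counting measure on $\K_2$ gives, exactly as in \eqref{eq.gL2},
\[
\sum_Y\bar g_\lambda(Y)^2=\int_0^\infty te^{-\lambda t}\bar p_t(\{0,1\},\{0,1\})\,\d t .
\]
Part~(1) of Lemma~\ref{lem.TransitionGreen} bounds $\bar p_t(\{0,1\},\{0,1\})\le C/(1+t)$ for $t>C_1$, and the kernel is trivially at most $1$ otherwise. Hence the integral is $\le C\int_0^\infty e^{-\lambda t}\,\d t=C\lambda^{-1}$, which proves (1).

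\textbf{Step 2 (Dirichlet form).} Testing the resolvent equation against $\bar g_\lambda$, as in the proof of Proposition~\ref{prop.GreenEstimate}, gives
\[
\expec{\bar{G}_\lambda,-\Lb\bar{G}_\lambda}=\chi(\rho)^2\bigl(\bar g_\lambda(\{0,1\})-\lambda\norm{\bar g_\lambda}^2_{\ell^2}\bigr).
\]
By Step~1 the second term is $O(1)$. The whole task therefore reduces to the sharp asymptotic
\[
\bar g_\lambda(\{0,1\})=\tfrac{1}{2\pi\DD(\rho)}\log(\lambda^{-1})+O(1).
\]
\textbf{This is the main obstacle.} The upper bound $C/(1+t)$ only yields the order $\log\lambda^{-1}$, not the constant. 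The exclusion interaction makes the walk on $\K_2$ differ from free two-dimensional motion, so the naive local CLT does not apply directly.

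To get the constant, we use the exact formula in part~(2) of Lemma~\ref{lem.TransitionGreen} at $x=0,y=1$, where the numerator becomes $1+\A(\tfrac{k_1+k_2}{2},\lambda)$. By \eqref{eq.ANear0}, $\A=1+O(\sqrt{\lambda+\tilde k^2})$ near $\tilde k=0$, and \eqref{eq.ABound} gives uniform boundedness elsewhere. The integral then splits into two parts.
\begin{itemize}
\item The main part carries numerator $2$. It equals $\tfrac{2}{8\pi^2\DD(\rho)}\iint(2+\lambda-\cos k_1-\cos k_2)^{-1}\,\d k_1\,\d k_2$, whose singularity at the origin yields $\tfrac{2}{8\pi^2\DD}\cdot2\pi\log\lambda^{-1}+O(1)=\tfrac{1}{2\pi\DD}\log\lambda^{-1}+O(1)$.
\item The error part carries numerator $O(\sqrt{\lambda+\tilde k^2})$ divided by a denominator $\gtrsim\lambda+|k|^2$. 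This is integrable uniformly in $\lambda$.
\end{itemize}
Two points need care: the corners $k_1,k_2$ near $\pm\pi$, where $\tilde k$ is small but $k_1+k_2$ is not near $0$ and the denominator is bounded below; and the consistency of the factor $\tfrac12$ in \eqref{eq.SymmetricAssumption}, which converts $\bar g_\lambda(\{0,1\})$ into $2\bar g_\lambda(0,1)$.

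\textbf{Step 3 ($H^2$ bound).} As in Corollary~\ref{Cor.TildeGNorm}, the resolvent equation \eqref{eq.ResolventSSEP} gives
\[
\norm{\Lb\bar G_\lambda}_{L^2}\le\lambda\norm{\bar G_\lambda}_{L^2}+\norm{\bar\eta(0)\bar\eta(1)}_{L^2}.
\]
By Step~1, $\lambda^2\norm{\bar G_\lambda}^2_{L^2}\le C\lambda$, so the right-hand side is bounded uniformly in $\lambda\in(0,1)$, which proves (3).
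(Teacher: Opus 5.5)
Your proposal is correct and follows the paper's proof essentially line by line. For (2) you test the resolvent equation and extract $\bar g_\lambda(\{0,1\})$ from the exact Fourier formula using $\A\to 1$ near the origin, and for (3) you use the triangle inequality. The one difference is in (1): you bound the return probability with Lemma~\ref{lem.TransitionGreen}(1), whereas the paper invokes the Bertini--Zegarlinski decay of Proposition~\ref{prop.fasterDecay}. By duality these are the same estimate, since $\E_\rho[(\bar P_{t/2}(\bar\eta(0)\bar\eta(1)))^2]=\chi(\rho)^2\bar p_t(\{0,1\},\{0,1\})$.

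There is one small slip: at $(x,y)=(0,1)$ the numerator is $1+\A(\tfrac{k_1+k_2}{2},\lambda)e^{i(k_1-k_2)}$, not $1+\A$. The extra phase is $1+O(|k|)$ near the origin and bounded elsewhere, so it is absorbed into your error term, exactly as in the paper's $2+O(\sqrt{\lambda+k_1^2+k_2^2})$.
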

	
	\begin{proof}
		For \eqref{eq.GreenL2Deg2}, we use the reversibility of SSEP and Proposition~\ref{prop.fasterDecay},
		\begin{align*}
			\E_\rho \Ll[\bar{G}_\lambda^2\Rr] &= \int_0^\infty \, \d t \int_0^\infty \, \d s \, e^{-\lambda (t+s)} \E_\rho \Ll[\bar{P}_t (\bar{\eta}(0)\bar{\eta}(1)) \bar{P}_s(\bar{\eta}(0)\bar{\eta}(1))\Rr] \\
			&= \int_0^\infty te^{-\lambda t} \E_\rho \Ll[\Ll(\bar{P}_{t/2} (\bar{\eta}(0)\bar{\eta}(1))\Rr)^2 \Rr]\, \d t  \leq C \lambda^{-1}.
		\end{align*}		
		
		Using \eqref{eq.ResolventSSEP} we have
		\begin{equation}\label{eq.GreenH1Compute}
			\expec{\bar{G}_\lambda,-\Lb\bar{G}_\lambda}=\expec{\bar{G}_\lambda,\bar{\eta}(0)\bar{\eta}(1)}-\lambda\norm{\bar{G}_\lambda}_{L^2}^2=\chi(\rho)^2\bar{g}_\lambda(\{0,1\})+O(1).
		\end{equation}
		Since $\A(k,\lambda)$ is uniformly bounded, for fixed $\delta>0$, we have
		\begin{equation*}
			\Ll|\int_{k_1^2+k_2^2>\delta^2}\frac{e^{i(k_1 x+k_2(y-1))}+\A(\frac{k_1+k_2}{2},\lambda)e^{i(k_1 y+k_2(x-1))}}{2+\lambda-(\cos k_1+\cos k_2)}\, \d k_1\, \d k_2\Rr|\leq C(\delta),
		\end{equation*}
		and therefore the main contribution of $\bar{g}_\lambda(\{x,y\})$ expressed as \eqref{lem.TwoParticleGreen} in Lemma~\ref{lem.TransitionGreen} is from the integration over the domain $k_1^2+k_2^2\leq \delta^2$. Using \eqref{eq.ANear0} we have
		\begin{equation}\label{eq.AsymGreen}
			\bar{g}_\lambda(\{0,1\})=\frac{1}{8\pi^2\DD(\rho)}\int_{-\pi}^{\pi}\int_{-\pi}^{\pi}\frac{2+O\left(\sqrt{\lambda+k_1^2+k_2^2}\right)}{2+\lambda-(\cos k_1+\cos k_2)}\, \d k_1\, \d k_2+O(1).
		\end{equation}
		Using the elementary integral
		\begin{equation*}
			\int_{-\pi}^{\pi}\frac{1}{a-\cos x}\, \d x=\frac{2\pi}{\sqrt{a^2-1}},
		\end{equation*}
		We can compute
		\begin{equation*}
			\int_{-\pi}^{\pi}\int_{-\pi}^{\pi}\frac{1}{2+\lambda-(\cos k_1+\cos k_2)}\, \d k_1 \, \d k_2 = 2\pi \int_{-\pi}^{\pi}\frac{1}{\sqrt{(2+\lambda-\cos k)^2-1}}\, \d k.
		\end{equation*}
		The main contribution from the above integral still comes from the integration over the domain $[-\delta,\delta]$ on which we have the following estimate
		\begin{equation*}
			\frac{1}{\sqrt{(2+\lambda-\cos k)^2-1}}=\frac{1}{\sqrt{2\lambda+k^2}}+O(1).
		\end{equation*}
		Notice that $(\sqrt{2\lambda+k^2})^{-1}$ is integrable:
		\begin{equation*}
			\int_{-\delta}^{\delta}\frac{1}{\sqrt{2\lambda+k^2}}\, \d k = 2 \operatorname{arcsinh}\left(\frac{\delta}{\sqrt{2\lambda}}\right)=\log(\lambda^{-1})+O(1),
		\end{equation*}
		and therefore we have
		\begin{equation}\label{eq.AsymMain}
			\int_{-\pi}^{\pi}\int_{-\pi}^{\pi}\frac{1}{2+\lambda-(\cos k_1+\cos k_2)}\, \d k_1 \, \d k_2 = 2\pi\log(\lambda^{-1})+O(1).
		\end{equation}
		For the remainder term, since $1-\cos x\geq \tfrac{2}{\pi^2}x^2$ for $x\in[-\pi,\pi]$, and using polar coordinate we can compute
		\begin{equation}\label{eq.AsymRemainder}
			\begin{aligned}
				\int_{-\pi}^{\pi}\int_{-\pi}^{\pi}\frac{\sqrt{\lambda+k_1^2+k_2^2}}{2+\lambda-(\cos k_1 + \cos k_2)}\, \d k_1 \, \d k_2
				&\leq C\int_{-\pi}^{\pi}\int_{-\pi}^{\pi}\frac{1}{\sqrt{\lambda+k_1^2+k_2^2}}\, \d k_1 \, \d k_2\\
				&\leq C\int_{0}^{\sqrt{2}\pi}\frac{r}{\sqrt{\lambda+r^2}}\, \d r = O(1).
			\end{aligned}
		\end{equation}
		Combining \eqref{eq.AsymGreen}, \eqref{eq.AsymMain}, and \eqref{eq.AsymRemainder} we have:
		\begin{equation*}
			\bar{g}_\lambda(\{0,1\})=\frac{1}{2\pi\DD(\rho)}\log(\lambda^{-1})+O(1).
		\end{equation*} 
		\eqref{eq.GreenH1Deg2} then follows from \eqref{eq.GreenH1Compute}.
		
		The $H^2$ estimate \eqref{eq.GreenH2Deg2} of $\bar{G}_\lambda$ follows directly from (1) and \eqref{eq.ResolventSSEP}:
		\begin{equation*}
			\norm{-\Lb\bar{G}_\lambda}_{L^2}=\norm{\bar{\eta}(0)\bar{\eta}(1)-\lambda\bar{G}_\lambda}_{L^2}\leq \norm{\lambda\bar{G}_\lambda}_{L^2}+\norm{\bar{\eta}(0)\bar{\eta}(1)}_{L^2}=O(1).
		\end{equation*}
	\end{proof}
	
	In the next proposition, we collect some other useful estimates for $\bar{g}_\lambda$.
	\begin{proposition}\label{prop.SobolevNorm}
		We have the following estimates for $\bar{g}_\lambda$:
		\begin{enumerate}
			\item\label{prop.H11D}The first order finite difference of $\bar{g}_\lambda$ satisfies:
			\begin{equation*}
				\sum_{x,y\in\Zo}\left(\D_1\bar{g}_\lambda(x,y)\right)^2 = \frac{1}{2} \DD(\rho)^{-1}\chi(\rho)^{-2}\expec{\bar{G}_\lambda,-\Lb\bar{G}_\lambda}.
			\end{equation*}
			\item\label{prop.pibpib'} The second order finite difference of $\bar{g}_\lambda$ satisfies:
			\begin{equation*}
				\sum_{x,y\in\Zo}\left(\D_1\D_2\bar{g}_\lambda(x,y)\right)^2\leq\frac{1}{4} \DD(\rho)^{-2}\chi(\rho)^{-2}\expec{-\Lb\bar{G}_\lambda,-\Lb\bar{G}_\lambda},
			\end{equation*}
			\begin{equation*}
				\sum_{x,y\in\Zo}\left(\D_1\D_1\bar{g}_\lambda(x,y)\right)^2\leq \frac{1}{4}\DD(\rho)^{-2}\chi(\rho)^{-2}\expec{-\Lb\bar{G}_\lambda,-\Lb\bar{G}_\lambda}.
			\end{equation*}
			\item\label{prop.H1Near} For every fixed constant $s\geq 1$, there exists a positive constant $C(\rho)$ such that for every $\ell\in\mathbb{Z}_{+}$, the following estimate holds:
			\begin{equation*}
				\sum_{x\in\Zo}\sum_{|y-x|\leq s}\left(\D_1\bar{g}_\lambda(x,y)\right)^2
				\leq s\cdot C(\rho)\left( \ell^{-1}\expec{\bar{G}_\lambda,-\Lb\bar{G}_\lambda}+\ell\expec{-\Lb\bar{G}_\lambda,-\Lb\bar{G}_\lambda}\right).
			\end{equation*}
		\end{enumerate}
	\end{proposition}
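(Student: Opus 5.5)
The three estimates will come from rewriting the Dirichlet form and the $L^2$ norm of $\Lb\bar G_\lambda$ in terms of $\bar g_\lambda$. The tools are the chaos decomposition in $\HH_2$ and the formula \eqref{eq.PibGbar}.

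For (1), compute $\expec{\bar G_\lambda,-\Lb\bar G_\lambda}=\tfrac12\DD(\rho)\sum_x\E_\rho[(\pi_{x,x+1}\bar G_\lambda)^2]$ via \eqref{eq.PibGbar}. For fixed $x$, the variables $(\bar\eta_x-\bar\eta_{x+1})\bar\eta_y$ with $y\notin\{x,x+1\}$ are orthogonal in $L^2$. Each has second moment $2\chi(\rho)^2$. The terms $y\in\{x,x+1\}$ vanish by the convention $\D_1\bar g_\lambda(x,y)=0$ there. Hence
\begin{equation*}
\E_\rho[(\pi_{x,x+1}\bar G_\lambda)^2]=4\sum_y(\D_1\bar g_\lambda(x,y))^2\cdot 2\chi(\rho)^2 .
\end{equation*}
Summing over $x$ gives the Dirichlet form as $4\DD(\rho)\chi(\rho)^2\sum_{x,y}(\D_1\bar g_\lambda)^2$. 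I need to double-check the constant against the symmetric normalization \eqref{eq.SymmetricAssumption}, so that the identity reads exactly as stated with the factor $\tfrac12\DD^{-1}\chi^{-2}$.

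For (2), write $\Lb\bar G_\lambda=\sum_{x\ne y}(\DD\Delta\bar g_\lambda)(x,y)\bar\eta_x\bar\eta_y$, where $\Delta=\Delta_1+\Delta_2$ is the two-particle Laplacian with the exclusion boundary conditions along the diagonal. Then $\|\Lb\bar G_\lambda\|_{L^2}^2=2\chi^2\DD^2\sum_{x\ne y}(\Delta\bar g_\lambda)^2$. Next I expand $\sum(\Delta_1 g+\Delta_2 g)^2$ and sum by parts. Away from the diagonal the cross term $2\sum\Delta_1g\,\Delta_2g$ equals $2\sum(\D_1\D_2 g)^2$, and $\sum(\Delta_1g)^2$ relates to $\sum(\D_1\D_1g)^2$. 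The main obstacle is the diagonal boundary terms produced by summation by parts near $y=x,x\pm1$. I would use the symmetry $\bar g(x,y)=\bar g(y,x)$ so that these boundary contributions appear as perfect squares, or as terms of a definite sign, and can be dropped for an upper bound. This is where the conventions on $\D_1\D_2\bar g$ and $\D_1\D_1\bar g$ near the diagonal enter. If exact cancellation fails, I would at least show the boundary terms are nonnegative, which gives each of $\sum(\D_1\D_2 g)^2$ and $\sum(\D_1\D_1g)^2$ bounded by $\tfrac14\DD^{-2}\chi^{-2}\|\Lb\bar G_\lambda\|^2$.

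For (3), I would use a one-dimensional interpolation along the $y$-variable. Fix $x$ and the offset $k=y-x$ with $|k|\le s$. For any $j\le \ell$ with $|k+j|>s$ (or any shift),
\begin{equation*}
\D_1\bar g(x,x+k)=\D_1\bar g(x,x+k+j)-\sum_{i<j}\D_2\D_1\bar g(x,x+k+i).
\end{equation*}
Averaging over $j\in\{1,\dots,\ell\}$ and applying Cauchy--Schwarz bounds $(\D_1\bar g(x,x+k))^2$ by
\begin{equation*}
\frac{2}{\ell}\sum_{j\le\ell}(\D_1\bar g(x,x+k+j))^2+2\ell\sum_{i\le\ell}(\D_1\D_2\bar g(x,x+k+i))^2 .
\end{equation*}
Summing over $x$ and the at most $2s+1$ values of $k$ gives the claimed bound via (1) and (2). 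Each pair $(x,y)$ is counted at most $\ell$ times in the first sum, which cancels with the $1/\ell$ and leaves $\ell^{-1}$ times the $H^1$ sum; similarly the second sum gives the $\ell$ times the $H^2$ term. The only subtlety is the convention-induced zeros near the diagonal, which only help.
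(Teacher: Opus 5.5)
Your proposal has a genuine gap in part (2). Expanding $\sum_{x\neq y}(\Delta\bar g_\lambda)^2$ and summing by parts is harmless away from the diagonal. Along the diagonal, however, the exclusion constraint turns $\Delta_1\bar g_\lambda$ into one-sided first-order differences; for instance $\Delta_1\bar g_\lambda(x,x+1)=-\D_1\bar g_\lambda(x-1,x+1)$. The sign of the resulting boundary terms is where all the work of the estimate lies. Saying you ``would show they are perfect squares, or at least nonnegative'' restates what must be proved, and nothing in the plan explains why the sign comes out right.

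The paper never passes to $\Delta\bar g_\lambda$. It expands $\langle -\Lb\bar G_\lambda,-\Lb\bar G_\lambda\rangle=\DD(\rho)^2\sum_{b,b'}\langle\pi_b\bar G_\lambda,\pi_{b'}\bar G_\lambda\rangle$ bond by bond.
\begin{itemize}
\item \emph{Disjoint bonds.} Using $\pi_b^2=-2\pi_b$, self-adjointness and commutation, $\langle\pi_bF,\pi_{b'}F\rangle=\frac14\E_\rho[(\pi_b\pi_{b'}F)^2]$. For $b=\{x,x+1\}$, $b'=\{y,y+1\}$ this equals $4\chi(\rho)^2(\D_1\D_2\bar g_\lambda(x,y))^2$. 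It is nonnegative by construction, and these pairs are exactly those where the convention does not set $\D_1\D_2\bar g_\lambda$ to zero.
\item \emph{Same and adjacent bonds.} Grouped as $\langle\pi_{x-1,x}\bar G_\lambda,\pi_{x-1,x}\bar G_\lambda\rangle+\langle\pi_{x,x+1}\bar G_\lambda,\pi_{x,x+1}\bar G_\lambda\rangle+4\langle\pi_{x,x+1}\bar G_\lambda,\pi_{x-1,x}\bar G_\lambda\rangle$, they evaluate to $8\chi(\rho)^2\bigl[\sum_y(\D_1\D_1\bar g_\lambda(x-1,y))^2+(\D_1\bar g_\lambda(x,x-1)-\D_1\bar g_\lambda(x-1,x+1))^2\bigr]$.
\end{itemize}
So the diagonal contribution is literally a square, and both bounds in (2) follow with constant $\frac14$. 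If you keep your route, you must produce this square by an explicit computation near the diagonal.

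In part (3) your idea is the paper's, but the convention-induced zeros do not ``only help.'' Your telescoping identity fails once the window crosses the diagonal. For $y=x-1$ and $j=1$ it reads $\D_1\bar g_\lambda(x,x-1)=\D_1\bar g_\lambda(x,x)-\D_1\D_2\bar g_\lambda(x,x-1)=0$, which is false in general. You must telescope away from the diagonal: upward if $y>x$, downward if $y<x$. This is the paper's ``without loss of generality $y<x$'' with $z\in[y-\ell,y-1]$. Your multiplicity count is also off. Each pair $(x,y)$ is hit at most $2s+1$ times in the first sum (once per $k$), not $\ell$ times; the $\ell^{-1}$ comes from the averaging and the factor $s$ from this multiplicity.

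In part (1) your method is the paper's, and your constant is the correct one: $\frac12\DD(\rho)\cdot 8\chi(\rho)^2=4\DD(\rho)\chi(\rho)^2$. The identity therefore holds with $\frac14$ in place of $\frac12$. As a check, $\bar G=\bar\eta(0)\bar\eta(2)$ has Dirichlet form $4\DD(\rho)\chi(\rho)^2$ while $\sum_{x,y}(\D_1\bar g)^2=1$. The paper's proof writes $2\DD(\rho)\chi(\rho)^2$, which contradicts its own later line $\langle\pi_{x,x+1}\bar G_\lambda,\pi_{x,x+1}\bar G_\lambda\rangle=8\chi(\rho)^2\sum_y(\D_1\bar g_\lambda(x,y))^2$. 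The stated $\frac12$ version is true only as an upper bound, which is all that is used downstream, so do not bend your computation to match it.
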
 
	
	\begin{proof}
		Notice that we have the relation 
		\begin{equation*}
			\pi_b\pi_b=-2\pi_{b},\qquad\forall b\in\K_2,
		\end{equation*}
		and if the bond $b$ and $b'$ are disjoint, the Kawasaki operators $\pi_b$ and $\pi_{b'}$ commute:
		\begin{equation*}
			\pi_b\pi_{b'}=\pi_{b'}\pi_{b},\quad \forall b\cap b'=\emptyset.
		\end{equation*}
		Moreover, the Kawasaki operator is a self-adjoint operator in $L^2(\X,\F,\P_\rho)$:
		\begin{equation*}
			\expec{\pi_b F,G} = \expec{F,\pi_b G},\quad\forall F,G\in L^2(\X,\F,\P_\rho),\quad\forall b\in\K_2.
		\end{equation*}
		Therefore, when the bond $b$ and $b'$ are disjoint, we have:
		\begin{equation}\label{eq.disjointbond}
			\expec{\pi_b F,\pi_{b'}F} = \frac{1}{4}\expec{\pi_b\pi_b F,\pi_{b'}\pi_{b'}F} = \frac{1}{4}\expec{\pi_{b}\pi_{b'}F,\pi_{b}\pi_{b'}F} = \frac{1}{4}\E_\rho\Ll[(\pi_b \pi_{b'}F)^2\Rr].
		\end{equation}
		By \eqref{eq.defSSEPG} and the previous observation, we have
		\begin{equation*}
			\expec{\bar{G}_\lambda,-\Lb\bar{G}_\lambda}=\frac{1}{2}\DD(\rho)\sum_{x\in\Zo}\E_\rho\Ll[\left(\pi_{x,x+1}\bar{G}_\lambda\right)^2\Rr].
		\end{equation*}
		Combining \eqref{eq.PibGbar} we obtain estimate \eqref{prop.H11D}:
		\begin{equation*}
			\expec{\bar{G}_\lambda,-\Lb\bar{G}_\lambda}=2\DD(\rho)\chi(\rho)^2\sum_{x,y\in\Zo}\left(\D_1\bar{g}_\lambda(x,y)\right)^2.
		\end{equation*}	
		Using \eqref{eq.defSSEPG}, we can decompose $\expec{-\Lb\bar{G}_\lambda,-\Lb\bar{G}_\lambda}$ into:
		\begin{equation}\label{eq.BarGH2Decom}
			\begin{multlined}
				\expec{-\Lb\bar{G}_\lambda,-\Lb\bar{G}_\lambda} =\DD(\rho)^2\bigg(\sum_{\{x,x+1\}\cap\{y,y+1\}=\emptyset}\expec{\pi_{x,x+1}\bar{G}_\lambda,\pi_{y,y+1}\bar{G}_\lambda}\\ +\sum_{x\in\Zo}\expec{\pi_{x,x+1}\bar{G}_\lambda,\pi_{x,x+1}\bar{G}_\lambda}+2\sum_{x\in\Zo}\expec{\pi_{x,x+1}\bar{G}_\lambda,\pi_{x-1,x}\bar{G}_\lambda}\bigg).
			\end{multlined}
		\end{equation}
		Combining \eqref{eq.disjointbond} and \eqref{eq.PibGbar}, for disjoint bonds $\{x,x+1\}$ and $\{y,y+1\}$, we can compute:
		\begin{equation}\label{eq.D1D2gBar}
			\expec{\pi_{x,x+1}\bar{G}_\lambda,\pi_{y,y+1}\bar{G}_\lambda}=\frac{1}{4}\E_\rho\Ll[(\pi_{x,x+1}\pi_{y,y+1}\bar{G}_\lambda)^2\Rr]=4\chi(\rho)^2\Ll(\D_1\D_2\bar{g}_\lambda(x,y)\Rr)^2.
		\end{equation}
		Using the expression \eqref{eq.defBarGH2}, we can calculate other terms as
		\begin{equation*}
			\expec{\pi_{x,x+1}\bar{G}_\lambda,\pi_{x,x+1}\bar{G}_\lambda}=8\chi(\rho)^2\sum_{y\in\Zo}\Ll(\D_1\bar{g}_\lambda(x,y)\Rr)^2,
		\end{equation*}
		and
		\begin{multline*}
			\expec{\pi_{x,x+1}\bar{G}_\lambda,\pi_{x-1,x}\bar{G}_\lambda}\\	=-4\chi(\rho)^2\Ll(\sum_{y\in\Zo}\D_1\bar{g}_\lambda(x,y)\D_1\bar{g}_\lambda(x-1,y)+\D_1\bar{g}_\lambda(x-1,x+1)\D_1\bar{g}_\lambda(x,x-1)\Rr).
		\end{multline*}
		Therefore, we consider one pair of bonds $\{x-1,x\}$ and $\{x,x+1\}$ and obtain:
		\begin{equation}\label{eq.D1D1gBar}
			\begin{multlined}
				\expec{\pi_{x-1,x}\bar{G}_\lambda,\pi_{x-1,x}\bar{G}_\lambda}+\expec{\pi_{x,x+1}\bar{G}_\lambda,\pi_{x,x+1}\bar{G}_\lambda}+4\expec{\pi_{x,x+1}\bar{G}_\lambda,\pi_{x-1,x}\bar{G}_\lambda}\\
				=8\chi(\rho)^2\bigg(\sum_{y\in\Zo}\Ll(\D_1\D_1\bar{g}_\lambda(x-1,y)\Rr)^2+\Ll(\D_1\bar{g}_\lambda(x,x-1)-\D_1\bar{g}_\lambda(x-1,x+1)\Rr)^2\bigg).
			\end{multlined}
		\end{equation}
		We can conclude \eqref{prop.pibpib'} through \eqref{eq.BarGH2Decom}, \eqref{eq.D1D2gBar}, and \eqref{eq.D1D1gBar}.
		
		For the last estimate \eqref{prop.H1Near}, we first fix $y$ such that $|y-x|\leq s$. Without loss of generality we can assume $y<x$, and by Cauchy--Schwarz inequality we have
		\begin{align*}
			\left(\D_1\bar{g}_\lambda(x,y)\right)^2
			&\leq 2\left(\frac{1}{\ell}\sum_{z\in [y-\ell,y-1]}\D_1\bar{g}_\lambda(x,z)\right)^2 + 2\left(\D_1\bar{g}_\lambda(x,y)-\frac{1}{\ell}\sum_{z\in [y-\ell,y-1]}\D_1\bar{g}_\lambda(x,z)\right)^2\\
			&\leq \frac{2}{\ell}\sum_{z\in\Zo}\left(\D_1\bar{g}_\lambda(x,z)\right)^2 + 2\left(\sum_{i=1}^{\ell}\frac{i}{\ell}\, \D_1\D_2\bar{g}_\lambda(x,y-\ell+i-1)\right)^2\\
			&\leq \frac{2}{\ell}\sum_{z\in\Zo}\left(\D_1\bar{g}_\lambda(x,z)\right)^2+\ell\sum_{z\in\Zo}\left(\D_1\D_2\bar{g}_\lambda(x,z)\right)^2,
		\end{align*}
		and \eqref{prop.H1Near} then follows from summing over all $x\in\Zo$ and $y\in\Zo$ such that $|y-x|\leq s$, \eqref{prop.H11D}, and \eqref{prop.pibpib'}.
	\end{proof}
	
	For every $r\in\Zo_+$, we can define the following notations:
	\begin{equation}\label{eq.defDr}
		\D_{(r)}\bar{g}_\lambda(x,y):=\sum_{|x'-x|\leq r}\sum_{|y'-y|\leq r}|\D_1 \bar{g}_\lambda(x',y')|,
	\end{equation}
	\begin{equation}\label{eq.defD2r}
		\D^2_{(r)}\bar{g}_\lambda(x,y):=\sum_{|x'-x|\leq r}\sum_{|y'-y|\leq r}\left(|\D_1\D_1\bar{g}_\lambda(x',y')|+|\D_1\D_2\bar{g}_\lambda(x',y')|\right),
	\end{equation}
	\begin{equation}\label{eq.defDTilde}
		\tilde{\D}_{(r)}\bar{g}_\lambda:=\D_{(r)}\bar{g}_\lambda+\D^2_{(r)}\bar{g}_\lambda.
	\end{equation}
	The next proposition will be used in estimating the variance of the quadratic variation of the martingale associated with $\tilde{G}_\lambda$:
	\begin{proposition}\label{prop.GreenQuartic}
		We have the following estimates for the quartic sum consisting of the finite difference of $\bar{g}_\lambda$:
		\begin{enumerate}
			\item\label{prop.QuarticFourBlock} We have the following estimates for the four-block quartic sum.
			\begin{enumerate}
				\item There exists a constant $C$ independent of $\lambda$ such that for every non-negative bounded weight function $f\in\ell^{\infty}(\Zo)$, the following estimate holds:
				\begin{multline*}
					\sum_{x,x',y,y'\in\Zo}f(x)f(x') \D_1\bar{g}_\lambda(x,y)\D_1\bar{g}_\lambda(x,y')\D_1\bar{g}_\lambda(x',y)\D_1\bar{g}_\lambda(x',y')\\
					\leq C\norm{f}_{\ell^\infty(\Zo)}^{2}\log(\lambda^{-1}).
				\end{multline*}
				\item There exists a constant $C$ independent of $\lambda$, such that for every $r\in\Zo_{+}$, the following estimate holds:
				\begin{multline*}
					\sum_{z_i,z_i'\in\Z_m}\D^2_{(r)}\bar{g}_\lambda(z_1,z_1')\tilde{\D}_{(r)}\bar{g}_\lambda(z_2,z_1')\tilde{\D}_{(r)}\bar{g}_\lambda(z_1,z_2')\tilde{\D}_{(r)}\bar{g}_\lambda(z_2,z_2')\\
					\leq C r^8\left((\log(\lambda^{-1}))^{\frac{3}{2}}+1\right).
				\end{multline*}
			\end{enumerate}
			
			\item\label{prop.QuarticThreeBlock}	There exists a constant $C$ independent of $\lambda$, such that for every $r\in\Zo_{+}$, the following estimates about the three-block quartic sum hold.
			
			\begin{enumerate}
				\item For some terms which we can write as a summation of quadratic terms
				\begin{multline*}
					\sum_{z\in\Z_m}\left(\sum_{z'\in\Z_m}\left(\tilde{\D}_{(r)}\bar{g}_\lambda(z,z')\right)^2\right)^2+	\sum_{z'\in\Z_m}\left(\sum_{z\in\Z_m}\left(\tilde{\D}_{(r)}\bar{g}_\lambda(z,z')\right)^2\right)^2\\
					\leq Cr^8\left(\log(\lambda^{-1})+1\right),
				\end{multline*} 
				\item For some other three blocks  quartic sum $\operatorname{TB}$
				\begin{equation*}
					\operatorname{TB}\leq Cr^8\left(\left(\log(\lambda^{-1})\right)^{\frac{3}{2}}+1\right),
				\end{equation*}
				where $\operatorname{TB}$ can be one of the following:
				\begin{multline*}
					\sum_{z_i\in\Z_m}\left(\tilde{\D}_{(r)}\bar{g}_\lambda(z_1,z_1)+\tilde{\D}_{(r)}\bar{g}_\lambda(z_1,z_2)\right)\Ll(\tilde{\D}_{(r)}\bar{g}_\lambda(z_2,z_2)+\tilde{\D}_{(r)}\bar{g}_\lambda(z_2,z_1)\Rr)\\
					\tilde{\D}_{(r)}\bar{g}_\lambda(z_1,z_3)\tilde{\D}_{(r)}\bar{g}_\lambda(z_2,z_3),
				\end{multline*}
				or
				\begin{equation*}
					\sum_{z_i\in\Z_m}\left(\tilde{\D}_{(r)}\bar{g}_\lambda(z_1,z_3)\right)^2\left(\tilde{\D}_{(r)}\bar{g}_\lambda(z_2,z_2)+\tilde{\D}_{(r)}\bar{g}_\lambda(z_2,z_1)\right)^2.
				\end{equation*}
			\end{enumerate}
			
			\item\label{prop.QuarticTwoBlock} There exists a constant $C$ independent of $\lambda$, such that for every $r\in\Zo_{+}$, the following estimate about the two-block quartic sum holds:
			\begin{multline*}
				\sum_{z_i\in\Z_m}\left(\tilde{\D}_{(r)}\bar{g}_\lambda(z_1,z_1)+\tilde{\D}_{(r)}\bar{g}_\lambda(z_1,z_2)\right)^2\Ll(\tilde{\D}_{(r)}\bar{g}_\lambda(z_2,z_2)+\tilde{\D}_{(r)}\bar{g}_\lambda(z_2,z_1)\Rr)^2\\
				\leq Cr^8 \left(\log(\lambda^{-1})+1\right).
			\end{multline*}
		\end{enumerate}
		
	\end{proposition}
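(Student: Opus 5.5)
The plan is to reduce every quartic sum to two kinds of inputs: the $\ell^2$ bounds on first and second differences from Proposition~\ref{prop.SobolevNorm}, and a uniform $\ell^\infty$ bound on $\D_1\bar{g}_\lambda$. The latter is read off from the explicit Fourier representation in Lemma~\ref{lem.TransitionGreen}(2): after differentiating, the factor $e^{ik_1}-1$ cancels the singularity of the denominator, using \eqref{eq.ABound} and \eqref{eq.ANear0}. Write $E:=\expec{\bar{G}_\lambda,-\Lb\bar{G}_\lambda}=O(\log\lambda^{-1})$ and $H:=\expec{-\Lb\bar{G}_\lambda,-\Lb\bar{G}_\lambda}=O(1)$ (Proposition~\ref{prop.GreenEstimateDeg2}). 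Then Proposition~\ref{prop.SobolevNorm} gives
\[
\sum_{x,y}(\D_1\bar{g}_\lambda)^2\leq CE, \qquad \sum_{x,y}(\D_1\D_i\bar{g}_\lambda)^2\leq CH .
\]
For the blocked quantities $\tilde{\D}_{(r)}\bar{g}_\lambda$ in \eqref{eq.defDTilde}, Cauchy--Schwarz over the $(2r+1)^2$ translates gives
\[
\sum_{z,z'}\bigl(\tilde{\D}_{(r)}\bar{g}_\lambda(z,z')\bigr)^2\leq Cr^4(E+H),
\]
uniformly over sublattices (in particular over $\Z_m$), since translates are at most counted once. The same argument bounds $\ell^\infty$ norms by $Cr^2$.

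For (1)(a), view $K(x,y)=\D_1\bar{g}_\lambda(x,y)$ as a kernel and rewrite the sum as
\[
\sum_{y,y'}\Bigl(\sum_x f(x)K(x,y)K(x,y')\Bigr)^2=\norm{K^{T}\operatorname{diag}(f)K}_{HS}^2 .
\]
This is bounded by $\norm{f}_\infty^2\,\norm{K}_{op}^2\,\norm{K}_{HS}^2$. Here $\norm{K}_{HS}^2\leq CE$, so it suffices to show $\norm{K}_{op}=O(1)$. I would prove this by Fourier transform: away from the diagonal, $\bar{g}_\lambda$ is the free part plus the $\A$-reflected part, and the Fourier multiplier of $\D_1$ of each piece, in the variable $y$ for fixed frequency in $x$, is of size $|k_1|/(\lambda+k_1^2+k_2^2)$. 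This multiplier is not bounded, so instead I would bound the operator norm via a Schur test using the pointwise decay $|\D_1\bar{g}_\lambda(x,y)|\lesssim (1+|x|+|y|)^{-1}$, which comes from the heat-kernel bound in Lemma~\ref{lem.TransitionGreen}(1) together with the gradient bound of the Remark. Schur's test with weight $(1+|x|)^{-1/2}$ gives a logarithm at worst. I expect this to be the main obstacle. If only $\norm{K}_{op}^2\lesssim\sqrt{\log\lambda^{-1}}$ is available, the resulting $E^{3/2}$ is still acceptable for (1)(b) but not for (1)(a), so for (1)(a) a finer Fourier argument is needed. That argument uses the fact that the symbol $\A\approx 1$ makes the reflection nearly an image charge, so $K$ is essentially a convolution-type operator with bounded symbol after symmetrization.

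For (1)(b), (2) and (3), the approach is to apply Hölder/Cauchy--Schwarz, grouping factors into two pairs that each share an index. In (1)(b), estimate
\[
\sum \D^2\,\tilde{\D}\,\tilde{\D}\,\tilde{\D}\leq \Bigl(\sum_{z_1,z_1'}(\D^2)^2\Bigr)^{1/2}\cdot\Bigl(\sum_{z_1,z_1'}\Bigl(\sum_{z_2,z_2'}\tilde{\D}\,\tilde{\D}\,\tilde{\D}\Bigr)^2\Bigr)^{1/2}.
\]
The second factor is controlled by $\norm{\tilde{\D}}_{op}^2\norm{\tilde{\D}}_{HS}$-type bounds, giving $r^8H^{1/2}E^{3/2}$. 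The sums in (2)(a) are $\sum_z\bigl(\sum_{z'}\tilde{\D}^2\bigr)^2\leq \sup_z\bigl(\sum_{z'}\tilde{\D}^2\bigr)\cdot\sum\tilde{\D}^2$. Here $\sup_z\sum_{z'}(\D_1\bar{g}_\lambda(z,z'))^2=O(1)$ follows from the one-dimensional Fourier slice: the slice integral $\int|k_1|^2/(\lambda+k_1^2+k_2^2)^2\,\d k_2$ is bounded by $1/|k_1|$ times $|k_1|^2$ integrated, which gives a finite quantity. The near-diagonal terms $\tilde{\D}(z,z)$ and $\tilde{\D}(z_1,z_2)$ with $z_1\neq z_2$ in (2)(b) and (3) are handled by Proposition~\ref{prop.SobolevNorm}(3) with $\ell\sim\sqrt{\log\lambda^{-1}}$. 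This choice makes the diagonal $\ell^2$ mass $O(\sqrt{\log})$, and combining it with the $\ell^\infty$ bound and one full $\ell^2$ factor yields the $(\log)^{3/2}$ and $\log$ bounds stated.
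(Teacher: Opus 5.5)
The gap is in (1)(a). For (1)(b), (2) and (3) your argument is essentially the paper's and is fine; the three- and two-block cases are only sketched, but the ingredients you list suffice. Those ingredients are the Hilbert--Schmidt bounds from Proposition~\ref{prop.SobolevNorm}, the uniform row-sum and $\ell^\infty$ bounds on $\D_1\bar{g}_\lambda$, and the diagonal mass $O(r^4(\sqrt{\log\lambda^{-1}}+1))$ from Proposition~\ref{prop.SobolevNorm}(3) with $\ell\sim\sqrt{\log\lambda^{-1}}$. Your bound $\operatorname{Tr}(A^{T}BB^{T}B)\le\norm{A}_{\mathrm{HS}}\norm{B}_{\mathrm{HS}}^3$ replaces the paper's Schatten--H\"older step. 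One slip there: integrating $|M_\lambda|^2$ in $k_2$ and then in $k_1$ gives $\log\lambda^{-1}$, not $O(1)$. The uniform bound on $\sum_y(\D_1\bar{g}_\lambda(x,y))^2$ instead comes from fixing $x$ and using Parseval in $y$. One then bounds $\int|M_\lambda(k_1,k_2)|\,dk_1\lesssim\log(1+\pi^2/(\lambda+k_2^2))$, which is square integrable in $k_2$.

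\textbf{What fails in (1)(a).} Your reduction of the sum to $\norm{f}_{\ell^\infty}^2\norm{K}_{\mathrm{op}}^2\norm{K}_{\mathrm{HS}}^2$ is correct. But it makes $\norm{K}_{\mathrm{op}}=O(1)$ for $K=\D_1\bar{g}_\lambda$ the whole difficulty, and neither of your routes proves it.
\begin{itemize}
\item The pointwise bound $|\D_1\bar{g}_\lambda(x,y)|\lesssim(1+|x|+|y|)^{-1}$ does not follow from Lemma~\ref{lem.TransitionGreen}(1) and the Remark. Integrating a gradient bound of size $t^{-5/4}e^{-|z|^2/(Ct)}$ in time gives only $|z|^{-1/2}$, and with that decay the Schur test gives no $\lambda$-uniform bound. (A genuine $|z|^{-1}$ bound would give $O(1)$ by Hilbert's inequality, not a logarithm.)
\item Your fallback misreads the structure. $g_\lambda(x,y)$ is not a function of $x-y$, so $T[\D_1 g_\lambda]$ is not a Fourier multiplier. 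Conjugated by the Fourier transform it is an integral operator on $L^2(\mathbb{T})$ with kernel $M_\lambda(k,-r)$. Symmetrization does not make $|k_1|/(\lambda+k_1^2+k_2^2)$ bounded.
\item Passing from the full-plane kernel $g_\lambda$ to $\bar{g}_\lambda$ requires a triangular truncation. Triangular truncation is unbounded in operator norm, so even an operator-norm bound for $g_\lambda$ would not transfer.
\end{itemize}

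\textbf{How to close it.} The paper never uses the operator norm.
\begin{itemize}
\item Since $f\ge0$, the sum is at most $\norm{f}_{\ell^\infty}^2\norm{T[\D_1\bar{g}_\lambda]}_{S_4}^4$.
\item Macaev's theorem (triangular truncation is bounded on $S_p$, $1<p<\infty$) reduces this to $\D_1 g_\lambda$ and $\D_2 g_\lambda$.
\item Then $\norm{T[\D_1 g_\lambda]}_{S_4}^4=\norm{T^*T}_{\mathrm{HS}}^2=(2\pi)^{-2}\iint|B_\lambda(k,r)|^2$, with $|B_\lambda(k,r)|\lesssim(\sqrt{1+\lambda-\cos k}+\sqrt{1+\lambda-\cos r})^{-1}$. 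This integrates to $O(\log\lambda^{-1})$.
\end{itemize}
Your image-charge idea can also be made rigorous.
\begin{itemize}
\item For $\A\equiv1$, the symmetric extension of $\bar{g}_\lambda$ coincides with $g_\lambda$ off the diagonal, so no truncation arises.
\item On the Fourier side, $|M_\lambda(k,-r)|\lesssim|k|/(k^2+r^2)$. This kernel is homogeneous of degree $-1$, and the Schur test with weight $|k|^{-1/2}$ bounds it on $L^2(\mathbb{T})$.
\item The $(\A-1)$ part has a bounded multiplier, since $|\A-1|\lesssim\sqrt{\lambda+|k|^2}$ near the origin. So it has $O(1)$ Hilbert--Schmidt norm, which truncation cannot increase.
\item The diagonal convention is a banded, bounded correction.
\end{itemize}
As written, however, (1)(a) is not proved.
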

	\begin{proof}
		We prove estimate $(\text{a})$ in \eqref{prop.QuarticFourBlock} first.
		Notice that we can rewrite it as a summation of quadratic terms:
		\begin{multline*}
			\sum_{x,x',y,y'\in\Zo}f(x)f(x')\D_1\bar{g}_\lambda(x,y)\D_1\bar{g}_\lambda(x,y')\D_1\bar{g}_\lambda(x',y)\D_1\bar{g}_\lambda(x',y')\\
			=\sum_{x,x'\in\Zo}f(x)f(x')\left(\sum_{y}\D_1\bar{g}_\lambda(x,y)\D_1 \bar{g}_\lambda(x',y)\right)^2\leq \norm{f}^2_{\ell^{\infty}(\Zo)}Q(\D_1\bar{g}_\lambda),
		\end{multline*}
		where in the last inequality we use the non-negativeness and boundedness of the weight function $f$ and we denote the quartic sum without weight by $Q(\D_1\bar{g}_\lambda)$ for simplicity
		\begin{equation*}
			Q(\D_1\bar{g}_\lambda):=\sum_{x,x',y,y'\in\Zo}\D_1\bar{g}_\lambda(x,y)\D_1\bar{g}_\lambda(x,y')\D_1\bar{g}_\lambda(x',y)\D_1\bar{g}_\lambda(x',y').
		\end{equation*}
		
		In the following part, we will use the operator theory to estimate $Q(\D_1\bar{g}_\lambda)$.
		For a kernel function $g:\mathbb{Z}^2\rightarrow\R$ satisfying $\norm{g}_{\ell^2(\Zt)}<\infty$, we can 
		define a linear bounded operator $T[g]$ on $\ell^2(\Zo)$ of convolutional type:
		\begin{equation*}
			(T[g]\circ f)(x):=\sum_{y\in\Zo}g(x,y)f(y),\quad\forall f\in\ell^2(\Zo).
		\end{equation*}
		We consider the \emph{Schatten 4-class} on the Hilbert space $\ell^2(\Zo)$ consisting of all linear compact operators $T$ on $\ell^2(\Zo)$ with a finite \emph{4-th Schatten norm}:
		\begin{equation*}
			\norm{T}_{S_4}:=\left(\operatorname{Tr}\left((T^*\circ T)^{2}\right)\right)^{\frac{1}{4}}.
		\end{equation*}
		Notice that for the convolutional type operator $T[g]$ we have
		\begin{equation*}
			\norm{T[g]}_{S_4}^4=\operatorname{Tr}\left(((T[g])^*\circ T[g])^2\right)=\sum_{x,y,z,w\in\Zo}g(x,z)g(y,z)g(x,w)g(y,w),
		\end{equation*}
		and therefore we have
		\begin{equation*}
			Q(\D_1\bar{g}_\lambda)=\norm{T[\D_1\bar{g}_\lambda]}_{S_4}^4.
		\end{equation*}
		
		We utilize \eqref{lem.TwoParticleGreen} in Lemma~\ref{lem.TransitionGreen} to define a full kernel $g_\lambda$ on $\Zt$ which can roughly be viewed as the Green function of two independent particle starting from the source $(0,1)$:
		\begin{equation*}
			g_\lambda(x,y):=\frac{1}{16\pi^2\DD(\rho)}\int_{\pi}^{\pi}\int_{-\pi}^{\pi}\frac{e^{i(k_1 x+k_2(y-1))}+\A(\frac{k_1+k_2}{2},\lambda)e^{i(k_1 y+k_2(x-1))}}{2+\lambda-(\cos k_1+\cos k_2)}\, \d k_1\, \d k_2.
		\end{equation*}
		
		For a kernel function $g$ on $\Zt$, we define its triangular truncation $U\circ g$ and $U'\circ g$ as:
		\begin{equation*}
			U\circ g(x,y):=g(x,y)\1_{\{x\geq y\}},\quad
			U'\circ g(x,y):=g(x,y)\1_{\{x\geq y-1\}}.
		\end{equation*}
		Then from our symmetric assumption \eqref{eq.SymmetricAssumption} and  \eqref{lem.TwoParticleGreen} in Lemma~\ref{lem.TransitionGreen}, we have
		\begin{equation*}
			\D_1\bar{g}_\lambda = \D_1 g_\lambda - U'\circ \D_1g_\lambda - (\D_2 g_\lambda - U\circ \D_2g_\lambda)^*.
		\end{equation*}
		By Macaev’s theorem \cite[Proposition 4.2]{ArazySome}(see also \cite[Corollary 5.4.5]{HytonenAnalysis} for more details), the triangular truncation is a bounded operator on the \emph{Schatten p-class} for any $p\in(1,\infty)$, and combining $\norm{T}_{S_p}=\norm{T^*}_{S_p}$ we have
		\begin{equation*}
			\norm{T[U'\circ \D_1g_\lambda]}_{S_4}\leq C\norm{T[\D_1g_\lambda]}_{S_4},\quad \norm{T[U\circ \D_2g_\lambda]}_{S_4}\leq C\norm{T[\D_2g_\lambda]}_{S_4}.
		\end{equation*}
		Therefore, by Cauchy--Schwarz inequality, we have
		\begin{equation}\label{eq.SymmetricSeparation}
			Q(\D_1\bar{g}_\lambda)\leq C(\norm{T[\D_1g_\lambda]}_{S_4}^4+\norm{T[\D_2g_\lambda]}_{S_4}^4).
		\end{equation}
		
		We consider the Fourier transform $\mathscr{F}:\ell^2(\Zo)\rightarrow L^2(\mathbb{T})$ and the inverse Fourier transform $\mathscr{F}^{-1}:L^2(\mathbb{T}) \rightarrow \ell^2(\Zo)$ defined by:
		\begin{equation*}
			\mathscr{F}\circ f(k) = \sum_{x\in\Zo} f(x) e^{-i k x},\quad \mathscr{F}^{-1}\circ F(x)=\frac{1}{2\pi}\int_{-\pi}^{\pi}F(k)e^{ikx}\, \d k,\quad\forall f\in\ell^2(\Zo), F\in L^2(\mathbb{T}).
		\end{equation*}
		
		For any linear bounded operator $T:
		\ell^2(\Zo)\rightarrow\ell^2(\Zo)$, we can consider the Fourier conjugate of $T$ defined by:
		\begin{equation*}
			\tilde{T}:L^2(\mathbb{T})\rightarrow L^2(\mathbb{T}),\quad \tilde{T}:= \mathscr{F}\circ T \circ \mathscr{F}^{-1}.
		\end{equation*}		
		By Parseval's theorem, the Fourier transform is a unitary transform from $\ell^2(\Zo)$ to $L^2(\mathbb{T})$, which means it preserves the singular value of the operator. The \emph{Schatten norm} is invariant under unitary transformation, and therefore we have
		\begin{equation}\label{eq.UnitaryInvariance}
			\norm{T}_{S_4(\ell^2(\Zo))}=\norm{\tilde{T}}_{S_4(L^2(\mathbb{T}))}.
		\end{equation}
		
		Denote $A(k_1,k_2,\lambda)$ and $A_{-}(k_1,k_2,\lambda)$ the Fourier mode in $g_\lambda$:
		\begin{align*}
			A(k_1,k_2,\lambda)&:=\frac{1}{4\DD(\rho)}\frac{1}{2+\lambda-(\cos k_1+\cos k_2)},\\
			A_{-}(k_1,k_2,\lambda)&:=\frac{1}{4\DD(\rho)}\frac{\A(\frac{k_1+k_2}{2},\lambda)}{2+\lambda-(\cos k_1+\cos k_2)}.
		\end{align*}
		Then by symmetry, we have
		\begin{equation}\label{eq.glambda}
			g_\lambda = \mathscr{F}_1^{-1}\otimes\mathscr{F}_2^{-1}\Ll( e^{-i k_2}A(k_1,k_2,\lambda)\Rr) + \mathscr{F}_1^{-1}\otimes\mathscr{F}_2^{-1}\Ll( e^{-i k_1}A_{-}(k_1,k_2,\lambda)\Rr),
		\end{equation}
		where $\mathscr{F}_1^{-1}$ and $\mathscr{F}_2^{-1}$ are the inverse Fourier transform on the first and second coordinate respectively. Therefore we have
		\begin{equation}\label{eq.D1glambda}
			\D_1 g_\lambda = \mathscr{F}_1^{-1}\otimes\mathscr{F}_2^{-1}\left(M_\lambda(k_1,k_2)\right),
		\end{equation}
		where the multiplier $M_\lambda(k_1,k_2)$ is defined as:
		\begin{equation}\label{eq.defMLambda}
			M_\lambda(k_1,k_2):=(e^{ik_1}-1) e^{-i k_2}A(k_1,k_2,\lambda)+(1- e^{-i k_1}) A_{-}(k_1,k_2,\lambda).
		\end{equation}
		We can compute the Fourier transform of $T[\D_1 g_\lambda]$ for $f\in \ell^2(\Zo)$:
		\begin{align*}
			\mathscr{F}\circ T[\D_1 g_\lambda]\circ f(k)
			&= \sum_{x\in\Zo}T[\D_1 g_\lambda]\circ f(x) e^{-i k x}\\
			&=\sum_{x\in\Zo}\sum_{y\in\Zo}\D_1 g_\lambda(x,y)f(y)e^{-ik x}\\
			&=\sum_{y\in\Zo}f(y)\mathscr{F}_1\circ(\D_1 g_\lambda(k,y))\\
			&=\mathscr{F}_2\circ(f\mathscr{F}_1\circ(\D_1 g_\lambda(k,\cdot)))(0)\\
			&=\frac{1}{2\pi}\int_{-\pi}^{\pi}\mathscr{F}_1\otimes\mathscr{F}_2\left(\D_1 g_\lambda(k, -r)\right)\mathscr{F}\circ f(r)\, \d r.
		\end{align*}
		Therefore, we have
		\begin{equation*}
			\tilde{T[\D_1 g_\lambda]}\circ F(k) = \frac{1}{2\pi}\int_{-\pi}^{\pi}M_\lambda(k,-r)F(r)\, \d r,\quad\forall F\in L^2(\mathbb{T}),
		\end{equation*}
		We can compute the adjoint operator of $\tilde{T[\D_1 g_\lambda]}$:
		\begin{equation*}
			\left(\tilde{T[\D_1 g_\lambda]}\right)^*\circ F(k)= \frac{1}{2\pi}\int_{-\pi}^{\pi}\bar{M_\lambda(r,-k)}F(r)\, \d r,\quad\forall F\in L^2(\mathbb{T}).
		\end{equation*}
		The composition of these two operators is also of convolutional type:
		\begin{equation*}
			\left(\tilde{T[\D_1 g_\lambda]}\right)^*\circ \tilde{T[\D_1 g_\lambda]}\circ F(k) = \frac{1}{2\pi}\int_{-\pi}^{\pi}B_\lambda(k,r)F(r)\, \d r,
		\end{equation*}
		with the following kernel $B_\lambda(k,r)$:
		\begin{equation}\label{eq.defBLambda}
			B_\lambda(k,r)=\frac{1}{2\pi}\int_{-\pi}^{\pi}\bar{M_\lambda(s,-k)}M_\lambda(s,-r)\, \d s.
		\end{equation}
		
		We can finally estimate the \emph{Schatten norm}:
		\begin{align*}
			\norm{\tilde{T[\D_1 g_\lambda]}}_{S_4(L^2(\mathbb{T}))}^4
			&=\norm{\left(\tilde{T[\D_1 g_\lambda]}\right)^*\circ \tilde{T[\D_1 g_\lambda]}}_{\text{HS}(L^2(\mathbb{T}))}^2\\
			&=\frac{1}{4\pi^2}\int_{-\pi}^{\pi}\int_{-\pi}^{\pi}\left|B_\lambda(k,r)\right|^2\, \d k \, \d r,
		\end{align*}
		where $\norm{\cdot}_{\text{HS}(L^2(\mathbb{T}))}$ is the Hilbert–Schmidt norm for the operator on $L^2(\mathbb{T})$.
		Since $\A$ is uniformly bounded for $0<\lambda<1$. From \eqref{eq.defMLambda} we have
		\begin{equation*}
			|M_\lambda(k_1,k_2)|\leq C\frac{|k_1|}{2+\lambda-(\cos k_1+\cos k_2)}.
		\end{equation*}
		Using the elementary integral
		\begin{equation*}
			\int_{\R}\frac{x^2}{(x^2+a)(x^2+b)} \, \d x = \frac{\pi}{\sqrt{a}+\sqrt{b}},\qquad\forall a,b>0,
		\end{equation*}
		and the fact that $1-\cos x\geq \tfrac{2}{\pi^2}x^2$ for $x\in[-\pi,\pi]$, we have 
		\begin{equation}\label{eq.EsBLambda}
			\begin{aligned}
				|B_\lambda(k,r)|
				&\leq C\int_{-\pi}^{\pi}\frac{s^2}{(2+\lambda-(\cos k+\cos s))(2+\lambda-(\cos r+\cos s))}\, \d s\\
				&\leq C\frac{1}{\sqrt{1+\lambda-\cos k}+\sqrt{1+\lambda-\cos r}}.
			\end{aligned}
		\end{equation}
		Therefore, we obtain the desired estimate about the \emph{Schatten norm}:
		\begin{align*}
			\norm{\tilde{T[\D_1 g_\lambda]}}_{S_4(L^2(\mathbb{T}))}^4
			&\leq C\int_{-\pi}^{\pi}\int_{-\pi}^{\pi}\frac{1}{(\sqrt{1+\lambda-\cos k}+\sqrt{1+\lambda-\cos r})^2}\, \d k \, \d r\\
			&\leq C\int_{-\pi}^{\pi}\int_{-\pi}^{\pi}\frac{1}{2+2\lambda-(\cos k+\cos r)}\, \d k \, \d r\\
			&\leq C\log(\lambda^{-1}),
		\end{align*}
		where in the second line, we have already estimated this integral in \eqref{eq.AsymMain}.
		A similar estimate yields the \emph{Schatten norm} for the finite difference on the second coordinate satisfying
		\begin{equation*}
			\norm{\tilde{T[\D_2 g_\lambda]}}_{S_4(L^2(\mathbb{T}))}^4\leq C\log (\lambda^{-1}).
		\end{equation*}
		Combining \eqref{eq.SymmetricSeparation}, \eqref{eq.UnitaryInvariance}, and the previous two estimates we obtained $(\text{a})$ in  \eqref{prop.QuarticFourBlock}.
		
		We next prove $(\text{b})$ in \eqref{prop.QuarticFourBlock}. We will consider the following operators:
		\begin{equation*}
			T[\tilde{\D}_{(r)}\bar{g}_\lambda],\quad T[\D^2_{(r)}\bar{g}_\lambda].
		\end{equation*}
		Since $\tilde{\D}_{(r)}\bar{g}_\lambda$ and $\D^2_{(r)}\bar{g}_\lambda$ are non-negative functions on $\Zt$, we have
		\begin{equation*}
			\text{LHS} \leq \operatorname{Tr}(T[\D^2_{(r)}\bar{g}_\lambda]\circ T[\tilde{\D}_{(r)}\bar{g}_\lambda]^*\circ T[\tilde{\D}_{(r)}\bar{g}_\lambda]\circ T[\tilde{\D}_{(r)}\bar{g}_\lambda]^*).
		\end{equation*}
		Using the H\"older inequality for \emph{Schatten classes} \cite[Corollary 2.9]{ArazySome}, we can obtain
		\begin{equation*}
			\text{LHS} \leq \norm{T[\D^2_{(r)}\bar{g}_\lambda]}_{S_4}\norm{T[\tilde{\D}_{(r)}\bar{g}_\lambda]}^3_{S_4}.
		\end{equation*}
		Utilizing the singular value formulation of the \emph{Schatten norm}, we have $S_2\subset S_4$ as a consequence of $\ell^2(\N)\subset \ell^4(\N)$, and therefore the following estimates hold:
		\begin{equation*}
			\norm{T[\D^2_{(r)}\bar{g}_\lambda]}_{S_4}\leq \norm{T[\D^2_{(r)}\bar{g}_\lambda]}_{\operatorname{HS}},\quad \norm{T[\tilde{\D}_{(r)}\bar{g}_\lambda]}_{S_4}\leq \norm{T[\tilde{\D}_{(r)}\bar{g}_\lambda]}_{\operatorname{HS}}.
		\end{equation*}
		Using Proposition~\ref{prop.SobolevNorm}, Proposition~\ref{prop.GreenEstimateDeg2} and Cauchy--Schwarz inequality, we have
		\begin{align*}
			\norm{T[\D^2_{(r)}\bar{g}_\lambda]}_{\operatorname{HS}}^2 &= \sum_{x,y\in\Zo}\Ll(\D^2_{(r)}\bar{g}_\lambda(x,y)\Rr)^2\\
			&\leq Cr^4\sum_{x,y\in\Zo}\Ll(\Ll(\D_1\D_1\bar{g}_\lambda(x,y)\Rr)^2+\Ll(\D_1\D_2\bar{g}_\lambda(x,y)\Rr)^2\Rr)\leq Cr^4,
		\end{align*}
		and 
		\begin{align*}
			\norm{T[\tilde{\D}_{(r)}\bar{g}_\lambda]}_{\operatorname{HS}}^2 &= \sum_{x,y\in\Zo}\Ll(\tilde{\D}_{(r)}\bar{g}_\lambda(x,y)\Rr)^2\\
			& \leq Cr^4 \sum_{x,y\in\Zo}\Ll(\Ll(\D_1\bar{g}_\lambda(x,y)\Rr)^2+\Ll(\D_1\D_1\bar{g}_\lambda(x,y)\Rr)^2+\Ll(\D_1\D_2\bar{g}_\lambda(x,y)\Rr)^2\Rr)\\
			& \leq Cr^4(\log(\lambda^{-1})+1).
		\end{align*}
		Combining the above estimates, we can obtain the desired estimate $(\text{b})$ in \eqref{prop.QuarticFourBlock}.
		
		We next prove $(\text{a})$ in \eqref{prop.QuarticThreeBlock}. Using Cauchy--Schwarz inequality, we can obtain
		\begin{multline*}
			\sum_{z'\in\Z_m}\left(\tilde{\D}_{(r)}\bar{g}_\lambda(z,z')\right)^2\\
			\leq Cr^3\sum_{|x-z|\leq r}\sum_{y\in\Zo}\left(\left(\D_1\bar{g}_\lambda(x,y)\right)^2+\left(\D_1\D_1\bar{g}_\lambda(x,y)\right)^2+\left(\D_1\D_2\bar{g}_\lambda(x,y)\right)^2\right).
		\end{multline*}
		Using Cauchy--Schwarz inequality again, we have
		\begin{multline*}
			\sum_{z\in\Z_m}\left(\sum_{z'\in\Z_m}\left(\tilde{\D}_{(r)}\bar{g}_\lambda(z,z')\right)^2\right)^2\\
			\leq Cr^8\sum_{x\in\Zo}\left(\sum_{y\in\Zo}\left(\left(\D_1\bar{g}_\lambda(x,y)\right)^2+\left(\D_1\D_1\bar{g}_\lambda(x,y)\right)^2+\left(\D_1\D_2\bar{g}_\lambda(x,y)\right)^2\right)\right)^2.
		\end{multline*}
		
		We first show the following estimate holds:
		\begin{equation}\label{eq.H1FixedOnePoint}
			\sup_{\lambda\in(0,1)}\sup_{x\in\Zo}\sum_{y\in\Zo}\left(\D_1\bar{g}_\lambda(x,y)\right)^2\leq C.
		\end{equation}
		To prove \eqref{eq.H1FixedOnePoint}, we fixed $x\in\Zo$ and use the same trick in the proof of \eqref{prop.QuarticFourBlock}:
		\begin{equation*}
			\sum_{y\in\Zo}\left(\D_1\bar{g}_\lambda(x,y)\right)^2\leq \sum_{y\in\Zo}\left(\left(\D_1 g_\lambda(x,y)\right)^2+\left(\D_2 g_\lambda(x,y)\right)^2\right).
		\end{equation*}
		By Parseval's theorem and \eqref{eq.D1glambda}, we have
		\begin{align*}
			\sum_{y\in\Zo}\left(\D_1 g_\lambda(x,y)\right)^2
			&= \frac{1}{2\pi}\int_{-\pi}^{\pi}\left|\mathscr{F}_2\cdot \D_1 g_\lambda (x,k)\right|^2\, \d k\\
			&=\frac{1}{2\pi}\int_{-\pi}^{\pi}\left|\frac{1}{2\pi}\int_{-\pi}^{\pi}M_\lambda(k_1,k_2)e^{ik_1 x}\, \d k_1 \right|^2\, \d k_2\\
			&\leq \frac{1}{8\pi^3 }\int_{-\pi}^{\pi}\left(\int_{-\pi}^{\pi}|M_\lambda(k_1,k_2)|\, \d k_1\right)^2\, \d k_2.
		\end{align*}
		Using \eqref{eq.defMLambda}, the uniform boundedness of $\A(k,\lambda)$ \eqref{eq.ABound}, and the fact that ${1-\cos x\geq \tfrac{2}{\pi^2}x^2}$ for $x\in[-\pi,\pi]$ we have
		\begin{align*}
			\int_{-\pi}^{\pi}|M_\lambda(k_1,k_2)|\, \d k_1\
			&\leq \int_{-\pi}^{\pi}\frac{|k_1|}{2+\lambda-(\cos k_1+\cos k_2)}\, \d k_1\\
			&\leq C\int_{-\pi}^{\pi}\frac{|k_1|}{\lambda+k_1^2+k_2^2}\, \d k_1\leq C\log\left(1+\frac{\pi^2}{\lambda+k_2^2}\right).
		\end{align*}
		For $k_2\in[-\pi,\pi]$, we have
		\begin{equation*}
			\log\left(1+\frac{\pi^2}{\lambda+k_2^2}\right)\leq \log\left(\frac{k_2^2+\pi^2}{k_2^2}\right)\leq C(1+|\log |k_2||),
		\end{equation*}
		and therefore, we can obtain
		\begin{equation*}
			\sum_{y\in\Zo}\left(\D_1 g_\lambda(x,y)\right)^2\leq C\int_{-\pi}^{\pi}\left(1+|\log|k_2||\right)^2\, \d k_2\leq C.
		\end{equation*}
		Similarly, the following estimate holds:
		\begin{align*}
			\sum_{y\in\Zo}\left(\D_2 g_\lambda(x,y)\right)^2
			&\leq C\int_{-\pi}^{\pi}\left(\int_{-\pi}^{\pi}\frac{|k_2|}{2+\lambda-(\cos k_1+\cos k_2)}\, \d k_1\right)^2\, \d k_2\\
			&\leq C\int_{-\pi}^{\pi}\frac{k_2^2}{(2+\lambda-\cos k_2)^2-1}\, \d k_2\\
			&\leq C\int_{-\pi}^{\pi}\frac{k_2^2}{(2+\lambda)(\lambda+k_2^2)}\, \d k_2\leq C,
		\end{align*}
		where in the first line, we use \eqref{eq.glambda}, in the second line, we use the standard integration formula $\int_{-\pi}^{\pi}\frac{\d k}{a-\cos k}=\frac{2\pi}{\sqrt{a^2-1}}$ , and in the third line, we use the fact that $1-\cos x\geq \tfrac{2}{\pi^2}x^2$ for $x\in[-\pi,\pi]$ again.
		
		Combining the above two estimates, we cam obtain the desired result \eqref{eq.H1FixedOnePoint}.
		
		Combining \eqref{eq.H1FixedOnePoint}, Proposition~\ref{prop.SobolevNorm}, and \eqref{eq.GreenH1Deg2} in Proposition~\ref{prop.GreenEstimateDeg2}, we can conclude:
		\begin{multline*}
			\sum_{z\in\Z_m}\left(\sum_{z'\in\Z_m}\left(\tilde{\D}_{(r)}\bar{g}_\lambda(z,z')\right)^2\right)^2\\
			\leq C r^8\sum_{x\in\Zo}\sum_{y\in\Zo}\left(\left(\D_1\bar{g}_\lambda(x,y)\right)^2+\left(\D_1\D_1\bar{g}_\lambda(x,y)\right)^2+\left(\D_1\D_2\bar{g}_\lambda(x,y)\right)^2\right)\\
			\leq C r^8 \left(\log(\lambda^{-1})+1\right).
		\end{multline*}
		The second term of $(\text{a})$ in \eqref{prop.QuarticThreeBlock} can be treated similarly. Combining them, we can conclude $(\text{a})$ in \eqref{prop.QuarticThreeBlock}. 
		
		For simplicity, we will use the following notations for the rest of the proof:
		\begin{equation*}
			\alpha_{(r)}(z):=\sum_{z'\in\Z_m}\left(\tilde{\D}_{(r)}\bar{g}_\lambda(z,z')\right)^2,\quad \beta_{(r)}(z):=\sum_{z'\in\Z_m}\left(\tilde{\D}_{(r)}\bar{g}_\lambda(z',z)\right)^2.
		\end{equation*}
		\begin{equation*}
			\kappa_{(r)}:=\sum_{z\in\Z_m}\left(\tilde{\D}_{(r)}\bar{g}_\lambda(z,z)\right)^2.
		\end{equation*}
		Using $(\text{a})$ in \eqref{prop.QuarticThreeBlock} we can immediately obtain
		\begin{equation}\label{eq.AlphaBetaBound}
			\sum_{z\in\Z_m}\left(\left(\alpha_{(r)}(z)\right)^2+\left(\beta_{(r)}(z)\right)^2\right)\leq C r^8 \left(\log(\lambda^{-1})+1\right).
		\end{equation}
		
		Using Cauchy--Schwarz inequality and combining Proposition~\ref{prop.SobolevNorm} and Proposition~\ref{prop.GreenEstimateDeg2}, we also have:
		\begin{equation}\label{eq.AlphaBetaBound2}
			\sum_{z\in\Z_m}\left(\alpha_{(r)}(z)+\beta_{(r)}(z)\right)\leq Cr^4\left(\log(\lambda^{-1})+1\right).
		\end{equation}
		
		By Cauchy--Schwarz inequality,  \eqref{eq.H1FixedOnePoint} and symmetry, we have
		\begin{equation}\label{eq.AlphaBetaLInfty}
			\sup_{z\in\Zo}\max\{\alpha_{(r)}(z),\beta_{(r)}(z)\}\leq Cr^4.
		\end{equation}
		Using Cauchy--Schwarz inequality and combining \eqref{prop.H1Near} ($s=2r,\ell=\lfloor\sqrt{\log(\lambda^{-1})}\rfloor\vee 1$) in Proposition~\ref{prop.SobolevNorm} and  Proposition~\ref{prop.GreenEstimateDeg2}, we have
		\begin{align*}
			&\sum_{z\in\Z_m}\left(\tilde{\D}_{(r)}\bar{g}_\lambda(z,z)\right)^2\\
			&\quad\leq Cr^3\sum_{x\in\Zo}\sum_{|y-x|\leq 2r}\left(\left(\D_1\bar{g}_\lambda(x,y)\right)^2+\left(\D_1\D_1\bar{g}_\lambda(x,y)\right)^2+\left(\D_1\D_2\bar{g}_\lambda(x,y)\right)^2\right)\\
			&\quad\leq Cr^4\left(\left(\sqrt{\log(\lambda^{-1})}\right)^{-1}\log(\lambda^{-1})+\sqrt{\log(\lambda^{-1})}+1\right)\leq Cr^4\left(\log(\lambda^{-1})^{\frac{1}{2}}+1\right),
		\end{align*}
		and therefore we have the following bound for $\kappa_{(r)}$\:
		\begin{equation}\label{eq.KappaBound}
			\kappa_{(r)}\leq Cr^4\left(\log(\lambda^{-1})^{\frac{1}{2}}+1\right).
		\end{equation}
		
		Now we start to prove $(\text{b})$ in \eqref{prop.QuarticThreeBlock}. It suffices to obtain the same bound for the following six terms $Q_i$:
		\begin{align*}
			Q_1&=\sum_{z_i\in\Z_m}\tilde{\D}_{(r)}\bar{g}_\lambda(z_1,z_1)\tilde{\D}_{(r)}\bar{g}_\lambda(z_2,z_2)\tilde{\D}_{(r)}\bar{g}_\lambda(z_1,z_3)\tilde{\D}_{(r)}\bar{g}_\lambda(z_2,z_3),\\
			Q_2&=\sum_{z_i\in\Z_m}\tilde{\D}_{(r)}\bar{g}_\lambda(z_1,z_1)\tilde{\D}_{(r)}\bar{g}_\lambda(z_2,z_1)\tilde{\D}_{(r)}\bar{g}_\lambda(z_1,z_3)\tilde{\D}_{(r)}\bar{g}_\lambda(z_2,z_3),\\
			Q_3&=\sum_{z_i\in\Z_m}\tilde{\D}_{(r)}\bar{g}_\lambda(z_1,z_2)\tilde{\D}_{(r)}\bar{g}_\lambda(z_2,z_2)\tilde{\D}_{(r)}\bar{g}_\lambda(z_1,z_3)\tilde{\D}_{(r)}\bar{g}_\lambda(z_2,z_3),\\
			Q_4&=\sum_{z_i\in\Z_m}\tilde{\D}_{(r)}\bar{g}_\lambda(z_1,z_2)\tilde{\D}_{(r)}\bar{g}_\lambda(z_2,z_1)\tilde{\D}_{(r)}\bar{g}_\lambda(z_1,z_3)\tilde{\D}_{(r)}\bar{g}_\lambda(z_2,z_3),\\
			Q_5&=\sum_{z_i\in\Z_m}\left(\tilde{\D}_{(r)}\bar{g}_\lambda(z_1,z_3)\right)^2\left(\tilde{\D}_{(r)}\bar{g}_\lambda(z_2,z_1)\right)^2,\\
			Q_6&=\sum_{z_i\in\Z_m}\left(\tilde{\D}_{(r)}\bar{g}_\lambda(z_1,z_3)\right)^2\left(\tilde{\D}_{(r)}\bar{g}_\lambda(z_2,z_2)\right)^2.
		\end{align*}
		By Cauchy--Schwarz inequality, we have
		\begin{equation*}
			Q_1 = \sum_{z_3\in\Z_m}\left(\sum_{z\in\Z_m}\tilde{\D}_{(r)}\bar{g}_\lambda(z,z)\tilde{\D}_{(r)}\bar{g}_\lambda(z,z_3)\right)^2\leq \sum_{z_3\in\Z_m}\kappa_{(r)}\beta_{(r)}(z_3).
		\end{equation*}
		By \eqref{eq.KappaBound} and \eqref{eq.AlphaBetaBound2}, we can obtain
		\begin{equation*}
			Q_1\leq Cr^8\left(\log(\lambda^{-1})^{\frac{3}{2}}+1\right).
		\end{equation*}
		
		For $Q_2$, we first apply Cauchy--Schwarz inequality for $z_3\in\Z_m$
		\begin{align*}
			Q_2&\leq \sum_{z_1,z_2\in\Z_m}\tilde{\D}_{(r)}\bar{g}_\lambda(z_1,z_1)\tilde{\D}_{(r)}\bar{g}_\lambda(z_2,z_1)\left(\alpha_{(r)}(z_1)\right)^{\frac{1}{2}}\left(\alpha_{(r)}(z_2)\right)^{\frac{1}{2}}\\
			&\leq \sum_{z_1\in\Z_m}\tilde{\D}_{(r)}\bar{g}_\lambda(z_1,z_1)\left(\alpha_{(r)}(z_1)\right)^{\frac{1}{2}}\left(\beta_{(r)}(z_1)\right)^{\frac{1}{2}}\left(\sum_{z\in\Z_m}\alpha_{(r)}(z)\right)^{\frac{1}{2}}\\
			&\leq \kappa_{(r)}^{\frac{1}{2}}\left(\sum_{z\in\Z_m}\alpha_{(r)}(z)\beta_{(r)}(z)\right)^{\frac{1}{2}}\left(\sum_{z\in\Z_m}\alpha_{(r)}(z)\right)^{\frac{1}{2}}\\
			&\leq \kappa_{(r)}^{\frac{1}{2}}\left(\sum_{z\in\Z_m}\left(\alpha_{(r)}(z)\right)^2\right)^{\frac{1}{4}}\left(\sum_{z\in\Z_m}\left(\beta_{(r)}(z)\right)^2\right)^{\frac{1}{4}}\left(\sum_{z\in\Z_m}\alpha_{(r)}(z)\right)^{\frac{1}{2}},
		\end{align*}
		where in the second inequality, we apply Cauchy--Schwarz for $z_2\in\Z_m$, in the third inequality, we apply Cauchy--Schwarz for $z_1\in\Z_m$, and in the last inequality, we apply Cauchy--Schwarz for $z\in\Z_m$.
		
		Combining \eqref{eq.KappaBound}, \eqref{eq.AlphaBetaBound}, and \eqref{eq.AlphaBetaBound2}, we can obtain
		\begin{equation*}
			Q_2\leq Cr^8 \left(\log(\lambda^{-1})^{\frac{5}{4}}+1\right).
		\end{equation*}
		By symmetry, we also have the same bound for $Q_3$:
		\begin{equation*}
			Q_3\leq Cr^8 \left(\log(\lambda^{-1})^{\frac{5}{4}}+1\right).
		\end{equation*}
		
		For $Q_4$, we first apply Cauchy--Schwarz inequality for $z_3\in\Z_m$
		\begin{align*}
			Q_4&\leq \sum_{z_1,z_2\in\Z_m}\tilde{\D}_{(r)}\bar{g}_\lambda(z_1,z_2)\tilde{\D}_{(r)}\bar{g}_\lambda(z_2,z_1)\left(\alpha_{(r)}(z_1)\right)^{\frac{1}{2}}\left(\alpha_{(r)}(z_2)\right)^{\frac{1}{2}}\\
			&\leq \left(\sum_{z_1,z_2\in\Z_m}\left(\tilde{\D}_{(r)}\bar{g}_\lambda(z_1,z_2)\right)^2\alpha_{(r)}(z_1)\right)^{\frac{1}{2}}\left(\sum_{z_1,z_2\in\Z_m}\left(\tilde{\D}_{(r)}\bar{g}_\lambda(z_2,z_1)\right)^2\alpha_{(r)}(z_2)\right)^{\frac{1}{2}}\\
			&=\sum_{z\in\Z_m}\left(\alpha_{(r)}(z)\right)^2,
		\end{align*}
		where in the second inequality, we apply Cauchy--Schwarz for $z_1,z_2\in\Z_m$. Therefore, using \eqref{eq.AlphaBetaBound}, we have
		\begin{equation*}
			Q_4\leq Cr^8\left(\log(\lambda^{-1})+1\right).
		\end{equation*}
		For $Q_5$, we can apply Cauchy--Schwarz inequality and \eqref{eq.AlphaBetaBound}  to obtain
		\begin{multline*}
			Q_5=\sum_{z_1\in\Z_m}\alpha_{(r)}(z_1)\beta_{(r)}(z_1)\\
			\leq \left(\sum_{z\in\Z_m}\left(\alpha_{(r)}(z)\right)^2\right)^{\frac{1}{2}}\left(\sum_{z\in\Z_m}\left(\beta_{(r)}(z)\right)^2\right)^{\frac{1}{2}}\leq Cr^8\left(\log(\lambda^{-1})+1\right).
		\end{multline*}
		For the last term $Q_6$, we use \eqref{eq.AlphaBetaBound2} and \eqref{eq.KappaBound},
		\begin{equation*}
			Q_6=\kappa_{(r)}\left(\sum_{z\in\Z_m}\alpha_{(r)}(z)\right)\leq Cr^8\left(\log(\lambda^{-1})^{\frac{3}{2}}+1\right).
		\end{equation*}
		
		In the last, we prove \eqref{prop.QuarticTwoBlock}, and it suffices to prove the same bound for the following four terms $Q_i$:
		\begin{align*}
			Q_7&=\sum_{z_i\in\Z_m}\left(\tilde{\D}_{(r)}\bar{g}_\lambda(z_1,z_1)\right)^2\left(\tilde{\D}_{(r)}\bar{g}_\lambda(z_2,z_2)\right)^2,\\
			Q_8&=\sum_{z_i\in\Z_m}\left(\tilde{\D}_{(r)}\bar{g}_\lambda(z_1,z_1)\right)^2\left(\tilde{\D}_{(r)}\bar{g}_\lambda(z_2,z_1)\right)^2,\\
			Q_9&=\sum_{z_i\in\Z_m}\left(\tilde{\D}_{(r)}\bar{g}_\lambda(z_1,z_2)\right)^2\left(\tilde{\D}_{(r)}\bar{g}_\lambda(z_2,z_2)\right)^2,\\
			Q_{10}&=\sum_{z_i\in\Z_m}\left(\tilde{\D}_{(r)}\bar{g}_\lambda(z_1,z_2)\right)^2\left(\tilde{\D}_{(r)}\bar{g}_\lambda(z_2,z_1)\right)^2.
		\end{align*}
		By computation and \eqref{eq.AlphaBetaBound}, \eqref{eq.AlphaBetaBound2}, \eqref{eq.AlphaBetaLInfty}, and \eqref{eq.KappaBound}, we can prove that
		\begin{align*}
			Q_7&=\kappa_{(r)}^2\leq Cr^8\left(\log(\lambda^{-1})+1\right),\\
			Q_8&=\sum_{z_1\in\Z_m}\left(\tilde{\D}_{(r)}\bar{g}_\lambda(z_1,z_1)\right)^2\beta_{(r)}(z_1)\leq Cr^4\kappa_{(r)}\leq Cr^8\left(\left(\log(\lambda^{-1})\right)^{\frac{1}{2}}+1\right),\\
			Q_9&=\sum_{z_2\in\Z_m}\left(\tilde{\D}_{(r)}\bar{g}_\lambda(z_2,z_2)\right)^2\beta_{(r)}(z_2)\leq Cr^8\left(\left(\log(\lambda^{-1})\right)^{\frac{1}{2}}+1\right).
		\end{align*}
		For $Q_{10}$, using \eqref{eq.H1FixedOnePoint}, we have
		\begin{equation*}
			\sup_{\lambda\in(0,1)}\sup_{x,y\in\Zo}|\D_1\bar{g}_\lambda(x,y)|\leq C,
		\end{equation*}
		and therefore by Cauchy--Schwarz inequality, we have
		\begin{equation*}
			\sup_{z_i\in\Z_m}\left(\tilde{\D}_{(r)}\bar{g}_\lambda(z_1,z_2)\right)^2\leq Cr^4, 
		\end{equation*}
		which implies
		\begin{equation*}
			Q_{10}\leq Cr^4\sum_{z_i\in\Z_m}\left(\tilde{\D}_{(r)}\bar{g}_\lambda(z_2,z_1)\right)^2=Cr^4\left(\sum_{z\in\Z_m}\alpha_{(r)}(z)\right)\leq Cr^8\left(\log(\lambda^{-1})+1\right).
		\end{equation*}
	\end{proof}
	
	\subsection{Two-scale expansion}
	In this section, we consider the following two-scale expansion on the $\HH_2$ space in $d=1$:
	\begin{equation}\label{eq.TwoscaleExDeg2}
		\tilde{G}_\lambda:=\bar{G}_\lambda+\sum_{z \in \Z_m}  [\D \bar{G}_\lambda]^z_{m} \phi^z_{m}.
	\end{equation}
	Since there is only one direction in $d=1$, we can omit the subscript $e_i$ and use $\phi_m^z$ to denote the only corrector. 
	Unlike the two-scale expansion on the $\HH_1$ space \eqref{eq.TwoScaleExpan}, the averaged slope $[\D \bar{G}_\lambda]^z_{m}$ is no longer a constant but a random variable in the $\HH_1$ space and the precise definition of the averaged slope $[\D \bar{G}_\lambda]^z_{m}$ is given by:
	\begin{equation}\label{eq.AveSlopeDeg2}
		[\D \bar{G}_\lambda]^z_{m} := \sum_{y\in (N_\r(z+\cu_m^+))^c}\left(\frac{2}{|\cu_m|}\sum_{x\in z+\cu_m}\D_1\bar{g}_\lambda(x,y)\right)\bar{\eta}_y.
	\end{equation}
	Recall that the set of vertices $N_\r(z+\cu_m^+)$ is defined in \eqref{eq.defNrLa}. There are many possible candidates for $[\D \bar{G}_\lambda]^z_{m}$, and we choose to sum over all vertices at least $\r+1$ away from $z+\cu_m^+$ in order to guarantee the independence between $[\D \bar{G}_\lambda]^z_{m}$ and $\{c_{x,x+1}\pi_{x,x+1}\phi_{m}^z,x\in z+\cu_m\}$.
	
	Notice that the center flux define in \eqref{eq.defFlux} is essentially 
	\begin{equation*}
		\g_{m,b}^z := c_b  \pi_b (\ell_{e} + \phi^z_{m}) - \DD(\rho)\pi_b \ell_{e},
	\end{equation*}
	where $\ell_e$ is the affine function \eqref{eq.defaffinefuc} in $d=1$:
	\begin{equation*}
		\ell_e:=\sum_{x\in\Zo}x\cdot\bar{\eta}_x.
	\end{equation*}
	
	We first recover the $L^2$ and flux estimate about the two scale expansion in parallel with \eqref{prop.L2Err} and \eqref{prop.FluxErr} in Proposition~\ref{prop.L2FluxErr}.
	
	\begin{proposition}\label{prop.L2FluxDeg2} The following $L^2$ and flux estimates hold for the two-scale expansion $\tilde{G}_\lambda$:
		\begin{enumerate}
			\item\label{prop.L2ErrDeg2} There exists a finite positive constant $C(c_{-},c_{+},\rho)$ such that the  two-scale expansion satisfies
			\begin{equation*}
				\norm{\tilde{G}_\lambda-\bar{G}_\lambda}^2_{L^2}\leq C3^{3m}\expec{\bar{G}_\lambda,-\Lb \bar{G}_\lambda}.
			\end{equation*}
			\item\label{prop.FluxErrDeg2} There exists an exponent $\alpha(c_{-},c_{+}, \r) > 0$ and a finite positive constant $C(c_{-},c_{+}, \r,\rho)$ such that the following estimate holds for any $V\in L^2$ satisfying $\expec{V,-\Lb V}<\infty$:
			\begin{equation*}
				\left|\expec{V,-\L\tilde{G}_\lambda+\Lb \bar{G}_\lambda}\right|\leq C\expec{V,-\Lb V}^{\frac{1}{2}}\left(3^{-\alpha m}\expec{\bar{G}_\lambda,-\Lb \bar{G}_\lambda}^{\frac{1}{2}}+3^{7m} \expec{-\Lb \bar{G}_\lambda,-\Lb \bar{G}_\lambda}^{\frac{1}{2}}\right).
			\end{equation*}
		\end{enumerate}
	\end{proposition}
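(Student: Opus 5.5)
The plan is to redo the argument of \cite[Lemmas~4.5, 4.7]{gu2025relaxation}, now with a random slope. The key structural fact is the choice in \eqref{eq.AveSlopeDeg2}: $[\D\bar G_\lambda]^z_m$ is $\fil_{(N_\r(z+\cu_m^+))^c}$-measurable, while $\phi^z_m$ and the rates $c_b$ for $b\in\ov{(z+\cu_m)^*}$ are measurable with respect to the complementary region. Hence these two families are independent under $\Pr$.

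\textbf{Part~(1).} Expand
\[
\norm{\tilde G_\lambda-\bar G_\lambda}^2_{L^2}=\sum_{z,z'}\bracket{[\D\bar G_\lambda]^z_m\phi^z_m,\,[\D\bar G_\lambda]^{z'}_m\phi^{z'}_m}.
\]
For the diagonal terms, independence and Lemma~\ref{lem.Homogenization}~(3) give
\[
\Er\bigl[([\D\bar G_\lambda]^z_m)^2\bigr]\,\Er\bigl[(\phi^z_m)^2\bigr]\le C3^{3m}\,\Er\bigl[([\D\bar G_\lambda]^z_m)^2\bigr].
\]
For the off-diagonal terms with non-adjacent cubes, expand both slopes in $\HH_1$ and both correctors in chaos, using that $\phi^z_m$ is centered. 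A nonzero contribution requires the slope site $y$ of one factor to pair with a site inside the other cube's corrector. This produces cross terms bounded by $\norm{\phi}_{L^2}^2$ times sums of $(\D_1\bar g_\lambda)$ averages. I would control these by Cauchy--Schwarz, using the localization of correctors, which allows only $O(1)$ neighboring cubes to interact per site. Finally, Jensen gives
\[
\sum_z\Er\bigl[([\D\bar G_\lambda]^z_m)^2\bigr]\le 4\chi\sum_{x,y}(\D_1\bar g_\lambda(x,y))^2,
\]
which equals $C\bracket{\bar G_\lambda,-\Lb\bar G_\lambda}$ by Proposition~\ref{prop.SobolevNorm}~(1). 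This yields the $3^{3m}$ bound.

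\textbf{Part~(2).} Write $\bracket{V,-\L\tilde G_\lambda}=\frac12\sum_b\bracket{c_b\pi_bV,\pi_b\tilde G_\lambda}$ and subtract the analogous SSEP expression. On a bond $b=\{x,x+1\}\subset z+\cu_m$ away from the slope's support, $\pi_b$ acts only on $\phi^z_m$. Using \eqref{eq.PibGbar}, this gives
\[
\pi_b\tilde G_\lambda=\pi_b\bar G_\lambda+[\D\bar G_\lambda]^z_m\pi_b\phi^z_m,
\]
where $\pi_b\bar G_\lambda=-2\sum_y\D_1\bar g_\lambda(x,y)\bar\eta_y\,\pi_b\ell_e$, up to the diagonal $y$ near $x$. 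I then replace $\D_1\bar g_\lambda(x,y)$ by its cube average, which costs an error of $3^m\D_1\D_1\bar g_\lambda$, and restrict $y$ to $(N_\r(z+\cu_m^+))^c$, which costs a near-diagonal error. This recasts the main term as
\[
\sum_z\frac{1}{|\cu_m|}\sum_{b}\bracket{\pi_b(V[\D\bar G_\lambda]^z_m),\g^z_{m,b}}\,|\cu_m|,
\]
since $[\D\bar G_\lambda]^z_m$ commutes with $\pi_b$. Lemma~\ref{lem.Homogenization}~(4) applied to $v=V[\D\bar G_\lambda]^z_m$ produces the factor $3^{-\alpha m}$. Because the slope is random, I would instead apply (4) conditionally: fix the outside configuration, which is allowed by independence and the product structure of $\Pr$, and then apply Cauchy--Schwarz. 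The result is
\[
3^{-\alpha m}\Bigl(\sum_b\Er[(\pi_bV)^2]\Bigr)^{1/2}\Bigl(\sup\ \text{or}\ \sum_z\Er\bigl[([\D\bar G_\lambda]^z_m)^2\bigr]\Bigr)^{1/2},
\]
bounded via Part~(1)'s computation by $\bracket{V,-\Lb V}^{1/2}\bracket{\bar G_\lambda,-\Lb\bar G_\lambda}^{1/2}$.

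\textbf{Error terms.} These are (a) the averaging error, controlled by $\sum(\D_1\D_1\bar g_\lambda)^2$ and $\sum(\D_1\D_2\bar g_\lambda)^2$, hence by $\norm{\Lb\bar G_\lambda}_{L^2}$ via Proposition~\ref{prop.SobolevNorm}~(2); (b) the bonds where $\pi_b$ hits the slope's sites or the boundary layer of width $\r$; and (c) the near-diagonal terms $|y-x|\lesssim 3^m$. I would handle (c) using Proposition~\ref{prop.SobolevNorm}~(3), or by bounding $\D_1\bar g_\lambda$ near the diagonal via $\D_1\D_2\bar g_\lambda$ with $s\sim 3^m$. Each of these carries polynomial factors of $3^m$ from $\norm{\phi}_{L^\infty}$ or $\norm{\phi}_{L^p}$ in Lemma~\ref{lem.Homogenization}~(5), and collecting the powers gives the $3^{7m}$.

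The main obstacle is (c) together with the conditional use of the flux estimate. The slope excludes the neighborhood of the cube, so the contribution of particles $y$ inside or near $z+\cu_m$ must be charged to second differences. Doing this without losing a factor of $\log(\lambda^{-1})$ requires care. I would first try to bound it by writing the difference between the full and truncated slopes as a telescoping sum of $\D_1\D_2\bar g_\lambda$ over a window of length $3^m$.
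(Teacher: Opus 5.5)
\textbf{The decomposition of $\pi_b\tilde G_\lambda$ in your Part~(2) misses a term, and that term is the hard one.}

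Your Part~(1) is the paper's argument. So is your conditional use of Lemma~\ref{lem.Homogenization}~(4) for the main flux term, which is equivalent to conditioning on $\fil_{N_\r(z+\cu_m^+)}$. Your slope-fixing errors also match: the averaging error goes through $\D_1\D_1\bar g_\lambda$, and the near-diagonal error through Proposition~\ref{prop.SobolevNorm}~(3) with $\ell$ polynomially large in $3^m$.

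There is one slip. In the flux step you need
\begin{equation*}
\sum_z|\cu_m|\,\Er\bigl[([\D\bar G_\lambda]^z_m)^2\bigr]\le 4\chi(\rho)\sum_{x,y}(\D_1\bar g_\lambda(x,y))^2,
\end{equation*}
which keeps the factor $|\cu_m|^{-1}$ that Jensen gives. The bound you state in Part~(1) drops it. Used as stated, it costs $3^{m/2}$ and destroys the gain $3^{-\alpha m}$, since $\alpha<\frac12$.

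Now the gap. For $b=\{x,x+1\}\in\ov{(z+\cu_m)^*}$ you write $\pi_b\tilde G_\lambda=\pi_b\bar G_\lambda+[\D\bar G_\lambda]^z_m\pi_b\phi^z_m$, and this is false. The slope of the cube $z$ itself is indeed insensitive to $b$. But the slope $[\D\bar G_\lambda]^{z'}_m$ of every other cube is a combination of $\bar\eta_y$ over $y\notin N_\r(z'+\cu_m^+)$, and this set contains $x$ and $x+1$ as soon as $|z'-z|>3^m$. So every bond carries the extra term
\begin{equation*}
w(b)=\sum_{z'\in\Z_m}\pi_b\bigl([\D\bar G_\lambda]^{z'}_m\bigr)\phi^{z'}_m ,
\end{equation*}
an infinite sum over cubes. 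This is the contribution of the randomness of the slope (the term $\mathbf{F.3}$ in the paper), and it is exactly where the factor $3^{7m}$ comes from. It is not a boundary-layer effect on a few bonds, so your item~(b) and ``collecting powers of $3^m$'' do not cover it. Bounding it bond by bond with $\norm{\phi^{z'}_m}_{L^\infty}$ and the triangle inequality leads to $\sum_x\bigl(\sum_{x'}|\D_1\D_2\bar g_\lambda(x',x)|\bigr)^2$, which the $\ell^2$ estimates of Proposition~\ref{prop.SobolevNorm} do not control.

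Here is how the paper closes it.
\begin{itemize}
\item For $|z'-z|>3^m$,
\begin{equation*}
\pi_b[\D\bar G_\lambda]^{z'}_m=\frac{2}{|\cu_m|}\sum_{x'\in z'+\cu_m}\D_1\D_2\bar g_\lambda(x',x)\,(\bar\eta(x)-\bar\eta(x+1)),
\end{equation*}
which depends only on $\eta(x),\eta(x+1)$. The $\phi^{z'}_m$ are centered and supported in disjoint cubes, so the cross terms in $z'$ vanish in $L^2$. With $\norm{\phi^{z'}_m}_{L^\infty}\le Cm3^{3m}$ this gives a total of at most
\begin{equation*}
Cm^23^{5m}\sum_{x,y}(\D_1\D_2\bar g_\lambda(x,y))^2\le Cm^23^{5m}\expec{-\Lb\bar G_\lambda,-\Lb\bar G_\lambda}.
\end{equation*}
\item For the two neighbours $z'=z\pm3^m$, bonds with exactly one endpoint in $N_\r(z'+\cu_m^+)$ produce first differences $\D_1\bar g_\lambda(x',x)$ with $|x'-x|\lesssim3^m$. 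Proposition~\ref{prop.SobolevNorm}~(3) with $\ell\approx3^{7m+2\alpha m}$ bounds these by $3^{-2\alpha m}\expec{\bar G_\lambda,-\Lb\bar G_\lambda}+3^{14m}\expec{-\Lb\bar G_\lambda,-\Lb\bar G_\lambda}$, which gives the $3^{7m}$ in the statement.
\end{itemize}
Without this term and this orthogonality argument, your Part~(2) does not close.
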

	
	\begin{proof}
		Using the expression for $\tilde{G}_\lambda$ in \eqref{eq.TwoscaleExDeg2}, we have
		\begin{align*}
			\norm{\tilde{G}_\lambda-\bar{G}_\lambda}_{L^2}^2 &=\E_\rho\left[\left(\sum_{z\in\Z_m}[\D\bar{G}_\lambda]_m^z\phi_m^z\right)^2\right]\\
			&=\sum_{z\in\Z_m}\E_\rho\left[\left([\D\bar{G}_\lambda]_m^z\phi_m^z\right)^2\right]+\sum_{z,z'\in\Z_m, z\neq z'}\E_\rho\left[[\D\bar{G}_\lambda]_m^z\phi_m^z[\D\bar{G}_\lambda]_m^{z'}\phi_m^{z'}\right].
		\end{align*}
		Utilizing the independence between $[\D\bar{G}_\lambda]_m^z$ and $\phi_m^z$, we can first treat the diagonal terms:
		\begin{equation*}
			\E_\rho\left[\left([\D\bar{G}_\lambda]_m^z\phi_m^z\right)^2\right]=\E_\rho\left[\left([\D\bar{G}_\lambda]_m^z\right)^2\right]\E_\rho\left[\left(\phi_m^z\right)^2\right].
		\end{equation*}
		By Cauchy--Schwarz inequality and \eqref{eq.AveSlopeDeg2} we have
		\begin{equation*}
			\E_\rho\left[\left([\D\bar{G}_\lambda]_m^z\right)^2\right] \leq 4\chi(\rho)3^{-m}\sum_{y\in (N_\r(z+\cu_m^+))^c}\sum_{x\in z+\cu_m}\left(\D_1\bar{g}_\lambda(x,y)\right)^2,
		\end{equation*}
		and therefore combining \eqref{prop.H11D} in Proposition~\ref{prop.SobolevNorm}, the summation on $z\in\Z_m$ yields:
		\begin{align*}
			\sum_{z\in\Z_m}\E_\rho\left[\left([\D\bar{G}_\lambda]_m^z\right)^2\right]
			&\leq 4 \chi(\rho)3^{-m}\sum_{x,y\in\Zo}\left(\D_1\bar{g}_\lambda(x,y)\right)^2\\
			&= 2 \DD(\rho)^{-1}\chi(\rho)3^{-m}\expec{\bar{G}_\lambda,-\Lb\bar{G}_\lambda}.
		\end{align*}
		Combining \eqref{lem.CorrectorH1L2} in Lemma~\ref{lem.Homogenization} we can obtain:
		\begin{equation}\label{eq.L2Diagonal}
			\sum_{z\in\Z_m}\E_\rho\left[\left([\D\bar{G}_\lambda]_m^z\phi_m^z\right)^2\right]\leq C(c_{-},c_{+},\rho)3^{2m}\expec{\bar{G}_\lambda,-\Lb\bar{G}_\lambda}.
		\end{equation}
		In the following part about estimates for the non-diagonal terms, we regard the coefficient of the missing terms as zero for convenience:
		\begin{equation*}
			\D_1\bar{g}_\lambda(x,y)=0,\quad \forall x\in z+\cu_m,\, y\in N_\r(z+\cu_m^+).
		\end{equation*}
		For the non-diagonal terms, using the independence and by \eqref{lem.Correctorlocal} and \eqref{lem.CorrectorMean} in Lemma~\ref{lem.Homogenization}, we observe that all the possible non-vanishing terms are of the form:
		\begin{equation*}
			\E_\rho\left[\bar{\eta}_y\bar{\eta}_{y'}\phi_m^z\phi_m^{z'}\right],\quad y\in z'+\cu_m^-,\ y'\in z+\cu_m^-,
		\end{equation*} 
		with the coefficient
		\begin{equation*}
			\left(\frac{2}{|\cu_m|}\sum_{x\in z+\cu_m}\D_1\bar{g}_\lambda(x,y)\right)\left(\frac{2}{|\cu_m|}\sum_{x\in z'+\cu_m}\D_1\bar{g}_\lambda(x,y')\right).
		\end{equation*}
		By Cauchy--Schwarz inequality, \eqref{lem.Correctorlocal} and \eqref{lem.CorrectorH1L2} in Lemma~\ref{lem.Homogenization}, we have
		\begin{equation*}
			\left|\E_\rho\left[\bar{\eta}_y\bar{\eta}_{y'}\phi_m^z\phi_m^{z'}\right]\right|=\left|\E_\rho\left[\bar{\eta}_y\phi_m^{z'}\right]\E_\rho\left[\bar{\eta}_{y'}\phi_m^{z}\right]\right|\leq C(c_{-},c_{+},\rho)3^{3m},
		\end{equation*}
		where the first equality is because we have
		\begin{equation*}
			\bar{\eta}_y\phi_m^{z'}\in \F_0(z'+\cu_m^-),\qquad \bar{\eta}_{y'}\phi_m^z\in \F_0(z+\cu_m^-).
		\end{equation*}
		
		We can use AM--GM inequality to obtain:
		\begin{multline*}
			\left|\left(\frac{2}{|\cu_m|}\sum_{x\in z+\cu_m}\D_1\bar{g}_\lambda(x,y)\right)\left(\frac{2}{|\cu_m|}\sum_{x\in z'+\cu_m}\D_1\bar{g}_\lambda(x,y')\right)\right|\\
			\leq 2\cdot3^{-m}\left(\sum_{x\in z+\cu_m}\left(\D_1\bar{g}_\lambda(x,y)\right)^2 + \sum_{x\in z'+\cu_m}\left(\D_1\bar{g}_\lambda(x,y')\right)^2\right).
		\end{multline*}
		Therefore we have
		\begin{multline*}
			\left|\E_\rho\left[[\D\bar{G}_\lambda]_m^z\phi_m^z[\D\bar{G}_\lambda]_m^{z'}\phi_m^{z'}\right]\right|\\
			\leq C(c_{-},c_{+},\rho)3^{3m} \left(\sum_{y\in z'+\cu_m^-}\sum_{x\in z+\cu_m}\left(\D_1\bar{g}_\lambda(x,y)\right)^2 + \sum_{y'\in z+\cu_m^-} \sum_{x\in z'+\cu_m}\left(\D_1\bar{g}_\lambda(x,y')\right)^2\right).
		\end{multline*}
		Combining \eqref{prop.H11D} in Proposition~\ref{prop.SobolevNorm}, the summation on $z,z'\in\Z_m,z\neq z'$ yields:
		\begin{equation}\label{eq.L2NonDiagonal}
			\begin{split}
				&\sum_{z,z'\in\Z_m, z\neq z'}\left|\E_\rho\left[[\D\bar{G}_\lambda]_m^z\phi_m^z[\D\bar{G}_\lambda]_m^{z'}\phi_m^{z'}\right]\right|\\
				&\leq C(c_{-},c_{+},\rho)3^{3m}\sum_{x,y\in\Zo}\left(\D_1\bar{g}_\lambda(x,y)\right)^2\\
				&\leq C(c_{-},c_{+},\rho)3^{3m}\expec{\bar{G}_\lambda,-\Lb\bar{G}_\lambda}.
			\end{split}
		\end{equation}
		Combining \eqref{eq.L2Diagonal} and \eqref{eq.L2NonDiagonal} we can conclude
		\eqref{prop.L2ErrDeg2}.
		
		The proof of \eqref{prop.FluxErrDeg2} is similar to \cite[Lemma 4.7]{gu2025relaxation} and can be divided into 4 steps.
		
		\textit{Step~0: decomposition.} We start from the terms involving $\L$
		\begin{equation*}
			\expec{V,-\L\tilde{G}_\lambda}=\frac{1}{2}\sum_{z\in\Z_m}\sum_{x\in z+\cu_m}\expec{c_{x,x+1}(\pi_{x,x+1}V),\pi_{x,x+1}\tilde{G}_\lambda}.
		\end{equation*}
		Using \eqref{eq.TwoscaleExDeg2}, we can express $\pi_{x,x+1}\tilde{G}_\lambda$ as:
		\begin{equation}\label{eq.PibTwoScale}
			\pi_{x,x+1}\tilde{G}_\lambda = \pi_{x,x+1}\bar{G}_\lambda + \sum_{z'\in\Z_m}\pi_{x,x+1}\left([\D\bar{G}_\lambda]_m^{z'}\phi_m^{z'}\right).
		\end{equation}
		By chain rule and \eqref{lem.Correctorlocal} in Lemma~\ref{lem.Homogenization}, for every $x\in z+\cu_m$, we have
		\begin{equation*}
			\sum_{z'\in\Z_m}\pi_{x,x+1}\left([\D\bar{G}_\lambda]_m^{z'}\phi_m^{z'}\right)=\sum_{z'\in\Z_m}\pi_{x,x+1}\left([\D\bar{G}_\lambda]_m^{z'}\right)\phi_m^{z'}+[\D\bar{G}_\lambda]_m^z\pi_{x,x+1}\phi_m^z.
		\end{equation*}
		We denote the oscillation of the averaged slope for a bond $b$ paired with the corrector $\phi$ by $w_{\D,\phi}(b)$:
		\begin{equation}\label{eq.defOscillation}
			w_{\D,\phi}(b):=\sum_{z\in\Z_m}\pi_{b}\left([\D\bar{G}_\lambda]_m^{z}\right)\phi_m^{z}.
		\end{equation}
		
		We then aim to make  the centered flux $\g_{m, b}^z$ appear in \eqref{eq.PibTwoScale}. 
		For every $x\in \Zo$, we have 
		\begin{align*}
			\pi_{x,x+1}\ell_e=\bar{\eta}_x-\bar{\eta}_{x+1}.
		\end{align*}
		We denote the error of the slope to be $[\D_{err}\bar{G}_\lambda(b)]_{m}^{z}$ for a cube $z+\cu_m$ and a bond $\{x,x+1\}=b\in\ov{(z+\cu_m)^*}$:
		\begin{equation}\label{eq.defDErrGBar}
			[\D_{err}\bar{G}_\lambda(b)]_{m}^{z} := 2\sum_{y\in \Zo}\D_1 \bar{g}_\lambda(x,y)\bar{\eta}_y-[\D\bar{G}_\lambda]_{m}^{z},\quad\forall \{x,x+1\}=b\in\ov{(z+\cu_m)^*}.
		\end{equation} 
		and therefore we can obtain:
		\begin{equation}\label{eq.PibGBarGTilde}
			\begin{aligned}
				\pi_b\bar{G}_\lambda &= [\D\bar{G}_\lambda]_m^{z}\pi_{b}\ell_{e}+[\D_{err}\bar{G}_\lambda(b)]_{m}^{z}\pi_{b}\ell_{e},\\
				\pi_{b}\tilde{G}_\lambda &= [\D\bar{G}_\lambda]_{m}^{z}\pi_{b}(\ell_{e}+\phi_m^z)+[\D_{err}\bar{G}_\lambda(b)]_{m}^{z}\pi_{b}\ell_{e}+w_{\D,\phi}(b).
			\end{aligned}
		\end{equation}
		
		We further apply $c_{b}$, which yields
		\begin{align*}
			c_{b}\pi_{b} \tilde G_\lambda	&= [\D\bar{G}_\lambda]_{m}^{z} (c_{b}\pi_{b}(\ell_{e} + \phi^{z}_{m})) + [\D_{err}\bar{G}_\lambda(b)]_{m}^{z}c_{b}\pi_{b}\ell_{e}+c_{b}w_{\D,\phi}(b) \\
			&= [\D\bar{G}_\lambda]_{m}^{z}\Ll(c_b  \pi_b (\ell_{e} + \phi^z_{m} ) - \DD(\rho)\pi_b \ell_{ e}\Rr)\\
			&\qquad + [\D_{err}\bar{G}_\lambda(b)]_{m}^{z}c_b\pi_{b}\ell_{e}+c_bw_{\D,\phi}(b) + [\D\bar{G}_\lambda]_{m}^{z}\DD(\rho)\pi_{b}\ell_{e}.
		\end{align*}
		Here in the second line, we make $\g^z_{m,b}=c_b  \pi_b (\ell_{e} + \phi^z_{m} ) - \DD(\rho)\pi_b \ell_{e}$ appear as desired. 
		We can also compute the terms involving $\Lb$ using \eqref{eq.PibGBarGTilde}:
		\begin{equation*}
			\expec{V,-\Lb\bar{G}_\lambda}=\frac{1}{2}\DD(\rho)\sum_{z\in\Z_m}\sum_{b\in \ov{(z + \cu_m)^*}}\expec{\pi_b V,[\D\bar{G}_\lambda]_m^{z}\pi_{b}\ell_{e}+[\D_{err}\bar{G}_\lambda(b)]_{m}^{z}\pi_{b}\ell_{e}}.
		\end{equation*}
		Therefore,  we conclude that
		\begin{align}\label{eq.fluxdecomshort}
			\bracket{V(-\L\tilde G_\lambda + \Lb  \bar G_\lambda)}  &= \mathbf{F.1} + \mathbf{F.2} + \mathbf{F.3},
		\end{align}
		where the three terms are
		\begin{equation}\label{eq.fluxdecomlong}
			\begin{split}
				\mathbf{F.1} &:= \frac{1}{2}\sum_{z \in \Z_m} \sum_{b \in \ov{(z + \cu_m)^*}}\bracket{ \pi_{b} V,  [\D\bar{G}_\lambda]_{m}^{z}\g^z_{m,b}},\\
				\mathbf{F.2} &:= \frac{1}{2}\sum_{z \in \Z_m} \sum_{b \in \ov{(z + \cu_m)^*}}\bracket{ \pi_{b} V, [\D_{err}\bar{G}_\lambda(b)]_{m}^{z}(c_b\pi_{b}\ell_{e}-\DD(\rho)\pi_{b}\ell_{e})}, \\
				\mathbf{F.3} &:= \frac{1}{2}\sum_{z \in \Z_m} \sum_{b \in \ov{(z + \cu_m)^*}} \bracket{\pi_{b} V, c_b w_{\D,\phi}(b)}.
			\end{split}
		\end{equation}
		These three terms have their own interpretations. The term $\mathbf{F.1}$ is the main part of the flux replacement. The term $\mathbf{F.2}$ is the error to fix the local slope. The term $\mathbf{F.3}$ is the error from the randomness of the averaged slope. A similar decomposition of two-scale expansion can be found in the previous work \cite[eq.(4.10)]{gu2024quantitative}, \cite[eq.(4.43)]{gu2025relaxation}.
		
		In the following paragraphs, we treat the three terms separately.
		\medskip
		
		\textit{Step~1: term~$\mathbf{F.1}$ as the error in flux replacement.}
		For this term, we make the centered flux $\g_{m,b}^z$ appear. Moreover, as the averaged slope $[\D\bar{G}_\lambda]_{m}^{z}$ does not depend on the configuration in $(z+\cu_m)^+$, we have 
		\begin{equation*}
			\pi_b V\cdot [\D\bar{G}_\lambda]_{m}^{z}=\pi_b\Ll([\D\bar{G}]_m^z\cdot V\Rr),\quad \forall b \in \ov{(z + \cu_m)^*}.
		\end{equation*}
		Notice that the centered flux $\g_{m,b}^z$ is a local function:
		\begin{equation*}
			\g^z_{m,b}=c_b  \pi_b (\ell_{e} + \phi^z_{m} ) - \DD(\rho)\pi_b \ell_{e}\in \F_0(N_\r(z+\cu_m^+)),\quad\forall b\in \ov{(z+\cu_m)^*},
		\end{equation*}
		and therefore by taking conditional expectation with respect to $\fil_{N_\r(z+\cu_m^+)}$ we can obtain:
		\begin{align*}
			\expec{\pi_b V, [\D\bar{G}_\lambda]_m^z\g_{m,b}^z}
			&= \E_\rho\Ll[\g_{m,b}^z\E_\rho\Ll[\pi_b\left.\left([\D\bar{G}_\lambda]_m^z\cdot V\right)\right|\fil_{N_\r(z+\cu_m^+)}\Rr]\Rr]\\
			&=\E_\rho\Ll[\g_{m,b}^z\pi_b\left(\E_\rho\Ll[\left.\left([\D\bar{G}_\lambda]_m^z\cdot V\right)\right|\fil_{N_\r(z+\cu_m^+)}\Rr]\right)\Rr],
		\end{align*}
		here in the second line, the operator $\pi_b$ commutes with the conditional expectation because $b\in \ov{(z+\cu_m)^*}$ and $z+\cu_m^+$ belongs to $N_\r(z+\cu_m^+)$.

		For $v=\E_\rho\Ll[\left.\left([\D\bar{G}_\lambda]_m^z\cdot V\right)\right|\fil_{N_\r(z+\cu_m^+)}\Rr]$, we apply the flux cancellation \eqref{lem.FluxReplacement} in Lemma~\ref{lem.Homogenization} to obtain:
		\begin{align*}
			\Ll\vert \sum_{b \in \ov{(z + \cu_m)^*}}\bracket{\pi_{b} V, [\D\bar{G}_\lambda]_{m}^{z}\g_{m,b}^z} \Rr\vert 
			&= \Ll\vert \sum_{b \in \ov{(z + \cu_m)^*}} \bracket{\pi_{b} v,\g_{m,b}^z}\Rr\vert\\
			&\leq C 3^{-\alpha m} \vert \cu_m \vert^{\frac{1}{2}}\Ll(\sum_{b \in \ov{(z+\cu_m)^*} }\E_\rho\Ll[\Ll(\pi_b v\Rr)^2\Rr]\Rr)^{\frac{1}{2}}.
		\end{align*}
		We next use Cauchy--Schwarz inequality for conditional expectation to analyze $\E_\rho\left[(\pi_b v)^2\right]$ for our specifically chosen $v$:
		\begin{align*}
			\E_\rho[(\pi_b v)^2]
			& =\E_\rho\left[\left(\E_\rho\left.\left[\left(\pi_b V\right)\cdot [\D\bar{G}_\lambda]_m^z\right|\fil_{N_\r(z+\cu_m^+)}\right]\right)^2\right]\\
			&\leq \E_\rho\left[\E_\rho\left.\left[(\pi_b V)^2\right|\fil_{N_\r(z+\cu_m^+)}\right]\E_\rho\left.\left[\left([\D\bar{G}_\lambda]_m^z\right)^2\right|\fil_{N_\r(z+\cu_m^+)}\right]\right]\\
			&=\E_\rho[\left(\pi_b V\right)^2]\E_\rho\left[\left([\D\bar{G}_\lambda]_m^z\right)^2\right],
		\end{align*}
		where in the last line, we use the fact that $[\D\bar{G}_\lambda]_m^z\in \F_0\left(\left(N_\r(z+\cu_m^+)\right)^c\right)$ and therefore
		\begin{equation*}
			\E_\rho\left.\left[\left([\D\bar{G}_\lambda]_m^z\right)^2\right|\fil_{N_\r(z+\cu_m^+)}\right]=\E_\rho\left[\left([\D\bar{G}_\lambda]_m^z\right)^2\right].
		\end{equation*}
		We can use Cauchy--Schwarz inequality again to obtain an upper bound for $\E_\rho\left[\left([\D\bar{G}_\lambda]_m^z\right)^2\right]$:
		\begin{equation*}
			\E_\rho\left[\left([\D\bar{G}_\lambda]_m^z\right)^2\right]\leq 4\chi(\rho)\vert \cu_m \vert^{ -1}\sum_{y\in\left(N_\r(z+\cu_m^+)\right)^c}\sum_{x\in z+\cu_m}\left(\D_1 \bar{g}_\lambda(x,y)\right)^2.
		\end{equation*}
		The volume factor $\vert \cu_m \vert^{-\frac{1}{2}}$ compensates in the product of two estimates above. We then apply Jensen's inequality and \eqref{prop.H11D} in Proposition~\ref{prop.SobolevNorm} to obtain
		\begin{align}\label{eq.F.1}
			\vert \mathbf{F.1} \vert \leq C3^{-\alpha m}\expec{V,-\Lb V}^{\frac{1}{2}}\expec{\bar{G}_\lambda,-\Lb \bar{G}_\lambda}^{\frac{1}{2}}.
		\end{align}
		
		\medskip
		
		\textit{Step~2: term~$\mathbf{F.2}$ as the error to fix the slope.}
		We notice that for one bond $b=\{x,x+1\}$,
		\begin{equation*}
			|c_b\pi_{b}\ell_e-\DD(\rho)\pi_b\ell_e|=|(c_b-\DD(\rho))(\bar{\eta}(x)-\bar{\eta}(x+1))|\leq c_++\DD(\rho),\quad\forall \eta\in\X.
		\end{equation*}
		We apply at first Cauchy-Schwarz inequality and obtain
		\begin{equation*}
			\vert \mathbf{F.2} \vert^2\leq C\left(\sum_{b\in(\Zo)^*}\E_\rho\left[\left(\pi_b V\right)^2\right]\right)\left(\sum_{z\in\Z_m}\sum_{b\in\ov{(z+\cu_m)^*}}\E_\rho\left[\left([\D_{err}\bar{G}_\lambda(b)]_m^z\right)^2\right]\right).
		\end{equation*}
		Recall that $[\D_{err}\bar{G}_\lambda(b)]_m^z$ is defined in \eqref{eq.defDErrGBar}:
		\begin{equation*}
			[\D_{err}\bar{G}_\lambda(b)]_{m}^{z} := 2\sum_{y\in \Zo}\D_1 \bar{g}_\lambda(x,y)\bar{\eta}_y-[\D\bar{G}_\lambda]_{m}^{z},\quad\forall \{x,x+1\}=b\in\ov{(z+\cu_m)^*},
		\end{equation*}
		and therefore for one bond $b=\{x,x+1\}$, we can decompose it into two terms:
		\begin{multline*}
			[\D_{err}\bar{G}_\lambda(b)]_{m}^{z}=2\sum_{y\in N_\r(z+\cu_m^+)}\D_1 \bar{g}_\lambda(x,y)\bar{\eta}_y\\
			+2\sum_{y\in \left(N_\r(z+\cu_m^+)\right)^c}\left(\D_1\bar{g}_\lambda(x,y)-\frac{1}{|\cu_m|}\sum_{x'\in z+\cu_m}\D_1\bar{g}_\lambda(x',y)\right)\bar{\eta}_y.
		\end{multline*}
		By orthogonality of $\{\bar{\eta}_{x}\}_{x\in\Zo}$ we have
		\begin{multline*}
			\E_\rho\left[\left([\D_{err}\bar{G}_\lambda(b)]_{m}^{z}\right)^2\right]\leq 4\chi(\rho)\bigg(\sum_{y\in N_\r(z+\cu_m^+)}\left(\D_1 \bar{g}_\lambda(x,y)\right)^2\\
			+\sum_{y\in \left(N_\r(z+\cu_m^+)\right)^c}\bigg(\D_1\bar{g}_\lambda(x,y)-\frac{1}{|\cu_m|}\sum_{x'\in z+\cu_m}\D_1\bar{g}_\lambda(x',y)\bigg)^2\bigg).
		\end{multline*}
		For the first term, we use \eqref{prop.H1Near} in Proposition~\ref{prop.SobolevNorm} with $\ell=\lfloor3^{2\alpha m}(\r+3^m+1)\rfloor+1$ to obtain:
		\begin{equation*}
			\sum_{z\in\Z_m}\sum_{x\in z+\cu_m}\sum_{y\in N_\r(z+\cu_m^+)}\left(\D_1 \bar{g}_\lambda(x,y)\right)^2\leq C\left(3^{-2\alpha m}\expec{\bar{G}_\lambda,-\Lb\bar{G}_\lambda}+3^{4m}\expec{-\Lb\bar{G}_\lambda,-\Lb\bar{G}_\lambda}\right).
		\end{equation*}
		For the second term, we first apply Cauchy--Schwarz inequality to obtain
		\begin{equation*}
			\bigg(\D_1\bar{g}_\lambda(x,y)-\frac{1}{|\cu_m|}\sum_{x'\in z+\cu_m}\D_1\bar{g}_\lambda(x',y)\bigg)^2\leq C3^m\sum_{x'\in z+\cu_m}\left(\D_1\D_1\bar{g}_\lambda(x',y)\right)^2.
		\end{equation*}
		We then use \eqref{prop.pibpib'} in Proposition~\ref{prop.SobolevNorm} to obtain
		\begin{multline*}
			\sum_{z\in\Z_m}\sum_{x\in z+\cu_m}\sum_{y\in \left(N_\r(z+\cu_m^+)\right)^c}\bigg(\D_1\bar{g}_\lambda(x,y)-\frac{1}{|\cu_m|}\sum_{x'\in z+\cu_m}\D_1\bar{g}_\lambda(x',y)\bigg)^2\\
			\leq C 3^{2m}\expec{-\Lb\bar{G}_\lambda,-\Lb\bar{G}_\lambda}.
		\end{multline*}
		Therefore combining the above estimates, we have
		\begin{equation}\label{eq.F.2}
			\vert \mathbf{F.2} \vert\leq C\expec{V,-\Lb V}^{\frac{1}{2}}\left(3^{-\alpha m}\expec{\bar{G}_\lambda,-\Lb \bar{G}_\lambda}^{\frac{1}{2}}+3^{2m}\expec{-\Lb\bar{G}_\lambda,-\Lb\bar{G}_\lambda}^{\frac{1}{2}}\right)
		\end{equation}
		
		\medskip
		
		\textit{Step~3: term~$\mathbf{F.3}$ as the error for the randomness of the slope.} We first apply Cauchy-Schwarz inequality again and obtain
		\begin{equation*}
			\vert \mathbf{F.3} \vert^2 \leq C\left(\sum_{b\in(\Zo)^*}\E_\rho\left[\left(\pi_b V\right)^2\right]\right)\left(\sum_{b\in(\Zo)^*}\E_\rho\left[\left(c_b w_{\D,\phi}(b)\right)^2\right]\right).
		\end{equation*}
		Recall that $w_{\D,\phi}(b)$ is defined in \eqref{eq.defOscillation}:
		\begin{equation*}
			w_{\D,\phi}(b):=\sum_{z\in\Z_m}\pi_{b}\left([\D\bar{G}_\lambda]_m^{z}\right)\phi_m^{z}.
		\end{equation*}
		By the ellipticity of the jump rate $c_b$, we have
		\begin{equation*}
			\E_\rho\left[\left(c_b w_{\D,\phi}(b)\right)^2\right]\leq c_{+}^2 \E_\rho\left[\left(\sum_{z\in\Z_m}\pi_b\left([\D\bar{G}_\lambda]_m^z\right)\phi_m^z\right)^2\right].
		\end{equation*}
		We assume $3^m>\r$ and without loss of generality, we consider $\{x,x+1\}=b\in(\cu_m)^*$. By Cauchy--Schwarz inequality, we can obtain
		\begin{multline*}
			\E_\rho\left[\left(\sum_{z\in\Z_m}\pi_b\left([\D\bar{G}_\lambda]_m^z\right)\phi_m^z\right)^2\right]\\
			\leq 4\Ll(\sum_{z\in\Z_m,|z|\leq 3^m}\E_\rho\left[\left(\pi_b\left([\D\bar{G}_\lambda]_m^z\right)\phi_m^z\right)^2\right]+\E_\rho\left[\left(\sum_{z\in\Z_m,|z|>3^m}\pi_b\left([\D\bar{G}_\lambda]_m^z\right)\phi_m^z\right)^2\right]\Rr).
		\end{multline*}
		For the first term, we first notice that
		\begin{equation*}
			\pi_b[\D\bar{G}_\lambda]_m^0=0,
		\end{equation*}
		and therefore combining the $L^\infty$ estimate about the corrector $\phi_m^z$, \eqref{lem.CorrectorLp} in Proposition~\ref{lem.Homogenization}, we can obtain:
		\begin{equation*}
			\sum_{z\in\Z_m,|z|\leq 3^m}\E_\rho\left[\left(\pi_b\left([\D\bar{G}_\lambda]_m^z\right)\phi_m^z\right)^2\right]\leq C m^2\cdot 3^{6m}\sum_{z=\pm3^{m}}\E_\rho\left[\left(\pi_b[\D\bar{G}_\lambda]_m^z\right)^2\right].
		\end{equation*}
		Recall the averaged slope $[\D\bar{G}_\lambda]_m^z$ is defined in \eqref{eq.AveSlopeDeg2}:
		\begin{equation*}
			[\D \bar{G}_\lambda]^z_{m} := \sum_{y\in (N_\r(z+\cu_m^+))^c}\left(\frac{2}{|\cu_m|}\sum_{x'\in z+\cu_m}\D_1\bar{g}_\lambda(x',y)\right)\bar{\eta}_y.
		\end{equation*}
		Therefore, we can compute $\pi_b [\D\bar{G}_\lambda]_m^z$ explicitly as:
		\begin{equation}\label{eq.PibDG}
			\pi_b [\D\bar{G}_\lambda]_m^z=
			\begin{cases}
				\quad 0, &b\subset N_\r(z+\cu_{m}^+),\\
				\hfil \frac{2}{|\cu_m|}\sum_{x'\in z+\cu_m}\D_1\D_2\bar{g}_\lambda(x',x)(\bar{\eta}(x)-\bar{\eta}(x+1)), &b\subset (N_\r(z+\cu_m^+))^c,\\
				\hfil \frac{2}{|\cu_m|}\sum_{x'\in z+\cu_m}\D_1\bar{g}_\lambda(x',x)(\bar{\eta}(x+1)-\bar{\eta}(x)), &b\cap N_\r(z+\cu_m^+)=\{x\},\\
				\frac{2}{|\cu_m|}\sum_{x'\in z+\cu_m}\D_1\bar{g}_\lambda(x',x+1)(\bar{\eta}(x)-\bar{\eta}(x+1)), &b\cap N_\r(z+\cu_m^+)=\{x+1\}.
			\end{cases}
		\end{equation}
		Taking expectation with respect to $\P_\rho$ and by Cauchy--Schwarz inequality we can obtain
		\begin{multline}\label{eq.PibDNear}
			\sum_{b\in\ov{(\cu_m)^*}}\sum_{z=\pm 3^m}\E_\rho\left[\left(\pi_b[\D\bar{G}_\lambda]_m^z\right)^2\right]\\
			\leq C3^{-m}\sum_{z=\pm 3^{m}}\sum_{x'\in z+\cu_m}\left(\sum_{x\in  N_\r(z+\cu_m^+)}\left(\D_1\bar{g}_\lambda(x',x)\right)^2+\sum_{x\in \cu_m}\left(\D_1\D_2\bar{g}_\lambda(x',x)\right)^2\right).
		\end{multline}
		For any $|z|> 3^m$, we have
		\begin{equation*}
			\pi_b[\D\bar{G}_\lambda]_m^z=\frac{2}{|\cu_m|}\sum_{x'\in z+\cu_m}\D_1\D_2\bar{g}_\lambda(x',x)(\bar{\eta}(x)-\bar{\eta}(x+1))\in\F_0(\cu_m^+).
		\end{equation*}
		Therefore, for the summation over $|z|> 3^m$, we can use the $L^\infty$ estimate about the corrector $\phi_m^z$, \eqref{lem.CorrectorLp} in Proposition~\ref{lem.Homogenization} and Cauchy--Schwarz inequality to obtain
		\begin{equation}\label{eq.PibDFar}
			\begin{aligned}
				\E_\rho\left[\left(\sum_{z\in\Z_m,|z|>3^m}\pi_b\left([\D\bar{G}_\lambda]_m^z\right)\phi_m^z\right)^2\right]
				& = \sum_{z\in \Z_m,|z|>3^m}\E_\rho\left[\left(\pi_b[\D\bar{G}_\lambda]_m^z\right)^2(\phi_m^z)^2\right]\\
				& \leq Cm^2\cdot 3^{6m}\sum_{z\in\Z_m,|z|>3^m}\E_\rho\left[\left(\pi_b[\D\bar{G}_\lambda]_m^z\right)^2\right]\\
				& \leq C m^2 3^{5m}\sum_{|x'|>3^m}\left(\D_1\D_2\bar{g}_\lambda(x',x)\right)^2.
			\end{aligned}
		\end{equation}
		Here, the cross terms vanish in the first line  because $\pi_b[\D\bar{G}_\lambda]_m^z\in \F_0(\cu_m^+)$, together with the locality and mean zero property of the corrector $\phi_m^z$ (see \eqref{lem.Correctorlocal} and \eqref{lem.CorrectorMean} in Lemma~\ref{lem.Homogenization}).
		
		Combining \eqref{eq.PibDNear} and \eqref{eq.PibDFar} we have
		\begin{multline*}
			\sum_{b\in(\Zo)^*}\E_\rho\left[\left(c_b w_{\D,b}(b)\right)^2\right]\\
			\leq Cm^2\cdot 3^{5m}\left(\sum_{x\in\Zo}\sum_{|y-x|\leq \r+3^m+1}\left(\D_1\bar{g}_\lambda(x,y)\right)^2+\sum_{x,y\in\Zo}\left(\D_1\D_2\bar{g}_\lambda(x,y)\right)^2\right).
		\end{multline*}
		For the first term, we use \eqref{prop.H1Near} in Proposition~\ref{prop.SobolevNorm} with $\ell=\lfloor3^{6m+2\alpha m}(\r+3^m+1)\rfloor+1$, for the second term, we use \eqref{prop.pibpib'} in Proposition~\ref{prop.SobolevNorm}, and we can conclude
		\begin{equation}\label{eq.PibDL2}
			\sum_{b\in(\Zo)^*}\E_\rho\left[\left(c_b w_{\D,b}(b)\right)^2\right]\leq C\left(3^{-2\alpha m}\expec{\bar{G}_\lambda,-\Lb\bar{G}_\lambda}+3^{14m}\expec{-\Lb\bar{G}_\lambda,-\Lb\bar{G}_\lambda}\right).
		\end{equation}
		Therefore, we have
		\begin{equation}\label{eq.F.3}
			\vert \mathbf{F.3} \vert\leq C\expec{V,-\Lb V}^{\frac{1}{2}}\left(3^{-\alpha m}\expec{\bar{G}_\lambda,-\Lb \bar{G}_\lambda}^{\frac{1}{2}}+3^{7m}\expec{-\Lb\bar{G}_\lambda,-\Lb\bar{G}_\lambda}^{\frac{1}{2}}\right).
		\end{equation} 
		Finally, we combine \eqref{eq.F.1}, \eqref{eq.F.2} and \eqref{eq.F.3} to obtain the desired result \eqref{prop.FluxErrDeg2}.
	\end{proof}
	
	In the next step, we will derive a proposition about the carr\'e du champ of $\tilde{G}_\lambda$ in $1$ dimension degree $2$ similar to Proposition~\ref{prop.H1TwoScale}.
	
	\begin{proposition}\label{prop.H1TwoScaleDeg2}
		There exists a finite positive constant $C(c_{-},c_{+},\r,\rho)$ such that for every $\lambda\in(0,\tfrac{1}{2})$, the following estimates about the expectation and variance of the carr\'e du champ of $\tilde{G}_\lambda$ hold:
		\begin{multline}\label{eq.H1TwoScaleExpectDeg2}
			\Ll|\E_\rho\Ll[\sum_{b\in(\Zo)^*}\left(c_b(\pi_b\tilde{G}_\lambda)^2\right)\left(\eta\right)\Rr]-2\expec{\bar{G}_\lambda,-\Lb\bar{G}_\lambda}\Rr|\\
			\leq C\left(\sqrt{\log (\lambda^{-1})}+3^{14m}\right)(3^{-\alpha m}\sqrt{\log(\lambda^{-1})}+3^{7m}),
		\end{multline}
		\begin{equation}\label{eq.H1TwoScaleVarDeg2}
			\var_\rho\left[\sum_{b\in(\Zo)^*}\left(c_b(\pi_b\tilde{G}_\lambda)^2\right)(\eta)\right]\leq C 3^{38m}\left((\log(\lambda^{-1}))^{\frac{3}{2}}+1\right).
		\end{equation}
	\end{proposition}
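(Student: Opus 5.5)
The plan has two steps, mirroring Proposition~\ref{prop.H1TwoScale}: first the expectation, then the variance.

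\emph{Expectation.} I use the identity $\E_\rho[\sum_b c_b(\pi_b\tilde G_\lambda)^2]=2\expec{\tilde G_\lambda,-\L\tilde G_\lambda}$ and split exactly as in \eqref{eq.DecomFluxL2}. The first term $\expec{\tilde G_\lambda,-\L\tilde G_\lambda+\Lb\bar G_\lambda}$ is handled by \eqref{prop.FluxErrDeg2} of Proposition~\ref{prop.L2FluxDeg2} with $V=\tilde G_\lambda$. This requires a bound on $\expec{\tilde G_\lambda,-\Lb\tilde G_\lambda}$. I compute $\pi_b(\tilde G_\lambda-\bar G_\lambda)$ through \eqref{eq.PibGBarGTilde}; it equals $[\D\bar G_\lambda]^z_m\pi_b\phi^z_m+w_{\D,\phi}(b)$.
\begin{itemize}
\item For the first piece I use the independence of $[\D\bar G_\lambda]^z_m$ from $\pi_b\phi^z_m$, Lemma~\ref{lem.Homogenization}\eqref{lem.CorrectorH1L2}, and the slope bound from the proof of \eqref{prop.L2ErrDeg2}. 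This gives a contribution of order $\expec{\bar G_\lambda,-\Lb\bar G_\lambda}$.
\item The second piece is controlled by \eqref{eq.PibDL2}, which gives $3^{-2\alpha m}\log(\lambda^{-1})+3^{14m}$.
\end{itemize}
Hence $\expec{\tilde G_\lambda,-\Lb\tilde G_\lambda}^{1/2}\le C(\sqrt{\log\lambda^{-1}}+3^{7m})$. The second term is bounded by $\norm{\tilde G_\lambda-\bar G_\lambda}_{L^2}\norm{\Lb\bar G_\lambda}_{L^2}\le C3^{3m/2}\sqrt{\log\lambda^{-1}}$, using \eqref{prop.L2ErrDeg2} and Proposition~\ref{prop.GreenEstimateDeg2}. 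Combining these with Proposition~\ref{prop.GreenEstimateDeg2} yields \eqref{eq.H1TwoScaleExpectDeg2}; the factor $\sqrt{\log}+3^{14m}$ on the right is generous enough to absorb all the crossed terms.

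\emph{Variance.} Here the strategy of \eqref{eq.CubeDecomVar} fails: in degree $2$ the summands $X^z$ are no longer local, since $\pi_b\bar G_\lambda=2\sum_y\D_1\bar g_\lambda(x,y)(\bar\eta_x-\bar\eta_{x+1})\bar\eta_y$ involves far sites. I write $\pi_b\tilde G_\lambda$ as a sum over $y$ (or over blocks $z'\in\Z_m$) of terms $a_b(y)\,\Psi_{b,y}$. The coefficients $a_b(y)$ are built from $\D_1\bar g_\lambda$, $\D_1\D_1\bar g_\lambda$, $\D_1\D_2\bar g_\lambda$ and their block averages, so after coarse-graining to $r\simeq 3^m+\r$ they are dominated by $\tilde\D_{(r)}\bar g_\lambda(z,z')$. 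The factors $\Psi$ are local functions supported near $b$ and $y$, with $L^\infty$ norms up to $Cm3^{3m}$ coming from $\phi^z_m$ via Lemma~\ref{lem.Homogenization}\eqref{lem.CorrectorLp}.

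Then $\sum_b c_b(\pi_b\tilde G_\lambda)^2$ becomes a quartic-type sum $\sum_{b,y,y'}c_b\,a_b(y)a_b(y')\Psi\Psi'$. Its variance is a sum over $b_1,y_1,y_1',b_2,y_2,y_2'$ of covariances. A covariance vanishes unless the supports of the two groups meet, and moreover each lone centered variable $\bar\eta_y$ must be paired with another support by the mean-zero property. Organizing the surviving configurations by how many distinct blocks appear gives three cases, which match items \eqref{prop.QuarticFourBlock}, \eqref{prop.QuarticThreeBlock} and \eqref{prop.QuarticTwoBlock} of Proposition~\ref{prop.GreenQuartic}:
\begin{itemize}
\item four distinct blocks, which pair as $(z_1,z_1'),(z_2,z_1'),(z_1,z_2'),(z_2,z_2')$;
\item three distinct blocks;
\item two distinct blocks.
\end{itemize}
In the four-block case the pure $\D_1\bar g_\lambda$ part uses (a) with weight $f\equiv$ the bounded mean of $c_b$. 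This weight needs care, since $c_b$ is not constant; I would subtract $\E c_b$ and absorb the fluctuating part into a term carrying $\D^2$, which is where (b) comes in. Each case is bounded by $Cr^8(\log(\lambda^{-1})^{3/2}+1)$. The $L^\infty$ bounds on $\Psi$ contribute at most $3^{4\cdot 6m}$, and multiplying by $r^8\sim3^{8m}$ lands within $3^{38m}$.

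The main obstacle is the combinatorial bookkeeping in this variance step: enumerating all pairings that survive the mean-zero and independence constraints. The hardest parts are the correlation of $\bar\eta_y$ with the corrector $\phi^{z'}_m$ inside a block, which contributes $\E[\bar\eta_y\phi^{z'}_m]\ne0$ as in the proof of \eqref{prop.L2ErrDeg2}, and the diagonal terms where $y$ lies near $b$. I would first treat the case $\L=\Lb$ with no correctors, to check that every pattern reduces to Proposition~\ref{prop.GreenQuartic}, and then insert the corrector terms as bounded local perturbations, replacing coefficients by $\tilde\D_{(r)}$.
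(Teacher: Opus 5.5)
Your expectation step is the paper's argument: the same splitting, and the same bound on $\expec{\tilde G_\lambda,-\Lb\tilde G_\lambda}$ via independence and \eqref{eq.PibDL2}. The variance step, however, has a genuine gap in the four-block configurations. These are the ones where the two bonds lie in far-apart blocks $z_1,z_2$ and the two far sites $y,y'$ are shared and far from both.

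There the corrector piece $[\D\bar G_\lambda]^z_m\pi_b\phi^z_m$ carries the \emph{first-order} coefficient $\frac{2}{|\cu_m|}\sum_{x'\in z+\cu_m}\D_1\bar g_\lambda(x',y)$. This is of the same size as the coefficient $2\D_1\bar g_\lambda(x,y)$ in $\pi_b\bar G_\lambda$, so it is a leading-order contribution, not a perturbation. If you insert it with coefficients replaced by $\tilde\D_{(r)}\bar g_\lambda$, you get a sum of four $\tilde\D_{(r)}$ factors with no second-order factor. Neither part (1)(a) of Proposition~\ref{prop.GreenQuartic} (which needs the signed kernel $\D_1\bar g_\lambda$ with a scalar weight) nor part (1)(b) (which needs one $\D^2_{(r)}$ factor) applies to it. The estimates at hand then only give $\norm{T[\tilde\D_{(r)}\bar g_\lambda]}_{\mathrm{HS}}^4\le Cr^8(\log(\lambda^{-1})+1)^2$. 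A $(\log N)^2$ bound is exactly too weak for $\var_\rho[\bracket{\tilde M^N}_t]$ to vanish in \eqref{eq.TildeMVar}.

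Your proposed source of a $\D^2$ factor does not exist either. In a four-block configuration the local factor at $b_1$ is independent of everything else, so the expectation factorizes exactly and $c_b-\E_\rho[c_b]$ simply drops out. Without correctors the weight is even the constant $2\chi(\rho)\E_\rho[c_{0,1}]$. Subtracting $\E_\rho[c_b]$ therefore produces no second-order difference of $\bar g_\lambda$.

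The missing idea is the paper's regrouping $\pi_b\tilde G_\lambda=\mcl R_{*,b}+\mcl R_{1,b}+\mcl R_{2,b}$.
\begin{itemize}
\item The far parts of $\pi_b\bar G_\lambda$ and of $[\D\bar G_\lambda]^z_m\pi_b\phi^z_m$ are merged into $\mcl R_{*,b}=2\sum_{y\notin z+\cu_{m+1}}\D_1\bar g_\lambda(x,y)\bar\eta_y\,\pi_b(\ell_e+\phi^z_m)$, by replacing the block-averaged slope with the pointwise one.
\item The replacement error is a combination of $\D_1\D_1\bar g_\lambda$, and the far-block part of $w_{\D,\phi}(b)$ carries $\D_1\D_2\bar g_\lambda$ by \eqref{eq.PibDG}. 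Both sit in $\mcl R_{2,b}$ with coefficients bounded by $\D^2_{(r)}\bar g_\lambda$.
\item The four-block part of $\cov(c_{b_1}\mcl R_{*,b_1}^2,c_{b_2}\mcl R_{*,b_2}^2)$, summed over bonds, is then exactly $64\chi(\rho)^2\mcl M(z_1,z_2)$. After completing the restricted sums (at the cost of three- and two-block terms), this is part (1)(a) with weight $f(x)=\E_\rho[c_{x,x+1}(\pi_{x,x+1}(\ell_e+\phi^z_m))^2]$. Note that this is the mean of the \emph{corrected} local gradient, not of $c_b$.
\item Every four-block term containing $\mcl R_{2}$ carries a $\D^2_{(r)}$ factor, lands in $\mcl N_1$, and is controlled by part (1)(b).
\end{itemize}
Alternatively, you could keep the signed block-averaged kernel and use that block averaging is an orthogonal projection, hence contractive in $S_4$; what fails with the available estimates is passing to absolute values. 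With either fix in place, the rest of your outline matches the paper: isolated far variables vanish by independence and mean zero, and the three- and two-block configurations go to parts (2) and (3).
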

	\begin{proof}
		The proof of \eqref{eq.H1TwoScaleExpectDeg2} is the same as \eqref{eq.H1TwoScaleExpect} in Proposition~\ref{prop.H1TwoScale} using Proposition~\ref{prop.L2FluxDeg2} and Proposition~\ref{prop.GreenEstimateDeg2} except we need to regain an upper bound for $\expec{\tilde{G}_\lambda,-\Lb\tilde{G}_\lambda}$ in the current setting:
		\begin{align*}
			\expec{\tilde{G}_\lambda-\bar{G}_\lambda,-\Lb\left(\tilde{G}_\lambda-\bar{G}_\lambda\right)}
			\leq C\sum_{z\in\Z_m}\sum_{b\in\ov{(z+\cu_m)^*}}\E_\rho\left[\left(\sum_{z'\in\Z_m}\pi_b\left([\D\bar{G}_\lambda]_m^{z'}\phi_m^{z'}\right)\right)^2\right].
		\end{align*}
		By the locality of the corrector $\phi_m^z$, \eqref{lem.Correctorlocal} in Lemma~\ref{lem.Homogenization}, the only non-vanishing terms for $\pi_b\phi_{m}^{z'}$ is exactly $\pi_b\phi_m^z$. Therefore, using Cauchy--Schwarz inequality, we can obtain:
		\begin{multline*}
			\expec{\tilde{G}_\lambda-\bar{G}_\lambda,-\Lb\left(\tilde{G}_\lambda-\bar{G}_\lambda\right)}\\
			\leq C\sum_{b\in(\Zo)^*}\E_\rho\left[\left(\sum_{z'\in\Z_m}\pi_b[\D\bar{G}_\lambda]_m^{z'}\cdot \phi_m^{z'}\right)^2\right] + C\sum_{z\in\Z_m}\sum_{b\in\ov{(z+\cu_m)^*}}\E_\rho\left[\left([\D\bar{G}_\lambda]_m^z\pi_b\phi_m^z\right)^2\right].
		\end{multline*}
		Since $[\D\bar{G}_\lambda]_{m}^z\in\F_0(\left(N_\r(z+\cu_m^+)\right)^c)$ and $\pi_b\phi_m^z\in\F_0(z+\cu_m^+)$, they are independent. Combining \eqref{lem.CorrectorH1L2} in Lemma~\ref{lem.Homogenization} and \eqref{prop.H11D} in Proposition~\ref{prop.SobolevNorm}, we have
		\begin{equation*}
			\sum_{z\in\Z_m}\sum_{b\in\ov{(z+\cu_m)^*}}\E_\rho\left[\left([\D\bar{G}_\lambda]_m^z\pi_b\phi_m^z\right)^2\right] \leq C\expec{\bar{G}_\lambda,-\Lb\bar{G}_\lambda}.
		\end{equation*} 
		Notice that $\sum_{z'\in\Z_m}\pi_b[\D\bar{G}_\lambda]_m^{z'}\cdot \phi_m^{z'}=w_{\D,\phi}(b)$, and therefore using \eqref{eq.PibDL2} we can obtain
		\begin{equation*}
			\sum_{b\in(\Zo)^*}\E_\rho\left[\left(\sum_{z'\in\Z_m}\pi_b[\D\bar{G}_\lambda]_m^{z'}\cdot \phi_m^{z'}\right)^2\right] \leq C\left(3^{-2\alpha m}\expec{\bar{G}_\lambda,-\Lb\bar{G}_\lambda}+3^{14m}\expec{-\Lb\bar{G}_\lambda,-\Lb\bar{G}_\lambda}\right).
		\end{equation*}
		Combining the previous two estimates, we have
		\begin{equation*}
			\expec{\tilde{G}_\lambda-\bar{G}_\lambda,-\Lb\left(\tilde{G}_\lambda-\bar{G}_\lambda\right)}\leq C\left(\expec{\bar{G}_\lambda,-\Lb\bar{G}_\lambda}+3^{14m}\expec{-\Lb\bar{G}_\lambda,-\Lb\bar{G}_\lambda}\right).
		\end{equation*}
		By triangular inequality, we have
		\begin{equation*}
			\expec{\tilde{G}_\lambda,-\Lb\tilde{G}_\lambda}\leq C\left(\expec{\bar{G}_\lambda,-\Lb\bar{G}_\lambda}+3^{14m}\expec{-\Lb\bar{G}_\lambda,-\Lb\bar{G}_\lambda}\right).
		\end{equation*}
		Plug this estimate in the proof of \eqref{eq.H1TwoScaleExpect} in Proposition~\ref{prop.H1TwoScale} and we can obtain the desired result \eqref{eq.H1TwoScaleExpectDeg2}.
		
		We next prove \eqref{eq.H1TwoScaleVarDeg2}. For simplicity, we denote the centered version of a random variable $X$ by $\bar{X}$:
		\begin{equation*}
			\bar{X}:=X-\E_\rho[X].
		\end{equation*}
		
		We denote the random variable containing bonds in $\ov{(z+\cu_m^+)^*}$ by $X_m^z$:
		\begin{equation*}
			X_m^z(\eta):=\sum_{b\in\ov{(z+\cu_m^+)^*}}\left(c_b(\pi_b\tilde{G}_\lambda)^2\right)(\eta),\quad \forall z\in\Z_m.
		\end{equation*} 
		
		We can decompose \eqref{eq.H1TwoScaleVarDeg2} by the near diagonal terms and the non-diagonal terms as follows:
		\begin{equation*}
			\var_\rho\left[\sum_{z\in\Z_m}X_m^z\right]=\sum_{\substack{z_1,z_2\in\Z_m\\ |z_1-z_2|<  3^{m+1}}}\expec{\bar{X_m^{z_1}},\bar{X_m^{z_2}}}+\sum_{\substack{z_1,z_2\in\Z_m\\ |z_1-z_2|\geq  3^{m+1}}}\expec{\bar{X_m^{z_1}},\bar{X_m^{z_2}}}.
		\end{equation*}
		We first treat the near diagonal terms, by AM--GM inequality, we have
		\begin{equation*}
			\sum_{\substack{z_1,z_2\in\Z_m\\ |z_1-z_2|< 3^{m+1}}}\left|\expec{\bar{X_m^{z_1}},\bar{X_m^{z_2}}}\right|\leq 5\sum_{z\in\Z_m}\E_\rho\left[\left(\bar{X_m^{z}}\right)^2\right]\leq 5\sum_{z\in\Z_m}\E_\rho\left[\left(X_m^{z}\right)^2\right].
		\end{equation*}
		We claim that we have the following bounds for $\E_\rho\left[\left(X_m^z\right)^2\right]$:
		\begin{equation}\label{eq.VarDigonal}
			\E_\rho\left[\left(X_m^{z}\right)^2\right]
			\leq C\cdot 3^{30m}\left(\sum_{z'\in\Z_m}\left(\tilde{\D}_{(3^{m+1})}\bar{g}_\lambda(z,z')\right)^2\right)^2,
		\end{equation}
		where $\tilde{\D}_{(r)}\bar{g}_\lambda$ is defined in \eqref{eq.defDTilde}.
		
		To prove \eqref{eq.VarDigonal}, we notice that each single term in $\pi_b \tilde{G}_\lambda$ for $b\in\ov{(z+\cu_m^+)^*}$ consists of one of the following random parts together with certain deterministic coefficient:
        \begin{equation*}
			\pi_b\ell_e\cdot\bar{\eta}_y,\ y \in \mathbb{Z},\quad \pi_b\ell_e\cdot \phi_{m}^{z'},\ z\neq z'\in\Z_m,\quad \pi_b\phi_m^z\cdot \bar{\eta}_y,\  y\in(N_\r(z+\cu_m^+))^c.
		\end{equation*}
		Therefore, utilizing \eqref{lem.Correctorlocal} in Lemma~\ref{lem.Homogenization}, we decompose  $\pi_b\tilde{G}_\lambda$ by classifying the location of the random part:
		\begin{equation*}
			\pi_b\tilde{G}_\lambda=\sum_{\substack{z'\in\Z_m\cup\{\infty\}\\|z'-z|\geq 2\cdot 3^m}}\sum_{\alpha\in\mcl{I}_b}c^{z'}_{\alpha}A_{\alpha}^{z'}B_{\alpha}^{z'},\quad \forall \alpha\in\mcl{I}_b,\ A_{\alpha}^{z'}\in\F_0(z+\cu_{m+1}),\ B_{\alpha}^{z'}\in \F_0(z'+\cu_m),
		\end{equation*} 
		we include the case that some terms only depend on $\mathscr{F}(z+\cu_{m+1})$ by assuming $B_{\alpha}^\infty=1$. Since for any $z'\in\Z_m$, $B_\alpha^{z'}$ is either $\bar{\eta}_z$ with $z\in z'+\cu_m$ or $\phi_m^{z'}$, we can utilize \eqref{lem.CorrectorMean} in Lemma~\ref{lem.Homogenization} to obtain
		\begin{equation}\label{eq.BMeanZero}
			\E_\rho\left[B_{\alpha}^{z'}\right]=0,\quad \forall z'\in\Z_m.
		\end{equation}
		Moreover, the coefficient $c_{\alpha}^{z'}$ and the cardinality of the index set $\mcl{I}_b$ satisfy
		\begin{equation}\label{eq.CoeffBound}
			|c_{\alpha}^{z'}|\leq \tilde{\D}_{(3^{m+1})}\bar{g}_\lambda(z,z'),\quad |c_{\alpha}^{\infty}|\leq \tilde{\D}_{(3^{m+1})}\bar{g}_\lambda(z,z),
		\end{equation}
		\begin{equation}\label{eq.IndexCardBound}
			|\mcl{I}_b|\leq 3^{m+2}.
		\end{equation}
		
		The next step is to plug the decomposition about $\pi_b\tilde{G}_\lambda$ into $X_m^z$:
		\begin{equation*}
			X_m^z=\sum_{b\in\ov{(z+\cu_m^+)^*}}\sum_{\substack{z_1\in\Z_m\cup\{\infty\}\\|z_1-z|\geq 2\cdot 3^m}}\sum_{\alpha_1\in\mcl{I}_b}\sum_{\substack{z_2\in\Z_m\cup\{\infty\}\\|z_2-z|\geq 2\cdot 3^m}}\sum_{\alpha_2\in\mcl{I}_b}c_{\alpha_1}^{z_1}c_{\alpha_2}^{z_2}(c_bA_{\alpha_1}^{z_1}A_{\alpha_2}^{z_2})B_{\alpha_1}^{z_1}B_{\alpha_2}^{z_2}.
		\end{equation*}
		Therefore, we can compute $\E_\rho\left[\left(X_m^z\right)^2\right]$ as:
		\begin{equation}\label{eq.DiagonalExpan}
			\begin{multlined}
				\E_\rho\left[\left(X_m^z\right)^2\right]= \sum_{b_i\in\ov{(z+\cu_m^+)^*}}\sum_{\alpha_1,\alpha_2\in\mcl{I}_{b_1}}\sum_{\alpha_3,\alpha_4\in\mcl{I}_{b_2}}\sum_{\substack{z_i\in\Z_m\cup\{\infty\}\\|z_i-z|\geq 2\cdot 3^m}}\\
				c_{\alpha_1}^{z_1}c_{\alpha_2}^{z_2}c_{\alpha_3}^{z_3}c_{\alpha_4}^{z_4}\expec{(c_{b_1}A_{\alpha_1}^{z_1}A_{\alpha_2}^{z_2})B_{\alpha_1}^{z_1}B_{\alpha_2}^{z_2},(c_{b_2}A_{\alpha_3}^{z_3}A_{\alpha_4}^{z_4})B_{\alpha_3}^{z_3}B_{\alpha_4}^{z_4}}.
			\end{multlined}
		\end{equation}
		We notice that if there exists an isolate $z_i\in\Z_m$ (i.e. $z_i\notin\{z_j,j\neq i\}$), and without loss of generality we can assume it is $z_1$, then we have
		\begin{equation*}
			(c_{b_1}A_{\alpha_1}^{z_1}A_{\alpha_2}^{z_2})B_{\alpha_2}^{z_2}(c_{b_2}A_{\alpha_3}^{z_3}A_{\alpha_4}^{z_4})B_{\alpha_3}^{z_3}B_{\alpha_4}^{z_4}\in\F_0((z_1+\cu_m)^c),\quad B_{\alpha_1}^{z_1}\in\F_0(z_1+\cu_m),
		\end{equation*}
		and therefore by independence and  \eqref{eq.BMeanZero} we have
		\begin{equation*}
			\expec{(c_{b_1}A_{\alpha_1}^{z_1}A_{\alpha_2}^{z_2})B_{\alpha_1}^{z_1}B_{\alpha_2}^{z_2},(c_{b_2}A_{\alpha_3}^{z_3}A_{\alpha_4}^{z_4})B_{\alpha_3}^{z_3}B_{\alpha_4}^{z_4}}=0.
		\end{equation*}
		Using the above observation, we can find all indexes $z_i$ in the non-zero terms in the expansion of $\E_\rho\left[\left(X_m^z\right)^2\right]$ belong to the following cases:
		\begin{enumerate}
			\item Two of $z_i$ equal to $z'\in\Z_m,\ |z'-z|\geq 2\cdot 3^m$, and the other two equal to $z''\in\Z_m,\ |z''-z|\geq 2\cdot 3^m$, $z'$ and $z''$ cannot be the same,
			\item At least two of $z_i$ equals $z'\in\Z_m,|z'-z|\geq 2\cdot 3^m$, and the others equal to $\infty$,
			\item All of $z_i$ equal to $\infty$.
		\end{enumerate}
		\begin{figure}[h!]
			\centering
			\includegraphics[width=0.8\linewidth]{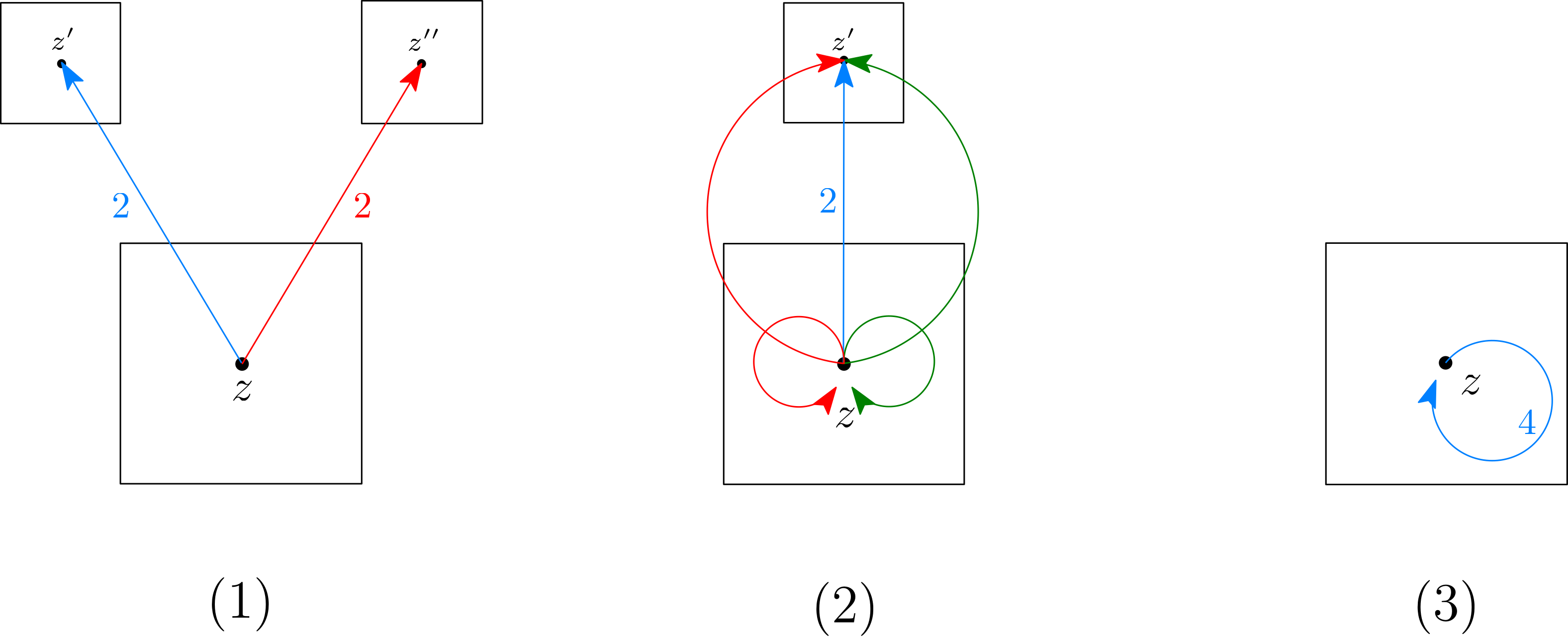}
			\caption{The diagram of non-vanishing terms. }\label{fig.Diagonal}
		\end{figure}
		
		Figure~\ref{fig.Diagonal} presents all non-vanishing terms (i.e. each point $z_i$ outside cube $z+\cu_{m+1}$ must be attached at least twice). Each arrow from $z$ to $\tilde{z}$ represents the appearance of  $c_{\alpha}^{\tilde{z}}A_{\alpha}^{\tilde{z}}B_{\alpha}^{\tilde{z}}$ with $\tilde{z}\in\{z,z',z''\}$, and the corresponding deterministic coefficient $c_{\alpha}^{\tilde{z}}$ can be controlled by $\tilde{\D}_{(3^{m+1})}(z,\tilde{z})$. The number $n$ attached to some arrows represents the corresponding arrow appears exactly $n$ times. We will use different colors to include multiple possible situations in one figure, in each subfigure, the arrows with the same color appear exactly once. Given $4$ arrows in one subfigure for one typical situation, multiplying the corresponding $c_{\alpha}^{\tilde{z}}A_{\alpha}^{\tilde{z}}B_{\alpha}^{\tilde{z}}$ together with the jump rate $c_{b_1}$, $c_{b_2}$ and taking expectation with respect to $\P_\rho$ yields one term in the summation of \eqref{eq.DiagonalExpan}.
		
		Using the $L^8$ bound for the corrector $\phi_m^z$, \eqref{lem.CorrectorLp} in Lemma~\ref{lem.Homogenization}, the ellipticity of the jump rate $c_b$, and H\"{o}lder inequality, we have
		\begin{multline*}
			\left|\expec{(c_{b_1}A_{\alpha_1}^{z_1}A_{\alpha_2}^{z_2})B_{\alpha_1}^{z_1}B_{\alpha_2}^{z_2},(c_{b_2}A_{\alpha_3}^{z_3}A_{\alpha_4}^{z_4})B_{\alpha_3}^{z_3}B_{\alpha_4}^{z_4}}\right|\\
			\leq  c_{+}^2\prod_{i=1}^{4}\left(\E_\rho\left[\left(A_{\alpha_{i}}^{z_i}\right)^8\right]\E_\rho\left[\left(B_{\alpha_{i}}^{z_i}\right)^8\right]\right)^{\frac{1}{8}}\leq C 3^{24m}.
		\end{multline*}
		By \eqref{eq.CoeffBound}, we can track the deterministic coefficients appear in each case, using AM--GM inequality and the above bounds for the expectation of the random part, we have for each single term belongs to $(1)$, $(2)$, and $(3)$:
		\begin{align*}
			|(1)|&\leq C3^{24m}\left(\tilde{\D}_{(3^{m+1})}\bar{g}_\lambda(z,z')\right)^2\left(\tilde{\D}_{(3^{m+1})}\bar{g}_\lambda(z,z'')\right)^2,\\
			|(2)|&\leq C3^{24m}\left(\tilde{\D}_{(3^{m+1})}\bar{g}_\lambda(z,z')\right)^2\Ll(\left(\tilde{\D}_{(3^{m+1})}\bar{g}_\lambda(z,z')\right)^2+\left(\tilde{\D}_{(3^{m+1})}\bar{g}_\lambda(z,z)\right)^2\Rr),\\
			|(3)|&\leq C3^{24m}\left(\tilde{\D}_{(3^{m+1})}\bar{g}_\lambda(z,z)\right)^4.
		\end{align*}  
		Summing the three parts and combining the  \eqref{eq.IndexCardBound}, we can obtain 
		\begin{align*}
			\E_\rho\left[\left(X_m^z\right)^2\right]
			&\leq C3^{30m}\sum_{z_1,z_2\in\Z_m}\left(\tilde{\D}_{(3^{m+1})}\bar{g}_\lambda(z,z_1)\right)^2\left(\tilde{\D}_{(3^{m+1})}\bar{g}_\lambda(z,z_2)\right)^2\\
			&\leq C3^{30m}\left(\sum_{z'\in\Z_m}\left(\tilde{\D}_{(3^{m+1})}g_\lambda(z,z')\right)^2\right)^2,
		\end{align*}
		which is exactly the desired estimate \eqref{eq.VarDigonal}.
		
		Using \eqref{prop.QuarticThreeBlock} $(\text{a})$ in Proposition~\ref{prop.GreenQuartic} by taking the parameter $r=3^{m+1}$ we have
		\begin{equation}\label{eq.NearDig}
			\sum_{\substack{z_1,z_2\in\Z_m\\ |z_1-z_2|\leq 3^{m+1}}}\left|\expec{\bar{X_m^{z_1}},\bar{X_m^{z_2}}}\right|\leq C3^{38m}\log(\lambda^{-1}). 
		\end{equation}
		
		We next consider the non-diagonal terms. We fix $z_1,z_2\in\Z_m$ such that $|z_1-z_2|\geq  3^{m+1}$, and we will focus on one particular term $\expec{\bar{X_m^{z_1}},\bar{X_m^{z_2}}}$. We will first derive a more precise decomposition of $\pi_b\tilde{G}_\lambda$ for any bond $b$. Suppose $b=\{x,x+1\}$ for $x\in z+\cu_m$, by definition of $\tilde{G}_\lambda$ in \eqref{eq.TwoscaleExDeg2}, we have
		\begin{equation*}
			\pi_b\tilde{G}_\lambda = \mathcal{R}_{*,b}+\mathcal{R}_{1,b}+\mcl{R}_{2,b},
		\end{equation*}
		where the main term $\mcl{R}_{*,b}$ and two remainder terms $\mcl{R}_{1,b}$ and $\mcl{R}_{2,b}$ are given by:
		\begin{equation*}
			\mathcal{R}_{*,b} := 2\sum_{y\in(z+\cu_{m+1})^c}\D_1\bar{g}_\lambda(x,y)\bar{\eta}_y\cdot \pi_b(\ell_e+\phi_m^z),
		\end{equation*}
		which collects all terms containing a random part $\bar{\eta}_y$ outside $z+\cu_{m+1}$ with a first order difference of $\bar{g}_\lambda$.
		\begin{multline*}
			\mcl{R}_{1,b} := 2 \sum_{y\in z+\cu_{m+1}}\D_1\bar{g}_\lambda(x,y)\bar{\eta}_y\cdot \pi_b\ell_e+\sum_{z'=z\pm 3^m}\pi_b\left([\D\bar{G}_\lambda]_m^{z'}\right)\phi_m^{z'}\\
			+\left(\frac{2}{|\cu_m|}\sum_{y\in z+\cu_{m+1}\backslash N_\r(z+\cu_m^+)}\sum_{x'\in z+\cu_m}\D_1\bar{g}_\lambda(x',y)\bar{\eta}_y-2\sum_{y\in z+\cu_{m+1}}\D_1\bar{g}_\lambda(x,y)\bar{\eta}_y\right)\cdot \pi_b\phi_m^z,
		\end{multline*}
		which collects all terms depends only on $z+\cu_{m+1}$.
		\begin{multline*}
			\mcl{R}_{2,b} := \sum_{z'\in \Z_m,|z'-z|>3^m}\pi_b\left([\D\bar{G}_\lambda]_m^{z'}\right)\phi_m^{z'}\\
			+2\sum_{y\in(z+\cu_{m+1})^c}\left(\frac{1}{|\cu_m|}\sum_{x'\in z+\cu_m}\D_1\bar{g}_\lambda(x',y)-\D_1\bar{g}_\lambda(x,y)\right)\bar{\eta}_y\cdot\pi_b\phi_m^z,
		\end{multline*}
		which is the remainder for $\pi_b\tilde{G}_\lambda$ after subtracting $\mcl{R}_{\ast,b}$ and $\mcl{R}_{1,b}$. 
		
		We further decompose $\mcl{R}_{1,b}$ and $\mcl{R}_{2,b}$ as a summation of single terms. We notice that $\mcl{R}_{1,b}\in\F_0(z+\cu_{m+1})$, and therefore we rewrite it as a summation of single terms:
		\begin{equation*}
			\mcl{R}_{1,b}=\sum_{\alpha\in \mcl{I}_{1,b}}c_{\alpha}A_{\alpha},\quad \forall \alpha\in \mcl{I}_{1,b},\ A_{\alpha}\in\F_0(z+\cu_{m+1}).
		\end{equation*}
		The coefficients $c_{\alpha}$ and the cardinality of the index set $\mcl{I}_{1,b}$ satisfy
		\begin{equation}\label{eq.CoeffIndexBound1}
			|c_{\alpha}|\leq \tilde{\D}_{(3^{m+1})}\bar{g}_\lambda(z,z),\quad |\mcl{I}_{1,b}|\leq 3^{m+2}.
		\end{equation}
		
		For $\mcl{R}_{2,b}$, we decompose it by classifying the location of the random part as before:
		\begin{equation*}
			\mcl{R}_{2,b} = \sum_{\substack{z'\in\Z_m\\|z'-z|\geq 2\cdot 3^m}}\sum_{\alpha\in\mcl{I}_{2,b}}c^{z'}_{\alpha}A_{\alpha}^{z'}B_{\alpha}^{z'},\quad \forall \alpha\in\mcl{I}_{2,b},\ A_{\alpha}^{z'}\in\F_0(z+\cu_{m+1}),\ B_{\alpha}^{z'}\in \F_0(z'+\cu_m).
		\end{equation*}
		Notice that the coefficients have a stronger bound than before, as a consequence of the first case of \eqref{eq.PibDG} and the fact that $\frac{1}{|\cu_m|}\sum_{x'\in z+\cu_m}\D_1\bar{g}_\lambda(x,y)-\D_1\bar{g}_\lambda(x,y),\ y\in (z+\cu_{m+1})^c$ can be decomposed into a summation of some second order finite difference of $\bar{g}_\lambda$:
		\begin{equation}\label{eq.CoeffIndexBound2}
			|c_{\alpha}^{z'}|\leq \D^{2}_{(3^{m+1})}\bar{g}_\lambda(z,z'),\quad |\mcl{I}_{2,b}|\leq 3^{m+2}.
		\end{equation}
		We claim that we have the following bounds for $\expec{\bar{X_m^{z_1}},\bar{X_m^{z_2}}}$:
		\begin{multline}\label{eq.VarNonDigonal}
			\left|\expec{\bar{X_m^{z_1}},\bar{X_m^{z_2}}}-64\chi(\rho)^2\mcl{M}(z_1,z_2)\right|\\
			\leq C3^{30m}\left(\mcl{N}_{1}(z_1,z_2)
			+ \mcl{N}_{2}(z_1,z_2)+\mcl{N}_{3}(z_1,z_2)+\mcl{N}_{4}(z_1,z_2)+\mcl{N}_5(z_1,z_2)\right),
		\end{multline}
		where $\mcl{M}(z_1,z_2)$, $\mcl{N}_{1}(z_1,z_2)$, $\mcl{N}_{2}(z_1,z_2)$, $\mcl{N}_3(z_1,z_2)$, $\mcl{N}_4(z_1,z_2)$, and $\mcl{N}_5(z_1,z_2)$ are given by
		\begin{multline*}
			\mcl{M}(z_1,z_2)\\
			:= \sum_{x_i\in z_i+\cu_m} \sum_{y_i\in (\cup_{i=1}^2 (z_i+\cu_{m+1}))^c}\prod_{i=1}^{2}\E_\rho\left[c_{x_i,x_i+1}\left(\pi_{x_i,x_i+1}(\ell_e+\phi_m^{z_i})\right)^2\right]\prod_{i,j=1}^{2}\D_1\bar{g}_\lambda(x_i,y_j),
		\end{multline*}
		which is the main parts containing the most possible single terms with only the first order difference of $\bar{g}_\lambda$. 
		\begin{align*}
			&\mcl{N}_{1}(z_1,z_2)\\
			&\quad:=\sum_{z_3,z_4\in\Z_m}\big(\D^2_{(3^{m+1})}\bar{g}_\lambda(z_1,z_3)\tilde{\D}_{(3^{m+1})}\bar{g}_\lambda(z_2,z_3)\tilde{\D}_{(3^{m+1})}\bar{g}_\lambda(z_1,z_4)\tilde{\D}_{(3^{m+1})}\bar{g}_\lambda(z_2,z_4)\\
			&\qquad+\D^2_{(3^{m+1})}\bar{g}_\lambda(z_2,z_3)\tilde{\D}_{(3^{m+1})}\bar{g}_\lambda(z_1,z_3)\tilde{\D}_{(3^{m+1})}\bar{g}_\lambda(z_1,z_4)\tilde{\D}_{(3^{m+1})}\bar{g}_\lambda(z_2,z_4)\big),
		\end{align*}
		which contains the most possible single terms as $\mcl{M}(z_1,z_2)$, but at least one second order difference of $\bar{g}_\lambda$ appears in $\D^2_{(3^{m+1})}\bar{g}_\lambda(z,z')$.
		
		$\mcl{N}_2(z_1,z_2)$, $\mcl{N}_3(z_1,z_2)$, $\mcl{N}_4(z_1,z_2)$, and $\mcl{N}_5(z_1,z_2)$ containing fewer terms than $\mcl{M}_{z_1,z_2}$ and $\mcl{N}_1(z_1,z_2)$ are defined as:
		\begin{equation*}
			\mcl{N}_{2}(z_1,z_2) := \sum_{z'\in\Z_m}\left(\tilde{\D}_{(3^{m+1})}\bar{g}_\lambda(z_1,z')\right)^2\left(\tilde{\D}_{(3^{m+1})}\bar{g}_\lambda(z_2,z')\right)^2,
		\end{equation*}
		\begin{multline*}
			\mcl{N}_{3}(z_1,z_2) := \sum_{z'\in\Z_m}\tilde{\D}_{(3^{m+1})}\bar{g}_\lambda(z_1,z')\tilde{\D}_{(3^{m+1})}\bar{g}_\lambda(z_2,z')\\
			\left(\tilde{\D}_{(3^{m+1})}\bar{g}_\lambda(z_1,z_1)+\tilde{\D}_{(3^{m+1})}\bar{g}_\lambda(z_1,z_2)\right)\left(\tilde{\D}_{(3^{m+1})}\bar{g}_\lambda(z_2,z_2)+\tilde{\D}_{(3^{m+1})}\bar{g}_\lambda(z_2,z_1)\right),
		\end{multline*}
		\begin{multline*}
			\mcl{N}_{4}(z_1,z_2) := \sum_{z'\in\Z_m}\left(\tilde{\D}_{(3^{m+1})}\bar{g}_\lambda(z_1,z')\right)^2\left(\tilde{\D}_{(3^{m+1})}\bar{g}_\lambda(z_2,z_2)+\tilde{\D}_{(3^{m+1})}\bar{g}_\lambda(z_2,z_1)\right)^2\\
			+\sum_{z'\in\Z_m}\left(\tilde{\D}_{(3^{m+1})}\bar{g}_\lambda(z_2,z')\right)^2\left(\tilde{\D}_{(3^{m+1})}\bar{g}_\lambda(z_1,z_1)+\tilde{\D}_{(3^{m+1})}\bar{g}_\lambda(z_1,z_2)\right)^2,
		\end{multline*}
		and
		\begin{multline*}
			\mcl{N}_5(z_1,z_2)\\
			:=\left(\tilde{\D}_{(3^{m+1})}\bar{g}_\lambda(z_1,z_1)+\tilde{\D}_{(3^{m+1})}\bar{g}_\lambda(z_1,z_2)\right)^2\left(\tilde{\D}_{3^{m+1}}\bar{g}_\lambda(z_2,z_2)+\tilde{\D}_{(3^{m+1})}\bar{g}_\lambda(z_2,z_1)\right)^2.
		\end{multline*}
		
		The philosophy in the proof of \eqref{eq.VarNonDigonal} is that the non-vanishing terms after taking expectation must not contain an isolated random part outside cubes $z_1+\cu_{m+1}$ and $z_2+\cu_{m+1}$. Specifically, we fix two bonds $b_1=\{x_1,x_1+1\}$ and $b_2=\{x_2,x_2+1\}$ with $x_1\in z_1+\cu_{m}$ and $x_2\in z_2+\cu_m$, and consider $\expec{\bar{c_{b_1}(\pi_{b_1}\tilde{G}_\lambda})^2,\bar{c_{b_2}(\pi_{b_2}\tilde{G}_\lambda})^2}$. We will discuss the contribution of $\mcl{R}_{*,b}$, $\mcl{R}_{1,b}$, and $\mcl{R}_{2,b}$.
		\medskip
		
		\textit{Case~1: only $\mcl{R}_{*,b_1}$ and $\mcl{R}_{*,b_2}$ appear.} We first notice that for $y_1\neq y_2\in (z_1+\cu_{m+1})^c$, we have
		\begin{equation*}
			\E_\rho\left[c_{b_1}\left(\pi_{b_{1}}(\ell_e+\phi_m^{z_1})\right)^2\bar{\eta}_{y_1}\bar{\eta}_{y_2}\right]=0.
		\end{equation*}
		For convenience, denote the random part in $z_1+\cu_{m+1}$ and $z_2+\cu_{m+1}$ by $X_1$ and $X_2$:
		\begin{equation*}
			X_1 := c_{b_1}\left(\pi_{b_{1}}(\ell_e+\phi_m^{z_1})\right)^2,\quad X_2 := c_{b_2}\left(\pi_{b_{2}}(\ell_e+\phi_m^{z_2})\right)^2,
		\end{equation*}
		and therefore decomposing $c_{b_i}\left(\mcl{R}_{*,b_i}\right)^2$ into diagonal terms and non-diagonal terms yields:
		\begin{align*}
			\bar{c_{b_i}\left(\mcl{R}_{*,b_i}\right)^2} &= 4\sum_{y_i\in(z_i+\cu_{m+1})^c}\D_1\bar{g}_\lambda(x_i,y_i)^2\bar{X_i\left(\bar{\eta}_{y_i}\right)^2}\\
			&\quad + 4X_i\sum_{y_i\neq y_i'\in(z_i+\cu_{m+1})^c}\D_1\bar{g}_\lambda(x_i,y_i)\D_1\bar{g}_\lambda(x_i,y_i')\bar{\eta}_{y_i}\bar{\eta}_{y_i'}.
		\end{align*}
		For each single term in $\expec{\bar{c_{b_1}(\mcl{R}_{*,b_1})^2},\bar{c_{b_2}(\mcl{R}_{*,b_2})^2}}$, if it contains an isolated $\bar{\eta}_{y_i}$, $\bar{\eta}_{y_i'}$, or $\bar{X_i(\bar{\eta_{y_i}})^2}$, it becomes zero after taking expectation. We observe that all non-vanishing terms belong to one of the following cases:
		\medskip
		\begin{enumerate}
			\item $y_i\neq y_i'\in (\cup_{i=1}^2 (z_i+\cu_{m+1}))^c$,\ $\{y_1,y_1'\}=\{y_2,y_2'\}$.
			\item $y_1=y_2\in z'+\cu_m$, $\min\{|z'-z_1|,|z'-z_2|\}\geq 2\cdot 3^m$, $y_1'=y_2'\in \cup_{i=1}^2(z_1+\cu_{m+1})$,  and other symmetric cases.
			\item $y_1=y_1'\in z'+\cu_m$, $\min\{|z'-z_1|,|z'-z_2|\}\geq 2\cdot 3^m$, $y_2\in (z_1+\cu_{m+1})\cup (z'+\cu_m)$, $y_2'\in \cup_{i=1}^2(z_i+\cu_{m+1})\cup(z'+\cu_m)$, and other symmetric cases.
			\item $y_1,y_1',y_2,y_2'\in \cup_{i=1}^2(z_1+\cu_{m+1})$, and at least one $y_1,y_1'$ belongs to $z_2+\cu_{m+1}$ or at least one $y_2,y_2'$ belongs to $z_1+\cu_{m+1}$.
		\end{enumerate}
		\begin{figure}[h!]
			\centering
			\includegraphics[width=0.8\linewidth]{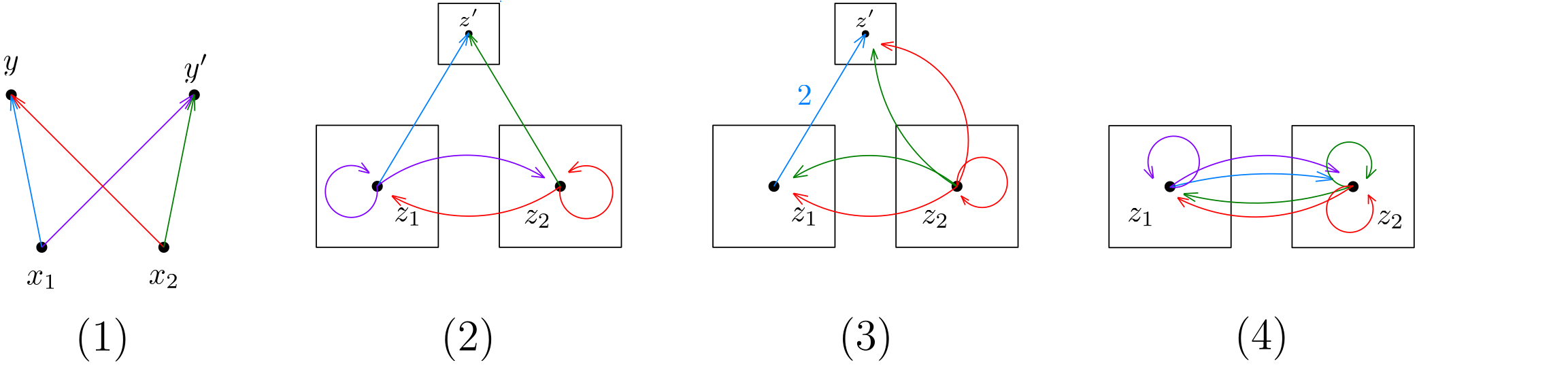}
			\caption{The diagram of non-vanishing terms. }\label{fig.NonDiagonalCase1}
		\end{figure}
		
		Figure~\ref{fig.NonDiagonalCase1} presents all non-vanishing terms (i.e. each point $z'$ outside cube $z+\cu_{m+1}$ must be attached at least twice, and the whole undirected graph is connected). In subfigure $(1)$, each arrow from $x_i$ to $y_i$ represents the appearance of $2\pi_{b_i}(\ell_e+\phi_m^{z_i})\D_1\bar{g}_\lambda(x_i,y_i)\bar{\eta}_{y_i}$. In other subfigures, each arrow from $z_i$ to $\tilde{z}$ with $\tilde{z}\in\{z_1,z_2,z'\}$ represents the appearance of  $2\pi_{b_i}(\ell_e+\phi_m^{z_i})\D_1\bar{g}_\lambda(x_i,y)\bar{\eta}_{y}$ with $y\in \tilde{z}+\cu_{\tilde{m}}$. The meaning of the number $n$ attached to some arrows and the color of the arrows is the same as the diagonal illustration Figure~\ref{fig.Diagonal}. 
		
		Therefore, similar to the diagonal case, by Cauchy--Schwarz inequality, AM--GM inequality and the $L^4$ estimate of the corrector $\phi_m^z$, \eqref{lem.CorrectorLp} in Lemma~\ref{lem.Homogenization}, for each individual term we have
		\begin{equation*}
			(1)= 64(\chi(\rho))^2\E_\rho[X_1]\E_\rho[X_2]\D_1\bar{g}_\lambda(x_1,y_1)\D_1\bar{g}_\lambda(x_1,y_1')\D_1\bar{g}_\lambda(x_2,y_1)\D_1\bar{g}_\lambda(x_2,y_1'),
		\end{equation*}
		\begin{align*}
			|(2)|&\leq C3^{12m}\mcl{N}_3(z_1,z_2),\\
			|(3)|&\leq C3^{12m}\left(\mcl{N}_2(z_1,z_2)+\mcl{N}_4(z_1,z_2)\right),\\
			|(4)|&\leq C3^{12m}\mcl{N}_5(z_1,z_2).
		\end{align*}
		
		Combining the above results, we can obtain:
		\begin{multline*}
			\left|\sum_{b_1\in\ov{(z_1+\cu_m)^*}}\sum_{b_2\in\ov{(z_2+\cu_m)^*}}\expec{\bar{c_{b_1}(\mcl{R}_{*,b_1})^2},\bar{c_{b_2}(\mcl{R}_{*,b_2})^2}}-64\chi(\rho)^2\mcl{M}(z_1,z_2)\right|\\
			\leq  C3^{18m}\left(\mcl{N}_2(z_1,z_2)+\mcl{N}_3(z_1,z_2)+\mcl{N}_4(z_1,z_2)+\mcl{N}_5(z_1,z_2)\right).
		\end{multline*}
		
		\medskip
		
		\textit{Case~2: at least one $\mcl{R}_{1,b_i}$ appears.} We may consider $\mcl{R}_{1,b_1}$ appearing as an example, and then we need to estimate
		\begin{equation*}
			\expec{\bar{c_{b_1}\mcl{R}_{1,b_1}\tilde{\mcl{R}_{b_1}}},\bar{c_{b_2}\tilde{\mcl{R}_{b_2}}\tilde{\mcl{R}_{b_2}'}}},
		\end{equation*}
		where 
		\begin{equation*}
			\tilde{\mcl{R}_{b_i}}, \tilde{\mcl{R}_{b_i}'}\in \{\mcl{R}_{*,b_i},\mcl{R}_{1,b_i},\mcl{R}_{2,b_i}\}.
		\end{equation*}
		By a similar independence analysis as before, we can observe all non-vanishing terms belong to one of the following cases:
		\begin{figure}[h!]
			\centering
			\includegraphics[width=0.8\linewidth]{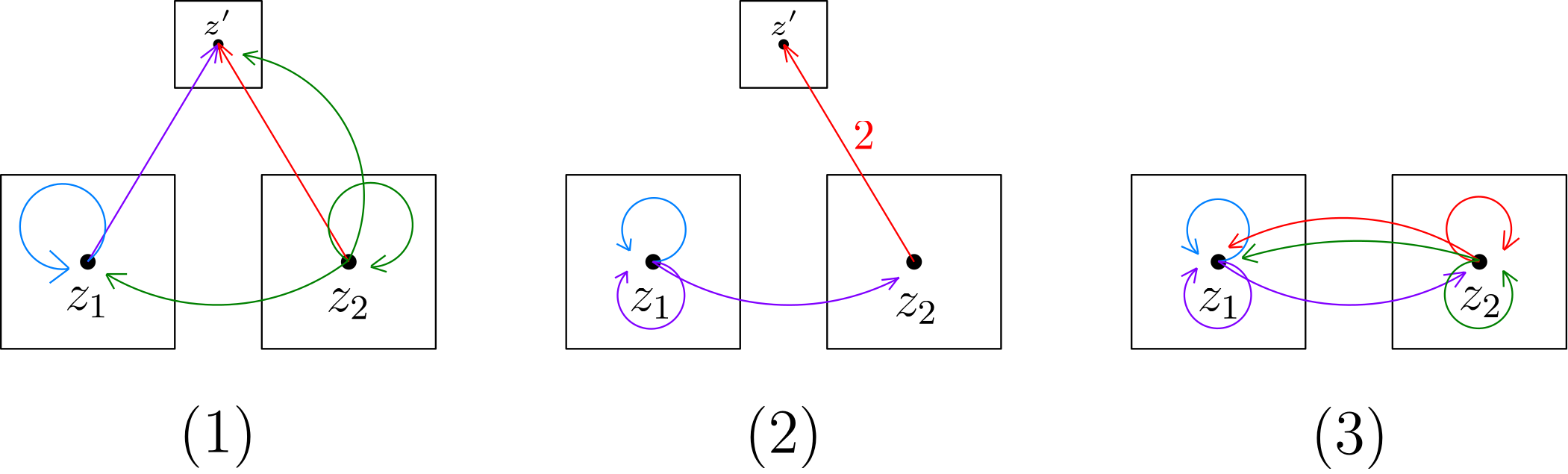}
			\caption{The diagram of non-vanishing terms. }\label{fig.NonDiagonalCase2}
		\end{figure}
		
		Combining \eqref{eq.CoeffIndexBound1}, \eqref{eq.CoeffIndexBound2}, by counting the number of the terms appearing and using the $L^8$ estimate about the corrector $\phi_m^z$ as we did in treating the near diagonal terms $\E_\rho\left[\left(X_m^z\right)^2\right]$, we have 
		\begin{equation*}
			\left|\expec{\bar{c_{b_1}\mcl{R}_{1,b_1}\tilde{\mcl{R}_{b_1}}},\bar{c_{b_2}\tilde{\mcl{R}_{b_2}}\tilde{\mcl{R}_{b_2}'}}}\right|\leq C3^{28m}\left(\mcl{N}_2(z_1,z_2)+\mcl{N}_3(z_1,z_2)+\mcl{N}_4(z_1,z_2)+\mcl{N}_5(z_1,z_2)\right).
		\end{equation*}
		
		\textit{Case~3: at least one $\mcl{R}_{2,b_i}$ appears.} We may consider $\mcl{R}_{2,b_1}$ appearing as an example, and then we need to estimate
		\begin{equation*}
			\expec{\bar{c_{b_1}\mcl{R}_{2,b_1}\tilde{\mcl{R}_{b_1}}},\bar{c_{b_2}\tilde{\mcl{R}_{b_2}}\tilde{\mcl{R}_{b_2}'}}}.
		\end{equation*}
		By a similar independence analysis as in $(1)$, we can observe all non-vanishing terms belong to one of the following cases:
		\begin{figure}[h!]
			\centering
			\includegraphics[width=0.8\linewidth]{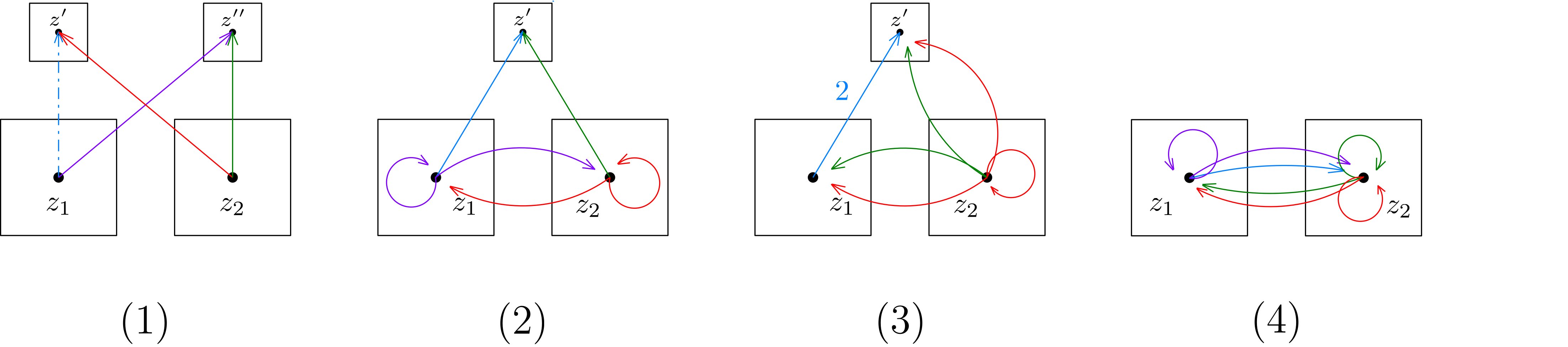}
			\caption{The diagram of non-vanishing terms. }\label{fig.NonDiagonalCase3}
		\end{figure}
		
		In subfigure $(1)$, the dash dotted arrow represents the terms with the coefficient of a second order finite difference of $\bar{g}_\lambda$.
		
		Combining \eqref{eq.CoeffIndexBound1} and \eqref{eq.CoeffIndexBound2}, by a similar independence estimate as in $(1)$ and counting the number of the terms appearing and using the $L^8$ estimate about the corrector $\phi_m^z$ as we did in treating the near diagonal terms $\E_\rho\left[\left(X_m^z\right)^2\right]$, we have 
		\begin{multline*}
			\left|\expec{\bar{c_{b_1}\mcl{R}_{2,b_1}\tilde{\mcl{R}_{b_1}}},\bar{c_{b_2}\tilde{\mcl{R}_{b_2}}\tilde{\mcl{R}_{b_2}'}}}\right|\\
			\leq C3^{28m}\left(\mcl{N}_1(z_1,z_2)+\mcl{N}_2(z_1,z_2)+\mcl{N}_3(z_1,z_2)+\mcl{N}_4(z_1,z_2)+\mcl{N}_5(z_1,z_2)\right).
		\end{multline*}
		
		Combining the computation in \textit{Case~1}, \textit{Case~2}, and \textit{Case~3}, we can obtain the desired estimate \eqref{eq.VarNonDigonal}. 
		
		Taking summation over $z_1,z_2\in\Z_m,|z_1-z_2|>3^{m+1}$, by \eqref{eq.VarNonDigonal}, we have
		\begin{multline*}
			\left|\sum_{\substack{z_1,z_2\in\Z_m\\ |z_1-z_2|> 3^{m+1}}}\expec{\bar{X_m^{z_1}},\bar{X_m^{z_2}}}-\sum_{\substack{z_1,z_2\in\Z_m\\ |z_1-z_2|> 3^{m+1}}}64\chi(\rho)^2\mcl{M}(z_1,z_2)\right|\\
			\leq C3^{30m}\sum_{z_1,z_2\in\Z_m}\left(\mcl{N}_1(z_1,z_2)+\mcl{N}_2(z_1,z_2)+\mcl{N}_3(z_1,z_2)+\mcl{N}_4(z_1,z_2)+\mcl{N}_5(z_1,z_2)\right).
		\end{multline*}
		Notice that using the $L^4$ estimate of the corrector $\phi_m^z$, \eqref{lem.CorrectorLp} in Lemma~\ref{lem.Homogenization}, we have
		\begin{multline*}
			\left|\sum_{\substack{z_1,z_2\in\Z_m\\ |z_1-z_2|> 3^{m+1}}}\mcl{M}(z_1,z_2)-\mcl{M}\right|\\
			\leq C3^{14m}\bigg(\sum_{z_1,z_2\in\Z_m}\left(\mcl{N}_2(z_1,z_2)+\mcl{N}_3(z_1,z_2)+\mcl{N}_4(z_1,z_2)+\mcl{N}_5(z_1,z_2)\right)\\
			+\sum_{z\in\Z_m}\left(\sum_{z'\in\Z_m}\left(\tilde{\D}_{(3^{m+1})}g_\lambda(z,z')\right)^2\right)^2\bigg),
		\end{multline*}
		where
		\begin{equation*}
			\mcl{M}=\sum_{x_i,y_i\in \Zo} \prod_{i=1}^{2}\E_\rho\left[c_{x_i,x_i+1}\left(\pi_{x_i,x_i+1}(\ell_e+\phi_m^{z_i})\right)^2\right]\prod_{i,j=1}^{2}\D_1\bar{g}_\lambda(x_i,y_j).
		\end{equation*}
		Using \eqref{prop.QuarticFourBlock}	$(\text{a})$ for $f(x)=\E_\rho\left[c_{x,x+1}\left(\pi_{x,x+1}(\ell_e+\phi_m^{z})\right)^2\right]$ with $x\in z+\cu_m$ in Proposition~\ref{prop.GreenQuartic}, we have
		\begin{equation*}
			\mcl{M}\leq C3^{12m}\left(\log(\lambda^{-1})+1\right).
		\end{equation*}
		Taking $r=3^{m+1}$ in Proposition~\ref{prop.GreenQuartic}, we also have
		\begin{multline*}
			\sum_{z_1,z_2\in\Z_m}\left(\mcl{N}_1(z_1,z_2)+\mcl{N}_2(z_1,z_2)+\mcl{N}_3(z_1,z_2)+\mcl{N}_4(z_1,z_2)+\mcl{N}_5(z_1,z_2)\right)\\
			+\sum_{z\in\Z_m}\left(\sum_{z'\in\Z_m}\left(\tilde{\D}_{(3^{m+1})}g_\lambda(z,z')\right)^2\right)^2\leq C3^{8m}\left(\left(\log(\lambda^{-1})^{\frac{3}{2}}\right)+1\right).
		\end{multline*}
		Combining the above estimates we can obtain
		\begin{equation}\label{eq.NonDig}
			\left|\sum_{\substack{z_1,z_2\in\Z_m\\ |z_1-z_2|> 3^{m+1}}}\expec{\bar{X_m^{z_1}},\bar{X_m^{z_2}}}\right|\leq C3^{38m}\left(\left(\log(\lambda^{-1})\right)^{\frac{3}{2}}+1\right).
		\end{equation}
		Therefore combining \eqref{eq.NearDig} and \eqref{eq.NonDig} we can obtain the desired estimate \eqref{eq.H1TwoScaleVarDeg2}.
		
	\end{proof}
	
	In the last part of this subsection, we provide a uniform bound for $\pi_b\tilde{G}_\lambda$, which is the difference after one jump for $\tilde{G}_\lambda$.
	\begin{corollary}\label{Cor.JumpDeg2}
		There exists a constant $C(c_{-},c_{+})$ such that, for any bond $b\in(\Zo)^*$ and $\lambda\in(0,1)$,  we have the following uniform bound for $\pi_b\tilde{G}_\lambda$:
		\begin{equation*}
			\norm{\pi_b\tilde{G}_\lambda}_{L^\infty}\leq C m 3^{3m}\lambda^{-\frac{1}{2}},\qquad\forall b\in(\Zo)^*,\ \lambda\in(0,1),\quad\forall m\in\N
		\end{equation*}
		\begin{proof}
			Suppose $b=\{x,x+1\}\in\ov{(z+\cu_m)^*}$, we can compute $\pi_b\tilde{G}_\lambda(\eta)$ explicitly as:
			\begin{equation*}
				\pi_{b}\tilde{G}_\lambda(\eta) = 2\sum_{y\in\Zo}\D_1\bar{g}_\lambda(x,y)\bar{\eta}_y(\bar{\eta}_{x}-\bar{\eta}_{x+1})+\sum_{z'\in\Z_m}\pi_{b}[\D\bar{G}_\lambda]_m^{z'}\cdot \phi_m^{z'}+[\D\bar{G}_\lambda]_m^z\pi_b\phi_m^z.
			\end{equation*}
			Taking supreme of $\eta\in\X$ for each terms yields:
			\begin{equation*}
				\norm{\pi_b\tilde{G}_\lambda}_{L^\infty}\leq 2\sum_{y\in\Zo}|\D_1\bar{g}_\lambda(x,y)|+\sum_{z'\in\Z_m}\norm{\pi_{b}[\D\bar{G}_\lambda]_m^{z'}}_{L^\infty}\norm{\phi_m^{z'}}_{L^\infty}+\norm{[\D\bar{G}_\lambda]_m^z}_{L^\infty}\norm{\pi_b\phi_m^z}_{L^\infty}.
			\end{equation*}
			Utilizing \eqref{lem.CorrectorLp} in Lemma~\ref{lem.Homogenization} we have
			\begin{equation*}
				\norm{\phi_{m}^z}_{L^\infty}\leq Cm3^{3m},\quad \norm{\pi_b\phi_m^z}_{L\infty}\leq C m3^{3m},\qquad \forall z\in\Z_m.
			\end{equation*}
			From the computation \eqref{eq.PibDG} we can derive
			\begin{align*}
				\sum_{z'\in\Z_m}\norm{\pi_{b}[\D\bar{G}_\lambda]_m^{z'}}_{L^\infty}&\leq \frac{2}{|\cu_m|}\sum_{y\in\Zo}\left(|\D_1\D_2\bar{g}_\lambda(y,x)|+|\D_1\bar{g}_\lambda(y,x)|+|\D_1\bar{g}_\lambda(y,x+1)|\right)\\
				&\leq\frac{4}{|\cu_m|}\sum_{y\in\Zo}\left(|\D_1\bar{g}_\lambda(y,x)|+|\D_1\bar{g}_\lambda(y,x+1)|\right).
			\end{align*}
			By the definition of the averaged slope $[\D\bar{G}_\lambda]_m^z$ in \eqref{eq.AveSlopeDeg2}, we can obtain
			\begin{equation*}
				\norm{[\D\bar{G}_\lambda]_m^z}_{L^\infty}\leq \sum_{y\in\Zo}\frac{2}{|\cu_m|}\sum_{x\in z+\cu_m}|\D_1\bar{g}_\lambda(x,y)|.
			\end{equation*}
			Combining the above estimates, we can obtain
			\begin{align*}
				\norm{\pi_b\tilde{G}_\lambda}_{L^\infty}&\leq Cm3^{3m}\sup_{x\in\Zo}\Ll\{\sum_{y\in\Zo}\left(|\D_1\bar{g}_\lambda(x,y)|+|\D_1\bar{g}_\lambda(y,x)|\right)\Rr\}\\
				&\leq Cm3^{3m}\sup_{x\in\Zo}\Ll\{\sum_{y\in\Zo}\left(|\bar{g}_\lambda(x,y)|+|\bar{g}_\lambda(y,x)|\right)\Rr\}\\
				&=Cm3^{3m}\sup_{x\in\Zo}\sum_{y\in\Zo}\bar{g}_\lambda(\{x,y\}),
			\end{align*}
			where in the last line, we use the symmetric assumption \eqref{eq.SymmetricAssumption}.
			
			Using \eqref{lem.TwoParticleTransition} in Lemma~\ref{lem.TransitionGreen}, we have
			\begin{align*}
				\sum_{y\in\Zo}\bar{g}_\lambda(\{x,y\}) 
				& =\int_0^\infty e^{-\lambda t}\sum_{y\in\Zo}\bar{p}_t(\{0,1\},\{x,y\})\, \d t\\
				& \leq C\int_{C_1}^{\infty}\frac{e^{-\lambda t}}{(1+t)}\sum_{y\in\Zo}\left(\exp\left(-\tfrac{y^2}{C_2t}\right)+\exp\left(-\tfrac{|y|}{C_2\log t}\right)\1_{|y|>\frac{C_3et}{\log t}}\right)\, \d t+C.
			\end{align*}
			Fix $t\geq C_1$, and a simple computation yields 
			\begin{equation*}
				\sum_{y\in\Zo}\left(\exp\left(-\tfrac{y^2}{C_2t}\right)+\exp\left(-\tfrac{|y|}{C_2\log t}\right)\1_{|y|>\frac{C_3 et}{\log t}}\right) \leq C\left(t^{\frac{1}{2}}+|\log t|\right),
			\end{equation*}
			and therefore we have 
			\begin{equation*}
				\sum_{y\in\Zo}\bar{g}_\lambda(\{x,y\})\leq C
				\int_0^\infty \frac{e^{-\lambda t}}{1+t}\left(t^{\frac{1}{2}}+|\log t|\right)\, \d t+C\leq C\lambda^{-\frac{1}{2}}.
			\end{equation*}
			Combining the above estimates, we can conclude Corollary~\ref{Cor.JumpDeg2}.
		\end{proof}
	\end{corollary}
	
	\subsection{Proof of Theorem~\ref{thm.main_next}}\label{subsec.ConvergenceDeg2}
	
	After establishing Proposition~\ref{prop.GreenEstimateDeg2}, Proposition~\ref{prop.L2FluxDeg2}, Proposition~\ref{prop.H1TwoScaleDeg2}, and Corollary~\ref{Cor.JumpDeg2} in previous two subsections, the proof for Theorem~\ref{thm.main_next} is almost identical to $d=2$ case in Section~\ref{subsec.MTildeCLT}, Section~\ref{subsec.tightness}, and Section~\ref{subsec.Convergence}. 
	
	The only difference is that for the tightness part, since the argument in Proposition~\ref{prop.tightGamma} for $d=2$ no longer apply because the cost of the flow connecting from $\delta_0$ to $p_\ell$ has order $\ell$ in $d=1$ instead of $\log \ell$; see lemma \ref{lem flow}. Therefore, we propose a different approach via the forward-backward martingale, which takes advantage of the reversibility of the process.
	
	We first confirm the martingale CLT of $\tilde{M}^N$ in $d=1$:
	
	\begin{proof}[Proof of Proposition~\ref{prop.CLT_MDeg2}]
		The proof is the same as the proof of Proposition~\ref{prop.CLT_M} utilizing Proposition~\ref{prop.GreenEstimateDeg2}, Proposition~\ref{prop.H1TwoScaleDeg2}, and Corollary~\ref{Cor.JumpDeg2} under the chosen scale \eqref{eq.scaleMDeg2}.
	\end{proof}
	
	This next part is devoted to the tightness of the occupation time.
	\begin{proposition}\label{prop.tightGammaDeg2}
		For $d = 1$, the occupation time $\{\Gamma^N\}_{N \in \N}$ defined in \eqref{eq.OccupationDeg2} is tight in $C(\R_+, \R)$.
	\end{proposition}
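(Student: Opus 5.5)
Following the remark just before the statement, the approach is the forward--backward martingale decomposition (Lyons--Zheng type), which uses the reversibility of $\L$ under $\Pr$. Let $f=\bar{\eta}(0)\bar{\eta}(1)$ and let $G_\lambda$ be the true resolvent $(\lambda-\L)G_\lambda=f$ with $\lambda=1/N$. We could also work with $\tilde{G}_\lambda$, but it is cleaner to use $G_\lambda$ itself, since only energy bounds are needed. Write $f=\lambda G_\lambda-\L G_\lambda$. For the forward martingale $M_t$ of $G_\lambda$ and the backward martingale $\hat{M}_t$ of $G_\lambda$ along the time-reversed process (stationary, with the same law by reversibility), the standard identity reads
\begin{align*}
\int_s^t -\L G_\lambda(\eta_r)\,\d r=\tfrac12\bigl(M_t-M_s\bigr)+\tfrac12\bigl(\hat{M}_{T-s}-\hat{M}_{T-t}\bigr).
\end{align*}
Hence
\begin{align*}
\Gamma^N_t-\Gamma^N_s=\frac{1}{\sqrt{N\log N}}\Bigl(\int_{Ns}^{Nt}\lambda G_\lambda(\eta_r)\,\d r+\tfrac12(M_{Nt}-M_{Ns})+\tfrac12(\hat{M}_{NT-Ns}-\hat{M}_{NT-Nt})\Bigr).
\end{align*}
The boundary terms $G_\lambda(\eta_{Nt})-G_\lambda(\eta_{Ns})$ have cancelled.

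For the martingale parts, use the Burkholder--Davis--Gundy inequality. The quadratic variation density is $\sum_b c_b(\pi_bG_\lambda)^2$, whose expectation is $2\bracket{G_\lambda,-\L G_\lambda}$. Testing the resolvent equation gives $\bracket{G_\lambda,-\L G_\lambda}\le\bracket{f,(\lambda-\L)^{-1}f}$. The comparison argument \eqref{eq.H1comparison} with $\Lb$ (same ellipticity constants) then gives $\bracket{f,(\lambda-\L)^{-1}f}\le C\bracket{f,(\lambda-\Lb)^{-1}f}=C\chi(\rho)^2\bar{g}_\lambda(\{0,1\})\le C\log N$, using Proposition~\ref{prop.GreenEstimateDeg2}. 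Second moments therefore give
\begin{align*}
\Er\bigl[(M_{Nt}-M_{Ns})^2\bigr]\le CN(t-s)\log N .
\end{align*}
This alone is not enough for Kolmogorov--Chentsov, so we need fourth moments: by BDG, $\Er[(M_{Nt}-M_{Ns})^4]\le C\Er[\langle M\rangle^2]+C\Er[\sum(\Delta M)^4]$.

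The main obstacle is bounding $\Er[\langle M\rangle^2]$, which requires a variance bound on the carr\'e du champ of $G_\lambda$ that we do not have directly. To get around it, first pass to a dyadic/chaining bound. The standard route is to take the increment at times $|t-s|\ge N^{-1}$ and use
\begin{align*}
\Er\bigl[\langle M\rangle_{Nt}-\langle M\rangle_{Ns}\bigr]^2\le (N(t-s))^2\,\Er\Bigl[\bigl(\textstyle\sum_b c_b(\pi_bG_\lambda)^2\bigr)^2\Bigr].
\end{align*}
Alternatively, and this is what I would try first, replace $G_\lambda$ by $\tilde{G}_\lambda$ and use Proposition~\ref{prop.H1TwoScaleDeg2}. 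Its expectation is $O(\log N)$ and its variance is $O(3^{38m}(\log N)^{3/2})$, so the second moment is $O(\log^2N)$ for $m$ as in \eqref{eq.scaleMDeg2}. This yields $\Er[(\text{mart. incr.})^4]/(N\log N)^2\le C(t-s)^2$. The jump term is controlled by Corollary~\ref{Cor.JumpDeg2}: it contributes $N(t-s)\cdot m^4 3^{12m}N^2/(N\log N)^2$, which is too big. I would therefore treat jumps via a truncation argument, noting that only increments with $t-s\ge N^{-1/2}$ matter, or simply use the $L^2$ bound on jumps together with Aldous' criterion instead.

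With $\tilde{G}_\lambda$ the forward--backward identity acquires the extra drift $\L\tilde{G}_\lambda-\Lb\bar{G}_\lambda$ (plus $\lambda\bar{G}_\lambda$). This drift is handled exactly as in Proposition~\ref{prop.RemainderConverge} through the Kipnis--Varadhan $H^{-1}$ estimate: $\Er[(\int_{Ns}^{Nt}V)^2]\le CN(t-s)\norm{V}_{-1}^2$. Proposition~\ref{prop.L2FluxDeg2}(2) gives $\norm{\L\tilde{G}_\lambda-\Lb\bar{G}_\lambda}_{-1}^2=o(\log N)$. For the $\lambda\bar{G}_\lambda$ term, Cauchy--Schwarz with $\norm{\bar{G}_\lambda}^2=O(N)$ gives $C(t-s)^2N/\log N$. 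This is fine for $t-s\le(\log N)/N$; for larger $t-s$ use the $H^{-1}$ bound $\norm{\lambda\bar{G}_\lambda}_{-1}^2\le\lambda\norm{\bar{G}_\lambda}^2_{L^2}\cdot\ldots=O(1)$, via the spectral calculus. Finally, combine the Kolmogorov-type moment bounds on the continuous parts with Aldous' criterion for the martingale parts. Since $\Gamma^N$ itself is continuous, the limit points lie in $C(\R_+,\R)$.
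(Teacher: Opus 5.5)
Your final route (the forward--backward decomposition run directly with $\tilde G_\lambda$) works, but it is not the paper's route.

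\emph{How the two arguments differ.} The paper keeps the true resolvent $G_\lambda$ in the forward--backward identity, so no drift error appears. It bypasses exactly the obstacle you ran into (no control of the carr\'e du champ of $G_\lambda$) by comparing martingales. Doob's inequality and Proposition~\ref{prop.HomoResolvent} give $\Er[\sup_{t\le T}(M^N_t-\tilde M^N_t)^2]\le \frac{CT}{\log N}\bracket{\tilde G_\lambda-G_\lambda,-\L(\tilde G_\lambda-G_\lambda)}\le CT(3^{-2\alpha m}+3^{3m}/N+3^{14m}/\log N)$. Tightness of $M^N$ (and, by reversibility, of the backward martingale) is then inherited from the convergence of $\tilde M^N$ in Proposition~\ref{prop.CLT_MDeg2}, and the $\lambda G_\lambda$ term is handled by Cauchy--Schwarz.

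You instead move the two-scale error into the drift $V:=\L\tilde G_\lambda-\Lb\bar G_\lambda$. You bound it in $H^{-1}$ through Proposition~\ref{prop.L2FluxDeg2}(2) and the Dirichlet-form comparison, giving $\norm{V}_{-1}^2\le C(3^{-2\alpha m}\log N+3^{14m})$. This dispenses with $G_\lambda$ and with Proposition~\ref{prop.HomoResolvent}. That proposition is itself derived from the same flux estimate, so the two routes use one input in dual $H^1$/$H^{-1}$ form and yield errors of the same order. The price of your route is that you need a maximal inequality for additive functionals.

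Your Aldous--Rebolledo treatment of $\tilde M^N$ is sound. With $\Phi:=\sum_b c_b(\pi_b\tilde G_\lambda)^2$, Proposition~\ref{prop.H1TwoScaleDeg2} under \eqref{eq.scaleMDeg2} gives $\Er[\Phi^2]=O((\log N)^2)$. Hence $\Er[(\bracket{\tilde M^N}_t-\bracket{\tilde M^N}_s)^2]\le C(t-s)^2$, and the jumps are $O(m3^{3m}/\sqrt{\log N})$ by Corollary~\ref{Cor.JumpDeg2}. But this is just the tightness half of Proposition~\ref{prop.CLT_MDeg2}, which you may simply cite.

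\emph{Steps that need repair.} First, the drift. Your non-maximal bound $\Er[(\int_{Ns}^{Nt}V(\eta_r)\,\d r)^2]\le CN(t-s)\norm{V}_{-1}^2$ only gives $o(1)\,(t-s)$ after normalization. That is linear in $t-s$ and cannot feed Kolmogorov--Chentsov. Arguing ``as in Proposition~\ref{prop.RemainderConverge}'' also yields only fixed-time bounds, since that proof imports tightness from elsewhere. Use instead the maximal Kipnis--Varadhan inequality for stationary reversible processes, $\Er[\sup_{t\le T}(\int_0^{Nt}V(\eta_r)\,\d r)^2]\le CNT\norm{V}_{-1}^2$. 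This shows the normalized drift tends to $0$ uniformly on $[0,T]$ in $L^2$, at rate $CT(3^{-2\alpha m}+3^{14m}/\log N)$.

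Second, the $\lambda\bar G_\lambda$ term: you lost a factor $N$. Cauchy--Schwarz gives $\frac{(NT)^2\lambda^2\norm{\bar G_\lambda}_{L^2}^2}{N\log N}\le CT^2/\log N$, even with the supremum over $t\le T$ inside the expectation, so no case distinction is needed. Your fallback claim $\norm{\lambda\bar G_\lambda}_{-1}^2=O(1)$ is false and should be deleted. Since $\bar p_t(\{0,1\},\cdot)$ is a probability, the second-chaos coefficients of $\lambda\bar G_\lambda$ sum to $\lambda\sum_Y\bar g_\lambda(Y)=1$, exactly as for $\bar\eta(0)\bar\eta(1)$. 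Consequently $\bracket{\lambda\bar G_\lambda,\Pb_t\lambda\bar G_\lambda}$ is of order $t^{-1}$ for $t\gg\lambda^{-1}$, and the $H^{-1}$ norm diverges logarithmically.

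Third, your jump estimate in the BDG attempt was too pessimistic. The bound $\sum_b c_b(\pi_b\tilde G_\lambda)^4\le\sup_b\norm{\pi_b\tilde G_\lambda}_{L^\infty}^2\Phi$ gives $C(t-s)m^23^{6m}/\log N$. That is still only linear in $t-s$, so switching to Aldous--Rebolledo was the right decision. With these corrections your argument proves the proposition.
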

	
	\begin{proof}[Proof of Proposition~\ref{prop.tightGammaDeg2}]
		By Dynkin's martingale formula, 
		\[M_t^{N} = \frac{1}{\sqrt{N \log N}} G_{\lambda} (\eta_{Nt}) - \frac{1}{\sqrt{N \log N}} G_{\lambda} (\eta_{0}) - \frac{1}{\sqrt{N \log N}} \int_0^{Nt} \mathcal{L} G_\lambda (\eta_s) \, \d s\]
		is a martingale. Consider the reversed process $\{\eta_{NT-t}\}_{0\leq t \leq NT}$, then we have the following backward martingale,
		\[\mathfrak{m}_t^{N} = \frac{1}{\sqrt{N \log N}} G_{\lambda} (\eta_{NT-Nt}) - \frac{1}{\sqrt{N \log N}} G_{\lambda} (\eta_{NT}) - \frac{1}{\sqrt{N \log N}} \int_0^{Nt} \mathcal{L} G_\lambda (\eta_{NT-s}) \, \d s.\]
		After a change of variables,
		\[\mathfrak{m}_T^{N} - \mathfrak{m}_{T-t}^{N} =  \frac{1}{\sqrt{N \log N}} G_{\lambda} (\eta_{0})  - \frac{1}{\sqrt{N \log N}} G_{\lambda} (\eta_{Nt})  -   \frac{1}{\sqrt{N \log N}} \int_0^{Nt} \mathcal{L} G_\lambda (\eta_s) \, \d s.\]
		Thus,
		\[M_t^{N}  + \mathfrak{m}_T^{N} - \mathfrak{m}_{T-t}^{N} = -   \frac{2}{\sqrt{N \log N}} \int_0^{Nt} \mathcal{L} G_\lambda (\eta_s) \, \d s.\]
		Viewing the definition \eqref{eq.ResolventDeg2}, we obtain the following decomposition for $\Gamma^N_t$ defined in \eqref{eq.OccupationDeg2}
		\begin{equation}\label{decom tightness}
			\Gamma^N_t = \frac{1}{2} \big(M_t^{N}  + \mathfrak{m}_T^{N} - \mathfrak{m}_{T-t}^{N} \big) + \frac{1}{\sqrt{N \log N}} \int_0^{Nt} \lambda G_\lambda (\eta_s) \, \d s.
		\end{equation}
		Next, we prove the tightness of the terms on the {\rhs} respectively.
		
		Concerning the martingale, we use Doob's inequality and isometry of quadratic variation to get
		\begin{align*}
			\E_\rho \big[\sup_{0 \leq t \leq T} \big(M_t^{N} - \widetilde{M}_t^{N}\big)^2 \big ] &\leq 4 \E_\rho \big[ \big(M_T^{N} - \widetilde{M}_T^{N}\big)^2 \big ]\\
			&= 4\E_\rho \Ll[ \bracket{M^{N} - \widetilde{M}^{N}}_T \Rr]\\
			&= \frac{4T}{\log N} \langle \tilde{G}_\lambda - G_\lambda, - \mathcal{L} (\tilde{G}_\lambda - G_\lambda) \rangle.
		\end{align*}
		Recall $\lambda = 1/N$. By Proposition~\ref{prop.HomoResolvent} and the estimates in Proposition~\ref{prop.GreenEstimateDeg2}, the last term has order 
		\begin{align*}
			&\frac{4T}{\log N} \expec{\tilde{G}_\lambda-G_\lambda,-\L(\tilde{G}_\lambda-G_\lambda)}\\
			&\leq \frac{4CT}{\log N}\left(\left(3^{-2\alpha m}+\lambda3^{3m}\right)\expec{\bar{G}_\lambda,-\Lb\bar{G}_\lambda} + 3^{14m}\expec{-\Lb\bar{G}_\lambda,-\Lb\bar{G}_\lambda}\right) \\
			&\leq CT \Ll(3^{-2\alpha m} +  3^{3m}/N + 3^{14m} / \log N\Rr),
		\end{align*}
		which converges to zero as $N \rightarrow \infty$ with the choice 
		\begin{align}\label{eq.m_choice}
			1 \ll m \ll (\log N)^{\frac{1}{14}}.
		\end{align}
		The tightness of $M_\cdot^{N}$ follows from the convergence of $\widetilde{M}_\cdot^{N}$ immediately.
		
		The tightness of the reversed martingales on the right-hand side of \eqref{decom tightness} can be proved following the same argument.
		
		For the last term on the right-hand side of \eqref{decom tightness}, by using Cauchy--Schwarz inequality, we obtain a bound
		\begin{align*}
			&\E_\rho \Big[ \sup_{0 \leq t \leq T}\Big( \frac{1}{\sqrt{N \log N}} \int_0^{Nt} \lambda G_\lambda (\eta_s) \, \d s \Big)^2\Big] \\
			&\leq \frac{\lambda^2 N^2 T^2}{N\log N} \|G_\lambda\|_{L^2}^2\\
			&\leq \frac{3\lambda^2 N T^2}{\log N} \Big( \|G_\lambda - \tilde{G}_\lambda\|_{L^2}^2 + \| \tilde{G}_\lambda - \bar{G}_\lambda\|_{L^2}^2 + \|\bar{G}_\lambda\|_{L^2}^2 \Big).
		\end{align*}
		By Proposition \ref{prop.HomoResolvent}, Proposition \ref{prop.GreenEstimateDeg2} and Proposition \ref{prop.L2FluxDeg2}, the last bound also has order $3^{-2\alpha m} +  3^{3m}/N + 3^{14m} / \log N$, which will also vanish under the condition \eqref{eq.m_choice}. This concludes the proof.
	\end{proof}
	
	In the following proposition in parallel with Proposition~\ref{prop.RemainderConverge}, we identify the limit of the remainder.
	\begin{proposition}
		For $d=1$, the remainder $\{\Gamma^N-\tilde{M}^N\}_{N\in\N}$ converges weakly to the zero process.
	\end{proposition}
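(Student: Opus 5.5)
We follow the proof of Proposition~\ref{prop.RemainderConverge}, replacing the $d=2$ inputs with their $d=1$, degree-two counterparts. With $\lambda=1/N$ and $m$ chosen as in \eqref{eq.scaleMDeg2}, the remainder is
\begin{equation*}
R^N_t=\Gamma^N_t-\tilde{M}^N_t=\frac{1}{\sqrt{N\log N}}\Ll(\tilde{G}_\lambda(\eta_0)-\tilde{G}_\lambda(\eta_{Nt})+\int_0^{Nt}\lambda\bar{G}_\lambda(\eta_s)\,\d s+\int_0^{Nt}(\L\tilde{G}_\lambda-\Lb\bar{G}_\lambda)(\eta_s)\,\d s\Rr),
\end{equation*}
which is the identity \eqref{eq.Gamma_decomTilde} now obtained from \eqref{eq.ResolventSSEP}.

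\textbf{Tightness and continuity.} Proposition~\ref{prop.tightGammaDeg2} shows that $\{\Gamma^N\}$ is tight in $C(\R_+,\R)$. Proposition~\ref{prop.CLT_MDeg2} shows that $\tilde{M}^N$ converges to a Brownian motion, so it is tight with continuous limit points. Hence $\{R^N\}$ is tight in $D$, and every limit point has continuous paths. It therefore suffices to show $\E_\rho[(R^N_t)^2]\to0$ for each fixed $t$, since this identifies all finite-dimensional marginals as zero.

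\textbf{Boundary and $\lambda\bar G_\lambda$ terms.} By Proposition~\ref{prop.GreenEstimateDeg2}, $\norm{\bar{G}_{1/N}}_{L^2}^2=O(N)$. By Proposition~\ref{prop.L2FluxDeg2}~\eqref{prop.L2ErrDeg2}, $\norm{\tilde{G}_{1/N}-\bar{G}_{1/N}}^2_{L^2}\le C3^{3m}\log N$, which is $o(N)$ under \eqref{eq.scaleMDeg2}. Stationarity then gives a bound $C/\log N$ for the boundary terms. Cauchy--Schwarz in time gives $Ct^2N^2\lambda^2\norm{\bar G_\lambda}^2_{L^2}/(N\log N)\le Ct^2/\log N$ for the $\lambda\bar G_\lambda$ term.

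\textbf{The flux term (main step).} Expanding the square, the second moment is bounded by $2\int_0^{Nt}\int_0^{s}\bracket{H,P_{s-r}H}\,\d r\,\d s$ with $H=\L\tilde{G}_\lambda-\Lb\bar{G}_\lambda$. We control $\bracket{H,P_uH}$ by Proposition~\ref{prop.L2FluxDeg2}~\eqref{prop.FluxErrDeg2} with $V=P_uH$, combined with Proposition~\ref{prop.GreenEstimateDeg2}. Alternatively, we test the time integral against $V\in L^2$ exactly as in Proposition~\ref{prop.RemainderConverge}: reversibility moves $P_s$ onto $V$, and then
\begin{equation*}
\Ll|\int_0^{Nt}\bracket{H,P_sV}\,\d s\Rr|\le C\Ll(3^{-\alpha m}\sqrt{\log N}+3^{7m}\Rr)\int_0^{Nt}\bracket{P_sV,-\Lb P_sV}^{1/2}\,\d s .
\end{equation*}
We compare $\Lb$ with $\L$ via \cite[Corollary 2.3]{gu2025relaxation} and the ellipticity in Hypothesis~\ref{hyp}. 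Then Cauchy--Schwarz together with $\int_0^\infty\bracket{P_sV,-\L P_sV}\,\d s\le\frac12\norm{V}^2_{L^2}$ bounds the right side by $o(\sqrt{N\log N})\sqrt t\,\norm{V}_{L^2}$, because $3^{7m}\ll\sqrt{\log N}$ and $3^{-\alpha m}\to0$ under \eqref{eq.scaleMDeg2}. Taking $V$ to be the random variable itself (as in the $d=2$ proof) gives $\E_\rho[(\cdot)^2]=o(1)t$.

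The main obstacle is this last step. Since $3^{7m}$ replaces $3^m$, the bound closes only because \eqref{eq.scaleMDeg2} forces $3^{7m}=o(\sqrt{\log N})$, and Proposition~\ref{prop.L2FluxDeg2} is stated only for $V$ with $\bracket{V,-\Lb V}<\infty$. To handle this, we first apply the estimate to $P_sV$ for $s>0$, where finite energy holds by the spectral calculus, and integrate only over $s\ge\epsilon$; the contribution from $s\in[0,\epsilon]$ is bounded trivially by $\epsilon\norm{H}_{L^2}\norm{V}_{L^2}$.
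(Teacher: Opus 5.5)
Your overall route is the paper's: its proof of this proposition reruns Proposition~\ref{prop.RemainderConverge} with Propositions~\ref{prop.GreenEstimateDeg2}, \ref{prop.L2FluxDeg2}, \ref{prop.tightGammaDeg2} and the scale \eqref{eq.scaleMDeg2}. Your tightness, boundary and $\lambda\bar{G}_\lambda$ steps are fine, and so is the observation $3^{7m}=o(\sqrt{\log N})$. The step that fails as written is the end of the flux term, ``taking $V$ to be the random variable itself''. It copies the last line of the $d=2$ proof, but it does not follow from the displayed bound.

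Write $H:=\L\tilde{G}_\lambda-\Lb\bar{G}_\lambda$ and $X:=(N\log N)^{-1/2}\int_0^{Nt}H(\eta_s)\,\d s$. The identity $\E_\rho[X\,V]=(N\log N)^{-1/2}\int_0^{Nt}\bracket{P_sH,V}\,\d s$ holds only for $V=V(\eta_0)$. So the supremum of your bound over $\norm{V}_{L^2}\leq 1$ controls $\norm{\E_\rho[X\mid\eta_0]}_{L^2}$, not $\E_\rho[X^2]^{1/2}$. Since $X$ is a path functional, it cannot be inserted as $V$.

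Your other suggestion, using \eqref{prop.FluxErrDeg2} of Proposition~\ref{prop.L2FluxDeg2} pointwise with $V=P_uH$, does not close either. Set $\varepsilon_N:=3^{-\alpha m}\sqrt{\log N}+3^{7m}$. Integrating $\bracket{H,P_uH}\leq C\varepsilon_N\bracket{P_uH,-\L P_uH}^{1/2}$ over $u\leq Nt$ costs either a factor $\norm{H}_{L^2}\sqrt{Nt}$ (Cauchy--Schwarz in time) or an extra factor $\log(Nt)$ (via $\bracket{P_uH,-\L P_uH}\leq (eu)^{-1}\bracket{H,P_uH}$). Neither is small after dividing by $N\log N$.

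The fix uses the same flux estimate, but through the $H^{-1}$ norm of $H$. By \eqref{prop.FluxErrDeg2} of Proposition~\ref{prop.L2FluxDeg2}, Proposition~\ref{prop.GreenEstimateDeg2} and ellipticity, $|\bracket{W,H}|\leq C\varepsilon_N\bracket{W,-\L W}^{1/2}$ for every $W$ of finite energy. Apply this to $W=(\delta-\L)^{-1}H$ with $\delta>0$; this $W$ has finite energy. Since $\bracket{W,-\L W}\leq\delta\norm{W}_{L^2}^2+\bracket{W,-\L W}=\bracket{W,H}$, you get $\bracket{H,(\delta-\L)^{-1}H}\leq C^2\varepsilon_N^2$ uniformly in $\delta$. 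Reversibility and stationarity then give
\begin{equation*}
\E_\rho[X^2]=\frac{2}{N\log N}\int_0^{Nt}(Nt-u)\norm{P_{u/2}H}_{L^2}^2\,\d u\leq\frac{2t}{\log N}\sup_{\delta>0}\bracket{H,(\delta-\L)^{-1}H}\leq\frac{2C^2t\,\varepsilon_N^2}{\log N}.
\end{equation*}
This tends to $0$ under \eqref{eq.scaleMDeg2}, and the Kipnis--Varadhan maximal inequality even gives it uniformly in $t\leq T$. With this replacement your argument is complete.
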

	\begin{proof}
		The proof is the same as the proof of Proposition~\ref{prop.RemainderConverge} utilizing Proposition~\ref{prop.GreenEstimateDeg2}, Proposition~\ref{prop.L2FluxDeg2}, and Proposition~\ref{prop.tightGammaDeg2} under the chosen scale \eqref{eq.scaleMDeg2}.
	\end{proof}
	
	Finally, we prove Theorem~\ref{thm.main_next}.
	
	\begin{proof}[Proof of Theorem~\ref{thm.main_next}]
		Notice that the case $\deg(f)>2/d$ is essentially Proposition~\ref{prop.KVnonGradient}. Thus it suffices to prove the case $d=1,\ \deg(f)=2$. 
		Note that for any centered local function $f$,
		\begin{equation*}
			\deg \left(f - \bar{f}^\prime (\rho)\bar{\eta}(0) - \frac{\bar{f}'' (\rho)}{2}\bar{\eta}(0)\bar{\eta}(1)\right)\geq 3.
		\end{equation*}
		Since $\deg(f)=2$, we have $\bar{f}^\prime (\rho)=0$, and therefore by Proposition~\ref{prop.KVnonGradient}, $\big(\Gamma^N_t (f) - \frac{\bar{f}''(\rho)}{2} \Gamma^N_t)\big)_{0 \leq t \leq T}$ converges to the zero process. Combining Proposition~\ref{prop.CLT_MDeg2} we conclude the proof.
	\end{proof}

	\appendix
	\section{Homogenization of the resolvent equation}\label{appendix}

    Throughout the appendix $G_\lambda$, $\bar{G}_\lambda$ represent the solutions of the resolvent equation \eqref{eq.resolvent} with $f=\bar{\eta}(0)$, \eqref{eq.ResolventSEP} for the case $\deg=1$ in all dimensions, and represent the solutions of the resolvent equation \eqref{eq.ResolventDeg2}, \eqref{eq.ResolventSSEP} for the special case $\deg=2$ in one dimension. $\tilde{G}_\lambda$ represents the corresponding two-scale expansion, \eqref{eq.TwoScaleExpan} for the case $\deg=1$ in all dimensions, and \eqref{eq.TwoscaleExDeg2} for the special case $\deg=2$ in one dimension.
	
	The following estimate is an analogue of that in \cite[(5.22)]{gu2024quantitative}, which controls the approximation error of the two-scale expansion in $\norm{\cdot}_{1,\lambda}$ norm.
	\begin{proposition}\label{prop.HomoResolvent}
		There exists a constant $C(d,c_{-},c_{+},\r,\rho)$ such that for any $\lambda>0$, the following estimate holds:
		\begin{equation}\label{eq.HomoResolvent}
			\begin{multlined}
				\lambda \norm{\tilde{G}_\lambda - G_\lambda}^2_{L^2} + \expec{\tilde{G}_\lambda-G_\lambda,-\L(\tilde{G}_\lambda-G_\lambda)}\\
				\leq C\left(\left(3^{-2\alpha m}+\lambda3^{3m}\right)\expec{\bar{G}_\lambda,-\Lb\bar{G}_\lambda} + 3^{14m}\expec{-\Lb\bar{G}_\lambda,-\Lb\bar{G}_\lambda}\right).
			\end{multlined}
		\end{equation}
	\end{proposition}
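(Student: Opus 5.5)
The plan is the standard energy estimate for the resolvent equation, with the residual of the two-scale expansion controlled by the flux estimates of Proposition~\ref{prop.L2FluxErr}(2) (for $\deg=1$) and Proposition~\ref{prop.L2FluxDeg2}(2) (for the case $d=1$, $\deg=2$). Set $W:=\tilde{G}_\lambda-G_\lambda$. Since $(\lambda-\L)G_\lambda=f=(\lambda-\Lb)\bar{G}_\lambda$, we have
\begin{equation*}
(\lambda-\L)W = \lambda(\tilde{G}_\lambda-\bar{G}_\lambda) + \big(-\L\tilde{G}_\lambda+\Lb\bar{G}_\lambda\big).
\end{equation*}
Testing against $W$ gives
\begin{equation*}
\lambda\norm{W}_{L^2}^2+\expec{W,-\L W} = \lambda\expec{W,\tilde{G}_\lambda-\bar{G}_\lambda} + \expec{W,-\L\tilde{G}_\lambda+\Lb\bar{G}_\lambda}.
\end{equation*}

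For the first term on the right, Cauchy--Schwarz and Young's inequality give the bound $\frac{\lambda}{2}\norm{W}^2_{L^2}+\frac{\lambda}{2}\norm{\tilde{G}_\lambda-\bar{G}_\lambda}^2_{L^2}$. Proposition~\ref{prop.L2FluxErr}(1), respectively Proposition~\ref{prop.L2FluxDeg2}(1), controls the second piece by $C\lambda 3^{3m}\expec{\bar{G}_\lambda,-\Lb\bar{G}_\lambda}$; the exponent $3^{2m}$ of the $\deg=1$ case is weaker and hence also covered. For the second term we apply the flux estimate with $V=W$. This is legitimate because $W$ has finite $\Lb$-Dirichlet energy: $\expec{W,-\Lb W}\le C\expec{W,-\L W}$ by the comparison of Dirichlet forms from uniform ellipticity, as in \cite[Corollary 2.3]{gu2025relaxation} and \eqref{eq.H1comparison}. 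This yields
\begin{equation*}
|\expec{W,-\L\tilde{G}_\lambda+\Lb\bar{G}_\lambda}| \le C\expec{W,-\L W}^{\frac12}\Big(3^{-\alpha m}\expec{\bar{G}_\lambda,-\Lb\bar{G}_\lambda}^{\frac12}+3^{7m}\expec{-\Lb\bar{G}_\lambda,-\Lb\bar{G}_\lambda}^{\frac12}\Big),
\end{equation*}
where in $\deg=1$ the factor $3^{7m}$ is replaced by the better $3^m$. Applying Young's inequality once more absorbs $\frac12\expec{W,-\L W}$ into the left side and produces $C(3^{-2\alpha m}\expec{\bar{G}_\lambda,-\Lb\bar{G}_\lambda}+3^{14m}\expec{-\Lb\bar{G}_\lambda,-\Lb\bar{G}_\lambda})$. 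Collecting terms gives \eqref{eq.HomoResolvent}.

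The main technical point is justifying the energy identity: we must check that $W\in L^2$, that $\tilde{G}_\lambda$ lies in the domain where $\expec{W,-\L\tilde{G}_\lambda}$ equals the Dirichlet form pairing, and that the flux estimates, proved for $V$ with finite $\Lb$-energy, apply to $V=W$. The approach is to approximate $G_\lambda$ and $\tilde{G}_\lambda$ by local functions, for instance by truncating the series defining $\bar{g}_\lambda$ and the sum over $z\in\Z_m$. Both sides are continuous in the $\norm{\cdot}_{1,\lambda}$ norm, so the identity passes to the limit. Finiteness of all quantities follows from Corollary~\ref{Cor.TildeGNorm} and Proposition~\ref{prop.GreenEstimateDeg2} together with the bounds on $\tilde{G}_\lambda-\bar{G}_\lambda$ in Proposition~\ref{prop.L2FluxErr} and Proposition~\ref{prop.L2FluxDeg2}.
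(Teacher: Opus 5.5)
Your proposal is correct and is essentially the paper's proof. It uses the same energy identity from testing against $W=\tilde{G}_\lambda-G_\lambda$, the flux estimate with $V=W$ combined with the Dirichlet-form comparison between $\L$ and $\Lb$ to absorb $\frac12\expec{W,-\L W}$, and Cauchy--Schwarz plus the $L^2$ two-scale bound for the $\lambda$-term. Your extra paragraph justifying the energy identity by approximation is a harmless addition that the paper leaves implicit.
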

	
	\begin{proof}
		We multiply $\tilde{G}_\lambda-G_\lambda$ for both sides of the identity
		\begin{equation*}
			(\lambda-\L)G_\lambda=(\lambda-\Lb)\bar{G}_\lambda,
		\end{equation*}
		after taking expectation with respect to $\P_\rho$ and rearranging the terms we can obtain:
		\begin{multline}\label{eq.HomH1Estimate}
			\expec{\tilde{G}_\lambda-G_\lambda, -\L \left(\tilde{G}_\lambda-G_\lambda\right)} + \lambda \norm{\tilde{G}_\lambda-G_\lambda}_{L^2}^{2} \\
			= \expec{\tilde{G}_\lambda-G_\lambda,-\L\tilde{G}_\lambda+\Lb\bar{G}_\lambda} + \lambda\expec{\tilde{G}_\lambda-G_\lambda,\tilde{G}_\lambda-\bar{G}_\lambda}.
		\end{multline}
		We can estimate the first term of the right-hand side of \eqref{eq.HomH1Estimate} by \eqref{prop.FluxErr} in Proposition~\ref{prop.L2FluxErr} (we use \eqref{prop.FluxErrDeg2} in Proposition~\ref{prop.L2FluxDeg2} for the special case $d=1, \deg=2$):
		\begin{equation}\label{eq.FluxEstimate}
			\begin{multlined}
				\left|\expec{\tilde{G}_\lambda-G_\lambda,-\L\tilde{G}_\lambda+\Lb\bar{G}_\lambda}\right|\\
				\leq C\expec{\tilde{G}_\lambda-G_\lambda,-\Lb\left(\tilde{G}_\lambda-G_\lambda\right)}^{\frac{1}{2}}\left(3^{-\alpha m}\expec{\bar{G}_\lambda,-\Lb\bar{G}_\lambda}^{\frac{1}{2}}+3^{7m}\expec{-\Lb\bar{G}_\lambda,-\Lb\bar{G}_\lambda}^{\frac{1}{2}}\right)
			\end{multlined}
		\end{equation}
		By \cite[Corollary~2.3]{gu2025relaxation}, we can compare the Dirichlet forms for $\tilde{G}_\lambda-G_\lambda$ associated with generators $\L$ and $\Lb$:
		\begin{equation}
			\expec{\tilde{G}_\lambda-G_\lambda,-\Lb\left(\tilde{G}_\lambda-G_\lambda\right)} \leq C\expec{\tilde{G}_\lambda-G_\lambda, -\L \left(\tilde{G}_\lambda-G_\lambda\right)}.
		\end{equation}
		Therefore, we can use AM--GM inequality in \eqref{eq.FluxEstimate}. A carefully chosen weight can let $\expec{\tilde{G}_\lambda-G_\lambda, -\L \left(\tilde{G}_\lambda-G_\lambda\right)}$ be reabsorbed in the right-hand side of \eqref{eq.HomH1Estimate}:
		\begin{multline}\label{eq.FluxEsR}
			\left|\expec{\tilde{G}_\lambda-G_\lambda,-\L\tilde{G}_\lambda+\Lb\bar{G}_\lambda}\right|\\
			\leq \frac{1}{2}\expec{\tilde{G}_\lambda-G_\lambda, -\L \left(\tilde{G}_\lambda-G_\lambda\right)}+C\left(3^{-2\alpha m}\expec{\bar{G}_\lambda,-\Lb\bar{G}_\lambda}+3^{14m}\expec{-\Lb\bar{G}_\lambda,-\Lb\bar{G}_\lambda}\right).
		\end{multline} 
		We can use Cauchy--Schwarz inequality to handle the second term in \eqref{eq.HomH1Estimate}:
		\begin{equation}\label{eq.HomRemainder}
			\lambda\expec{\tilde{G}_\lambda-G_\lambda, \tilde{G}_\lambda-\bar{G}_\lambda} \leq \frac{\lambda}{2}\norm{\tilde{G}_\lambda-G_\lambda}^{2}_{L^2}+\frac{\lambda}{2}\norm{\tilde{G}_\lambda-\bar{G}_\lambda}^{2}_{L^2}.
		\end{equation}
		We can use \eqref{prop.L2Err} in Proposition~\ref{prop.L2FluxErr} (we use \eqref{prop.L2ErrDeg2} in Proposition~\ref{prop.L2FluxDeg2} for the special case $d=1, \deg=2$) to control the second term in \eqref{eq.HomRemainder}:
		\begin{equation}\label{eq.HomL2}
			\norm{\tilde{G}_\lambda-\bar{G}_\lambda}^{2}_{L^2} \leq C3^{3m}\expec{\bar{G}_\lambda,-\Lb\bar{G}_\lambda}.
		\end{equation}
		We can combine the estimates \eqref{eq.FluxEsR}, \eqref{eq.HomRemainder}, and \eqref{eq.HomL2} to obtain the desired result \eqref{eq.HomoResolvent}.
	\end{proof}
	
	To show $\tilde{G}_\lambda$ is a reasonable substitution of $G_\lambda$, we take the $d=2$ case as an example. We first use the reversibility of the process and Proposition~\ref{prop.relaxation} to obtain
	\begin{align*}
		\E_\rho \Ll[G_\lambda^2\Rr] &= \int_0^\infty \, \d t \int_0^\infty \, \d s \, e^{-\lambda (t+s)} \E_\rho \Ll[P_t (\bar{\eta}(0)) P_s(\bar{\eta}(0))\Rr] \\
		&= \int_0^\infty te^{-\lambda t} \E_\rho \Ll[\Ll(P_{t/2} (\bar{\eta}(0))\Rr)^2 \Rr]\, \d t  \leq C \lambda^{-1}.
	\end{align*}
	By \eqref{eq.resolvent} we have the relation
	\begin{equation*}
		\expec{G_\lambda,-\L G_\lambda}=\expec{G_\lambda,\bar{\eta}(0)}-\lambda\E_\rho[G_\lambda^2].
	\end{equation*}
	Since $G_\lambda (\eta) = \int_0^{\infty} e^{-\lambda t} P_t (\bar{\eta}(0)) \, \d t$, we can estimate the first term $\expec{G_\lambda,\bar{\eta}(0)}$ as:
	\begin{equation*}
		\expec{G_\lambda,\bar{\eta}(0)}=\int_{0}^{\infty}e^{-\lambda t}\expec{P_t(\bar{\eta}(0)),\bar{\eta}(0)}\,\d t=\int_{0}^{\infty}e^{-\lambda t}\E_{\rho}\left[\left(P_{t/2}(\bar{\eta}(0))\right)^2\right]\sim\log(\lambda^{-1}),
	\end{equation*}
	where the last estimate utilizes Proposition~\ref{prop.relaxation}. Therefore we obtain the asymptotic for $\expec{G_\lambda,-\L G_\lambda}$:
	\begin{equation*}
		\expec{G_\lambda,-\L G_\lambda}\sim\log(\lambda^{-1}),\quad\text{as}\ \lambda\rightarrow 0.
	\end{equation*}
	
	Plugging in the estimates for $\bar{G}_\lambda$ in dimension $d=2$ developed in Section~\ref{subsec.HomH1}, the upper bound in Proposition~\ref{prop.HomoResolvent} becomes:
	\begin{equation*}
		C((3^{-2\alpha m}+\lambda 3^{3m})\log(\lambda^{-1})+3^{14m})\ll\log(\lambda^{-1}),
	\end{equation*}
	when we choose the mesoscopic scale $m$ appropriately:
	\begin{equation*}
		1\ll3^{14m}\ll\log(\lambda^{-1}).
	\end{equation*}
	
	The above computations justify the heuristic $\tilde{G}_\lambda\simeq G_\lambda$ discussed in the introduction in $\norm{\cdot}_{1,\lambda}$ sense.

    \section*{Statement on A.I. use}
    The manuscript was written by the authors. During the preparation of Proposition~\ref{prop.GreenQuartic}, the second author used a proof sketch generated by ChatGPT 5.4 Plus as a reference; the final proof and verification are due to the authors.

	\section*{Acknowledgements}
	The research of CG and LY is supported by the National Natural Science Foundation of China (Nos. 12301166, 12595280, 12595284). LZ thanks the financial support from the National Natural Science Foundation of China (Nos. 12401168, 12371142).
	We thank Claudio Landim for the comments on the preliminary version of the manuscript.

	\bibliographystyle{plain}
	\bibliography{KawasakiRef}
	
\end{document}